\author{
  Damien Scieur \\
  Samsung SAIL, Montreal\\
  \texttt{damien.scieur@gmail.com} \\
}
\DeclareMathOperator{\sign}{sign}
\def\defas{\stackrel{\text{def}}{=}}
\DeclareMathOperator*{\argmin}{{arg\,min}}
\newtheorem{assumption}{Assumption}
\newtheorem{proposition}{Proposition}
\newtheorem{lemma}{Lemma}
\newtheorem{corollary}{Corollary}
\providecommand{\customgenericname}{}
\newcommand{\newcustomtheorem}[2]{%
  \newenvironment{#1}[1]
  {%
   \renewcommand\customgenericname{#2}%
   \renewcommand\theinnercustomgeneric{##1}%
   \innercustomgeneric
  }
  {\endinnercustomgeneric}
}
\crefname{assumption}{Assumption}{Assumptions}
\crefname{innercustomgeneric}{Requirement}{Requirements}
\crefname{theorem}{Theorem}{Theorems}
\definecolor{mydarkblue}{rgb}{0,0.08,0.45}
\begin{document}

\runningtitle{Adaptive Quasi-Newton and Anderson Acceleration with Global Convergence Rates}

\twocolumn[
\aistatstitle{Adaptive Quasi-Newton and Anderson Acceleration Framework with Explicit Global (Accelerated) Convergence Rates}
\aistatsauthor{ Damien Scieur }
\aistatsaddress{  Samsung SAIL Montreal } ]


\begin{abstract}
Despite the impressive numerical performance of the quasi-Newton and Anderson/nonlinear acceleration methods, their global convergence rates have remained elusive for over 50 years. This study addresses this long-standing issue by introducing a framework that derives novel, adaptive quasi-Newton and nonlinear/Anderson acceleration schemes. Under mild assumptions, the proposed iterative methods exhibit explicit, non-asymptotic convergence rates that blend those of the gradient descent and Cubic Regularized Newton's methods. The proposed approach also includes an accelerated version for convex functions. Notably, these rates are achieved adaptively without prior knowledge of the function's parameters. The framework presented in this study is generic, and its special cases includes algorithms such as Newton's method with random subspaces, finite-differences, or lazy Hessian. Numerical experiments demonstrated the efficiency of the proposed framework, even compared to the l-BFGS algorithm with Wolfe line-search.
\end{abstract}

\section{Introduction}

Consider the problem of determining the minimizer $x^\star$ of the unconstrained minimization problem
\[
    f^\star \defas f(x^\star) = \min_{x\in\mathbb{R}^d} f(x),
\]
where $d$ is the problem dimension, and the function $f$ has a Lipschitz continuous Hessian.
\begin{assumption}\label{assump:lipchitiz_hessian}
    The function $f(x)$ has a Lipschitz continuous Hessian with constant $L$:
\begin{equation} \label{eq:def_Lipschitz_hess}
    \forall \;\; y,\,z\in\mathbb{R}^d,\quad \|\nabla^2 f(z)-\nabla^2 f(y)\| \leq L\|z-y\|.
\end{equation}
\end{assumption} 
In this study, $\|.\|$ stands for the maximal singular value of a matrix and the $\ell_2$ norm for a vector. Numerous twice-differentiable problems, such as logistic or least-squares regression, satisfy \cref{assump:lipchitiz_hessian}.

The Lipschitz continuity of the Hessian is crucial when analyzing second-order algorithms because it extends the concept of smoothness to second-order. The groundbreaking work by \citet{nesterov2006cubic} revealed the remarkable convergence rate of Newton's method for problems satisfying \cref{assump:lipchitiz_hessian} when augmented with cubic regularization. For instance, if the problem is convex, the accelerated gradient descent typically achieves $O(\frac{1}{t^2})$, whereas accelerated second-order methods achieve $O(\frac{1}{t^3})$. Recent advancements achieved even faster convergence rates, up to $\mathcal{O}(\frac{1}{t^{7/2}})$ using hybrid methods \citep{monteiro2013accelerated,carmon2022recapp}, or the direct acceleration of second-order methods \citep{nesterov2008accelerating,gasnikov2019near,kovalev2022first}.

However, second-order methods are not scalable, particularly for the high-dimensional problems common in machine learning. The limitation is that exact second-order methods require the solution of a linear system involving the Hessian $\nabla^2 f$. This has motivated alternative approaches that balance the efficiency of second-order methods with the scalability of first-order methods, such as quasi-Newton methods or Anderson/nonlinear acceleration methods (which are equivalent to quasi-Newton methods \cite{fang2009two}). Due to space limitation, the results of this study are presented under the prism of quasi-Newton methods, but their links with Anderson acceleration is explained extensively in \cref{sec:link_existing_methods}.

Quasi-Newton (qN) methods minimize differentiable functions by iteratively updating an approximate Hessian matrix using previous gradients, effectively balancing scalability and efficiency. This approach makes them highly suitable for large-scale optimization problems across diverse fields. For instance, l-BFGS is a widely used and effective optimization method for unconstrained functions (e.g., \texttt{fminunc} from Matlab) and is often considered state-of-the-art \citep{aggrawal2021hessian}.

Despite the impressive performance of quasi-Newton methods and nonlinear acceleration schemes, the following long-standing question \textbf{has remained unanswered for over 50 years}.
\begin{center}
\textit{What are the nonasymptotic global convergence rates of quasi-Newton and Anderson acceleration methods?}
\end{center}

This question is challenging: Over the years, extensive research has catapulted the popular l-BFGS algorithm to an exceptionally high level of efficiency, as attested by various studies (e.g., \citep{liu1989limited,venter2010review,morales2002numerical}). However, its theoretical convergence guarantees are notably lacking and do not accurately reflect its actual numerical performance. Therefore, additional numerical improvements or obtaining fast rates without arming the numerical convergence may be increasingly difficult or infeasible.


\subsection{Contributions}

\paragraph{Theoretical guarantees} This study presents generic updates that are novel quasi-Newton methods or nonlinear acceleration schemes with cubic regularization that meet the following requirements.
\begin{enumerate}
    \item The assumptions for the theoretical analysis are simple and verifiable (\cref{sec:assump}).
    \item The algorithm is suitable for large-scale problems, as for a fixed memory budget $N\leq d$, its per-iteration cost is linear in the dimension.
    \item The algorithm exhibits \textbf{explicit, global, and nonasymptotic convergence rates} that interpolate between those of the first and second order methods (\cref{sec:rate_convergence,sec:comparison_rate}):
    \begin{itemize}[leftmargin=5ex]
        \item Convergence on Nonconvex problems (\cref{thm:rate_nonconvex}): $\min_{i\leq t} \|\nabla f(x_i) \| \leq O(t^{-\frac{2}{3}}+t^{-\frac{1}{3}})$,
        \item (Star-)convex problems (\cref{thm:rate_starconvex,thm:rate_randomconvex}): $f(x_t)-f^\star \leq O(t^{-2}+t^{-1})$,
        \item Accelerated rate on convex problems (\cref{thm:rate_acc_sketch}): $f(x_t)-f^\star \leq O(t^{-3}+t^{-2})$.
    \end{itemize}
    \item The algorithm\textbf{ is adaptive to the problem’s constants} (\cref{alg:type1,alg:accelerated_algo}): both accelerated and classical methods require only an initial estimate of the Lipchitz constant (\cref{sec:backtracking_line_search}).
\end{enumerate}

\paragraph{Novel Analysis} To address these points, this study explores a novel paradigm, \textit{rethinking from scratch} the framework underlying qN methods (\cref{sec:detail_construction_algo}).

\paragraph{Numerical Efficiency} The algorithm outperform l-BFGS in many scenarios (\cref{sec:numerical_experiments}).

\paragraph{Practicability} A particular focus have been put on making the method \textit{simple}, \textit{generic} and \textit{adaptive}, to make it suitable for practical applications. The method is simple to implement, and requires \textit{fewer} hyperparameters than classical qN schemes.

\paragraph{Generic Framework} The framework can be instantiated as many kind of previously known methods (\cref{sec:special_cases}), and recovers the cubic regularization of Newton's method in its most extreme case.

\subsection{Limitations in Current qN Schemes} 

In most classical qN methods, a \textbf{(Wolfe) line search algorithm} (often in addition to other techniques such as secant equation filtering or re-scaling) is required to ensure global convergence. Without such a line search, the performance of qN methods is poor, even in a simple quadratic case in two dimensions \cite{powell1986bad}.

Nevertheless, some previous work already attempted to determine rates for qN methods (or to derive new ones), but often violates at least one of the previous points: \textbf{1)} the analysis requires non-verifiable assumptions, \textbf{2)} the algorithm is not suitable for large-scale problems as the per-iteration cost is at least $O(d^2)$,
\textbf{3)} the rates are local or do not interpolate between first and second order rates, \textbf{4)} the algorithm depends on potentially unknown parameters. See \cref{sec:related_work} for a detailed literature review.

\textbf{Violates 1:}  For instance, the ARC method \citep{cartis2011adaptive,cartis2011adaptive2} or proximal qN methods \citep{wei2004superlinear,scheinberg2016practical} show accelerated rates for quasi-Newton under similar assumptions. However, the authors state that the convergence rate is derived under a nonverifiable assumption and typically their rates do not rely on or exploit the accuracy of second-order approximations.

\textbf{Violates 2:} By using online algorithms and the Monteiro-Svaiter acceleration technique, \citep{jiang2023accelerated} achieves accelerated rates $O(\min\{\frac{1}{t^2}, \frac{1}{t^{2.5}}\})$ for qN methods, but require the storage and inversion of a $d\times d$ matrix, which does not scale well in high dimension, and does not perform well compared to BFGS.

\textbf{Violates 3:}  Recent research on quasi-Newton updates has unveiled explicit and nonasymptotic rates of convergence \cite{rodomanov2021greedy,rodomanov2021rates,rodomanov2021new,lin2022explicit,lin2021greedy}. Nonetheless, these analyses suffer from several significant drawbacks, as they are \textit{local}, full-memory (hence require storing a $d\times d$ matrix and the per-iteration cost is $O(d^2)$) and sometimes requires access to the Hessian matrix.

\textbf{Violates 4:} \citet{kamzolov2023accelerated} introduced an adaptive regularization technique combined with cubic regularization, but the method relies on the knowledge of $L$ in \cref{assump:lipchitiz_hessian}.

\section{Rethinking Quasi-Newton Methods} \label{sec:detail_construction_algo}

The starting point is the cubic upper bound on the objective function $f$ and the upper bound on the gradient variation derived from \cref{assump:lipchitiz_hessian} \cite{nesterov2006cubic}:
\begin{align}
    & \textstyle \|\nabla f(y)\hspace{-0.25ex}-\hspace{-0.25ex}\nabla f(x)-\nabla^2f(x)(y-x)\| \leq \frac{L}{2} \|y-x\|^2, \label{eq:ineq_secant}\\
    & \textstyle  f(y) \leq f(x) +\nabla f(x)(y-x)  \nonumber\\
    &\qquad + \frac{1}{2}(y-x)^T\nabla^2 f(x)(y-x)+ \frac{L}{6}\|y-x\|^3,\label{eq:ineq_function}
\end{align}
which holds for all $x,y\in\mathbb{R}^d$. Minimizing \cref{eq:ineq_function} over $y$ yields the cubic regularization of Newton's method \cite{nesterov2006cubic}.

The main steps in deriving this algorithm are as follows: 
\textbf{1)} The minimization will be constrained to a subspace of dimension $N\leq d$, reducing the per-iteration computation cost. 
\textbf{2)} The Hessian (in the subspace) will be approximated using differences of gradients.
\textbf{3)} From the previous points, an upper bound for the objective function and the gradient norm will be constructed, leading to a Type I and Type II method.
\textbf{4)} To ensure convergence, the update of the subspace will be chosen such that it spans the gradient (deterministic) or spans a portion of the gradient in expectation (random subspace).

\subsection{First Ingredient: Subspace Minimization} 

Minimizing the upper bound \cref{eq:ineq_function} is expensive in high dimensions because it requires an eigenvalue decomposition of the Hessian $\nabla^2 f(x)$ \citep{nesterov2006cubic}. Rather, let $D_t$ be an $d\times N$ matrix of directions (the construction of $D_t$ is described in \cref{sec:direction_requirements}). By constraining the update $x_{t+1}-x_t$ in the columns span of $D_t$, that is,
\begin{equation}\label{eq:def_next_iterate}
    x_{t+1} = x_t + D_t\alpha_t,
\end{equation}
where $\alpha_t$ is a vector of $N$ coefficients, the minimization problem is simplified to
\begin{align*}
    \alpha_{t} =  \argmin_{\alpha\in\mathbb{R}^N} & \;\;f(x_t) + \nabla f(x_t)D_t\alpha\\
    & \;\; \textstyle + \frac{1}{2}(D_t\alpha)^T\nabla^2 f(x_t)D_t\alpha + \frac{L}{6}\|D_t\alpha\|^3.
\end{align*}
Hence, the complexity of the eigenvalue decomposition of $D_t^T\nabla^2 f(x)D_t$ is now $O(N^2d + N^3)$.

\subsection{Second Ingredient: Multisecant Approximation of the Hessian} 

Typically, quasi-Newton methods approximate the Hessian using the properties of the \textit{secant equation}:
\[
    \nabla^2 f(x_i)(x_i-x_{i-1}) \approx \nabla f(x_i)-\nabla f(x_{i-1}).
\]
Usually, those updates are performed recursively, that is, by updating an approximation of the Hessian one secant equation at a time. 

This study approximates the Hessian using the secant equations simultaneously by using a finite-difference approximation between two sets of points $Y_t$ and $Z_t$.

More formally, let the matrix of directions $D_t$ and the matrix of normalized gradient differences $G_t$ be
\begin{align}\label{def:matrices_2}
    D_t & = \left[\frac{y^{(t)}_1-z^{(t)}_1}{\|y^{(t)}_1-z^{(t)}_1\|_2},\ldots, \frac{y^{(t)}_N-z^{(t)}_N}{\|y^{(t)}_N-z^{(t)}_N\|_2}\right],\\
    G_t & = \left[\ldots,\frac{\nabla f(y^{(t)}_i)-\nabla f(z^{(t)}_i)}{\|y^{(t)}_i-z^{(t)}_i\|_2},\ldots\right].\nonumber
\end{align}
and let the matrices $Y_t$, $Z_t$ be defined as:
\begin{align}\label{def:matrices}
    Y_t = [y_1^{(t)},\ldots, y_N^{(t)}],\quad Z_t = [z_1^{(t)},\ldots, z_N^{(t)}].
\end{align}
A naïve approach can be $Y_t = [x_{t-N},\ldots, x_{t-1}]$ and $Z_t = [x_{t-N+1},\ldots, x_{t}]$ (this will \textbf{not} be the case in this study; see \cref{sec:direction_requirements}). Intuitively, the matrix $G_t$ is a finite-difference approximation of the Hessian matrix product $\nabla^2f(x_t)D_t$. More precisely, the next theorem states a bound on the approximation error of this product as a function of the \textit{error vector} $\varepsilon_t$:
\begin{align} \label{eq:error_vector}
    \varepsilon_t \defas [e^{(t)}_1,\ldots,e^{(t)}_N], \; e^{(t)}_i \defas \|y_i^{(t)}-z_i^{(t)}\| + 2\|z^{(t)}_i-x_t\|
\end{align}
\vspace{-2ex}
\begin{restatable}{theorem}{thmboundsecantalpha}\label{thm:bound_secant_alpha} Let the function $f$ satisfy \cref{assump:lipchitiz_hessian}. Let matrices $D_t,\,G_t$ be defined as in \cref{def:matrices} and vector $\varepsilon$ as in \cref{eq:error_vector}. 
Subsequently, for all $w\in\mathbb{R}^d$ and $\alpha\in\mathbb{R}^N$
\begin{align}
    \left| w^T(\nabla^2f(x_t)D_t-G_t)\alpha\right| &\leq \textstyle \frac{L\|w\|}{2} |\alpha|^T\varepsilon_t, \label{eq:bound_hessian_scalar}\\
    \|w^T(\nabla^2f(x_t)D_t-G_t)\| &\leq  \textstyle \frac{L\|w\|}{2} \|\varepsilon_t\|. \label{eq:bound_hessian_dalpha}
\end{align}
\end{restatable}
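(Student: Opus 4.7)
The plan is to first prove a per-column bound on the error matrix $\nabla^2 f(x_t) D_t - G_t$, and then derive both inequalities by aggregating the column bounds appropriately. Write $u_i \defas y_i^{(t)} - z_i^{(t)}$, so that the $i$-th column of $D_t$ is $d_i = u_i/\|u_i\|$ and the $i$-th column of $G_t$ is $g_i = (\nabla f(y_i^{(t)}) - \nabla f(z_i^{(t)}))/\|u_i\|$. The key claim I would establish is
\[
    \|\nabla^2 f(x_t) d_i - g_i\| \leq \tfrac{L}{2}\, e_i^{(t)}.
\]

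To prove this claim, I would use the integral representation
$\nabla f(y_i^{(t)}) - \nabla f(z_i^{(t)}) = \int_0^1 \nabla^2 f(z_i^{(t)} + s u_i)\, u_i \, ds$,
subtract $\nabla^2 f(x_t)\, u_i$, pull the norm inside the integral, and invoke \cref{assump:lipchitiz_hessian} to obtain
\[
    \|\nabla f(y_i^{(t)}) - \nabla f(z_i^{(t)}) - \nabla^2 f(x_t) u_i\| \leq L\|u_i\|\! \int_0^1\!\! \|z_i^{(t)} + s u_i - x_t\|\, ds.
\]
Bounding $\|z_i^{(t)} + s u_i - x_t\| \leq \|z_i^{(t)} - x_t\| + s\|u_i\|$ and integrating gives exactly $\frac{L\|u_i\|}{2}(\|u_i\| + 2\|z_i^{(t)} - x_t\|) = \frac{L}{2}\|u_i\|\, e_i^{(t)}$, and dividing by $\|u_i\|$ yields the per-column bound.

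For \cref{eq:bound_hessian_scalar}, I would expand $w^T (\nabla^2 f(x_t) D_t - G_t) \alpha = \sum_{i=1}^N \alpha_i\, w^T (\nabla^2 f(x_t) d_i - g_i)$, and apply the triangle inequality and Cauchy--Schwarz $|w^T v| \leq \|w\|\|v\|$ on each summand to obtain $\sum_i |\alpha_i|\, \|w\|\, \tfrac{L}{2} e_i^{(t)} = \tfrac{L\|w\|}{2}\, |\alpha|^T \varepsilon_t$. For \cref{eq:bound_hessian_dalpha}, the $i$-th coordinate of the row vector $w^T (\nabla^2 f(x_t) D_t - G_t)$ is $w^T(\nabla^2 f(x_t) d_i - g_i)$, whose absolute value is bounded by $\|w\|\, \tfrac{L}{2} e_i^{(t)}$; squaring, summing over $i$, and taking the square root gives $\tfrac{L\|w\|}{2}\|\varepsilon_t\|$.

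The only non-routine step is the choice of integration path. Applying the secant-type inequality \cref{eq:ineq_secant} separately at $y_i^{(t)}$ and $z_i^{(t)}$ with base point $x_t$ and subtracting would produce only the weaker bound $\tfrac{L}{2}(\|y_i^{(t)} - x_t\|^2 + \|z_i^{(t)} - x_t\|^2)$, which does not match $\tfrac{L}{2}\|u_i\| e_i^{(t)}$ when $\|z_i^{(t)} - x_t\|$ is comparable to $\|u_i\|$. The correct manipulation is to integrate $\nabla^2 f$ along the segment from $z_i^{(t)}$ to $y_i^{(t)}$ and compare against $\nabla^2 f(x_t)$, because the distance of this segment from $x_t$ varies linearly with $s$ and yields precisely the factor $\|u_i\| + 2\|z_i^{(t)} - x_t\| = e_i^{(t)}$ after integration. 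Everything else is bookkeeping.
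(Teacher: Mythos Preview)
Your proposal is correct and follows essentially the same skeleton as the paper: establish the per-column bound $\|\nabla^2 f(x_t) d_i - g_i\| \leq \tfrac{L}{2} e_i^{(t)}$, then aggregate via sign-weighted summation for \cref{eq:bound_hessian_scalar} and via square-and-sum for \cref{eq:bound_hessian_dalpha}.

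The one genuine difference is in how the per-column bound is obtained. The paper does not integrate directly against $\nabla^2 f(x_t)$; instead it applies \cref{eq:ineq_secant} \emph{once} with base point $z_i^{(t)}$ (not $x_t$), giving $\|\nabla f(y_i)-\nabla f(z_i)-\nabla^2 f(z_i)(y_i-z_i)\|\leq \tfrac{L}{2}\|y_i-z_i\|^2$, and then uses the Lipschitz-Hessian bound $\|\nabla^2 f(z_i)-\nabla^2 f(x_t)\|\leq L\|z_i-x_t\|$ to shift the Hessian to $x_t$; the two terms combine exactly to $\tfrac{L}{2}\|y_i-z_i\|\,e_i^{(t)}$. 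Your single-shot integration along $[z_i,y_i]$ achieves the same constant in one move and is arguably cleaner, since \cref{eq:ineq_secant} is itself proved by that very integral. Your closing remark about the weaker bound from subtracting two instances of \cref{eq:ineq_secant} at base point $x_t$ is well taken, but note that the paper avoids this pitfall differently---by anchoring \cref{eq:ineq_secant} at $z_i$ rather than $x_t$---not by abandoning \cref{eq:ineq_secant} altogether.
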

\paragraph{Proof sketch} The detailed proof can be found in \cref{sec:missing_proofs}. The main idea of the proof is as follows. The superscript$~^{(t)}$ has been removed for clarity. From \cref{eq:ineq_secant} with $y=y_i$, $z=z_i$, then \cref{assump:lipchitiz_hessian}, 
\begin{align*}
    & \frac{\|\nabla f(y_i)-\nabla f(z_i)-\nabla^2f(x_t)(y_i-z_i)\|}{\|y_i-z_i\|}\\  & \leq  \frac{L}{2}\|y_i-z_i\| + \|\nabla^2f(x_t)-\nabla^2f(z_i)\| \leq \frac{L}{2}e_i.
\end{align*}
    
The \textit{first} term in $e_i$ bounds the error of \cref{eq:ineq_secant}, whereas the \textit{second} term originates from the distance between  \cref{eq:ineq_secant} and the current point $x_t$ where the Hessian is estimated. Subsequently, it suffices to combine the inequalities with the coefficients $\alpha_t$ to obtain \cref{thm:bound_secant_alpha}.

\subsection{Third Ingredient: Objective Function and Gradient Norm Upper bounds}

As the approximation error between $\nabla^2 f(x_t) D_t$ and $G_t$ can be explicitly bounded, carefully replacing the term $\nabla^2 f(x_t) D_t\alpha_t$ in \cref{eq:ineq_function,eq:ineq_secant} with $G_t\alpha_t$, along with an appropriate regularization, leads to \textbf{Type I} and \textbf{Type II} bounds.

\begin{restatable}{theorem}{thmupperboundalgo} \label{thm:upper_bound_algo} Let the function $f$ satisfy \cref{assump:lipchitiz_hessian}. Let $x_{t+1}$ be defined as in \cref{eq:def_next_iterate}; $D_t,\,G_t$ be defined as in \cref{def:matrices} and $\varepsilon_t$ be defined as in \cref{eq:error_vector}. Subsequently, $\forall\alpha\in\mathbb{R}^N$,
\begin{align}
    & \textstyle  f(x_{t+1}) \leq f(x_t) + \nabla f(x_t)^TD_t\alpha + \frac{\alpha^TH_t\alpha}{2} + \frac{L\|D_t\alpha\|^3}{6}, \tag{Type I bound}\label{eq:type1_bound}\\
    & \textstyle \|\nabla f(x_{t+1})\| \leq \left\|\nabla f(x_t) + G_t\alpha\right\| + \frac{L}{2}\Big( |\alpha|^T \varepsilon_t + \|D_t\alpha\|^2\Big), \tag{Type II bound} \label{eq:type2_bound}
\end{align}
where $H_t \defas  (G_t^TD_t+D_t^TG_t + \mathrm{I}L\|D_t\|\|\varepsilon_t\|)/2$.
\end{restatable}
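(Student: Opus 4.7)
The plan is to replace the inaccessible Hessian–vector product $\nabla^2 f(x_t)D_t$ by the finite-difference matrix $G_t$ inside the two second-order inequalities \cref{eq:ineq_function} and \cref{eq:ineq_secant}, and to absorb the resulting substitution error via \cref{thm:bound_secant_alpha}. Defining the error matrix $E_t\defas\nabla^2 f(x_t)D_t-G_t$, the two bounds \cref{eq:bound_hessian_scalar} and \cref{eq:bound_hessian_dalpha} of that theorem read $|w^T E_t\alpha|\leq\tfrac{L\|w\|}{2}|\alpha|^T\varepsilon_t$ and $\|w^T E_t\|\leq\tfrac{L\|w\|}{2}\|\varepsilon_t\|$; these are the only tools I would need.

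For the Type I bound, I would apply \cref{eq:ineq_function} at $y=x_{t+1}=x_t+D_t\alpha$ and $x=x_t$, yielding
\begin{equation*}
f(x_{t+1})\leq f(x_t)+\nabla f(x_t)^T D_t\alpha+\tfrac{1}{2}\alpha^T D_t^T\nabla^2 f(x_t)D_t\alpha+\tfrac{L}{6}\|D_t\alpha\|^3.
\end{equation*}
Only the quadratic term needs modification. Because $D_t^T\nabla^2 f(x_t)D_t$ is symmetric, I can split it as $\tfrac{1}{2}(D_t^T G_t+G_t^T D_t)+\tfrac{1}{2}(D_t^T E_t+E_t^T D_t)$; the first piece already contributes the symmetrized part of $H_t$. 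For the error piece, $\alpha^T(D_t^T E_t+E_t^T D_t)\alpha=2\alpha^T D_t^T E_t\alpha$, which I would bound by applying \cref{eq:bound_hessian_dalpha} with $w=D_t\alpha$, obtaining $|(D_t\alpha)^T E_t\alpha|\leq\tfrac{L}{2}\|D_t\alpha\|\|\varepsilon_t\|\|\alpha\|\leq\tfrac{L}{2}\|D_t\|\|\varepsilon_t\|\|\alpha\|^2$. This last quantity equals $\alpha^T\bigl(\tfrac{L\|D_t\|\|\varepsilon_t\|}{2}I\bigr)\alpha$, which is precisely the remaining piece of $H_t$, so the Type I bound follows after reassembly.

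For the Type II bound, the natural starting point is \cref{eq:ineq_secant} applied at $y=x_{t+1}$, $x=x_t$, which after a triangle inequality gives
\begin{equation*}
\|\nabla f(x_{t+1})\|\leq\|\nabla f(x_t)+\nabla^2 f(x_t)D_t\alpha\|+\tfrac{L}{2}\|D_t\alpha\|^2.
\end{equation*}
Writing $\nabla^2 f(x_t)D_t\alpha=G_t\alpha+E_t\alpha$ and using the triangle inequality once more isolates the residual $\|E_t\alpha\|$. Taking $w$ to be the unit vector aligned with $E_t\alpha$ in \cref{eq:bound_hessian_scalar} then yields $\|E_t\alpha\|\leq\tfrac{L}{2}|\alpha|^T\varepsilon_t$; plugging this in produces the Type II bound verbatim.

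The argument is essentially bookkeeping once \cref{thm:bound_secant_alpha} is in hand; the one non-routine step is the symmetrization in the Type I case, which is what forces the $G_t$ contribution to appear in the symmetric form $(G_t^T D_t+D_t^T G_t)/2$ inside $H_t$ and what allows $H_t$ to be used as a meaningful quadratic inside a cubic-regularized subproblem. All other steps are direct applications of \cref{eq:bound_hessian_scalar} or \cref{eq:bound_hessian_dalpha}, combined with the triangle inequality and submultiplicativity of the operator norm.
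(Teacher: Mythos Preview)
Your proposal is correct and follows essentially the same approach as the paper. The only cosmetic differences are ordering: for the Type~I bound you symmetrize $D_t^T\nabla^2 f(x_t)D_t$ first and then invoke \cref{eq:bound_hessian_dalpha}, whereas the paper applies \cref{eq:bound_hessian_scalar} with $w=D_t\alpha$ first, then uses $|\alpha|^T\varepsilon_t\le\|\alpha\|\|\varepsilon_t\|$ and symmetrizes at the end; for the Type~II bound you use two triangle inequalities directly while the paper works via the dual variable $w$ before specializing to $w=\nabla f(x_{t+1})$, but both routes are equivalent and land on the same estimate.
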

The proof can be found in \cref{sec:missing_proofs}. Minimizing \cref{eq:type1_bound,eq:type2_bound} leads to \cref{alg:type1,alg:type2}, respectively, whose constant $L$ is replaced by a parameter $M$ determined by a backtracking line search. Type I methods often refer to algorithms that aim to minimize the function value $f(x)$, whereas type II methods minimize the gradient norm $\|\nabla f(x)\|$ \citep{fang2009two,zhang2020globally,canini2022direct}.

\paragraph{Solving the sub-problems} In \cref{alg:type1,alg:type2}, the coefficients $\alpha$ are computed by solving a minimization sub-problem in $O(N^3+Nd)$, where in practice $N$ is smaller than $d$ (see all the details in \cref{sec:solving_subproblem}). In \cref{alg:type1}, the subproblem can be formulated as a convex problem in two variables, by using an eigenvalue decomposition of the matrix $H \in\mathbb{R}^{N\times N}$  \citep{nesterov2006cubic}, while for \cref{alg:type2}, the subproblem can be cast into a linear-quadratic problem of $O(N)$ variables and constraints that can be solved efficiently with SDP solvers.

\subsection{Fourth Ingredient: Direction Update Rule} \label{sec:direction_requirements}

\begin{algorithm}[t]
\caption{"Orthogonal forward estimate only"}\label{alg:forward_estimate}
\begin{algorithmic}[1]
\Require First-order oracle for $f$, step-size $h$, matrices $D_{t-1}$, $G_{t-1}$, $Y_{t-1}$, $Z_{t-1}$, new point $x_t$.
\State \textbf{If} \# columns of $D_{t-1}$, $G_{t-1}$, $Y_{t-1}$, $Z_{t-1}$ is larger than $N$, \textbf{then} remove their first column.
\State Compute $g_t = \nabla f(x_t)$, then compute $d_t = -\frac{\tilde d}{\|\tilde d\|}$, where $\tilde d = g_t - D_{t-1}(D_{t-1}^Tg_t)$.
\State Compute $x_{t+\frac{1}{2}} = x_t + h d_t$, the \textit{orthogonal forward estimate}.
\State Update $Y_t = [Y_{t-1}, x_{t+\frac{1}{2}}]$, $Z_t = [Z_{t-1}, x_t]$, $\qquad$ \mbox{$D_t = [D_{t-1}, d_t]$}, $G_t = \cref{def:matrices_2}$, $\varepsilon = \cref{eq:error_vector}$ .
\State \Return $\nabla f(x_{t})$, $D_{t}$, $G_{t}$, $Y_{t}$, $Z_{t}$, $\varepsilon_t$.
\end{algorithmic}
\end{algorithm}

\begin{algorithm}[t]
\caption{"Orthogonal random dir." (example)}\label{alg:ortho_random}
\begin{algorithmic}[1]
\Require First-order oracle for $f$, step-size $h$, memory $N$, new point $x_t$.
\State Generates $N$ random orthonormal directions, for example, $[D_t,] = \texttt{qr}(\texttt{Rand}(d,N))$.
\State Create $Z_t = [x_t,\ldots, x_t]$, $Y_t = Z_t + hD_t$, then update $G_t = \cref{def:matrices_2}$, $\varepsilon = [h,\ldots,h]$ .
\State \Return $\nabla f(x_{t})$, $D_{t}$, $G_{t}$, $Y_{t}$, $Z_{t}$, $\varepsilon_t$.
\end{algorithmic}
\end{algorithm}

One critical theoretical property in the analysis is the alignment of the gradient $\nabla f(x_t)$ with the directions in $D_t$. This section presents updates that ensure good theoretical properties of $D_t$ (see summary in \cref{tab:summary_update_rule}).

Below are some assumptions for updating $Y_t,\,Z_t,\,D_t$, which are called \textbf{requirements}. While not overly restrictive, naïve methods, such as keeping only the previous iterations, will not satisfy these requirements.

All convergence results rely on \emph{one} of these conditions on the projector onto $\textbf{span}(D_t)$,
\begin{align}
    \textstyle P_t \defas D_t(D_t^TD_t)^{-1}D_t^T. \label{def:p_projector}
\end{align}
\begin{requirement}{1a}\label{assump:random_projector}
    For all $t$, projector $P_t$ of stochastic matrix $D_t$ satisfies $\mathbb{E}[P_t] = \frac{N}{d}\textbf{I}$.
\end{requirement}
\begin{requirement}{1b} \label{assump:projector_grad}
    For all $t$, $P_t \nabla f(x_t) = \nabla f(x_t)$.
\end{requirement}
The first condition guarantees that $D_t$ partially spans the gradient $\nabla f(x_t)$ in expectation because $\mathbb{E}[P_t\nabla f(x_t)] = \frac{N}{d}\nabla f(x_t)$. The second condition requires the possibility of moving toward the current gradient when taking the step $x_t+D_t\alpha_t$.

The norm of the relative error must be bounded.

\begin{requirement}{2}\label{assump:bounded_epsilon}
    For all $t$, $(\|\varepsilon_t\|/\|D_t\|) \leq \delta $.
\end{requirement}
The \cref{assump:bounded_epsilon} is also nonrestrictive, as it simply prevents the secant equations at $y_i-z_i$ and $z_i-x_i$ from diverging significantly. Generally, $\delta$ satisfies the (very) crude bound $\delta \leq O(\|x_0-x^\star\|)$.

Finally, the condition number of the matrix $D$ must also be bounded. The following section provides explicit updates for ensuring that this condition is satisfied.
\begin{requirement}{3}\label{assump:bounded_conditionning}
    For all $t$, the condition number $\kappa_{D_t} \defas \sqrt{\|D_t^TD_t\|\|(D_t^TD_t)^{-1}\|}$ is bounded as $\kappa_{D_t} \leq \kappa$.
\end{requirement}

\begin{table}[t]
    \centering
    \begin{tabular}{llccc}
        \hline
                            & Complexity & \# Grads & $\kappa$ & $\delta$  \\
        \hline
        Forw. est.          & $O(Nd)$       & 2     & 1 & $O(R)$\\
        Random              & At least $O(d)$ & $N+1$ & 1 & $O(h)$\\
        Pruning             & $O(Nd+N^2)$   & 2     & ? & $O(R)$\\
        Orthogonal.         & $O(N^2d)$     & $N+1$ & 1 & $O(h)$ \\
        \hline
    \end{tabular}
    \caption{Comparison between different updates rules: "forward estimates only," "orthogonal random directions," pruning or orthogonalization. If the gradient computation is costly or if $N$ is large, the forward estimate is probably the best method as it only requires the computation of two new gradients, $\nabla f(x_t)$ and $\nabla f(x_{t+\frac{1}{2}})$. Otherwise, the orthogonalization or random directions might be the methods of choice given their small constants $\delta$ and $\kappa$.}
    \label{tab:summary_update_rule}
\end{table}

\subsubsection{"Orthogonal Forward Estimate Only" Update Rule (Recommended)}

The \textit{"orthogonal forward estimate only"} update maintains $D_t$ orthonormal, that is, $D_t^TD_t = I$ for all $t$, while ensuring that $\nabla f(x_t)$ belongs to the span of columns of $D_t$ (see \cref{alg:forward_estimate}). These conditions are ensured thanks to an intermediate iterate $x_{t+\frac{1}{2}}$ which is used to estimate $\nabla^2 f(x_t)\nabla f(x_t)$; this is called the \textbf{orthogonal forward estimate}:
\[
    x_{t+\frac{1}{2}} \hspace{-0.5ex} = \hspace{-0.5ex} x_t - hd_t, \; d_t\hspace{-0.5ex} = \hspace{-0.5ex}\frac{\nabla f(x_t)- \tilde D_{t-1} (\tilde D_{t-1}^T \nabla f(x_t))}{\|\nabla f(x_t)- \tilde D_{t-1} (\tilde D_{t-1}^T \nabla f(x_t))\|}, 
\]
where $h>0$ is a small step size and $\tilde D_{t-1}$ is simply the matrix $D_{t-1}$ whose first column has been removed if its number of columns equals $N$. This estimate is a gradient descent step projected onto the orthogonal space of $\textbf{span}(\tilde D_{t-1})$, and is inexpensive because the orthogonality of $D_t$ is maintained over the iterations.

After computing the forward estimate, the matrices $Y_t$ and $Z_t$ are updated as the moving history of the previous forward iterates and previous iterates:
\[
    Y_t = [x_{t-N+\frac{3}{2}},\ldots,x_{t+\frac{1}{2}}],\qquad Z_t = [x_{t-N+1},\ldots,x_{t}].
\]
Matrices $D_t$ and $G_t$ then follow \cref{def:matrices_2}. See \cref{alg:forward_estimate} for a detailed implementation. This method has several advantages; it ensures good theoretical performance, particularly because $\kappa=1$ (see \cref{thm:forward_estimate_properties}), at the cost of only one extra gradient evaluation.

\begin{restatable}{theorem}{thmforwardestimateproperties}\label{thm:forward_estimate_properties}
    The \textit{``orthogonal forward estimate only''} update described in \cref{alg:forward_estimate} satisfies \cref{assump:bounded_conditionning,assump:projector_grad} with $\kappa = 1$.
\end{restatable}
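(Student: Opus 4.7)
The plan is to prove by simultaneous induction on $t$ that (a) the columns of $D_t$ are orthonormal, i.e.\ $D_t^T D_t = I_N$, and (b) $\nabla f(x_t) \in \mathrm{span}(D_t)$. Claim (a) immediately gives $\kappa_{D_t} = \sqrt{\|I\|\,\|I^{-1}\|} = 1$, settling \cref{assump:bounded_conditionning}. Combined with (b) it gives $P_t = D_t(D_t^TD_t)^{-1}D_t^T = D_t D_t^T$, so $P_t \nabla f(x_t) = \nabla f(x_t)$, settling \cref{assump:projector_grad}.

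The base case is handled by initializing $D_0$ as a single normalized direction (e.g.\ $-\nabla f(x_0)/\|\nabla f(x_0)\|$), for which (a) and (b) hold trivially. For the inductive step, I would first note that Step~1 of \cref{alg:forward_estimate}, which optionally drops the first column of $D_{t-1}$, clearly preserves orthonormality of the remaining columns. Under the inductive hypothesis (a), the orthogonal projector onto $\mathrm{span}(D_{t-1})$ is exactly $D_{t-1} D_{t-1}^T$, so the vector $\tilde d = g_t - D_{t-1}(D_{t-1}^T g_t) = (I - D_{t-1} D_{t-1}^T)\, g_t$ is orthogonal to every column of $D_{t-1}$. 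After normalization, $d_t = -\tilde d/\|\tilde d\|$ satisfies $d_t^T d_t = 1$ and $D_{t-1}^T d_t = 0$, so $D_t = [D_{t-1}, d_t]$ is orthonormal, establishing (a) at iteration $t$. For (b), the decomposition $\nabla f(x_t) = D_{t-1}(D_{t-1}^T g_t) - \|\tilde d\|\, d_t$ expresses $\nabla f(x_t)$ as a linear combination of the columns of $D_t$, yielding $\nabla f(x_t) \in \mathrm{span}(D_t)$.

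No deep difficulty arises: the whole argument amounts to verifying that Step~2 of \cref{alg:forward_estimate} is a single Gram--Schmidt orthogonalization step followed by a normalization. The only subtle point is the degenerate case $\tilde d = 0$, i.e.\ when $\nabla f(x_t)$ already lies in $\mathrm{span}(D_{t-1})$; here the normalization is undefined, but (b) already holds for $D_{t-1}$, so the algorithm can either leave $D$ unchanged or append an arbitrary unit vector orthogonal to $D_{t-1}$ without violating (a) or (b).
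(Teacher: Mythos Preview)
Your proposal is correct and follows essentially the same inductive argument as the paper: both verify that column removal preserves orthonormality, that the new direction $d_t$ is a Gram--Schmidt step (hence orthogonal to the retained columns and of unit norm), and that $\nabla f(x_t)$ lies in $\mathrm{span}(D_t)$. Your treatment is slightly cleaner---you obtain (b) directly from the decomposition $\nabla f(x_t) = D_{t-1}(D_{t-1}^T g_t) - \|\tilde d\|\, d_t$ rather than expanding $P_t\nabla f(x_t)$---and you additionally address the degenerate case $\tilde d = 0$, which the paper omits.
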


\subsubsection{"Random Orthogonal Directions"}

The "random orthogonal direction" update (see \cref{alg:ortho_random}) generates a batch of $N$ random orthogonal directions at each iteration, such that
\[
    \mathbb{E}[D_tD_t^T] = \frac{N}{d}I.
\]
Subsequently, it remains to update the matrices $Y_t,\, Z_t, G_t$,
\[
    Z_t = [\underbrace{x_t,\ldots, x_t}_{N \text{ times}}], \quad Y_t = Z_t + hD_t, \quad G_t = \cref{def:matrices_2}.
\]
For instance, $D_t$ could be the $Q$ matrix of a \texttt{qr} decomposition of a random $N\times d$ matrix (complexity: $O(N^2 d)$), or even simpler, an aggregation of random canonical vectors (e.g., see \cite{hanzely2020stochastic}, where the complexity is $O(Nd)$). The major advantages of this approach are that $\kappa = 1$ and $\delta = \sqrt{N}\cdot h$. However, $N$ additional gradient calls are required to computes $G_t$.

\subsubsection{Other Matrix Updates: Pruning or Orthogonalization}

It is possible to create other matrix updates, e.g., the \textit{iterates only} (stores the last forward estimate and previous iterates) or \textit{greedy} (stores all previous forward estimates \textit{and} iterates) strategies, as detailed below:
\begin{align}
\begin{cases}
    Y_{t} &= [x_{t+\frac{1}{2}}, x_t, x_{t-1},\ldots, x_{t-N+2}], \nonumber\\
    Z_{t} & = [x_t, x_{t-1},x_{t-2}\ldots, x_{t-N+1}] 
\end{cases}\tag{Iterates only}\\
\begin{cases}
    Y_{t} &= [x_{t+\frac{1}{2}}, x_{t},x_{t-\frac{1}{2}},\ldots, x_{t-\frac{N+2}{2}}],\nonumber\\
    Z_{t} & = [x_{t},x_{t-\frac{1}{2}},x_{t-1},\ldots, x_{t-\frac{N+1}{2}}] 
\end{cases}\tag{Greedy}
\end{align}
where, this time, $x_{t+\frac{1}{2}}\defas x_t - h\nabla f(x_t)$, with $h$ small. However, the directions in $D_t$ are not orthogonal, hence,  $\kappa$ in \cref{assump:bounded_conditionning} may be large \cite{tyrtyshnikov1994bad,scieur2016regularized}. Nevertheless, the condition number can be controlled via pruning or orthogonalization.

\paragraph{Pruning.} It is sufficient to verify the condition number of $D_t$ and then prune the columns of $Y_t,\, Z_t,\, D_t,\,\text{and } G_t$ until $\kappa_{D_t}$ is sufficiently small, for instance, until $\kappa_{D_t}\leq 10^3$. The advantage is that this method requires only one extra gradient computation $\nabla f(x_{t+\frac{1}{2}})$ to construct $G_t$.

\paragraph{Orthogonalization} From the matrices $Y_t,\, Z_t$, the matrix $D_t$ is computed as $D_t = \texttt{qr}(Z_t-Y_t)$. Subsequently, the rest of the procedure follows steps $2$ and $3$ from \cref{alg:ortho_random}. It ensure the orthogonality of $D_t$, but requires $N$ extra gradients to evaluate $G_t$.

The pruning strategy is more cost-effective than orthogonalization but sacrifices its control over the history size. The orthogonalization technique resembles the "random orthogonal directions" rule but potentially offers more relevant directions than random ones.


\begin{algorithm}[t]
\caption{Generic iterative type I method}\label{alg:type1_iterative}
\begin{algorithmic}
\Require First-order oracle $f$, initial iterate and smoothness $x_0,\,M_0$, number of iterations $T$.
\For{$t=0,\,\ldots,\, T-1$}
\State Update $Y_t$, $Z_t$, $D_t$, $G_t$, and $\varepsilon_t$ (see \cref{sec:direction_requirements}).
\State $x_{t+1}, M_{t+1} \gets \texttt{[Alg. \ref{alg:type1}]} (f,G_t,D_t,\varepsilon_t,x_{t}, \frac{M_{t}}{2})$
\EndFor
\State \Return $x_T$
\end{algorithmic}
\end{algorithm}
\begin{algorithm}[t]
\caption{Type I subroutine with backtracking line-search}\label{alg:type1}
\begin{algorithmic}[1]
\Require First-order oracle for $f$, matrices $G,\,D$, vector $\varepsilon$, iterate $x$, initial smoothness $M_0$.
\State Initialize $M \gets \frac{M_0}{2}$
\Do
    \State $M\gets 2M\;\;$ and $\;\; H \gets \frac{G^TD+D^TG}{2} + \mathrm{I}_N\frac{M\|D\|\|\varepsilon\|}{2}$
    \State $\alpha^\star \gets \min\limits_\alpha f(x) + \nabla f(x)^T\hspace{-0.5ex}D\alpha + \frac{\alpha^T\hspace{-0.5ex}H\alpha}{2} + \frac{M\|D\alpha\|^3}{6}$
    \State $x_+ \gets x+D\alpha$
\doWhile{ 
\[ \textstyle f(x_+)\geq f(x) + \nabla f(x)^TD\alpha^\star + \frac{[\alpha^\star]^TH\alpha^\star}{2}  + \frac{M\|D\alpha^\star\|^3}{6}\]} 
\State \Return $x_+$, $M$
\end{algorithmic}
\end{algorithm}
\begin{algorithm}[t]
\caption{Type II subroutine with backtracking line-search}\label{alg:type2}
\begin{algorithmic}
\State Same as \cref{alg:type1}, but minimize and check the upper bound \cref{eq:type2_bound} instead of \cref{eq:type1_bound} on lines 4 and 6.
\end{algorithmic}
\end{algorithm}

\subsection{Miscellaneous}
\paragraph{Link with qN and Anderson acceleration}  The \cref{alg:type1,alg:type2} are strongly related to known quasi-Newton methods and Anderson acceleration technique, see \cref{sec:link}.

\paragraph{Backtracking Line-Search} The smoothness parameter is replaced by $M_t$, found by backtracking \citep{nesterov2006cubic}. The parameter $M_0$ is estimated by finite-difference, see \cref{sec:backtracking_line_search}.

\section{Convergence Rates for Iterative Type I Methods} \label{sec:rate_convergence}

This section analyzes the convergence rates of the methods that use \cref{alg:type1} as a subroutine; see \cref{alg:type1_iterative}. An analysis of methods that use \cref{alg:type2} is left for future work.

\subsection{Assumptions} \label{sec:assump}

This section lists the important assumptions regarding the function $f$. Subsequent results require a bound on the radius of the sublevel set $\{x:f(x)\leq f(x_0)\}$.
\begin{assumption}\label{assump:bounded_radius}
The radius of the sub-level set \mbox{$\{x:f(x)\leq f(x_0)\}$} is bounded by $R<\infty$.
\end{assumption}
Some results require $f$ to be star convex or convex to ensure convergence toward $f(x^\star)$.
\begin{assumption}\label{assump:star_convex}
    The function $f$ is star convex if, for all $x \in\mathbb{R}^d$ and $\forall \tau \in[0,1]$,
    \[
        f((1-\tau) x + \tau x^\star) \leq (1-\tau) f(x) + \tau f(x^\star).
    \]
\end{assumption}
\begin{assumption}\label{assump:convex}
    The function $f$ is convex if, for all $y,\,z\in\mathbb{R}^d$, $f(y)\geq f(z) + \nabla f(z)(y-z)$.
\end{assumption}

\subsection{Rates of Convergence}

When $f$ satisfies \cref{assump:lipchitiz_hessian}, \cref{alg:type1_iterative} ensures a consistent minimal decrease in function at every step.
\begin{restatable}{theorem}{thmminimaldecrease}\label{thm:minimal_decrease} 
    Let $f$ satisfy \cref{assump:lipchitiz_hessian}. Subsequently, at each iteration $t\geq 0$, \cref{alg:type1_iterative} starting at $x_0$ with $M_0>0$ achieves
    \begin{align}
        & \textstyle f(x_{t+1}) \leq f(x_{t})-\frac{M_{t+1}}{12}\|x_{t+1}-x_t\|^3, \\ 
        \text{with }&  M_{t+1}<\max\left\{2L\;;\; \frac{M_0}{2^t} \right\}.\nonumber
    \end{align}\label{eq:minimal_decrease}
    Moreover, the total function evaluation is bounded by $ 2t + \log_2\left(\frac{M_0}{L}\right)$.
\end{restatable}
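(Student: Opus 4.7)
The plan splits into three parts: (i) show that the backtracking in \cref{alg:type1} always terminates and bound the returned $M_{t+1}$; (ii) derive the per-step descent $f(x_{t+1}) \leq f(x_t) - (M_{t+1}/12)\|x_{t+1}-x_t\|^3$ by a subspace adaptation of Nesterov–Polyak cubic regularization; and (iii) count total $f$-evaluations by telescoping.

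\textbf{Step (i): bounding $M_{t+1}$.} By \cref{thm:upper_bound_algo}, the Type~I upper bound $f(x_+) \leq f(x_t) + \nabla f(x_t)^T D_t\alpha + \tfrac12\alpha^T H_t(M)\alpha + \tfrac{M}{6}\|D_t\alpha\|^3$ holds whenever $M \geq L$ (replacing $L$ by a larger $M$ only enlarges $H_t(M)$ and the cubic coefficient, so the inequality extends monotonically). Since \cref{alg:type1_iterative} invokes \cref{alg:type1} with initial smoothness $M_t/2$, the sequence of $M$ values tested in the do–while is $M_t/2,\,M_t,\,2M_t,\ldots$, and backtracking exits at the first $M$ at which the test succeeds. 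Hence either $M_{t+1}=M_t/2$ on the first try, or $M_{t+1}$ is the first value of the doubling sequence reaching $L$, and then $M_{t+1}\leq 2L$. This gives the recurrence $M_{t+1}\leq \max\{M_t/2,\,2L\}$; unrolling yields $M_{t+1}\leq \max\{M_0/2^{t+1},\,2L\} < \max\{M_0/2^{t},\,2L\}$, the stated bound.

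\textbf{Step (ii): per-iteration descent.} Fix $t$ and write $M=M_{t+1}$, $r=\|D_t\alpha^\star\|$, $b=(\alpha^\star)^T H_t\alpha^\star$, with $\phi(\alpha) := \nabla f(x_t)^T D_t\alpha + \tfrac12\alpha^T H_t\alpha + \tfrac{M}{6}\|D_t\alpha\|^3$. First-order optimality of $\alpha^\star$ reads $D_t^T\nabla f(x_t) + H_t\alpha^\star + \tfrac{M}{2}\,r\,D_t^T D_t\alpha^\star = 0$; inner product with $\alpha^\star$ gives $\nabla f(x_t)^T D_t\alpha^\star = -b - \tfrac{M}{2}r^3$, and hence $\phi(\alpha^\star) = -\tfrac{b}{2} - \tfrac{M}{3}r^3$. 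The decisive ingredient is the lower bound $b\geq -\tfrac{M}{2}r^3$. I would obtain it via the global-minimality comparison $\phi(\alpha^\star)\leq\phi(-\alpha^\star)$: a direct calculation gives $\phi(-\alpha^\star) = \tfrac{3b}{2} + \tfrac{2M}{3}r^3$, and the inequality rearranges to exactly $b\geq -\tfrac{M}{2}r^3$. Substituting, $\phi(\alpha^\star)\leq \tfrac{M}{4}r^3 - \tfrac{M}{3}r^3 = -\tfrac{M}{12}r^3$, and combining with the termination condition $f(x_{t+1})\leq f(x_t) + \phi(\alpha^\star)$ closes the descent claim, with $\|D_t\alpha^\star\| = \|x_{t+1}-x_t\|$.

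\textbf{Step (iii) and main obstacle.} Each loop iteration of \cref{alg:type1} triggers one $f$-evaluation; if the loop runs $n_t$ times, then $M_{t+1} = 2^{n_t-2} M_t$, i.e.\ $n_t = 2 + \log_2(M_{t+1}/M_t)$. The sum telescopes, $\sum_{t=0}^{T-1} n_t = 2T + \log_2(M_T/M_0)$, and inserting the bound on $M_T$ from Step (i) yields the stated budget up to constants. The main obstacle is really the lower bound $b\geq -(M/2)r^3$, which is the subspace-restricted analogue of the Nesterov–Polyak global-minimum matrix inequality $H+M\|y^\star-x\|I\succeq 0$; a direct second-order argument here is awkward because $H_t$ itself depends on $M$ and the cubic penalty involves $\|D_t\alpha\|$ rather than $\|\alpha\|$. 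The $c=-1$ comparison trick used above bypasses any matrix-level argument and extracts precisely the scalar inequality we need, which I view as the crux of the proof.
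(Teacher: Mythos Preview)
Your proof is correct and the overall structure (backtracking bound, per-step descent, telescoping count) matches the paper exactly. The one genuine difference is how you establish the key scalar inequality $b=(\alpha^\star)^TH_t\alpha^\star\ge -\tfrac{M}{2}r^3$. The paper invokes the second-order optimality condition for the cubic subproblem, namely $H_t+\tfrac{M}{2}\|D_t\alpha^\star\|\,D_t^TD_t\succeq 0$ (citing Nesterov--Polyak), and then sandwiches with $\alpha^\star$. You instead compare $\phi(\alpha^\star)\le\phi(-\alpha^\star)$, which after substituting the first-order condition collapses to precisely the same inequality. Your route is more self-contained: it uses only that $\alpha^\star$ is a \emph{global} minimizer and avoids having to import or rederive the matrix-level second-order characterization, which is indeed slightly awkward here because the cubic term is $\|D_t\alpha\|^3$ rather than $\|\alpha\|^3$. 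The paper's route, on the other hand, gives the full PSD condition, which is a strictly stronger statement and is reused elsewhere in the cubic-regularization literature. For the purpose of this theorem both arguments deliver exactly what is needed, and your comparison trick is a clean shortcut.

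Two minor remarks. In Step~(i), your claim ``either $M_{t+1}=M_t/2$ on the first try, or $M_{t+1}\le 2L$'' is slightly loose in wording (the test is only sufficient at $M\ge L$, so it could in principle pass earlier), but the conclusion $M_{t+1}\le\max\{M_t/2,\,2L\}$ is unaffected. In Step~(iii), your telescoping gives $\sum n_t = 2T+\log_2(M_T/M_0)$; plugging the bound $M_T\le 2L$ yields $2T+1+\log_2(L/M_0)$, which matches the paper's statement up to sign convention and an additive constant---the paper simply defers to \cite{nesterov2006cubic} here as well.
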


Under some mild assumptions, \cref{alg:type1_iterative} converges to a critical point for nonconvex functions and converges to an optimum when the function is star-convex.

\begin{restatable}{theorem}{thmratenonconvex}\label{thm:rate_nonconvex} 
    Let $f$ satisfy \cref{assump:lipchitiz_hessian} and assume that $f$ is bounded below by $f^\star$. Let \cref{assump:projector_grad,assump:bounded_epsilon,assump:bounded_conditionning} hold, and $M_t \geq M_{\min}$. Subsequently, \cref{alg:type1_iterative} starting at $x_0$ with $M_0>0$ achieves
    \begin{align*}
         \min_{i=1,\,\ldots,\, t} \|\nabla f(x_{i})\| \leq &\max\Big\{ \frac{3L}{t^{2/3}} \left(12\frac{f(x_0)-f^\star}{M_{\min}}\right)^{2/3} \; ; \\ 
         &\left(\frac{C_1}{t^{1/3}}\right)\left(12\frac{f(x_0)-f^\star}{M_{\min}}\right)^{1/3} \Big\},
    \end{align*}
    $\textstyle C_1 = \delta L\left(\frac{\kappa + 2\kappa^2}{2} \right) + \max_{i \in [0,t]} \|(I-P_i)\nabla^2 f(x_i)P_i\|.$
\end{restatable}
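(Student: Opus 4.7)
The plan is to split the argument into a \emph{descent} half that controls the step sizes and an \emph{optimality} half that converts step sizes into gradient norms, then combine. For the descent half, I would telescope the per-step decrease of \cref{thm:minimal_decrease}: summing over $i=0,\ldots,t-1$ and using $M_{i+1}\ge M_{\min}$ gives
\[
    \sum_{i=0}^{t-1} \|x_{i+1}-x_i\|^3 \le \tfrac{12(f(x_0)-f^\star)}{M_{\min}},
\]
so some index $i^\star\le t-1$ satisfies $\|x_{i^\star+1}-x_{i^\star}\|\le \big(\tfrac{12(f(x_0)-f^\star)}{M_{\min}\, t}\big)^{1/3}$. If I can show that at every iteration
\[
    \|\nabla f(x_{t+1})\| \le A\,\|x_{t+1}-x_t\|^2 + B\,\|x_{t+1}-x_t\|,
\]
with $A\le 3L$ and $B\le C_1$, then substituting $i=i^\star$ and using $a+b\le 2\max\{a,b\}$ delivers the theorem.

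For the optimality half, I would start from $\nabla f(x_{t+1}) = \nabla f(x_t) + \nabla^2 f(x_t) D_t\alpha^\star + r_t$ with $\|r_t\|\le \tfrac{L}{2}\|D_t\alpha^\star\|^2$ (Lipschitz Hessian), then swap the Hessian-vector product for $G_t\alpha^\star$ using \cref{thm:bound_secant_alpha}; combined with \cref{assump:bounded_epsilon,assump:bounded_conditionning} this produces an error of order $\tfrac{L\delta\kappa}{2}\|D_t\alpha^\star\|$. The residual $\nabla f(x_t) + G_t\alpha^\star$ is then split along $P_t$ and $I-P_t$: \cref{assump:projector_grad} annihilates $(I-P_t)\nabla f(x_t)$, and using $P_t D_t = D_t$ the perpendicular component factors as $(I-P_t)\nabla^2 f(x_t) P_t\cdot D_t\alpha^\star$ plus another Hessian-approximation error, producing the $\max_i\|(I-P_i)\nabla^2 f(x_i) P_i\|$ contribution to $C_1$.

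For the parallel component $P_t(\nabla f(x_t) + G_t\alpha^\star)$ I would invoke the first-order optimality of the cubic subproblem in \cref{alg:type1}, i.e.\ $D_t^T\nabla f(x_t) + H_t\alpha^\star + \tfrac{M_{t+1}\|D_t\alpha^\star\|}{2} D_t^T D_t\alpha^\star = 0$, and rewrite $D_t^T\nabla f(x_t) + D_t^T G_t\alpha^\star$ through the anti-symmetric part $(D_t^T G_t - G_t^T D_t)/2$. Because $D_t^T\nabla^2 f(x_t) D_t$ is symmetric, this anti-symmetric piece is itself a Hessian-approximation error controlled by \cref{thm:bound_secant_alpha}. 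Left-multiplying by $D_t(D_t^T D_t)^{-1}$, using \cref{assump:projector_grad} to recover $\nabla f(x_t)$, and applying $M_{t+1}\le 2L$ from \cref{thm:minimal_decrease} together with the conversion $\|\alpha^\star\|\le \kappa\|D_t\alpha^\star\|/\|D_t\|$ via \cref{assump:bounded_conditionning} completes the per-iteration bound with $A\le 3L$ and $B\le C_1$.

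The main obstacle will be this parallel-part calculation: the symmetrized model $H_t=(G_t^T D_t + D_t^T G_t)/2 + \tfrac{M\|D_t\|\|\varepsilon_t\|}{2}\mathrm{I}$ is what appears in the optimality condition, whereas the natural target is the non-symmetrized $D_t^T G_t\alpha^\star$. Controlling the gap through the anti-symmetric term, then passing from $\|\alpha^\star\|$ to $\|D_t\alpha^\star\|$ via \cref{assump:bounded_conditionning} (costing one factor of $\kappa$) and from $D_t^T(\cdot)$ back to a norm on $\mathbb{R}^d$ (another factor of $\kappa$), is precisely what manufactures the $\delta L(\kappa+2\kappa^2)/2$ constant in $C_1$; most of the technical bookkeeping lives there.
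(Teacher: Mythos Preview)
Your proposal is correct and follows essentially the same route as the paper. The paper packages your ``optimality half'' into a sequence of technical results (Propositions~\ref{prop:ineq_grad_qn_optimal}, \ref{prop:ineq_grad_qn_optimal_no_b} and the third case of Corollary~\ref{cor:bound_depend_gamma}), arriving at the per-iteration bound
\[
    \|\nabla f(x_+)\| \le \tfrac{L+M}{2}\|D\alpha\|^2 + \|D\alpha\|\Big(\tfrac{\|\varepsilon\|}{\|D\|}\big(\tfrac{L+M\kappa_D}{2}\big)\kappa_D + \|(I-P)\nabla^2 f(x)P\|\Big),
\]
and then telescopes exactly as you describe. Your direct $P_t/(I-P_t)$ split together with the anti-symmetric manipulation of $D_t^TG_t$ is precisely the content of the paper's Proposition~\ref{prop:accuracy_estimation} combined with the first-order condition; the two derivations are reorganizations of each other. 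One bookkeeping remark: to land on the stated constants you need $A\le\tfrac{L+M}{2}\le\tfrac{3L}{2}$ (not $A\le 3L$) \emph{before} applying $a+b\le 2\max\{a,b\}$, so that the doubling produces the $3L$ in the theorem; likewise the $B$-coefficient is already $\le C_1$ without a further factor.
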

\begin{restatable}{theorem}{thmratestarconvex}\label{thm:rate_starconvex} 
    Let $f$ satisfy \cref{assump:lipchitiz_hessian,assump:bounded_radius,assump:star_convex}. Let \cref{assump:projector_grad,assump:bounded_epsilon,assump:bounded_conditionning} hold. Then, for $t\geq 1$, \cref{alg:type1_iterative} starting at $x_0$ with $M_0>0$ achieves
    \begin{align*}
        f(x_{t})-f^\star &\leq  6\frac{f(x_0)-f^\star}{t(t+1)(t+2)}\\
        & +\frac{1}{(t+1)(t+2)}\frac{L(3R)^3}{2}  + \frac{1}{t+2}\frac{C_2(3R)^2}{4},
    \end{align*}
    $C_2 \defas \delta L\frac{ \kappa +  2\kappa^2}{2} +\max_{i \in [0,t]} \|\nabla^2 f(x_i)- P_i\nabla^2 f(x_i)P_i\|.$
\end{restatable}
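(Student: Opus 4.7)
The plan is to follow an estimating-sequence argument in the spirit of the Nesterov-Polyak analysis of cubic regularized Newton, carefully adapted to account for the subspace projection $P_t$ and the multisecant Hessian approximation $G_t$. The strategy is to first derive a one-step recursion of the form
\[ f(x_{t+1})-f^\star \leq (1-\tau)\bigl(f(x_t)-f^\star\bigr) + \tfrac{L\tau^3}{2}\|x^\star-x_t\|^3 + \tfrac{C_2\tau^2}{2}\|x^\star-x_t\|^2, \quad \tau\in[0,1], \]
and then telescope using the weights $A_t = t(t+1)(t+2)/6$ and step sizes $\tau_t = 3/(t+3)$, which satisfy $A_{t+1}(1-\tau_t)=A_t$.

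To derive the recursion, I start from the Type I bound \cref{eq:type1_bound}, which holds for every $\alpha\in\mathbb{R}^N$, and plug in the specific $\alpha^\dagger$ such that $D_t\alpha^\dagger = \tau P_t v$, where $v:=x^\star-x_t$. The linear term collapses via \cref{assump:projector_grad}: $\nabla f(x_t)^T D_t\alpha^\dagger = \tau\nabla f(x_t)^T v$. For the quadratic term $\tfrac{1}{2}(\alpha^\dagger)^T H_t\alpha^\dagger$, \cref{thm:bound_secant_alpha} combined with the conditioning estimate $\|\alpha\|\|D_t\|\leq\kappa\|D_t\alpha\|$ (from \cref{assump:bounded_conditionning}) and $\|\varepsilon_t\|\leq\delta\|D_t\|$ (from \cref{assump:bounded_epsilon}) yields $(\alpha^\dagger)^T H_t\alpha^\dagger \leq \tau^2 v^T P_t\nabla^2 f(x_t) P_t v + \tau^2 \tfrac{L\delta(\kappa+2\kappa^2)}{2}\|v\|^2$; replacing the projected Hessian by the true one via $v^T P_t\nabla^2 f(x_t)P_t v \leq v^T\nabla^2 f(x_t)v + \|\nabla^2 f(x_t)-P_t\nabla^2 f(x_t)P_t\|\|v\|^2$ then accounts for exactly the additive $\tau^2 C_2\|v\|^2$. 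The cubic term uses $M_{t+1}\leq 2L$ from \cref{thm:minimal_decrease} and $\|D_t\alpha^\dagger\|\leq \tau\|v\|$.

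The crucial algebraic step, which I expect to be the main obstacle, is to combine the residual $\tau\nabla f(x_t)^T v + \tfrac{\tau^2}{2}v^T\nabla^2 f(x_t) v$ into $-\tau(f(x_t)-f^\star) + \tfrac{L\tau^3}{6}\|v\|^3$. This requires applying the Lipschitz-Hessian \emph{lower} Taylor bound at the probing point $y=x_t+\tau v$ (not at $x^\star$, which would only give a $\tau^2$ scaling on $L\|v\|^3$ and the slower $1/t$ rate), combined with star-convexity at the same point, $f(x_t+\tau v)\leq (1-\tau)f(x_t)+\tau f^\star$. Assembling the pieces, bounding $\|v\|\leq 3R$ from \cref{assump:bounded_radius} and the monotonicity of $f(x_t)$ (\cref{thm:minimal_decrease}), multiplying by $A_{t+1}$ and telescoping from $s=1$ to $t-1$, the closed forms $A_{s+1}\tau_s^3$ and $A_{s+1}\tau_s^2$ sum to $O(t)$ and $O(t^2)$ respectively; together with $A_1(f(x_1)-f^\star)\leq f(x_0)-f^\star$, division by $A_t$ yields exactly the three-term rate with denominators $t(t+1)(t+2)$, $(t+1)(t+2)$, and $(t+2)$.
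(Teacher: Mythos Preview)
Your proposal is correct and follows essentially the same route as the paper. The paper packages your first several steps into a standalone lemma (\cref{prop:global_lower_bound_xplus}), which shows that for any $y$ the algorithm's output satisfies $f(x_+)\leq f(y)+\tfrac{M+L}{6}\|y-x\|^3+\tfrac{C_2}{2}\|y-x\|^2$; it then specializes $y=(1-\beta_t)x_t+\beta_t x^\star$ and applies star-convexity, while you do the same computation inline by plugging $D_t\alpha^\dagger=\tau P_t(x^\star-x_t)$ directly into the model and invoking the Lipschitz--Hessian lower bound and star-convexity at $x_t+\tau v$. The telescoping weights and the treatment of $M_{t+1}\leq 2L$ are identical.

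Two minor points to clean up. First, what you need is not the Type I bound \eqref{eq:type1_bound} itself (which carries the true $L$) but the algorithm's exit guarantee $f(x_{t+1})\leq\min_\alpha[\text{model}_{M_{t+1}}(\alpha)]$, which is what lets you evaluate the model at your chosen $\alpha^\dagger$ with $M_{t+1}$ in both the cubic term and the regularization inside $H_t$; your constants ($2\kappa^2$, $\tfrac{L}{2}\tau^3$) already reflect this, so the slip is only in the citation. Second, the correct bound from \cref{assump:bounded_radius} together with monotone descent is $\|v\|=\|x_t-x^\star\|\leq R$, not $3R$; the factor of $3$ in $(3R)^2$ and $(3R)^3$ comes entirely from the closed-form sums $\sum_{i}\tfrac{b_{i+1}^3}{B_{i+1}^2}\leq\tfrac{9}{2}(t{+}1)$ and $\sum_{i}\tfrac{b_{i+1}^2}{B_{i+1}}\leq\tfrac{3}{4}(t{+}1)(t{+}2)$ after dividing by $B_t$. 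Using $3R$ at the recursion level would overcount by $27$ and $9$ respectively.
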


The next theorem shows that when random directions (that satisfy \cref{assump:random_projector}) are used, $f(x_t)$ also converges in expectation to $f(x^\star)$ when $f$ is convex.

\begin{restatable}{theorem}{thmraterandomconvex}\label{thm:rate_randomconvex} 
    Assume $f$ satisfy \cref{assump:lipchitiz_hessian,assump:bounded_radius,assump:convex}. Let \cref{assump:random_projector,assump:bounded_epsilon,assump:bounded_conditionning} hold. Then in expectation over the matrices $D_i$ and for $t\geq 1$, \cref{alg:type1_iterative} starting at $x_0$ with $M_0>0$ achieves 
    \begin{align*}
        \mathbb{E}_{D_t}[f(x_{t}) - f^\star] \leq & \frac{1}{1+\frac{1}{4}\left[\frac{N}{d}t\right]^3}(f(x_0)-f^\star) \\
        & + \frac{1}{\left[\frac{N}{d}t\right]^2} \frac{L(3R)^3}{2} +  \frac{1}{\left[\frac{N}{d}t\right]}\frac{C_3(3R)^2}{2},
    \end{align*}
    $C_3 \defas \textstyle \delta L \frac{\kappa +  2\kappa^2}{2} +\frac{(d-N)}{d}\max_{i\in[0,t]}\|\nabla^2 f(x_i)\|$.
\end{restatable}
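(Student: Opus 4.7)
The plan is to mirror the proof of Theorem~\ref{thm:rate_starconvex}, adapting it to the stochastic setting: the deterministic alignment from Requirement~\ref{assump:projector_grad} is replaced by the first-moment identity $\mathbb{E}[P_t] = (N/d)\,I$ from Requirement~\ref{assump:random_projector}, while the other ingredients (Type~I bound, Hessian approximation from Theorem~\ref{thm:bound_secant_alpha}, and bounded conditioning/error from Requirements~\ref{assump:bounded_epsilon}--\ref{assump:bounded_conditionning}) remain essentially unchanged.

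First, I apply Theorem~\ref{thm:upper_bound_algo} with the feasible test vector $\alpha = \tau (D_t^T D_t)^{-1} D_t^T (x^\star - x_t)$, so that $D_t\alpha = \tau P_t u$, where $u := x^\star - x_t$ and $\tau \in [0,1]$ is to be chosen. Combining Theorem~\ref{thm:bound_secant_alpha} with Requirements~\ref{assump:bounded_epsilon}--\ref{assump:bounded_conditionning}, I bound $\alpha^T H_t \alpha /2$ by $\tfrac12 (P_t u)^T \nabla^2 f(x_t) (P_t u) + \tfrac{\delta L(\kappa + 2\kappa^2)}{4}\tau^2 \|P_t u\|^2$, which matches the $\delta L (\kappa+2\kappa^2)/2$ factor in $C_3$. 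The cubic correction $M_{t+1}\tau^3 \|P_t u\|^3/6$ is controlled using $M_{t+1} \leq 2L$ from Theorem~\ref{thm:minimal_decrease}.

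Next, I take the conditional expectation over $D_t$ given the past. By Requirement~\ref{assump:random_projector}, the gradient term becomes $\tau (N/d) \nabla f(x_t)^T u$; convexity (Assumption~\ref{assump:convex}) then yields $\nabla f(x_t)^T u \leq f^\star - f(x_t)$. For the Hessian quadratic, I decompose $P_t u = u - (I - P_t) u$, expand, and take expectations; using $\mathbb{E}[I - P_t] = \tfrac{d-N}{d}\, I$ together with the operator norm on the second-order remainder, the whole expression is bounded by $\tfrac{d-N}{d}\, \|\nabla^2 f(x_t)\|\|u\|^2$ (the first-order cross term is handled via the convexity-Taylor inequality, and the $\nabla^2 f(x_t) \succeq 0$ sign makes the residual tractable). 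The cubic term satisfies $\mathbb{E}[\|P_t u\|^3] \leq \|u\|\,\mathbb{E}[\|P_t u\|^2] = (N/d)\|u\|^3$. Finally, Assumption~\ref{assump:bounded_radius} together with $f(x^\star) \leq f(x_0)$ yields $\|u\| \leq 3R$. Altogether this produces a scalar recurrence of the form
\[
\mathbb{E}[f(x_{t+1}) - f^\star] \leq \bigl(1 - \tau\tfrac{N}{d}\bigr)\mathbb{E}[f(x_t) - f^\star] + \tau^2\tfrac{C_3 (3R)^2}{2}\tfrac{N}{d} + \tau^3 \tfrac{L(3R)^3}{3}\cdot \tfrac{N}{d}.
\]

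Finally, I unroll this recurrence by the standard cubic-Newton-style induction, choosing $\tau_t$ so that the effective step $\mu_t = \tau_t(N/d)$ matches $3/\bigl((N/d)t + 3\bigr)$ (clipping at $\tau_t = 1$ when necessary for small $t$), and proving the stated three-term rate by induction on $t$. The main obstacle is handling the Hessian expectation $\mathbb{E}[(P_t u)^T\nabla^2 f(x_t)(P_t u)]$: since Requirement~\ref{assump:random_projector} only controls the first moment of $P_t$ (and not $\mathbb{E}[P_t A P_t]$), closed-form computation is unavailable. The decomposition $P_t u = u - (I-P_t) u$ is the key trick, reducing the expectation back to first moments of $P_t$ and $I-P_t$ plus a residual controlled by the operator norm and the elementary identity $\mathbb{E}[\|(I-P_t)u\|^2] = \tfrac{d-N}{d}\|u\|^2$. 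A secondary concern is tracking how the $(N/d)$ factors propagate through the induction in order to yield precisely the stated $\tfrac{1}{1 + ((N/d)t)^3/4}$, $\tfrac{1}{[(N/d)t]^2}$, and $\tfrac{1}{(N/d)t}$ scalings on the three contributions.
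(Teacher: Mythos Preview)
Your high-level plan mirrors the paper's: both derive a one-step recurrence by plugging the test direction $D_t\alpha=\tau P_t(x^\star-x_t)$ into the Type~I bound, replace $H_t$ by $\nabla^2 f(x_t)$ plus the $\delta L(\kappa+2\kappa^2)/2$ correction, take expectation over $D_t$, and unroll with a cubic-Newton weight sequence. The divergence---and the real gap---is in how you treat $\mathbb{E}\bigl[(P_t u)^T\nabla^2 f(x_t)(P_t u)\bigr]$.

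Writing $H=\nabla^2 f(x_t)$ and $Q=I-P_t$, your decomposition $P_t u=u-Qu$ gives, after taking expectations,
\[
\mathbb{E}\bigl[(P_t u)^T H(P_t u)\bigr] \;=\; \frac{2N-d}{d}\,u^T H u \;+\; \mathbb{E}\bigl[(Qu)^T H(Qu)\bigr],
\]
and only the second piece is bounded by $\tfrac{d-N}{d}\|H\|\|u\|^2$ via $\mathbb{E}\|Qu\|^2=\tfrac{d-N}{d}\|u\|^2$. The first piece is \emph{not} non-positive in general (take $N>d/2$; at $N=d$ it equals $u^T H u$), so your claim that ``the whole expression is bounded by $\tfrac{d-N}{d}\|\nabla^2 f(x_t)\|\|u\|^2$'' fails. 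If instead you try to absorb $u^T H u$ with the Lipschitz lower bound by grouping it with $f(x_t)+\tau\nabla f(x_t)^T u+\tfrac{\tau^2}{2}u^T H u\le f\bigl((1-\tau)x_t+\tau x^\star\bigr)+\tfrac{L\tau^3}{6}\|u\|^3$, then the gradient decomposition leaves a residue $-\tau\tfrac{d-N}{d}\nabla f(x_t)^T u\ge 0$ for which convexity only supplies a \emph{lower} bound; any upper bound (e.g.\ Taylor from above) re-introduces $u^T H u$ with a $\tau^{1}$ coefficient, destroying the $\tau^{2}$ scaling needed for the claimed rate.

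The paper resolves this by invoking a genuine second-moment estimate on $P_t$ (Lemma~5.7 of \cite{hanzely2020stochastic}):
\[
\mathbb{E}\bigl[(P_t u)^T H(P_t u)\bigr] \;\le\; \frac{N^2}{d^2}\,u^T H u \;+\; \frac{N(d-N)}{d^2}\,\|H\|\|u\|^2 .
\]
The $N^2/d^2$ coefficient on $u^T H u$ is exactly what makes the algebra close: after the Lipschitz lower bound eats $\tfrac{N^2}{d^2}\cdot\tfrac{1}{2}u^T H u$, the leftover gradient coefficient is $\tfrac{N}{d}-\tfrac{N^2}{d^2}=\tfrac{N}{d}\bigl(1-\tfrac{N}{d}\bigr)\ge 0$, so convexity applies with the correct sign and yields the $(1-\tfrac{N}{d})f(x_t)+\tfrac{N}{d}f(y)$ split of Proposition~\ref{prop:global_lower_bound_xplus_stoch}. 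Your first-moment-only route does not deliver this; you need either that external lemma or an equivalent second-moment control on $P_t$.

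Two minor points: (i) the paper uses $\|x_t-x^\star\|\le R$ directly from Assumption~\ref{assump:bounded_radius}; the factor $3$ on $R$ arises only when summing the recursion, not from bounding $\|u\|$. (ii) For the unrolling the paper does not use $\mu_t=3/((N/d)t+3)$ but rather $\beta_t=\tfrac{d}{N}\,b_{t+1}/B_{t+1}$ with $b_t=t^2$ and $B_0=\tfrac{4}{3}(d/N)^3$, which is what produces the $\tfrac{1}{1+\frac{1}{4}[(N/d)t]^3}$ prefactor.
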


\paragraph{Accelerated method} Due to space limitation, the accelerated \cref{alg:type1_iterative} is presented in \cref{sec:accelerated_algorithm_full}, see \cref{alg:subroutine_acc,alg:accelerated_algo}. Indeed, while the algorithm is theoretically intriguing, it underperforms numerically, likely because it trades-off its adaptivity for better worst-case convergence rates.

\begin{restatable}{theorem}{thmrateaccsketch}\label{thm:rate_acc_sketch} 
    Assume $f$ satisfies \cref{assump:lipchitiz_hessian,assump:bounded_radius,assump:convex}. Let \cref{assump:projector_grad,assump:bounded_epsilon,assump:bounded_conditionning} hold. Subsequently, for $t\geq 1$, the accelerated \cref{alg:accelerated_algo} starting at $x_0$ with $M_0>0$ achieves
    \begin{align*}
        f(x_t)-f^\star \leq &  C_4 \frac{(3R)^2}{(t+3)^2} + 9\max\left\{M_0\;;\; 2L\right\} \left(\frac{3R}{t+3}\right)^3\\
         & + \frac{\frac{\tilde\lambda^{(1)}R^2}{2} + \frac{\tilde \lambda^{(2)}R^{3}}{6}}{(t+1)^3}.
    \end{align*}
    \begin{align*}
        & \textstyle \tilde \lambda^{(1)} = \frac{\delta}{2}\left(L\kappa+M_1\kappa^2\right) + \|\nabla^2 f(x_0)-P_0\nabla^2 f(x_0)P_0\|, \\
        & \tilde \lambda^{(2)} = M_1+L,\\
        & C_4 = 30\kappa_D\left(\delta\max\{4L,M_0\} + \max_{i\leq t} \|(I-P_i)\nabla f(x_i)P_i)\|\right) 
    \end{align*}
\end{restatable}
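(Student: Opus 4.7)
}

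The plan is to adapt the estimate-sequence technique used by \citet{nesterov2008accelerating} for accelerated cubic-regularized Newton, modifying it to absorb the additional error terms that arise from (i)~replacing $\nabla^2 f(x_t)D_t$ with the finite-difference surrogate $G_t$, and (ii)~restricting motion to the subspace $\mathrm{span}(D_t)$. Concretely, I would introduce a weighted potential $A_t f(x_t) + B_t \phi_t(x)$, where $A_t$ grows like $t^3$ (this is the weight sequence that yields the accelerated $1/t^3$ rate in the exact cubic regularized setting) and $\phi_t(x)$ is an estimate sequence of the form $\phi_t(x) = \ell_t(x) + \frac{C}{6}\|x-x_0\|^3$ that upper-bounds $A_t f(x)$ by convexity. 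The accelerated algorithm would combine three sequences $x_t$ (main iterate), $v_t$ (``mirror''/momentum iterate), and $y_t = \tau_t v_t + (1-\tau_t) x_t$ (the point at which the Type~I bound is applied).

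The key steps, in order, would be: \textbf{(1)}~Apply the \eqref{eq:type1_bound} descent inequality at the auxiliary point $y_t$ with coefficients chosen so that $x_{t+1} = y_t + D_t \alpha_t$ is the minimizer of the cubic model; by \Cref{thm:minimal_decrease} this gives a guaranteed cubic decrease in $f$ along with the backtracking guarantee $M_t \leq \max\{M_0, 2L\}$. \textbf{(2)}~Combine this descent with the convexity-based inequality $f(y_t) \leq \tau_t f(x^\star) + (1-\tau_t) f(x_t) + \langle \nabla f(y_t), y_t - \tau_t x^\star - (1-\tau_t) x_t\rangle$ to express the progress in terms of the momentum iterate $v_t$. \textbf{(3)}~Inductively verify the invariant $A_t f(x_t) \leq \min_x \phi_t(x) + E_t$, where $E_t$ is the accumulated error due to the Hessian and subspace approximations; the choice $A_t = \Theta(t^3)$ comes from requiring $A_{t+1} - A_t \leq c A_{t+1}^{2/3}$, the standard condition for cubic acceleration. \textbf{(4)}~Bound the residual term $\min_x \phi_t(x)$ using \Cref{assump:bounded_radius} which controls $\|v_t - x^\star\| \leq 3R$ along the sublevel set trajectory.

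Tracking the error $E_t$ is the crux. At each iteration, \Cref{thm:bound_secant_alpha} contributes two kinds of error when one passes from the \emph{true} cubic model (at $\nabla^2 f(y_t)$) to the \emph{surrogate} cubic model (with $H_t$): a ``gradient-level'' error of order $\frac{L\|\varepsilon_t\|}{2}\|D_t\alpha\|$ and a ``Hessian-level'' error related to $(I-P_t)\nabla^2 f P_t$. Under \Cref{assump:bounded_epsilon,assump:bounded_conditionning,assump:projector_grad}, the first type is bounded by $\delta L \kappa$ and $M_1\kappa^2$ contributions inside $\tilde\lambda^{(1)}$, while the second lives in the off-subspace Hessian residual. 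These enter linearly into the accumulated error, so after telescoping against the weights $A_t \sim t^3$ they produce the $\tilde\lambda^{(1)}/(t+1)^3$ term (when paired with an $R^2$ radius factor) and the $\tilde\lambda^{(2)} R^3/(t+1)^3$ term from the cubic-regularization mismatch $M_1+L$. The dominant $C_4/(t+3)^2$ term would arise from the projection error $\|(I-P_i)\nabla f(x_i) P_i\|$: because the accelerated scheme couples $v_t$ and $x_t$ through $D_t$, any component of $\nabla f$ orthogonal to $D_t$ generates an $O(R^2/t^2)$ residual rather than a higher-order one, multiplied by the condition-number factor $\kappa_D$ coming from the linear solve in the subproblem.

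The main obstacle I expect is precisely the coupling between the three sequences $x_t, v_t, y_t$ through the \emph{same} direction matrix $D_t$ while simultaneously keeping the descent inequality valid. In the exact cubic case the accelerated estimate-sequence argument is tight, so there is little slack to absorb the Hessian approximation error; making the bookkeeping go through likely requires choosing $\tau_t$ so that $\tau_t v_t + (1-\tau_t) x_t - x_t \in \mathrm{span}(D_t)$ (this is where \Cref{assump:projector_grad} becomes essential, since $v_{t+1} - v_t$ must also be expressible via $D_t\alpha$). A secondary difficulty is handling the backtracking: one must ensure the invariant is preserved independent of how many inner iterations the line search performs, which typically follows from the monotone amplification $M\leftarrow 2M$ and the worst-case bound $M_t \leq \max\{M_0, 2L\}$ inherited from \Cref{thm:minimal_decrease}.
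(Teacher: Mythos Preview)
Your general framework is right---the paper does use Nesterov's estimate-sequence technique---but you are missing the central structural ingredient, and as written your plan would not close.

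The paper's estimate sequence is not $\phi_t(x)=\ell_t(x)+\tfrac{C}{6}\|x-x_0\|^3$; it carries \emph{both} a quadratic and a cubic regularizer,
\[
\phi_t(x)=f(x_1)+\ell_t(x)+\tfrac{\lambda_t^{(1)}}{2}\|x-x_0\|^2+\tfrac{\lambda_t^{(2)}}{6}\|x-x_0\|^3,
\]
with $\lambda_t^{(1)},\lambda_t^{(2)}$ \emph{adaptively doubled} by the algorithm until $\phi_{t+1}^\star\ge B_{t+1}f(x_{t+1})$. This is driven by a modified inner subroutine (\cref{alg:subroutine_acc}, not \cref{alg:type1}) whose exit conditions are gradient-based rather than the function-value bound you invoke via \cref{thm:minimal_decrease}: it terminates either with \texttt{LargeStep} when $-\nabla f(x_+)^TD\alpha\ge \tfrac{2}{3^{3/4}}\|\nabla f(x_+)\|^{3/2}/\sqrt{M}$ (the cubic-Newton regime, which forces an update of $\lambda^{(2)}$), or with \texttt{SmallStep} when $-\nabla f(x_+)^TD\alpha\ge \|\nabla f(x_+)\|^2/\bigl(M(\gamma_M+\|D\alpha\|)\bigr)$ and $\|D\alpha\|\lesssim\gamma_M$ (the gradient-descent regime, which forces an update of $\lambda^{(1)}$). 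The point is that with subspace/secant error the strong cubic decrease condition need not hold at all, so one cannot simply carry an additive error $E_t$ through a cubic-only estimate sequence; the paper instead certifies a \emph{weaker} quadratic-type decrease in that case and absorbs it into the quadratic regularizer. The mixed $t^{-2}+t^{-3}$ rate then falls out directly from bounding $\lambda_t^{(1)}\lesssim (b_{t+1}^2/B_t)\,C_4$ and $\lambda_t^{(2)}\lesssim (b_{t+1}^3/B_t^2)\max\{M_0,4L\}$ (\cref{prop:bound_lambdas}) and plugging into $f(x_t)-f^\star\le \tfrac{\lambda_t^{(1)}+\tilde\lambda^{(1)}}{2B_t}R^2+\tfrac{\lambda_t^{(2)}+\tilde\lambda^{(2)}}{6B_t}R^3$.

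Two smaller misconceptions: first, your worry that $v_{t+1}-v_t$ must lie in $\mathrm{span}(D_t)$ is misplaced---$v_t=\argmin_x\phi_t(x)$ lives in the full space and is computed in closed form (\cref{prop:solution_minimizer}); only the step $x_{t+1}-y_t=D_t\alpha_t$ is subspace-restricted. Second, the relevant use of \cref{assump:projector_grad} is not to align $v_t$ with $D_t$ but to rewrite the first-order optimality condition of the cubic subproblem as $\nabla f(y_t)+(\tilde H_\Gamma+\tfrac{M\|D\alpha\|}{2})D\alpha=0$ in the full space, which is what yields the two exit inequalities above (\cref{prop:condition_M,prop:cond_large_step}).
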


\subsection{Interpretation, Comparison With First-Order Methods and Special Cases} 
\label{sec:interp_comp_special}
The rates presented in \cref{thm:rate_nonconvex,thm:rate_starconvex,thm:rate_randomconvex,thm:rate_acc_sketch} combine the rates of the cubic regularized Newton's method and gradient descent (or coordinate descent, as in \cref{thm:rate_randomconvex}) for functions with Lipschitz-continuous Hessian. As $C_1, C_2$, $C_3$, and $C_4$ decrease, the rates approach those of second-order methods. For simplicity in this section, those constants are denoted as $C$, and $\kappa$ is assumed to be equal to $1$.

\subsubsection{Interpretation and Comparison} 

The constant $C$ quantifies the estimation error of $D_t^T\nabla^2\hspace{-0.5ex} f(x_t)D_t$ by $H_t$ in \eqref{eq:type1_bound} into two terms:
\[
    \textstyle C \leq O\big(\delta L + \textstyle\max_{i\leq t}\|(I-P_i)\nabla^2 f(x_i)\|\Big).
\]
The first term is the error caused by approximating $\nabla^2 f(x)D_t$ by $G_t$ and the second is the subspace approximation error of $\nabla^2 f(x_t)$ in the span of the columns of $D_t$. This approximation is more explicit in $C_3$, where increasing $N$ reduces the constant to $N=d$.

The rate associated with the constant $C$ is a perturbed version of the rate of first order methods (\cref{sec:comparison_rate}): if the function has Lipchitz-continuous gradients with constant $\mathcal{L}$ (\cref{sec:comparison_rate}, \cref{eq:lipschitz_gradient}), in the worst case over $P_i$, the constant $C$ is bounded as $C \leq O(\delta L + \mathcal{L})$. 
Hence, the rates of \cref{thm:rate_nonconvex,thm:rate_starconvex,thm:rate_acc_sketch} are perturbed versions of the rates of gradient descent and accelerated gradient descent, respectively,  whereas \cref{thm:rate_randomconvex} is a perturbed version of the rate of coordinate descent. The perturbation is not surprising, as this study \textbf{did not} assume that $f$ has a Lipchitz continuous gradient.

\subsubsection{Special Cases} \label{sec:special_cases}

The framework presented in this study can be reduced to some known methods, such as the cubic regularization of Newton's method. This section explores some special/extreme cases of this study's framework when the update rule reads $Y_t = Z_t + hD_t$ for some $D_t$ that satisfy the requirements listed in \cref{sec:direction_requirements}. Consider the following three extreme cases.
\begin{enumerate}
    \item[a)] The stepsize $h$ tends to 0, hence $y_i \rightarrow z_i$ and
    \[
        G_t \rightarrow [\ldots, \nabla^2 f(z_i^{(t)})d_i^{(t)},\ldots]_{i=1\ldots N}.
    \]
    This requires a second-order oracle, but makes $\delta$ smaller, that is, $\delta = O(\|x_t-z_i^{(t)}\|)$.
    \item[b)] The forward estimates are all centered in $x_t$, hence $Z_t=[x_t,\ldots, x_t]$. Then, at each iteration $t$, computing $G_t$ requires $N$ additional gradients, but makes $\delta$ smaller, that is,  $\delta = O(h)$.
    \item[c)] The algorithm's memory equals $d$ (full memory version), hence $P_t = I_d$ for all $t$. This raises the complexity of each iteration to $O(d^3)$, but this makes $(I-P_t)\nabla f(x_t) = 0$.
\end{enumerate}

The combinations of these special cases are summarized in \cref{tab:special_case}.

\begin{table*}
    \centering
    \begin{tabular}{lccc}
         & Satisfies \textbf{a)} & Satisfies \textbf{b)} & Satisfies \textbf{c)} \\
         \hline 
         Limited Memory QN with guarantees (\textbf{this paper}) & & & \\
         Quasi-Newton with Second Order Oracle \citep{rodomanov2021rates,rodomanov2021new,rodomanov2021greedy} & \checkmark &  & \\
         Subspace Newton with finite-difference (\textbf{this paper}) &  & \checkmark &  \\
         Full-Memory QN with guarantees (\textbf{this paper}, \citep{jiang2023accelerated}) &  &  & \checkmark \\
         Random Subspace \citep{doikov2018randomized,gower2019rsn,hanzely2020stochastic} & \checkmark & \checkmark & \\
         Cubic Newton with finite-difference \citep{grapiglia2022cubic} &  & \checkmark & \checkmark \\
         Cubic Newton with Lazy Hessian \citep{doikov2023second} & \checkmark &  & \checkmark \\
         Cubic Newton \citep{nesterov2006cubic,nesterov2008accelerating} & \checkmark & \checkmark & \checkmark\\
         \hline 
    \end{tabular}
    \caption{By considering the extreme cases \textbf{a)}, \textbf{b)}, and \textbf{c)} from \cref{sec:special_cases}, the algorithm presented in this study reduces to known methods, up to the Cubic regularization of Newton's method in the most extreme setting.}
    \label{tab:special_case}
\end{table*}

\section{Numerical Experiments}\label{sec:numerical_experiments}
This section compares the methods generated by this study's framework to the l-BFGS algorithm from \texttt{minFunc} \citep{schmidt2005minfunc}, see \cref{fig:test}. Additional experiments are described in \cref{sec:numerical_app}. The tested methods were type-I iterative algorithms (\cref{alg:type1_iterative} using the techniques from \cref{sec:direction_requirements}). The step size of the forward estimation was set to $h=10^{-9}$, and the condition number $\kappa_{D_t}$ was maintained below $\kappa=10^9$ using iterate-only and greedy techniques. The accelerated \cref{alg:accelerated_algo} is used only with the \textit{ forward estimate-only} technique. These methods are evaluated on a logistic regression problem on the Madelon UCI dataset \citep{guyon2003design}.

Regarding the number of iterations, the greedy orthogonalized version outperformed the others owing to the orthogonality of the directions (resulting in a condition number of one) and the meaningfulness of the previous gradients/iterates. However, in terms of gradient oracle calls, the recommended method, \textit{orthogonal forward estimate only}, achieved the best performance by balancing the cost per iteration (only two gradients per iteration) and efficiency (small and orthogonal directions, reducing theoretical constants). Surprisingly, the accelerated method underperforms, likely owing to its tightened theoretical analysis reducing its inherent adaptivity.

\section{Conclusion and Future work}
This study introduces a generic framework for developing novel quasi-Newton and Anderson/nonlinear acceleration schemes that offer a global convergence rate in various scenarios, including accelerated convergence on convex functions, with minimal assumptions.

The proposed approach requires an additional gradient step for the \textit{forward estimate}, as discussed in Section \ref{sec:direction_requirements}. However, this forward estimate is crucial for enabling the algorithm's adaptivity.

Studying the special case $N=d$ (although unsuitable for large-scale problems) could highlight super-linear convergence rates in future research. Moreover, using the average-case analysis framework from existing literature \citep{pedregosa2020acceleration,scieur2020universal,domingo2020average,cunha2021only,paquette2022halting} can also improve the constants in \cref{thm:rate_starconvex,thm:rate_nonconvex} to match those in \cref{thm:rate_randomconvex}. Furthermore, exploring the convergence rates of the type-2 method is also worthwhile.

Ultimately, the results presented in this study open new avenues of research. It may also provide a potential foundation for investigating additional properties of existing quasi-Newton methods. This may even lead to the discovery of convergence rates for adaptive cubic-regularized BFGS variants.

\paragraph{Main limitation} Despite its strong theoretical convergence rates and its ability to recover cubic regularization in the limit, the algorithm's complexity rises to $O(d^3)$ when $N$ approaches $d$ owing to the cubic minimization subproblem. This contrasts with current full-memory qN methods, which have a complexity of $O(d^2)$ but offer local superlinear convergence rates \citep{rodomanov2021greedy,rodomanov2021new,rodomanov2021rates}. Solving the cubic regularized model is an active research topic \citep{carmon2018analysis,carmon2019gradient,jiang2021accelerated,gao2022approximate,jiang2022cubic}, and its study is out of the scope of this paper. Nevertheless, it is \textit{very likely} that future researchs may reduce the complexity to $O(d^2)$ by updating the solution of the subproblem with low-rank perturbations, or by solving inexactly the cubic subproblem like in \citep{jiang2023accelerated}.

\begin{figure}
    \centering
    \includegraphics[width=0.43\textwidth]{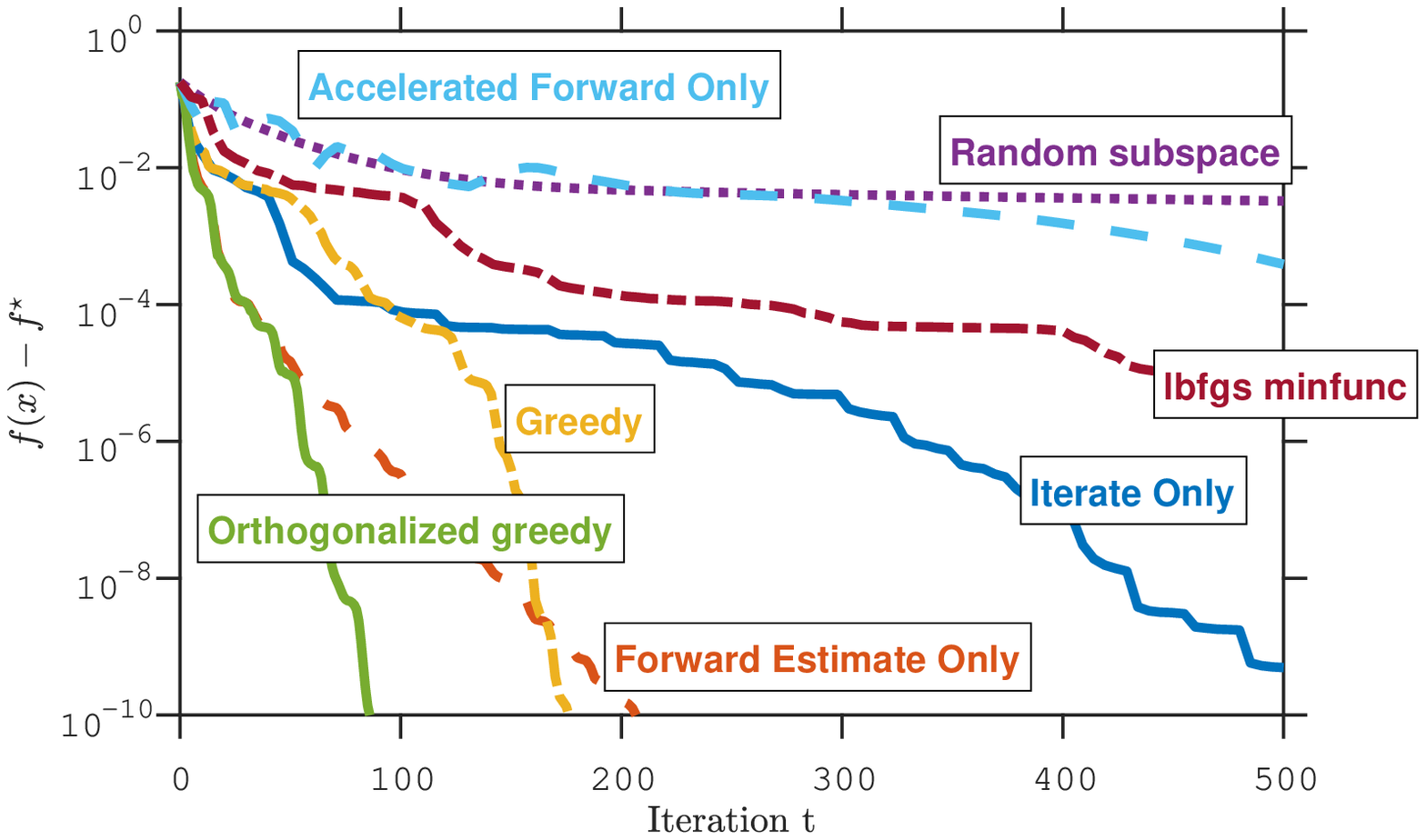}
    \includegraphics[width=0.43\textwidth]{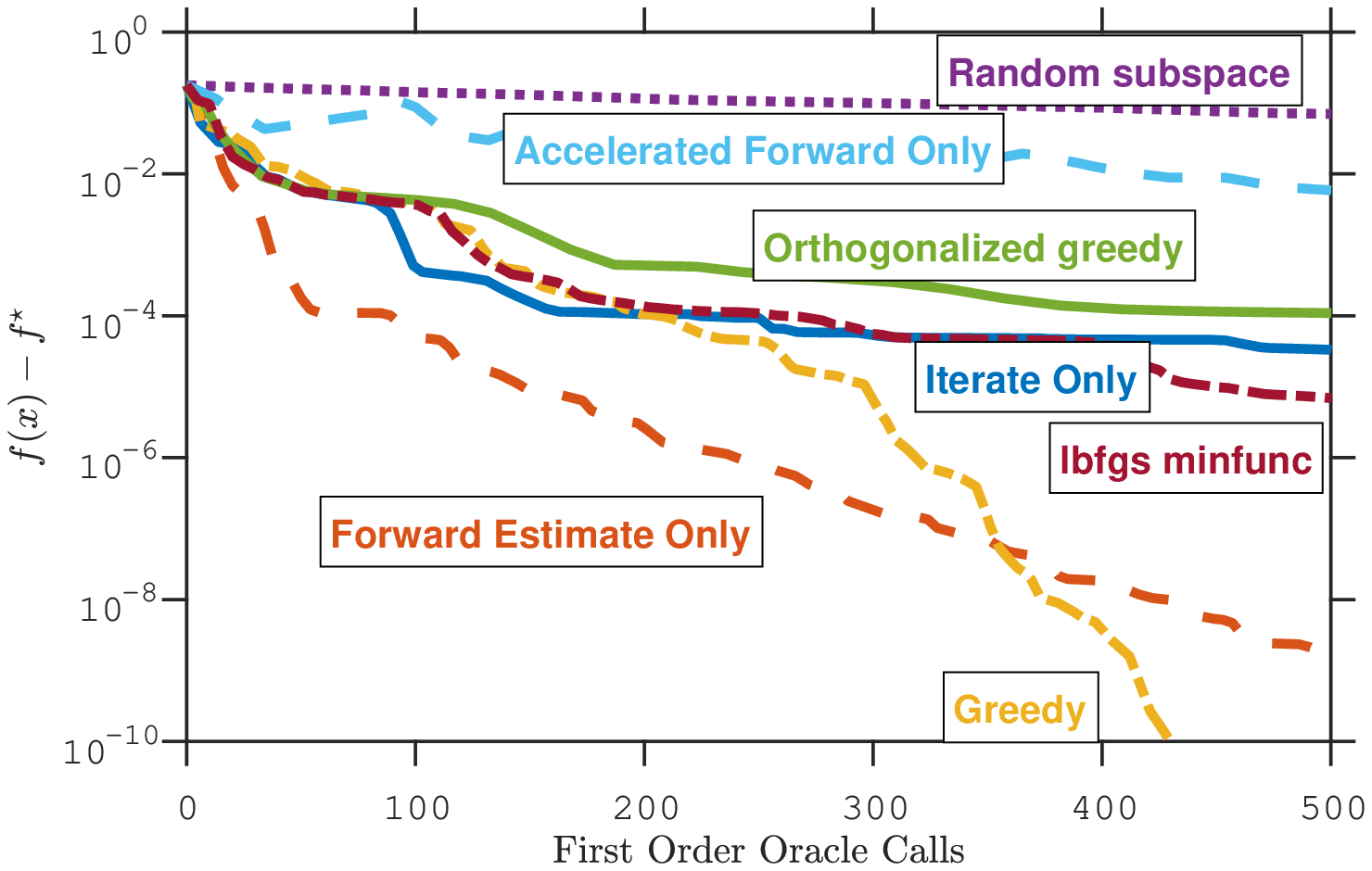}
    \caption{Comparison between the type-1 methods proposed in this study and the optimized implementation of l-BFGS from \texttt{minFunc} \citep{schmidt2005minfunc} with default parameters, except for the memory size. All methods use a memory size of $N=25$.}
    \label{fig:test}
\end{figure}

\clearpage

\clearpage
\printbibliography

\clearpage

\appendix
\onecolumn

\clearpage
\tableofcontents
\clearpage
\section*{Supplementary Materials}
\section{Accelerated Algorithm} \label{sec:accelerated_algorithm_full}

This section introduces \cref{alg:accelerated_algo}, an accelerated variant of \cref{alg:type1_iterative} for convex functions, designed using the estimate sequence technique from \citep{nesterov2008accelerating}. It consists in iteratively building a function $\Phi_t(x)$, that reads 
\[
    \textstyle \Phi_t(x) = \frac{1}{ \sum_{i=0}^{t} b_i}\left( \sum_{i=0}^{t} b_i \left(f(x_i) + \nabla f(x_i)(x-x_i)\right) + \lambda^{(1)}_{t}\frac{\|x-x_0\|^2}{2} + \lambda^{(2)}_{t}\frac{\|x-x_0\|^3}{6}\right).
\]
The parameters $b_i\geq 0$, $\lambda^{(1)}_{t}$, $\lambda_t^{(2}$ and the iterates $X_t$ are designed by theory to ensure the following properties,
\[
    B_t f(x_t) \leq \min_x \phi_t(x),\qquad \phi(x) \leq B_t f(x) + \frac{\tilde \lambda^{(1)}+\lambda^{(1)}_t}{2} \|x-x_0\|^2 + \frac{\tilde \lambda^{(2)}+\lambda^{(2)}_t}{6} \|x-x_0\|^3,
\]
where $B_t = \sum_{i=0}^t b_i$ and $\tilde \lambda^{(1)}$, $\tilde \lambda^{(2)}$ are constants determined by the theory.

Once the parameters are set, the accelerated algorithm operates as follow: \begin{enumerate}
    \item The accelerated algorithm combines linearly $v_t$, the optimum of $\Phi_t$, and the previous iterate $x_t$.
    \item It uses a slight modified version of \cref{alg:type1}, see \cref{alg:subroutine_acc}.
    \item There is a distinction between small and large step sizes, identifying which $\lambda$ needs to be updated. The step size is considered "large" if it resembles a cubic-Newton step.
\end{enumerate}

\begin{algorithm}
\caption{Type-I subroutine with backtracking for the accelerated method}\label{alg:subroutine_acc}
\begin{algorithmic}
\Require First-order oracle $f$, matrices $G,\,D$, vector $\varepsilon$, iterate $x$, initial smoothness parameter $M_0$
\State ~
\State Initialize $M \gets \frac{M_0}{2}$, $\texttt{ExitFlag} \gets \texttt{None}$
\State Define $\gamma_M \defas \frac{\kappa_D}{\|D\|}\left( \frac{3}{2}\|\varepsilon\| + 2\frac{(\|I-P)G\|}{M} \right)$
\Do{}
\State $M\gets 2\cdot M\;\;$ and $\;\; H_\gamma \gets \frac{G^TD+D^TG}{2} + D^TD\frac{M\gamma_M}{2}$
    \State $\alpha^* \gets \argmin_\alpha f(x) + \nabla f(x)^TD\alpha + \frac{1}{2} \alpha^TH_\gamma\alpha + \frac{M\|D\alpha\|^3}{6}$
    \State $x_+ \gets x+D\alpha$
    \If{ $\frac{2}{3^{3/4}} \frac{\|\nabla f(x_+)\|^{3/2}}{\sqrt{M}}\leq -\nabla f(x_+)^TD\alpha $ }
    \State $\texttt{ExitFlag} \gets \texttt{LargeStep}$
    \EndIf
    \If{ $\frac{\|\nabla f(x_+)\|^2}{M(\gamma_M+\|D\alpha\|)}\leq -\nabla f(x_+)^TD\alpha $ \texttt{And} $ \|D\alpha\| \leq (\sqrt{3}-1)\gamma_M $}
    \State $\texttt{ExitFlag} \gets \texttt{SmallStep}$
    \EndIf
\doWhile{\texttt{ExitFlag} is \texttt{None}}
\State ~
\State \Return $x_+$, $\alpha$, $M$, $\gamma_M$, \texttt{ExitFlag}
\end{algorithmic}
\end{algorithm}

\begin{algorithm}
\caption{Adaptive Accelerated Type-I Iterative Algorithm}\label{alg:accelerated_algo}
\begin{algorithmic}
\Require First-order oracle $f$, initial iterate and smoothness $x_0,\,M_0$, number of iterations $T$.
\State $\lambda^{(1)}_0\gets 0$, $\lambda^{(2)}_0\gets 0$
\State Initialize $G_0$, $D_0$, $\varepsilon_0$ (see \cref{sec:direction_requirements})
\State $\{x_{1},\, M_1\} \gets \texttt{[\cref{alg:type1}]} (f,G_0,D_0,\varepsilon_0,x_{0}, M_0)$
\State Initialize $\ell_0^{(0)} = f(x_1)$, $\quad \ell_0^{(1)} = 0$
\State
\For{$t=1,\,\ldots,\, T-1$}
    \State Update $G_t$, $D_t$, $\varepsilon_t$ (see \cref{sec:direction_requirements})
    \State Set $b_t \gets \frac{(t+1)(t+2)}{2}$, $B_t \gets \frac{t(t+1)(t+2)}{6}$, $\beta_t \gets \frac{3}{t+3}$.
    \State Update $\ell_t^{(0)} \gets \ell_{t-1}^{(0)} + b_{t-1} [f(x_t) - \nabla f(x_t)^T x_t]$, $\quad\ell_t^{(1)} \gets \ell_{t-1}^{(1)} + b_{t-1}\nabla f(x_t)$
    \State
    \Do
        \State $\texttt{ValidBound} \gets \texttt{True}$
        \State Set $v_t \gets \argmin_v \phi_t(v)$ (See \cref{prop:solution_minimizer}).
        \State Let $y_t \gets \frac{3}{t+3} v_t + \frac{t}{t+3} x_t$
        \[
        \{x_{t+1}, \alpha_{t},\, M_{t+1},\, \gamma_t, \texttt{ExitFlag}\} \gets \texttt{[Alg.\ref{alg:subroutine_acc}]} (f,G_t,D_t,\varepsilon_t,y_{t}, \frac{M_t}{2})
        \]
        \State
        \State \texttt{\%\% Check if the next $\phi$ is still a lower bound for $B_tf(x_{t+1})$}
        \State Define $\phi_{+}(x) = \phi_{t}(x) + b_{t}[f(x_{t+1} + \nabla f(x_{t+1})(x-x_{t+1})]$.
        \State Set $v_{+} \gets \argmin_v \phi_{+}(v)$ (See \cref{prop:solution_minimizer}).
        \State
        \If{$\Phi_{+}(v_{+}) \leq B_t f(x_{t+1})$}  $\quad$ \texttt{\%\% Parameters adjustment if needed}
            \State $\texttt{ValidBound} \gets \texttt{False}\quad$  \texttt{\%\% Unsuccessful iteration:} $\phi_{t+1}(v_{t+1})\geq f(x_{t+1})$.
            \If{\texttt{ExitFlag} is \texttt{LargeStep}}
                \State \textbf{If } $\lambda_t^{(2)} =0$ \textbf{ then } $\displaystyle \lambda_t^{(2)}\gets \frac{4}{\sqrt{3}}\frac{b_{t+1}^{3}}{B^2_t}M_{t+1}$. \textbf{Else, }$\lambda_t^{(2)} \gets 2\lambda_t^{(2)}$.
            \Else \texttt{ \%\% Exitflag is SmallStep}
                \State \textbf{If} $\lambda_t^{(1)} =0$ \textbf{ then } $\displaystyle\frac{b_{t+1}^2}{B_t}M_{t+1}\left(\gamma_t+\|D_t\alpha_t\|\right)$. \textbf{Else, } $\lambda_t^{(1)} \gets 2\lambda_t^{(1)}$.
            \EndIf
        \Else 
            \State $\{\lambda_{t+1}^{(1)} ,\lambda_{t+1}^{(2)} \} \gets \{\lambda_{t}^{(1)} ,\lambda_{t}^{(2)} \}$ \quad \texttt{\%\% Successful iteration}
        \EndIf
    \doWhile{\texttt{ValidBound is False}}
\EndFor
\State \Return $x_T$
\end{algorithmic}
\end{algorithm}

\begin{proposition} \label{prop:solution_minimizer}
    Let $v_t$ be the minimizer of 
    \[
        \phi_t(v) = \ell_t^{(0)} + \left[\ell_t^{(1)}\right]^Tv + \frac{\lambda_t^{(1)}}{2} \|v-x_0\|^2 + \frac{\lambda_t^{(2)}}{6}\|v-x_0\|^3.
    \]
    where $\lambda_t^{(1)} \geq 0 $, $\lambda_t^{(2)}\geq 0$. Let $r_t = \|v_t-x_0\|$. Then,
    \begin{align*}
        r_t& = \textstyle \|v_t-x_0\| =\begin{cases}
        0 & \texttt{if } \lambda_t^{(1)} = \lambda_t^{(2)} = 0\\
        \frac{\|\ell_t^{(1)}\|}{\lambda_t^{(1)}} & \texttt{if }  \lambda_t^{(1)}>0 \texttt{ and }\lambda_t^{(2)} = 0\\
        \frac{-\lambda^{(1)}_t + \sqrt{[\lambda^{(1)}_t]^2+2\lambda^{(2)}_t\|\ell_k\|}}{\lambda^{(2)}_2} & \texttt{if }\lambda_t^{(2)} >0\\
    \end{cases}\\
    v_t & \textstyle = \argmin \Phi_t(x) = x_0-r_t\frac{\ell_t^{(1)}}{\|\ell_t^{(1)}\|}
    \end{align*}
\end{proposition}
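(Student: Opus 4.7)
The plan is to attack this via first-order optimality. The function $\phi_t(v)$ is a sum of a constant, a linear term, $\tfrac{\lambda_t^{(1)}}{2}\|v-x_0\|^2$ and $\tfrac{\lambda_t^{(2)}}{6}\|v-x_0\|^3$, with $\lambda_t^{(1)},\lambda_t^{(2)} \geq 0$; hence $\phi_t$ is convex (strictly so whenever either coefficient is strictly positive), and a critical point is automatically a global minimizer. A direct computation, using $\nabla\|u\|^2 = 2u$ and $\nabla\|u\|^3 = 3\|u\|\,u$, gives
\[
\nabla\phi_t(v) \;=\; \ell_t^{(1)} + \lambda_t^{(1)}(v-x_0) + \tfrac{\lambda_t^{(2)}}{2}\|v-x_0\|(v-x_0).
\]

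Setting $\nabla\phi_t(v_t) = 0$ rearranges to
\[
\Bigl[\,\lambda_t^{(1)} + \tfrac{\lambda_t^{(2)}}{2}\|v_t-x_0\|\,\Bigr]\,(v_t-x_0) \;=\; -\,\ell_t^{(1)}.
\]
The bracketed scalar is nonnegative, so whenever it is strictly positive, $v_t - x_0$ must be antiparallel to $\ell_t^{(1)}$. Writing $v_t - x_0 = -\,r_t\,\ell_t^{(1)}/\|\ell_t^{(1)}\|$ with $r_t = \|v_t-x_0\| \geq 0$ and taking norms of both sides reduces the optimality condition to the scalar equation
\[
\tfrac{\lambda_t^{(2)}}{2}\,r_t^{\,2} + \lambda_t^{(1)}\,r_t \;=\; \|\ell_t^{(1)}\|. \qquad (\star)
\]
This already proves the directional formula for $v_t$; it remains only to solve $(\star)$ for $r_t$ in each case.

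The three cases then follow by inspection: (i) if $\lambda_t^{(1)}=\lambda_t^{(2)}=0$, the equation $(\star)$ is consistent only if $\ell_t^{(1)} = 0$, and $\phi_t$ is then constant, so one conventionally sets $r_t = 0$; (ii) if $\lambda_t^{(1)}>0$ and $\lambda_t^{(2)}=0$, $(\star)$ is linear and yields $r_t = \|\ell_t^{(1)}\|/\lambda_t^{(1)}$ directly; (iii) if $\lambda_t^{(2)} > 0$, $(\star)$ is a true quadratic in $r_t$ with a unique nonnegative root, which the standard quadratic formula delivers as the stated expression. Strict convexity in cases (ii) and (iii) guarantees uniqueness, and coercivity of $\phi_t$ guarantees existence of the minimum.

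There is no real obstacle beyond the bookkeeping: the only delicate point is the degenerate case $\lambda_t^{(1)} = \lambda_t^{(2)} = 0$ with $\ell_t^{(1)} \neq 0$, in which $\phi_t$ is unbounded below; this does not arise in \cref{alg:accelerated_algo} because the $\lambda$'s are adjusted upward whenever $\phi_t$ fails to lower-bound $B_t f(x_{t+1})$, ensuring $\phi_t$ remains bounded below whenever the proposition is invoked. Substituting the derived $r_t$ back into $v_t = x_0 - r_t\,\ell_t^{(1)}/\|\ell_t^{(1)}\|$ then completes the proof.
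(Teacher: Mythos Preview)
Your proof is correct and follows the natural approach: write the first-order optimality condition, observe that $v_t-x_0$ must be a nonpositive multiple of $\ell_t^{(1)}$, reduce to the scalar equation $\tfrac{\lambda_t^{(2)}}{2}r_t^2 + \lambda_t^{(1)}r_t = \|\ell_t^{(1)}\|$, and solve case by case. The paper in fact states \cref{prop:solution_minimizer} without proof, treating it as an elementary computation; your argument is exactly the expected one and would serve as the missing proof.
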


\clearpage

\section{Related work} \label{sec:related_work}

\subsection{Inexact, Subspace, and Stochastic Newton Methods} 

Instead of explicitly computing the Hessian matrix and the Newton step, inexact methods compute an approximation using sampling \citep{antonakopoulos2022extra}, inexact Hessian computation \citep{ghadimi2017second,doikov2022second}, or random subspaces \citep{doikov2018randomized,gower2019rsn,hanzely2020stochastic}. These approaches substantially reduce per-iteration costs without significantly compromising the convergence rate. The convergence speed in such cases often represents an interpolation between the rates observed in gradient descent methods and (cubic) Newton's method.

\subsection{Nonlinear and Anderson Acceleration} 

Nonlinear acceleration techniques, including Anderson acceleration \citep{anderson1965iterative}, have a long standing history \citep{brezinski1970application,brezinski1971algorithme,gekeler1972solution}. Driven by their promising empirical performance, they recently gained interest in their convergence analysis \citep{sidi1988extrapolation,ford1988recursive,sidi1991efficient,jbilou1991some,sidi1998upper,sidi2008vector,walker2011anderson,toth2015convergence,scieur2016regularized,sidi2017minimal,sidi2017vector,brezinski2018shanks,scieur2018online,brezinski2020shanks,scieur2020regularized}. In essence, Anderson acceleration is an optimization technique that enhances convergence by extrapolating a sequence of iterates using a combination of previous gradients and corresponding iterates. Comprehensive reviews and analyses of these techniques can be found in notable sources such as \citep{jbilou1991some,brezinski1991extrapolation,jbilou1995analysis,jbilou2000vector,brezinski2019genesis,d2021acceleration}. However, these methods do not generalize well outside quadratic minimization and their convergence rate can only be guaranteed asymptotically when using a line-search or regularization techniques \citep{sidi1986convergence,sidi1988convergence,scieur2016regularized}.

\subsection{Quasi-Newton Methods} 

Quasi-Newton schemes are renowned for their exceptional efficiency in continuous optimization. These methods replace the exact Hessian matrix (or its inverse) in Newton's step with an approximation updated iteratively during the method's execution. The most widely used algorithms in this category include DFP \citep{davidon1991variable,fletcher1963rapidly} and BFGS \citep{shanno1970conditioning,goldfarb1970family,fletcher1970new,broyden1970convergence,broyden1970convergenceb}. Most of the existing convergence results predominantly focus on the asymptotic super-linear rate of convergence \citep{stachurski1981superlinear,griewank1982local,byrd1987global,byrd1989tool,conn1991convergence,engels1991local,yabe1996local,wei2004superlinear,yabe2007local}. However, recent research on quasi-Newton updates has unveiled explicit and non-asymptotic rates of convergence \cite{rodomanov2021greedy,rodomanov2021rates,rodomanov2021new,lin2022explicit,lin2021greedy}. Nonetheless, these analyses suffer from several significant drawbacks, such as assuming an infinite memory size and/or requiring access to the Hessian matrix. These limitations fundamentally undermine the essence of quasi-Newton methods, typically designed to be Hessian-free and maintain low per-iteration cost through their low memory requirement and low-rank structure. 

\subsection{Close Related Work}

\subsubsection{(Accelerated) Quasi-Newton with Secant Inexactness}

Recently, \citet{kamzolov2023accelerated} introduced an adaptive regularization technique combined with cubic regularization, with global, explicit (accelerated) convergence rates for any quasi-Newton method. Based on the secant inexactness inequality, the technique introduces a quadratic regularization whose parameter is found by a backtracking line search. However, this algorithm relies on prior knowledge of the Lipschitz constant specified in \cref{assump:lipchitiz_hessian}. Unfortunately, the paper does not provide an adaptive method to find jointly the Lipschitz constant as well, as it is \textit{a priory} too costly to know which parameter to update. This aspect makes the method impractical in real-world scenarios. 

\subsubsection{ARC: Adaptive Regularization algorithm using Cubics}

In \citep{cartis2011adaptive,cartis2011adaptive2} is proposed a generic framework for inexact cubic regularized Newton's steps,
\[
    x_{t+1} = \min_x f(x_t) + \nabla f(x_t) (x-x_t) + \frac{1}{2}(x-x_t)H_t(x-x_t) + \frac{M_t}{6}\|x-x_t\|^3,
\]
where $H_t$ is assumed to be an approximation of the Hessian $\nabla^2 f(x_t)$. However, the theoretical analysis presents numerous problems, in particular, the assumption that the norm of the current step bounds the approximation
\[
    \| \nabla^2 f(x_t)-H_t\| \leq C\|x_{t+1}-x_{t}\|,
\]
for some constant $C$. Follow up works, such as \citep{wang2018note}, relaxed this assumption into
\[
    \| \nabla^2 f(x_t)-H_t\| \leq C\|x_{t}-x_{t-1}\|,
\]
which is much weaker since it can be verified while computing the step $x_{t+1}$. Nevertheless, those are assumptions on the matrix $H_t$, but those works do not explicitly construct such a matrix. Even worse - the assumption might not be met in practice, especially if $H_t$ is a subspace estimation of the matrix $\nabla^2 f(x_t)$.

\subsubsection{Proximal Quasi-Newton Methods}

The work of \citep{scheinberg2016practical,ghanbari2018proximal} combined qN methods with proximal schemes and provided sublinear and accelerated convergence rates. However, the rates in \citep{scheinberg2016practical} are based on a technical assumption \citep[Assumption 2]{scheinberg2016practical}, for which the authors commented that "\textit{Exploring different conditions on the Hessian approximations that ensure Assumption 2 is a subject of a separate study}", and acknowledge in their conclusion that "\textit{Our framework does not rely on or exploit the accuracy of second-order information, and hence we do not obtain fast local convergence rates.}" 

In a follow-up work, \citep{ghanbari2018proximal} proposed accelerated convergence rates under similar assumptions. However, the authors acknowledge the following: "\textit{In our numerical results, we construct $H_k$ via L-BFGS and ignore condition $\sigma_{k+1}H_{k+1}\preceq \sigma_k H_k$, since enforcing it in this case causes a very rapid decrease in $\sigma$. It is unclear, however, if a practical version of Algorithm 5, based on L-BFGS Hessian approximation, can be derived, which may explain why the accelerated version of our algorithm does not represent any significant advantage.}" In addition, their theoretical convergence results are based on an upper bound on the sequence $\sigma_k$, which current qN schemes cannot ensure.

\subsubsection{Proximal Extragradient Quasi-Newton Methods with Online Estimation}

Based on the technique in \citep{jiang2023online}, \citep{jiang2023accelerated} developed a novel quasi-Newton method with the global accelerated rate of convergence of $O(\min\{\frac{1}{t^2};\frac{\sqrt{d\log t}}{t^{2.5}}\})$. The main ideas are as follows: the authors used the framework of inexact proximal method from \citep{monteiro2013accelerated}, used an online algorithm to estimate the Hessian, and then solved a linear system involving this approximation using conjugate gradients. 

The paper focuses on a different regime than this study: \citep{jiang2023accelerated} explicitly show that it is possible to break the $O(\frac{1}{t^2})$ barrier for first order methods using full memory qN methods but this implies storing a full $d\times d$ matrix, and using it in a linear system, leading to per-iteration complexities of at least $O(d^2)$. 

From a practical point of view, the algorithm requires numerous hyperparameters such as $\alpha_1$, $\alpha_2$, $\beta$,$\ldots$, whose impact on the efficiency is rather unclear. Moreover, numerically, the algorithm improves over Nesterov's acceleration but is slower than l-BFGS on toy experiments.
\clearpage
\section{Known rates of convergence and Comparison} \label{sec:comparison_rate}

\subsection{(Accelerated) Gradient Descent}

This section study the rate of gradient decent when function is smooth (i.e., has Lipschitz continuous gradients): 
\begin{equation}
    \label{eq:lipschitz_gradient}
    f(y) \leq f(x) + \nabla f(x)(y-x) + \frac{\mathcal{L}}{2}\|y-x\|^2,
\end{equation}
Note that the class of functions considered in this paper is \textit{not} the class of smooth functions. However, if the function satisfies \cref{assump:lipchitiz_hessian}, the Lipchits constant can be bounded as
\begin{equation}
    \mathcal{L} \leq \|\nabla^2 f(x) \| + L R \qquad \text{for all $x \in \{x:f(x)\leq f(x_0)\}$.} \label{eq:estimation_smoothness}
\end{equation}
The rates of plain gradient descent and its accelerated version read \citep{nesterov2004introductory} (after replacing $\mathcal{L}$)
\begin{align}
    \min_{0\leq i \leq t} \|\nabla f(x_i)\| & \leq \sqrt{\frac{\left[\|\nabla^2 f(x) \| + L R\right](f(x_0)-f^\star)}{t+1}}, & \text{(plain, non-convex)} \label{eq:rate_gradient_nonconvex}\\
    f(x_t)-f(x^\star) & \leq \left[\|\nabla^2 f(x) \| + L R\right] \frac{2}{t+4}R^2,  & \text{(plain, convex)} \label{eq:rate_gradient_convex}\\
    f(x_t)-f(x^\star) & \leq \left[\|\nabla^2 f(x) \| + L R\right] \frac{4}{(t+2)^2}R^2.  & \text{(accelerated)} \label{eq:rate_gradient_accelerated}
\end{align}

\subsection{(Accelerated) Cubic Regularized Newton's Method} 

When the function has a Lipschitz-continuous Hessian, the cubic regularized Newton method and its accelerated version converge with the following rates \citep{nesterov2006cubic,nesterov2008accelerating,hanzely2020stochastic}:
\begin{align}
    \min_{0\leq i \leq t} \|\nabla f(x_i)\| & \leq \frac{16L}{9} \left(\frac{3(f(x_0)-f^\star)}{2tM_{\min}}\right)^{2/3}, & \text{(plain, non-convex)} \label{eq:rate_cubic_nonconvex}\\
    f(x_t)-f(x^\star) & \leq  \quad 9L\frac{R^3}{(t+4)^2}, &\text{(plain, convex)} \label{eq:rate_cubic_convex}\\
    \mathbb{E}[f(x_t)]-f(x^\star) & \leq \left(\frac{d-N}{N}\right) \frac{\mathcal{L}(3R)^2}{2t} + \left(\frac{d}{N}\right)^2 \frac{L(3R)^3}{3t^2} + O\left(\frac{1}{t^3}\right), &\text{(Random Subspace, convex)} \label{eq:rate_cubic_stoch}\\
    f(x_t)-f(x^\star) & \leq L\frac{14R^3}{t(t+1)(t+2)}. &\text{(accelerated)} \label{eq:rate_cubic_accelerated}
\end{align}

\subsection{Relation Between Parameters}

Given that this paper does not make the assumption of Lipchitz-continuous gradients, it becomes necessary to establish connections between various quantities to facilitate the comparison of rates. To streamline the notation, all numeric constants are substituted with the big $O$ notation, and the subsequent equations are derived for the "orthogonal forward estimate only" update rule, hence $\|D\|=1$ and $\kappa=1$.

\paragraph{Relation between $\delta$ and $R$.} The constant $\delta$ represents the upper bound on the relative error (see \cref{assump:bounded_epsilon}):
\[
    \forall t,\;\; \frac{\|\varepsilon_t\|}{\|D_t\|} \leq \delta.
\]
For a fixed memory, and assuming $h$ small, since $\varepsilon$ is the norm between iterates, $\delta$ is upper-bounded as 
\begin{equation}\label{eq:approx_delta}
    \delta \leq O(R).
\end{equation}

\paragraph{Relation between the different $C_i$ and $\mathcal{L}$} The $C_1, C_2$, and $C_4$ in \cref{thm:rate_nonconvex,thm:rate_starconvex,thm:rate_acc_sketch} quantifies the estimation error of $D_t^T\nabla^2\hspace{-0.5ex} f(x_t)D_t$ by $H_t$ in \eqref{eq:type1_bound} into two terms:
\[
    C_i \leq O\big(\delta L + \textstyle\max_{i\leq t}\|(I-P_i)\nabla^2 f(x_i)\|\Big).
\]
The first term is the error caused by approximating $\nabla^2 f(x)D_t$ by $G_t$, and the second is the subspace approximation error of $\nabla^2 f(x_t)$ in the span of the columns of $D_t$. 

Intuitively, the constants $C_i$ can be seen as an approximation of an upper bound on $\mathcal{L}$ in a neighborhood of size $\delta$. This is similar to \cref{eq:estimation_smoothness} but the norm of the Hessian is taken in a subspace, hence the $C_i$'s are smaller. Indeed, using \cref{eq:approx_delta}, in the worst case, if all iterates satisfies $\|x_i-x^\star\|<R$, 
\begin{equation}\label{eq:approx_c}
    C_i = O(RL + \textstyle\max_{i\leq t}\|(I-P_i)\nabla^2 f(x_i)\|).
\end{equation}

\paragraph{Other updates} Note that \cref{eq:approx_delta,eq:approx_c} are valid only for the \textit{"orthogonal forward estimate only"} update rule. If the random orthogonal forward estimate, or the orthogonalization of the "greedy" or "iterates only" update rules were used, the results would have been
\[
    \delta = O(h),\qquad C_i = O(hL + \textstyle\max_{i\leq t}\|(I-P_i)\nabla^2 f(x_i)\|),
\]
where $h$ is small. However, the comparison with gradient descent or Newton's method wouldn't have been fair as the orthogonalization update rules requires $N$ additional gradient calls.

\subsection{Comparing rates of convergence}

\paragraph{Non convex}

The rate from \cref{thm:rate_nonconvex} reads
 \begin{align*}
     \min_{i=1,\,\ldots,\, t} \|\nabla f(x_{i})\| \leq &\max\Big\{ \frac{3L}{t^{2/3}} \left(12\frac{f(x_0)-f^\star}{M_{\min}}\right)^{2/3} \;;\; \left(\frac{C_1}{t^{1/3}}\right)\left(12\frac{f(x_0)-f^\star}{M_{\min}}\right)^{1/3} \Big\},
\end{align*}
where $\textstyle C_1 = \frac{3\delta L}{2} + \max_{i \in [0,t]} \|(I-P_i)\nabla^2 f(x_i)P_i\|.$ In the case where $C_1$ is small, the rate matches exactly \cref{eq:rate_cubic_nonconvex}. In the other case, using the approximation from \cref{eq:approx_c},
\begin{align*}
    \min_{i=1,\,\ldots,\, t} \|\nabla f(x_{i})\| \leq \left(\frac{O(RL + \textstyle\max_{i\leq t}\|(I-P_i)\nabla^2 f(x_i)\|)}{t^{1/3}}\right)\left(12\frac{f(x_0)-f^\star}{M_{\min}}\right)^{1/3} 
\end{align*}
which differs significantly from \cref{eq:rate_gradient_nonconvex}, as the rate is $O(\frac{1}{\sqrt{t}})$. However, this might be an artifact of the theoretical analysis, since the function was not assumed to be smooth.

\paragraph{Star convex}

After using the approximation from \cref{eq:approx_c}, the rate from \cref{thm:rate_starconvex} reads
\begin{align}
    & f(x_{t})-f^\star \leq O\left(\frac{f(x_0)-f^\star}{t^3}\right)+O\left(\frac{LR^3}{t^2}\right) + O\left(\frac{[RL+\textstyle\max_{i\leq t}\|(I-P_i)\nabla^2 f(x_i)\|]R^2}{t}\right)
\end{align}
The term in $t^{-2}$ is \textit{exactly} the one from \cref{eq:rate_cubic_convex}, while the term is $t^{-1}$ has the same dependency in $R^3$ compared to \cref{eq:rate_gradient_convex}. However, $\|(I-P)\nabla^2 f(x_i)\|$ could be much smaller than $\|\nabla^2 f(x)\|$.

\paragraph{Convex with random coordinates or random subspace}

The rate from \cref{thm:rate_randomconvex} reads
\begin{align*}
    \mathbb{E}_{D_t}[f(x_{t}) - f^\star] \leq \frac{1}{1+\frac{1}{4}\left[\frac{N}{d}t\right]^3}(f(x_0)-f^\star)  + \frac{1}{\left[\frac{N}{d}t\right]^2} \frac{L(3R)^3}{2} +  \frac{1}{\left[\frac{N}{d}t\right]}\frac{[O(\delta L)  +\frac{(d-N)}{d}\max\limits_{i\in[0,t]}\|\nabla^2 f(x_i)\|](3R)^2}{2}.
\end{align*}
The rate is similar to \cref{eq:rate_cubic_stoch}, up to an additional $O(\delta L/t)$ term. This extra term comes from the estimation of the Hessian with finite difference, while the method presented in \citep{hanzely2020stochastic} uses exact Hessian-vector products.

\paragraph{Convex, accelerated rates}

After using the approximation from \cref{eq:approx_c}, and ignoring the terms $\tilde \lambda^{(1)}$, $\tilde \lambda^{(2)}$ for clarity, the rate from \cref{thm:rate_acc_sketch} reads 
\begin{align*}
    f(x_t)-f^\star \leq [RL+\max_{i=0\ldots t}\|(I-P_i)\nabla f(x_i)\|] \frac{(3R)^2}{(t+3)^2} + 9\max\left\{M_0\;;\; 2L\right\} \left(\frac{3R}{t+3}\right)^3 
\end{align*}
The rate is exactly a combination of \cref{eq:rate_cubic_accelerated} and \cref{eq:rate_gradient_accelerated}, but the constant ascociated to the $1/t^2$ rate is smaller in practice: \cref{eq:approx_delta} is a conservative bound and $\|(I-P_i)\nabla^2 f(x)\|\leq \|\nabla^2 f(x)\|$.

\clearpage

\section{Link with quasi-Newton and Anderson/Nonlinear Acceleration}\label{sec:link_existing_methods}

This section presents the fundamentals of Anderson/nonlinear acceleration (\cref{sec:anderson}), quasi-Newton schemes (\cref{sec:qn}), and their relationship with the method proposed in this paper (\cref{sec:link}).

\subsection{Anderson Acceleration and Nonlinear Acceleration} \label{sec:anderson}

Anderson acceleration, also known as nonlinear acceleration, is a powerful technique that enhances the convergence speed of fixed point iterations and optimization algorithms. Initially developed for solving linear systems, Anderson acceleration has gained popularity due to its effectiveness in accelerating iterative methods, including the ones in optimization. The method leverages previous iterations to construct an improved estimate of the objective function's minimizer.

The Anderson acceleration algorithm employs the following approximation to compute weights:
\[
    \nabla f\left(\sum_{i=0}^N \beta_i x_i\right) \approx 
     \sum_{i=0}^N\beta_i \nabla f(x_i), \;\; \sum_{i=0}^N \beta_i = 1.
\]
When the function $f$ is quadratic, this approximation becomes an equality. The underlying idea is as follows: since the optimum satisfies $\nabla f(x^\star) = 0$, 
\[
     \sum_{i=0}^N\beta_i \nabla f(x_i) \approx 0 \;\; \Rightarrow \nabla f\left(\sum_{i=0}^N \beta_i x_i\right)\approx 0 \quad \Rightarrow \sum_{i=0}^N \beta_i x_i \approx x^\star.
\]
The Anderson acceleration steps are thus given by
\[
    x_{t+1} = \sum_{i=0}^N \beta_i^\star x_{t-i+1},\quad \beta^\star = \argmin_\beta\|\sum_{i=0}^N\beta_i \nabla f( x_{t-i+1})\|^2
\]

Over the past decades, the ideas behind Anderson acceleration have been refined. For example, the constraint can be eliminated by considering the step $x_{t+1}-x_t$ instead:
\begin{align*}
x_{t+1}-x_t & = \left(\sum_{i=0}^N \beta_i x_{t-i+1}\right) - x_t\\
& = \sum_{i=0}^N \tilde \beta_i x_{t-i+1}.
\end{align*}
The vector $\tilde \beta_i$ has the property that its sum equals zero. Hence, it can be rewritten as
\begin{align*}
x_{t+1}-x_t & = \sum_{i=1}^N \alpha_i (x_{t-i+1} - x_{t-i}) \\
\alpha & = \argmin_\alpha \left\|\nabla f(x_t)+\sum_{i=1}^N \alpha_i (\nabla f(x_{t-i+1})-\nabla f(x_{t-i}))\right\|
\end{align*}
where $\alpha\in\mathbb{R}^N$ has no constraint. By writing $d_i = x_{t-i+1}-x_{t-i}$, $g_i =\nabla f(x_{t-i+1})-\nabla f(x_{t-i}) $, and $D=[d_{t},\ldots, d_{t-N+1}], \, G = [g_{t},\,\ldots,\, g_{t-N+1}]$, the step becomes
\begin{align*}
x_{t+1}-x_t & = D_t\alpha, \quad \alpha = \argmin_\alpha \|\nabla f(x_t)+G_t\alpha\|.
\end{align*}
However, this version of Anderson acceleration is non-convergent because there is no contribution from $\nabla f(x_t)$ in the step $x_{t+1}-x_t$. The most popular solution to this problem is introducing a \textit{mixing parameter} that combines gradient steps, resulting in the following expression:
\begin{align}
x_{t+1} & = x_t - h\nabla f(x_t) + (D-hG)\alpha, \quad \alpha = \argmin_\alpha \|\nabla f(x_t)+G\alpha\|. \label{eq:anderson_acc} \tag{AA Type II}
\end{align}

Following a similar idea, recent works have introduced a type I variant of the algorithm \citep{fang2009two,walker2011anderson,zhang2020globally,canini2022direct} that minimizes the function value instead of the gradient norm:
\begin{equation}
x_{t+1} = x_t - h\nabla f(x_t) + (D-hG)\alpha,\quad \alpha = \argmin f(x_t) + \nabla f(x_t)D_t\alpha + \frac{1}{2}\alpha^TD_t^TG_t\alpha, \label{eq:anderson_type_1} \tag{AA Type I}
\end{equation}

By incorporating regularization \citep{scieur2016regularized,canini2022direct}, globalization techniques \citep{zhang2020globally}, or performing a line search on the parameter $h$, the algorithm converges towards $x^\star$.

\subsection{Single-secant and Multisecant Quasi-Newton Methods}  \label{sec:qn}
Quasi-Newton methods, such as the Broyden-Fletcher-Goldfarb-Shanno (BFGS) method, approximate the Hessian matrix to solve unconstrained optimization problems efficiently. These methods avoid the expensive computation of the exact Hessian by using iterative updates based on previous iterates and gradients of the objective function.

This section focuses on other commonly used quasi-Newton methods: the Davidon-Fletcher-Powell (DFP) and Broyden type-1 and type-2 updates.

\subsubsection{The Ideas Behind Single-Secant and Multisecant Hessian Approximation}

In quasi-Newton methods, the Hessian approximation is updated using the \textit{secant equation}, which relates the gradients and Hessian at two different points. For a twice continuously differentiable function, the secant equation is given by:
\[
\nabla f(y) - \nabla f(x) = \nabla^2 f(\xi)(y - x),
\]
where $\xi$ is a point on the line segment connecting $x$ and $y$. This equation serves as the basis for updating the Hessian approximation.

Based on this remarkable identity, quasi-Newton methods update an approximation of the Hessian $B_t$ or its inverse $H_t$ such that the approximation satisfies
\[
    \nabla f(x_{t}) - \nabla f(x_{t-1}) = B_t(x_{t}-x_{t-1}),\quad H_t\left(\nabla f(x_{t}) - \nabla f(x_{t-1})\right) = x_{t}-x_{t-1}.
\]
What distinguishes the different updates is how to fix the remaining degrees of freedom. For instance, the simple SR-1 method updates $H_t$ such that
\begin{equation}\label{eq:qn_update_single}
    \min_{H}\|H-H_{t-1}\|_F \quad : H=H^T,\;\; H\left(\nabla f(x_{t}) - \nabla f(x_{t-1})\right) = x_{t}-x_{t-1}.
\end{equation}
Those methods are called \textit{single-secant} as they update $H_t$ only one secant equation at a time. Hence, in general, $H_t$ only satisfies the latest secant equation.

Multisecant updates, on the other hand, approximate the Hessian using a batch of secant equations. By introducing matrices $D_t = [x_{t-N+1}-x_{t-N}, \ldots, x_{t}-x_{t-1}]$ and $G_t = [\nabla f(x_{t-N+1})-\nabla f(x_{t-N}), \ldots, \nabla f(x_{t})-\nabla f(x_{t-1})]$, the multisecant updates satisfy
\[
G_t = B_t D_t, \quad \text{or}\quad  H_t G_t = D_t.
\] 
Unfortunately, when imposing symmetry, it is impossible to satisfy multiple secants at a time \citep{schnabel1983quasi}. However,it is possible to enforce symmetry while approximating the secant equation in a least square sense \citep{scieur2019generalized,scieur2021generalization}.

When symmetry is not imposed, the solution for $B_t$ and $H_t$ can be obtained as:
\begin{equation} \label{eq:multisecant_qn_formula}
    B_t = G_t [D_t]^\dagger + B_0(I - D_tD_t^\dagger), \quad H_t = D_t [G_t]^\dagger + H_0(I - G_tG_t^\dagger),    
\end{equation}

where $B_0$ and $H_0$ are the initial approximations, and $[A]^\dagger$ denotes the pseudo-inverse of matrix $A$. Different choices of pseudo-inverse lead to different methods.

The inversion of $B_t$ can be computed using the Woodbury matrix identity, which provides an efficient way to compute the inverse. The update for $B_t^{-1}$ is given by:

\[
B_t^{-1} = B_0^{-1}\left(I - G_t\left(D_t^\dagger B_0^{-1}G_t\right)^{-1}D_t^\dagger B_0^{-1}\right) + D_t\left(D_t^\dagger B_0^{-1}G_t\right)^{-1}D_t^\dagger B_0^{-1}.
\]

This update is equivalent to the update for $H_t$, given that 
\begin{equation}
    B_0^{-1} = H_0, \quad\text{and} \quad G_t^\dagger = \left(D_t^\dagger B_0^{-1}G_t\right)^{-1}D_t^\dagger B_0^{-1}.
\end{equation}

In summary, quasi-Newton methods update the Hessian approximation using the secant equation. Single-secant methods update the approximation using the secant equation one by one, while multisecant methods use a batch of secant equations. The choice of updating strategy and pseudo-inverse affects the behavior of the method.

\subsubsection{Davidon-Fletcher-Powell (DFP) Formula}

The DFP formula is a Quasi-Newton update rule used to iteratively refine an approximation of the inverse Hessian matrix. It is defined as follows:

\begin{equation}
H_{t} = H_{t-1} + \frac{d_t d_t^T}{d_t^T g_t} - \frac{H_{t-1} g_t g_t^T H_{t-1}}{g_t^T H_{t-1} g_t},
\end{equation}

In the above equation, $g_t = \nabla f(x_{t}) - \nabla f(x_{t-1})$ represents the difference in gradients, and $d_t = x_{t} - x_{t-1}$ denotes the difference in parameter values. The DFP formula updates the matrix $H_t$ using a rank-two matrix such that it remains symmetric and positive definite.





\subsubsection{Multisecant Broyden Methods}

The multisecant Broyden methods utilize the update equation from \cref{eq:multisecant_qn_formula}, where $A^\dagger$ is chosen as the Moore-Penrose pseudo-inverse of $A$, given by $A^\dagger = (A^TA)^{-1}A$. In this equation, $B_0$ and $H_0$ are scaled identity matrices. After simplification, the two types of updates can be expressed as follows:

\begin{align}
B_{t}^{-1} & = D_t\left(D_t^\dagger G_t\right)^{-1}D_t^\dagger + B_0^{-1}\left(I-G_t\left(D_t^\dagger G_t\right)^{-1}D_t^\dagger \right), \label{eq:broyden_type_1}\\
H_t & = D_t(G_t^TG_t)^{-1}G_t^T+H_0\left(I-G_t\left(G_t^TG_t\right)^{-1}G_t^T\right) \label{eq:broyden_type_2}.
\end{align}

Both updates are quite similar, differing mainly in the choice of the pseudo-inverse of the matrix $G$.

\subsubsection{Link with Anderson Acceleration}

The connection between quasi-Newton methods and Anderson Acceleration is strong, as, for instance, Broyden methods and Anderson acceleration are equivalent. To illustrate this, let's closely examine the update of $\alpha$ in \cref{eq:anderson_type_1}:
\begin{align*}
& x_{t+1} =  x_t - h\nabla f(x_t) + (D_t-hG_t)\alpha,\quad \alpha = \argmin f(x_t) + \nabla f(x_t)D_t\alpha + \frac{1}{2}\alpha^TD_t^TG_t\alpha \\
\Leftrightarrow & x_{t+1} =  x_t - h\nabla f(x_t) +(D_t-hG_t)\alpha,\quad \alpha : D_t^T\nabla f(x_t) + D_t^TG_t\alpha = 0\\
\Leftrightarrow & x_{t+1} =  x_t - h\nabla f(x_t) + (D_t-hG_t)\alpha,\quad \alpha : \alpha =- (D_t^TG_t)^{-1}D_t^T\nabla f(x_t)\\
\Leftrightarrow & x_{t+1} =  x_t - h\nabla f(x_t) - (D_t-hG_t) (D_t^TG_t)^{-1}D_t^T\nabla f(x_t).\\
\Leftrightarrow & x_{t+1} =  x_t -\left(D_t(D_t^TG_t)^{-1}D_t^T+h\left(I- G_t(D_t^TG_t)^{-1}D_t^T\right)\right)\nabla f(x_t)
\end{align*}

The above step is precisely the quasi-Newton step $x_{t+1}=x_t-B_t^{-1}\nabla f(x_t)$, where $B_t^{-1}$ corresponds to the Broyden update given by Equation \ref{eq:broyden_type_1}, with $B_0^{-1} = hI$. A similar reasoning can be applied to Equation \ref{eq:broyden_type_2}.

When considering the single-secant updates, following the same reasoning as in Section 3 leads to the same conclusion for the SR-1 and DFP updates.

This result is expected since the approximations $H_t$ or $B_t^{-1}$ satisfy the single or multisecant equation:
\[
    H_t G_t = D_t.
\]
This indicates that the matrix $H_t$ maps vectors from the span of previous gradients to the span of previous directions. This observation justifies the construction in \cref{eq:def_next_iterate}.

\subsection{Links with Algorithms \ref{alg:type1} and \ref{alg:type2}}  \label{sec:link}

Both Algorithms \ref{alg:type1} and \ref{alg:type2} can be viewed as quasi-Newton and Anderson/nonlinear acceleration schemes. The update formulas are
\begin{align}
    & \min_\alpha f(x_t) + \nabla f(x_t)^TD_t\alpha + \frac{\alpha^TH_t\alpha}{2} + \frac{M\|D_t\alpha\|^3}{6},  \quad H_t  \defas  \frac{G_t^TD_t+D_t^TG_t + \mathrm{I}M\|D_t\|\|\varepsilon_t\|}{2}.
    \tag{Type I}\\
     & \min_\alpha \left\|\nabla f(x_t) + G_t\alpha\right\| + \frac{M}{2}\Big( \sum_{i=1}^N |\alpha_i| [\varepsilon_t]_i + \|D_t\alpha\|^2\Big),\tag{Type II}
\end{align}
The resemblance with Anderson/nonlinear acceleration is strong, as the objective function is similar. If the function is quadratic, $L=0$ and therefore $M$ can also be set to $0$; hence, the coefficients $\alpha$ are \textit{exactly} the type I and type II Anderson steps \cref{eq:anderson_type_1,eq:anderson_acc}.

The same idea holds when compared to quasi-Newton methods. In both cases, the optimal solution $\alpha^\star$ can be written implicitly:
\begin{align}
    & \alpha^\star = -\left(H_t + \frac{MD_t^TD_t \|D_t\alpha^\star\|}{6}\right)^{-1}D_t^T\nabla f(x_t), 
    \tag{Type I - solution}\\
    & \alpha^\star = - \left(G_t^TG_t+\tilde M D_t^TD_t\right)^{-1} \left(G_t^T \nabla f(x) + \frac{\tilde M\|\varepsilon_t\|}{2}\partial(|\alpha^\star|)\right),\tag{Type II - solution}
\end{align}
where $\tilde M \defas\|\nabla f(x_t) + G_t\alpha\|M $ and $\partial(|\alpha^\star|)$ is a subgradient of $|\alpha^*|$. The step then reads
\begin{align}
    x_{t+1} & = x_t + D\alpha^\star \tag{Generic step}\\
    x_{t+1} & = x_t - D_t\left(H_t + \frac{MD_t^TD_t \|D_t\alpha^\star\|}{6}\right)^{-1}D_t^T\nabla f(x_t), 
    \tag{Type I - step}\\
    x_{t+1} &= x_t - D_t \left(G_t^TG_t+\tilde M D_t^TD_t\right)^{-1} \left(G_t^T \nabla f(x) + \frac{\tilde M\|\varepsilon_t\|}{2}\partial(|\alpha^\star|)\right),\tag{Type II - step}
\end{align}
Type I is a quasi-Newton step with a symmetrization of $G^TD$ and a regularization. In contrast, the type II step can be seen as a quasi-Newton method with a regularization on $G^\dagger$, with a correction term on the gradient. Therefore the Hessian approximation reads
\[
    B_t^{-1} = D_t\left(H_t + \frac{MD_t^TD_t \|D_t\alpha^\star\|}{6}\right)^{-1}D^T, \quad H_t = D_t \left(G_t^TG_t+\tilde M D_t^TD_t\right)^{-1}G_t^T.
\]

Again, when the objective function is quadratic, $L=0$ and therefore $M=0$. Moreover, when $f$ is quadratic, the matrix multiplication $D^TG$ satisfies $D^TG+G^TD = 2D^TG$ as $D^TG$ becomes symmetric. Hence,
\begin{align}
    x_{t+1} & = x_t - D_t\left(D_t^TG_t\right)^{-1}D_t^T\nabla f(x_t), 
    \tag{Type I - quadratic}\\
    x_{t+1} &= x_t - D_t \left(G_t^TG_t\right)^{-1} G_t^T \nabla f(x_t),\tag{Type II quadratic}
\end{align}
The steps are \textit{exactly} the type I and type II multisecant Broyden methods from \cref{eq:broyden_type_1,eq:broyden_type_2}, with the only difference that there is no initialization $H_0$ or $B_0$. 
\clearpage
\section{Solving the sub-problems} \label{sec:solving_subproblem}

\paragraph{Solving the Type 1 Subproblem} The Type 1 subproblem is a well-studied problem that involves minimizing a specific objective function. A method proposed by \citep{nesterov2006cubic} has proven to be efficient for solving this problem. The method utilizes eigenvalue decomposition on a matrix to find the optimal solution. In this paper, the matrix involved in this problem is relatively small, therefore eigenvalue decomposition is not a concern even for large-scale problems. The subproblem aims to determine the norm of the solution, and this can be achieved through solving one nonlinear equation using bisection or secant method.

\paragraph{Solving the Type 2 Subproblem} The Type 2 subproblem can be formulated as a Second-Order Cone Program (SOCP). The objective function of this subproblem consists of three terms: a norm term, a sum of absolute values term, and a quadratic term. The norm term can be transformed using singular value decomposition, and the sum of absolute values term can be expressed as with linear constraints. The quadratic term can be simplified using a rotated quadratic cone. By utilizing these techniques, the Type 2 subproblem can be effectively solved using existing SOCP solvers.

\subsection{Solving the Type 1 Subproblem}
The Type 1 subproblem can be expressed as follows:

\[
    \min_\alpha \nabla f(x) D\alpha + \frac{1}{2} \alpha^TH\alpha + \frac{M}{6}\|D\alpha\|^3,
\]

where $H$ is symmetric but not necessarily positive definite. This problem has been well-studied, and \citep{nesterov2006cubic} proposed an efficient method to solve it using eigenvalue decomposition on the matrix $H$. Although eigenvalue decomposition may be challenging for large-scale problems, it is not a concern here since $H\in\mathbb{R}^{N\times N}$, with a relatively small $N$ (e.g., $N=25$ in the experiments).

In essence, the subproblem involves determining the norm of the solution $r = \|\alpha\|$. This can be accomplished through a simple bisection on the following system of nonlinear equations:

\begin{equation} \label{eq:subproblem_type_1_plain}
    \left(H+\frac{MD^TDr}{2}I\right)\alpha = -D^t\nabla f(x),\quad \|\alpha\| = r,\quad r\geq -\lambda_{\min}(H).
\end{equation}

Interestingly, this problem is equivalent to the following formulation, as shown in Proposition \ref{prop:equivalence_type1_subproblem}:

\begin{equation}\label{eq:subproblem_type_1_preco}
    \left(\Lambda+\frac{Mr}{2}I\right)\tilde\alpha = -V^T(D^TD)^{-1/2}D^t\nabla f(x),\; \|\alpha\| = r,\; r\geq -\lambda_{\min}(H), \; \tilde \alpha = V^T(D^TD)^{1/2}\alpha,
\end{equation}

which involves the eigenvalue decomposition $(D^TD)^{-1/2}H(D^TD)^{-1/2} = V\Lambda V^T$.

\begin{proposition} \label{prop:equivalence_type1_subproblem}
    Problems \cref{eq:subproblem_type_1_plain} and \cref{eq:subproblem_type_1_preco} are equivalent.
\end{proposition}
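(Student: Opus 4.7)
The plan is to treat the equivalence as a straightforward change of variables induced by a preconditioning of the Hessian‐like matrix $H$ by $(D^TD)^{1/2}$. Concretely, introduce the invertible change of variable
\[
    \tilde\alpha \defas V^T(D^TD)^{1/2}\alpha, \qquad \alpha = (D^TD)^{-1/2}V\tilde\alpha,
\]
which is well defined because $D^TD\succ 0$ (from \cref{assump:bounded_conditionning}) and $V$ is orthogonal. The objective is then to rewrite both the optimality system and the norm constraint of \eqref{eq:subproblem_type_1_plain} in terms of $\tilde\alpha$ and verify that they coincide with those of \eqref{eq:subproblem_type_1_preco}.

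For the linear equation, I would first rewrite the system from \eqref{eq:subproblem_type_1_plain} in the algebraically correct form $\bigl(H+\tfrac{Mr}{2}D^TD\bigr)\alpha = -D^T\nabla f(x)$, substitute $\alpha=(D^TD)^{-1/2}V\tilde\alpha$, and left-multiply by $V^T(D^TD)^{-1/2}$. Using the fact that $(D^TD)^{-1/2}D^TD(D^TD)^{-1/2}=I$ and the eigendecomposition $(D^TD)^{-1/2}H(D^TD)^{-1/2}=V\Lambda V^T$, this collapses exactly to
\[
    \left(\Lambda+\tfrac{Mr}{2}I\right)\tilde\alpha = -V^T(D^TD)^{-1/2}D^T\nabla f(x),
\]
which is the equation in \eqref{eq:subproblem_type_1_preco}.

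For the constraint, I would expand $\|D\alpha\|^2 = \alpha^T D^TD\alpha$ after the substitution to obtain $\tilde\alpha^T V^T(D^TD)^{-1/2}D^TD(D^TD)^{-1/2}V\tilde\alpha = \tilde\alpha^TV^TV\tilde\alpha = \|\tilde\alpha\|^2$, so that $\|D\alpha\|=r$ if and only if $\|\tilde\alpha\|=r$. Finally, the eigenvalue condition that makes the coefficient matrix positive semidefinite transports from $H+\tfrac{Mr}{2}D^TD \succeq 0$ to $\Lambda+\tfrac{Mr}{2}I\succeq 0$ by congruence with the invertible matrix $(D^TD)^{-1/2}V$, which preserves the sign of the spectrum and thus preserves the admissible range of $r$ coming from \citet{nesterov2006cubic}'s global optimality characterization.

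I expect no serious obstacle; the proof is a change of variables that diagonalizes the problem simultaneously in the quadratic and cubic terms. The only mildly delicate step is making explicit that $(D^TD)^{-1/2}$ and $V$ are both invertible so that the transformation is a bijection between feasible/optimal pairs $(\alpha,r)$ of \eqref{eq:subproblem_type_1_plain} and $(\tilde\alpha,r)$ of \eqref{eq:subproblem_type_1_preco}, and that the eigenvalue threshold on $r$ transfers correctly under the congruence, which I would state as a short remark after the main algebraic computation.
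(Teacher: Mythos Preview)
Your proposal is correct and follows essentially the same route as the paper: factor out $(D^TD)^{1/2}$, diagonalize $(D^TD)^{-1/2}H(D^TD)^{-1/2}=V\Lambda V^T$, and substitute $\tilde\alpha=V^T(D^TD)^{1/2}\alpha$. In fact your treatment is slightly more complete than the paper's, which only writes out the chain of equivalences for the linear equation; your additional verification that $\|D\alpha\|=\|\tilde\alpha\|$ and that the positive-semidefiniteness threshold on $r$ is preserved under the congruence fills in steps the paper leaves implicit (and your silent correction of the typo $\tfrac{MD^TDr}{2}I\to \tfrac{Mr}{2}D^TD$ matches what the paper's own derivation actually uses).
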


\begin{proof}
    The first step is to split $D^TD = (D^TD)^{1/2}(D^TD)^{1/2}$ and then employ an eigenvalue decomposition on $(D^TD)^{-1/2}H(D^TD)^{-1/2} = V\Lambda V^T$ (where $V$ is orthonormal due to the symmetry of the matrix):
    \begin{align*}
        & \left(H+\frac{MD^TDr}{2}I\right)\alpha = -D^t\nabla f(x) \\
        \Leftrightarrow&  (D^TD)^{1/2}\left((D^TD)^{-1/2}H(D^TD)^{-1/2}+\frac{Mr}{2}I\right)(D^TD)^{1/2}\alpha = -D^t\nabla f(x)\\
        \Leftrightarrow&  (D^TD)^{1/2}V\left(\Lambda+\frac{Mr}{2}I\right)V^T(D^TD)^{1/2}\alpha = -D^t\nabla f(x)\\
        \Leftrightarrow&  \left(\Lambda+\frac{Mr}{2}I\right)V^T(D^TD)^{1/2}\alpha = -V^T(D^TD)^{-1/2}D^t\nabla f(x)\\
        \Leftrightarrow&  \left(\Lambda+\frac{Mr}{2}I\right)\tilde\alpha = -V^T(D^TD)^{-1/2}D^t\nabla f(x).
    \end{align*}
\end{proof}

Once the eigenvalue decomposition is performed, the subproblem \cref{eq:subproblem_type_1_preco} becomes relatively simple since it involves solving a diagonal system of equations for a fixed value of $r$. The main objective is to find an interval $[r_{\min}, r_{\max}]$ that encompasses the optimal value $r=\|\alpha\|$. Once this interval is identified, a straightforward bisection or secant method can be employed to obtain the optimal solution.

\paragraph{Finding initial bounds} Starting with $r_{\min} = \max\{0, -\lambda_{\min(H)}\}$ and $r_{\max} = \max\{2r_{\min}, 1\}$, 
\[
    \text{do } r_{\max} \gets 2r_{\max} \quad \text{while }  \left\|\tilde \alpha\right\| \geq r_{\max}.
\]
where $\tilde\alpha = -\left(\Lambda+\frac{Mr_{\max}}{2}I\right)^{-1}V^T(D^TD)^{-1/2}D^t\nabla f(x)$. Increasing $r_{\max}$ increases the regularization, hence reduces the norm of $\tilde \alpha$.

\paragraph{Finding $\alpha$} After $r^\star$ has been found such that $|r^\star-\|\tilde \alpha\||$ is sufficiently small, the best $\alpha$ is simply
\[
    \alpha = (D^TD)^{-1/2}V\tilde \alpha =  -(D^TD)^{-1/2}V \left(\Lambda+\frac{Mr^\star}{2}I\right)^{-1}V^T(D^TD)^{-1/2}D^t\nabla f(x).
\]
In the case where the diagonal matrix is not invertible, which happens when $r^\star = r_{\min}$, it suffices to use the pseudo-inverse instead. 

Note that $D^TD$ is an $N\times N$ matrix, where $N$ is small, therefore, computing its inverse is inexpensive. Moreover, when $D$ is orthogonal, $D^TD=I$, therefore there is no need to invert it. In addition, $(\Lambda+\frac{Mr^\star}{2}I)^{-1}$ can be computed in $O(N)$ complexity since the matrix is diagonal.

\subsection{Solving the Type 2 Subproblem}

The Type 2 subproblem is given by:

\begin{equation}\label{eq:subproblem_type_2}
    \min_{\alpha} \underbrace{\left\|\nabla f(x) + G\alpha\right\|}_{\textbf{(a)}} + \frac{L}{2}\Big( \underbrace{\sum_{i=1}^N |\alpha_i| \varepsilon_i}_{\textbf{(b)}} + \underbrace{\|D\alpha\|^2}_{\textbf{(c)}}\Big).
\end{equation}

Although it may not be immediately apparent, this subproblem can be formulated as a Second-Order Cone Program (SOCP) with $O(N)$ variables and constraints.

\subsubsection{Fundamentals of SOCP}

SOCP solvers handle the following conic problems:

\begin{align}
    & \min_{x,t_i,\omega_i} c_0x + \sum_{i} c_i [t_i;\omega_i] \quad \text{subject to} \nonumber\\
    & A_0x + \sum_{i=1}^{k}A_i[t_i;\omega_i] =b  \label{eq:socp} \tag{SOCP Standard Matrix Form}\\
    & x \geq 0 \nonumber \\
    & (t_i,\omega_i) \in \mathcal{K}_i \;\; \Leftrightarrow t_i \geq \|\omega_i\|, \;\; t\geq 0. \nonumber
\end{align}

Here, $k$ represents the number of cones, and the cone $\mathcal{K}$ refers to the second-order cone, also known as the \textit{Lorenz} cone.

A useful transformation is the \textit{rotated quadratic cone}, defined as follows:

\begin{align*}
    [a,b,c] \in \mathcal{K}_q  \quad \Leftrightarrow \quad 2ab \geq \|c\|^2.
\end{align*}

The rotated quadratic cone can be reformulated as a second-order cone using a linear transformation:

\[
   \text{if} \quad \begin{bmatrix}
        a \\ b \\ c
    \end{bmatrix} = \begin{bmatrix}
        \frac{1}{\sqrt{2}} & \frac{1}{\sqrt{2}} & 0  \\
        \frac{1}{\sqrt{2}} & - \frac{1}{\sqrt{2}} & 0 \\
        0 & 0 & I_K
    \end{bmatrix}
    \begin{bmatrix}
        t \\ \omega^{(0)} \\ \omega
    \end{bmatrix} \quad \text{then} \quad (t, [\omega^{(0)};\,\omega])\in\mathcal{K} \quad \Leftrightarrow \quad [a,b,c] \in \mathcal{K}_q.
\] 

Thanks to this transformation, the rotated quadratic cone can be included in SOCP solvers.
\subsubsection{SOCP Formulation of the Type 2 Subproblem}

The SOCP of \cref{eq:subproblem_type_2} is composed of the three terms $\textbf{a}$, $\textbf{b}$, and $\textbf{c}$.

\paragraph{Term (a)} Let $U_G\Sigma_G V^T_G$ be the singular value decomposition of $G$. Write $P_G = U_GU_G^T$ as the projector onto the columns of $G$. Then,

\begin{align*}
    \left\|\nabla f(x) + R\alpha\right\| &= \left\|P_G\nabla f(x) + P_GG\alpha + (I-P_G)\nabla f(x)\right\| \\
    &= \sqrt{\left\|P_G\nabla f(x) + R\alpha\right\|^2 + \|(I-P_G)\nabla f(x)\|^2} \\
    &= \sqrt{\left\|U_{G}\left(U_{G}^T\nabla f(x) + \Sigma_{G} V^T_G\alpha\right)\right\|^2 + \|(I-P_G)\nabla f(x)\|^2} \\
    &= \sqrt{\left\|U_{G}^T\nabla f(x) + \Sigma_{G} V^T_G\alpha\right\|^2 + \|(I-P_G)\nabla f(x)\|^2}
\end{align*}

Let the vector $\omega_1 = \left[U_{G}^T\nabla f(x) + \Sigma_{G} V\alpha;\, \|(I-P_G)\nabla f(x)\|\right]$. Hence,

\[
    \left\|\nabla f(x) + G\alpha\right\| = \min_{t_1,\,\alpha,\,\omega_1} t_1 : (t_1, \omega_1) \in \mathcal{K}_L,\quad \omega_1 = \left[U_{G}^T\nabla f(x) + \Sigma_{G} V\alpha;\, \|(I-P_G)\nabla f(x)\|\right].
\]

\paragraph{Term (b)} This term is standard in linear programming. Let $\alpha = \alpha_+ - \alpha_-$, with $\alpha_+, \alpha_- \geq 0$,

\[
    \sum_{i=1}^N |\alpha_i| \varepsilon_i = \sum_{i=1}^N (\alpha_+ + \alpha_-)\varepsilon_i.
\]

\paragraph{Term (c)} Let $U_D\Sigma_D V^T_D$ be the singular value decomposition of $D$. Using the rotated cone, the constraint can be written as

\[
    2 t_3 b \geq \| U_D \Sigma_D V_D \alpha \|^2 = \| \Sigma_D V_D \alpha \|^2 ,\quad b = \frac{1}{2}.
\]

Using the transformation into a Lorenz cone, this is equivalent to

\[
    \begin{bmatrix}
        1 & 0 & 0 \\
        0 & 1 & 0 \\
        0 & 0 & \Sigma_D V_D^T
    \end{bmatrix}
    \begin{bmatrix}
    t_3 \\ b \\ \alpha
    \end{bmatrix}
    =
 \begin{bmatrix}
        \frac{1}{\sqrt{2}} & \frac{1}{\sqrt{2}} & 0 \\
        \frac{1}{\sqrt{2}} & - \frac{1}{\sqrt{2}} & 0 \\
        0 & 0 & I_{k}
    \end{bmatrix}
    \begin{bmatrix}
    t_2\\
    \omega^{(0)}_2\\
    \omega_2
    \end{bmatrix},\quad b = \frac{1}{2},\quad (t_2, \,[ \omega^{(0)}_2,\,\omega_2])\in\mathcal{K}.
\]

\textbf{Simplification.} Note that, since $b=\frac{1}{2}$, the value can be immediately replaced. Same idea with $t_3$: the constraint is written as

\[
    t_3 = \frac{t_2 + \omega^{(0)}_2}{\sqrt{2}},\quad t_3 \geq 0.
\]

Since, by construction, $t_2 \geq \omega^{(0)}_2$ and $t_2 \geq 0$, $t_3$ always satisfies the condition, which means both $t_3$ and its constraint can be removed. The constraints thus simplify into

\[
    \begin{bmatrix}
    \frac{1}{2} \\
    0
    \end{bmatrix}
    +
    \begin{bmatrix}
        0 \\
        \Sigma_D V_D^T
    \end{bmatrix}
    \begin{bmatrix}
    \alpha
    \end{bmatrix}
    =
    \begin{bmatrix}
        \frac{1}{\sqrt{2}} & - \frac{1}{\sqrt{2}} & 0 \\
        0 & 0 & I_{k}
    \end{bmatrix}
    \begin{bmatrix}
    t_2\\
    \omega^{(0)}_2\\
    \omega_2
    \end{bmatrix},\quad (t_2, \,[ \omega^{(0)}_2,\,\omega_2])\in\mathcal{K}.
\]

\paragraph{Final formulation}

Gathering all terms, the final SOCP formulation reads

\begin{align*}
    \text{minimize}\quad&  t_1 + \frac{L}{2}\left( (\alpha_++\alpha_-)^T\varepsilon + t_2\right)\\
    \text{subject to}\quad & \omega_1 = \left[U_G^T\nabla f(x) + \Sigma_G V_G^T\alpha\;;\; \|(I-P_G)\nabla f(x)\|\right],\\
    & \alpha_+,\alpha_-\geq 0\\
    & \alpha = \alpha_+-\alpha_-\\
    &  \begin{bmatrix}
        \textbf{0}_{1\times N}  & -\frac{1}{\sqrt{2}} & \frac{1}{\sqrt{2}} & 0 \\
        \Sigma_D V_D^T & \textbf{0}_{N\times 1} & \textbf{0}_{N\times 1} & -I_{N}
    \end{bmatrix}
    \begin{bmatrix}
    \alpha\\
    t_2\\
    \omega^{(0)}_2\\
    \omega_2
    \end{bmatrix}
    =
    \begin{bmatrix}
    0 \\ -\frac{1}{2} \\ \textbf{0}_{N\times 1}
    \end{bmatrix}
    \\
    & (t_1,\,\omega_1)\in \mathcal{K}, \quad (t_2,[\omega^{(0)}_2;\omega_2])\in \mathcal{K}_L, \quad t_2\geq 0.
\end{align*}

\paragraph{Standard matrix formulation}

The SOCP can be written under the standard matrix form \cref{eq:socp}. Let the variables

\[
    \alpha_+,\, \alpha_- \geq 0, \quad (t_1,\omega_1) \in \mathcal{K}_1,\quad (t_2,[\omega_2^{(0)} \omega_2]) \in  \mathcal{K}_2 ,
\]

where $t_1$, $t_2$, and $\omega_2^{(0)}$ are scalars, $\omega_2$, $\alpha_+$, and $\alpha_-$ are vectors of size $N$, and $\omega_1$ is a vector of size $N+1$. The SOCP matrices read

\begin{align*}
    & c_0 = \begin{bmatrix}
         \frac{L\varepsilon^T}{2} & \frac{L\varepsilon^T}{2}
    \end{bmatrix}\quad 
     c_1 = \begin{bmatrix}
        1 & \textbf{0}_{1\times N+1}
    \end{bmatrix}
    \quad 
    c_2 = \begin{bmatrix}
        \frac{L}{2\sqrt{2}} & \frac{L}{2\sqrt{2}} & \textbf{0}_{1\times N}
    \end{bmatrix}\\
    & A_0 = \begin{bmatrix}
        -\Sigma_G V_G^T & \Sigma_G V_G^T\\
        \textbf{0}_{2\times N} & \textbf{0}_{2\times N}\\
        \Sigma_DV_D^T & -\Sigma_DV_D^T
    \end{bmatrix}\\
    & A_1 = \begin{bmatrix}
        \textbf{0}_{N+1\times 1} & I_{N+1\times N+1}\\
        \textbf{0}_{N+1\times 1} & \textbf{0}_{N+1\times N+1}\\
    \end{bmatrix}\\
    & A_2 = \begin{bmatrix}
        \textbf{0}_{N+1\times 1} & \textbf{0}_{N+1\times 1} & \textbf{0}_{N+1\times N}\\
        -\frac{1}{\sqrt{2}} & \frac{1}{\sqrt{2}} & \textbf{0}_{1\times N}\\
        \textbf{0}_{N\times 1} & \textbf{0}_{N\times 1} & -I_{N\times N}\\
    \end{bmatrix}\\
    & b = \begin{bmatrix}
        \nabla f(x)^TU_G & \|(I-P_R)\nabla f(x)\| & -\frac{1}{2} & \textbf{0}_{N\times 1}
    \end{bmatrix}^T.
\end{align*}

This completes the SOCP formulation of the type 2 subproblem.
\clearpage
\section{Additional Numerical Experiments} \label{sec:numerical_app}

This section presents additional numerical experiments.

\paragraph{Methods} The methods compared are the type 1 and type 2 steps with the following strategies: \textit{Iterate only}, \textit{Forward estimate only}, \textit{Greedy} (refer to \cref{sec:direction_requirements}), and the accelerated type 1 method with the strategy \textit{forward estimate only}. The batch methods are not included as they perform poorly regarding the number of Oracle calls. The baseline is the l-BFGS method from \texttt{minFunc} \citep{schmidt2005minfunc}.

\paragraph{Method parameters} In all experiments, the memory of the methods is set to $N=25$ and the $h$ for the forward estimates is set to $h=10^{-9}$. The parameters of the l-BFGS are left untouched except for the memory. The initial point is $x_0 = \nabla f(0_d)$.

\paragraph{Functions} The minimized problems are square loss with cubic regularization, logistic loss with small quadratic regularization, and the generalized Rosenbrock function. The regularization parameter of the square loss is set to $1e-3$ times the norm of the Hessian, and the regularization of the logistic loss is set to $1e-10$ times the square norm of the feature matrix.

\paragraph{Dataset} The datasets for the square and the logistic loss are Madelon \citep{guyon2003design}, Sido0 \citep{guyon2008design}, and Marti2 \citep{guyon2008design} datasets.

\paragraph{Post-processing} The dataset matrix is normalized by its norm, then a vector of ones is concatenated to the data matrix.

\subsection{Initial Parameter for the Backtracking Line search} \label{sec:backtracking_line_search}

The backtracking line search was used in all experiments. The estimation of the initial value $M_0$ (see \cref{eq:m0}) is based on the following observation. Since the function satisfies \cref{assump:lipchitiz_hessian},
\[
    \| \nabla f(y)-\nabla f(x) - \nabla^2 f(x)(y-x)) \| \leq \frac{L}{2} \|y-x\|^2,
\]
for some $x,\,y$. Hence, the parameter $L$ can be estimated as
\[
    L\approx 2\frac{\| \nabla f(y)-\nabla f(x) - \nabla^2 f(x)(y-x)) \|}{\|y-x\|^2}.
\]
Now, define
\begin{align*}
    s_h \defas h\nabla f(x_0),\qquad 
\end{align*}
for some small $h$ and $\tau >1$, and let $x = x_0$ and $y=x_0+s_{\tau h}$. Indeed, if $h$ is small, then 
\[
    \tau \left[ \nabla f(x_0+s_{h})-\nabla f(x_0) \right] \approx \tau \nabla^2 f(x)s_h = \nabla^2 f(x)s_{\tau h}.
\]
Therefore, 
\[
    \| \nabla f(x_0+s_{\tau h})-\nabla f(x_0) - \tau \left[ \nabla f(x_0+s_{h})-\nabla f(x_0) \right]  \| \approx \| \nabla f(x_0+s_{\tau h})-\nabla f(x_0) - \nabla^2 f(x)s_{\tau h}  \|,
\]
and hence, the Lipchitz constant can be estimated as
\begin{align} \label{eq:m0}
    M_0 = \frac{2}{\|s_{\tau h}\|^2}\| \nabla f(x_0+s_{\tau h})-\nabla f(x_0) - \tau \left[ \nabla f(x_0+s_{h})-\nabla f(x_0) \right]  \|.
\end{align}
In the experiments, $h$ is the same as the algorithm, and $\tau = 10$. Various choices of $\tau,\,h$ have been tested without significantly impacting the numerical convergence.

\newpage

\subsection{Scalability w.r.t. Dimension and Memory}

\begin{figure}[h!t]
    \centering
    \includegraphics[width=0.37\textwidth]{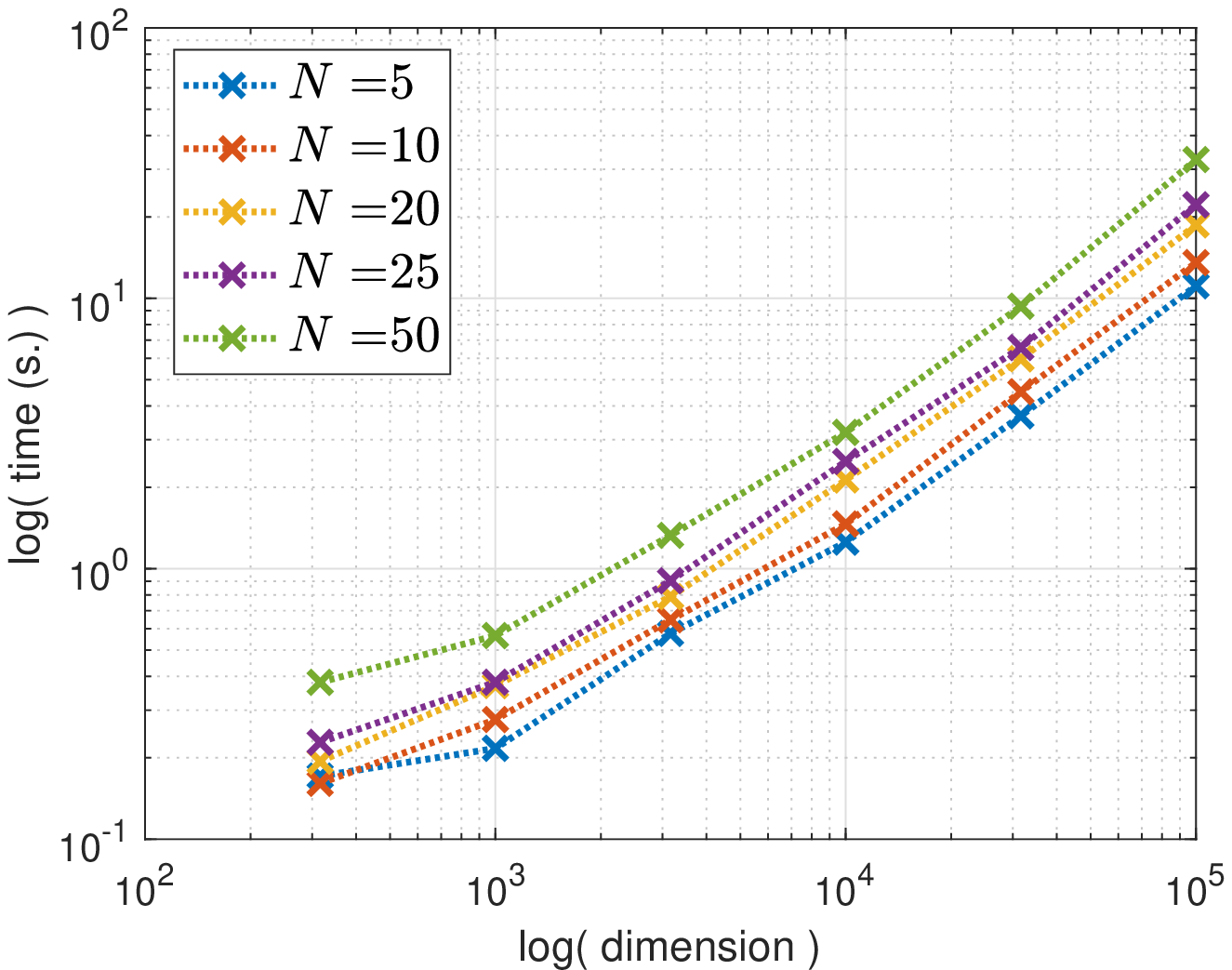}
    \includegraphics[width=0.37\textwidth]{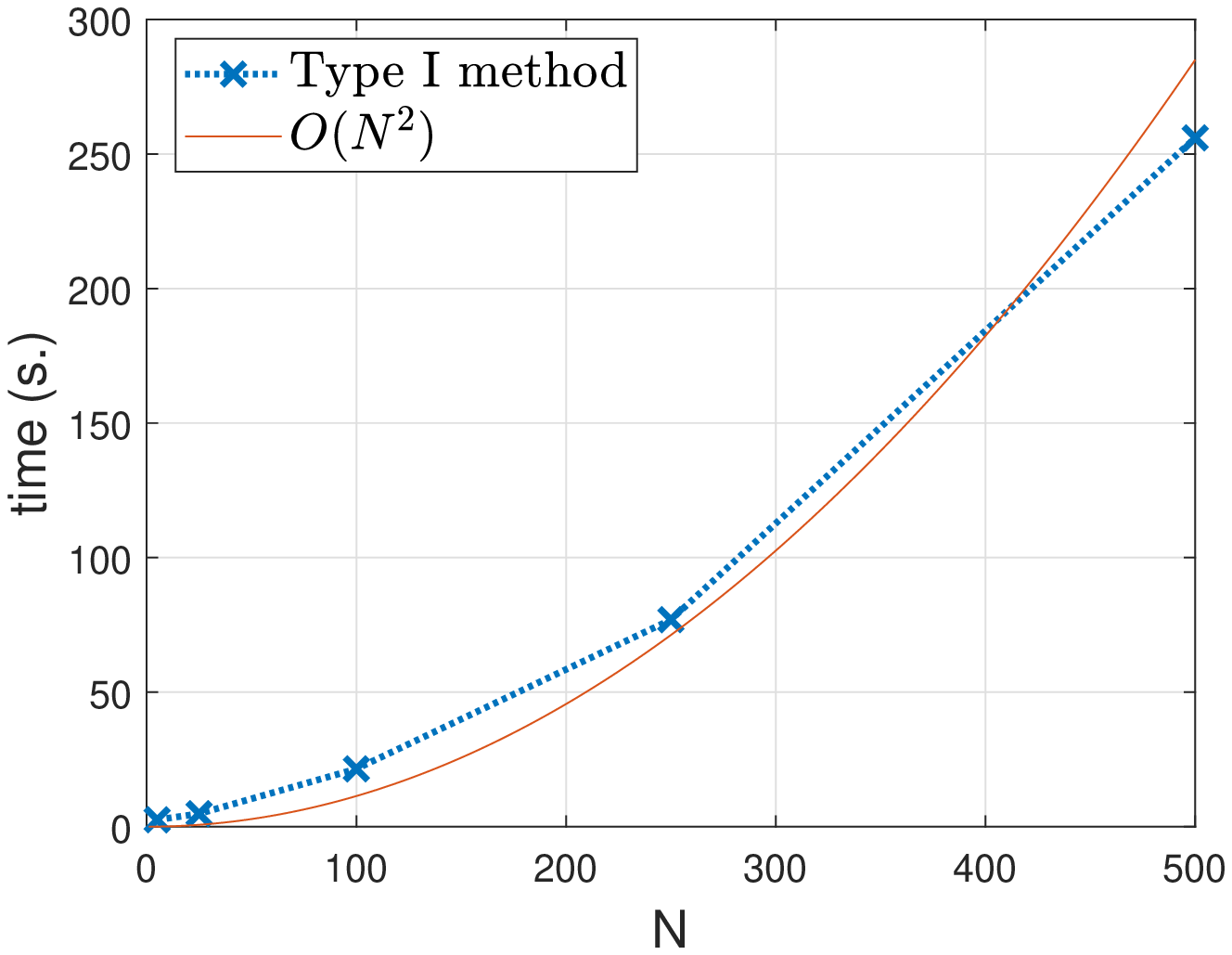}
    \caption{Scaling of the Type 1 method with the "orthogonal forward estimates only" updates rules w.r.t. $N$ and $d$ to minimize a random logistic regression function. As predicted by the theory, the scaling is linear in the dimension and quadratic w.r.t. $N$. The proposed method is suitable for large-scale problems, as it can quickly solve problems with $d\approx 10^6$.}
    \label{fig:scaling}
\end{figure}

\begin{figure}[h!t]
    \centering
    \includegraphics[width=0.3\textwidth]{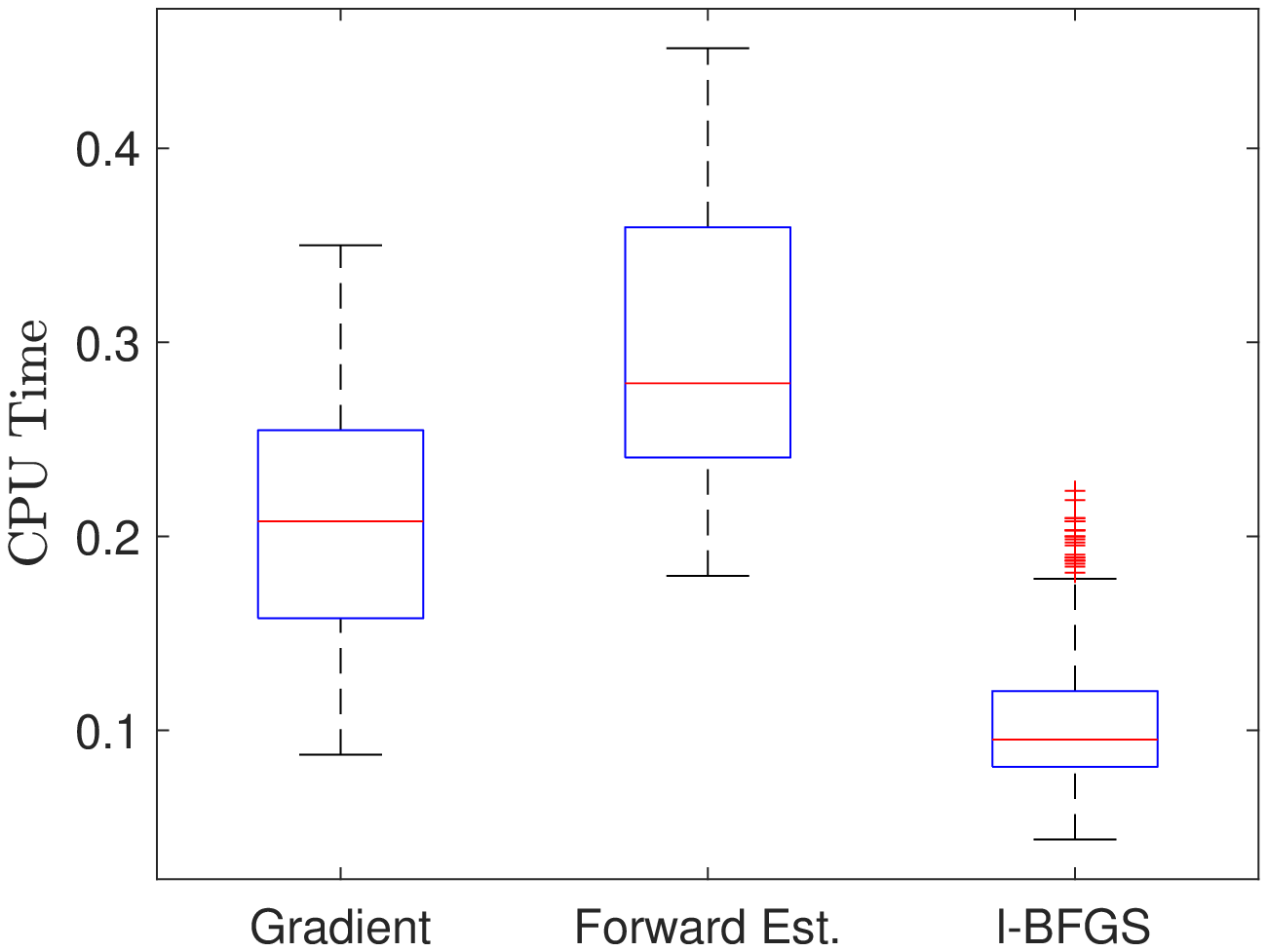}
    \includegraphics[width=0.3\textwidth]{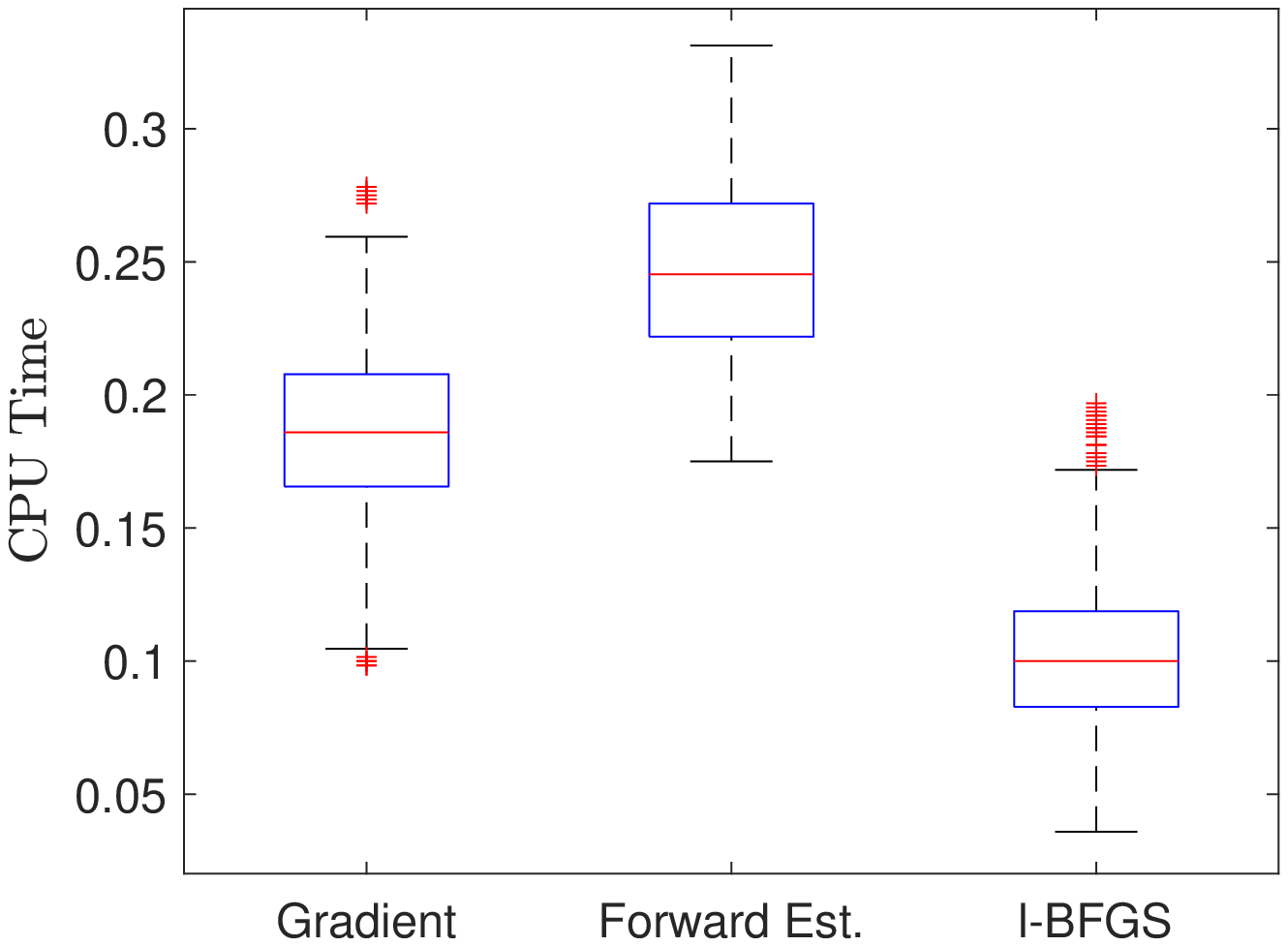}
    \includegraphics[width=0.3\textwidth]{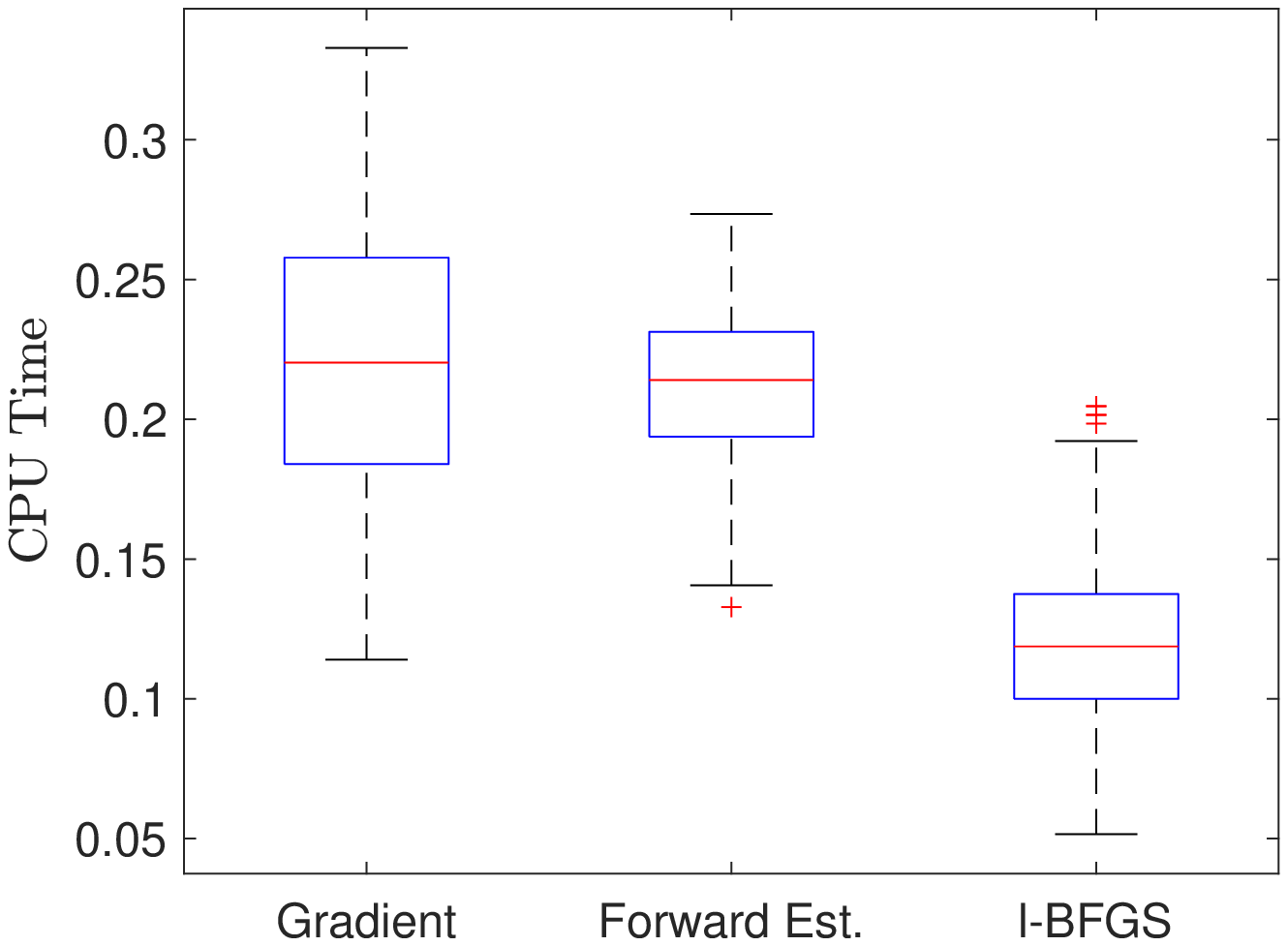}
    \caption{Distribution of the per-iteration time for three methods. The memory parameter of l-BFGS and the type I method is set to (left to right) $N=5,25,100$. The time required by the l-BFGS algorithm increases slightly when $N$ grows, and the per-iteration computation time is approximately two times faster than the type I method. Surprisingly, the total computation time of the type-1 method remains constant for different $N$ because the condition in the backtracking line search is more often satisfied. Note that the $\times 2$ factor between l-BFGS and the type 1 method is expected since the type 1 method requires at least 2 gradient calls.}
    \label{fig:hist}
\end{figure}

\subsection{Influence of h}

\begin{figure}[h!t]
    \centering
    \includegraphics[width=0.37\textwidth]{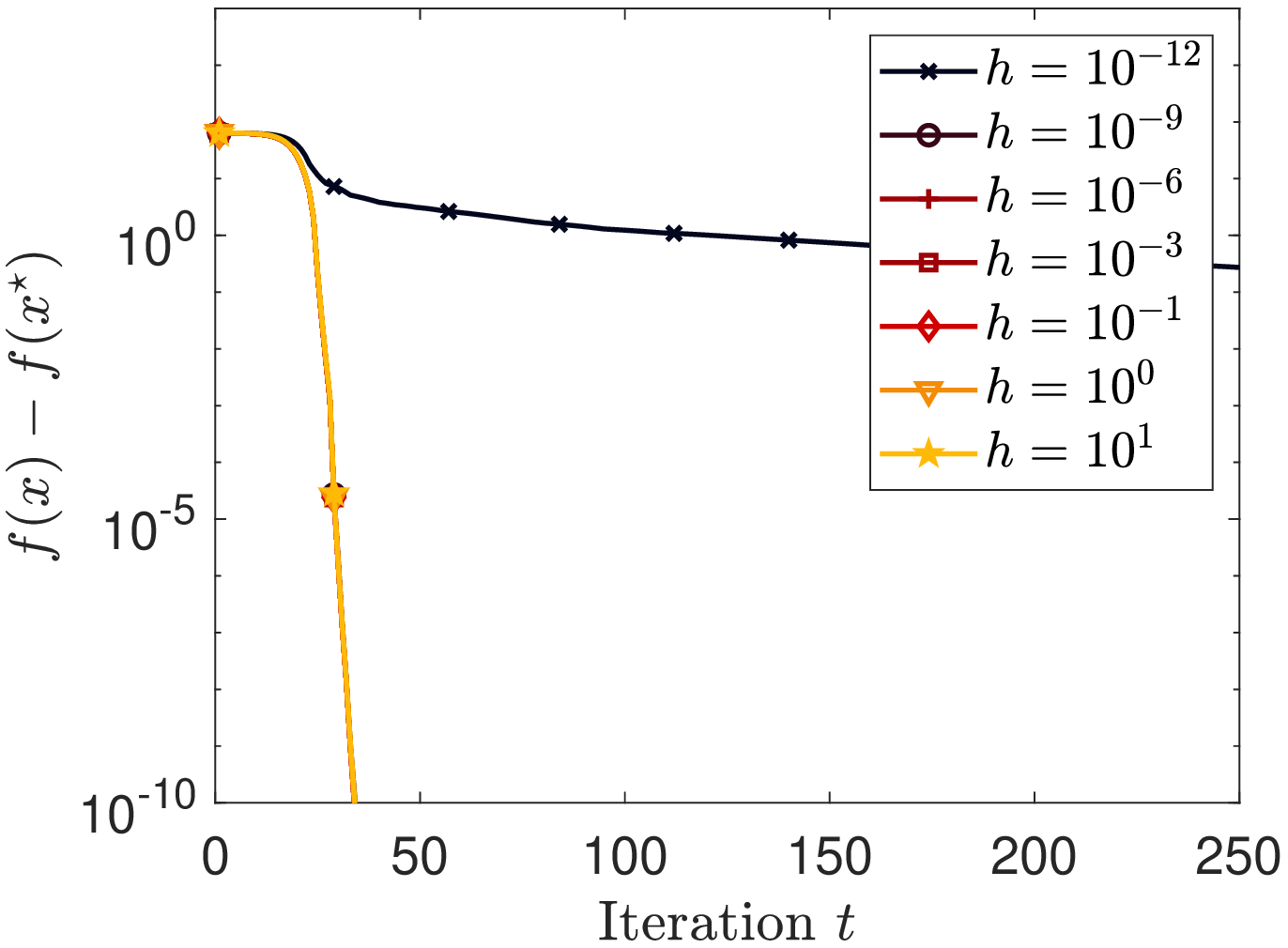}
    \includegraphics[width=0.37\textwidth]{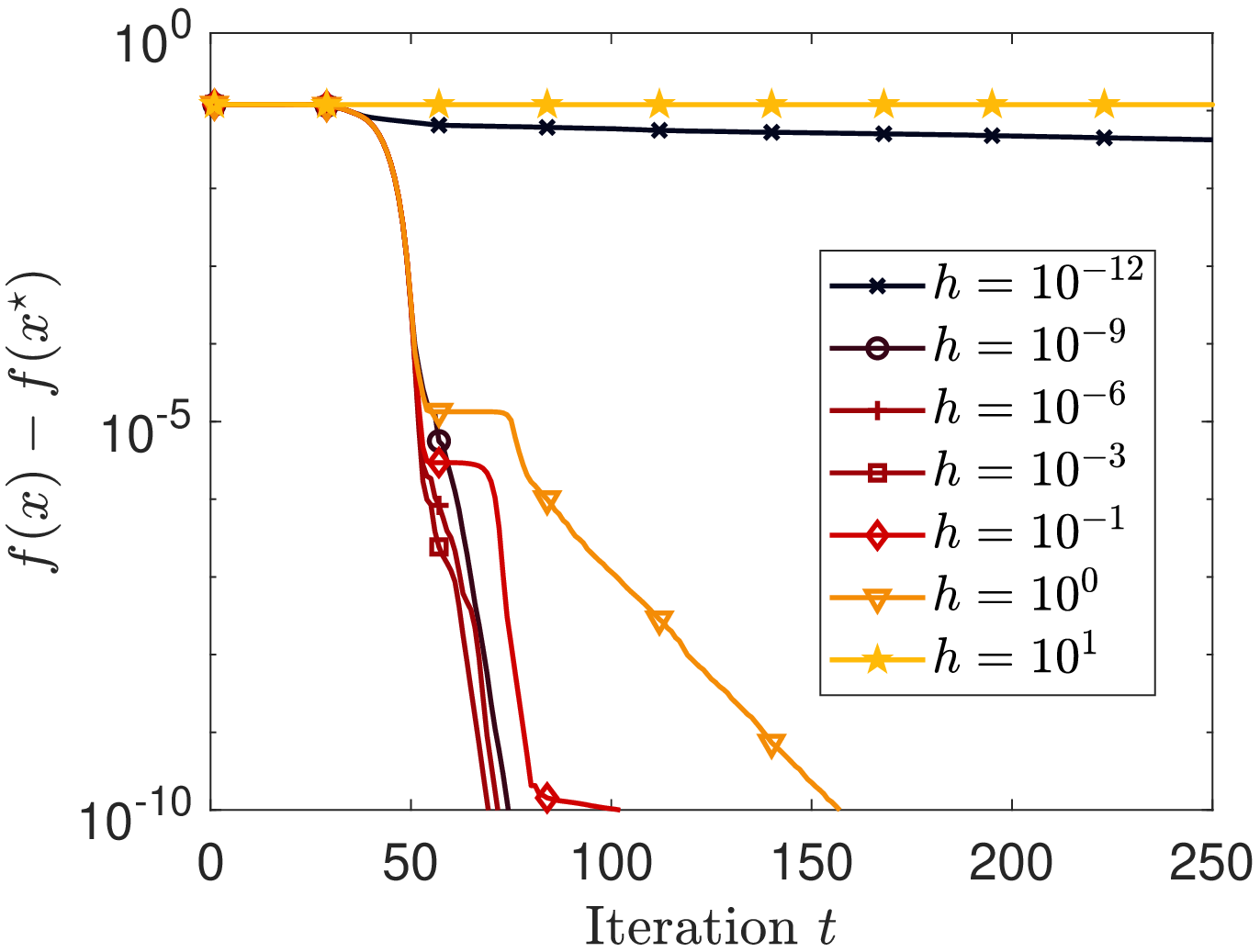}
    \caption{Influence of the step size $h$ to compute the forward estimate $x_{+\frac{1}{2}}$ in the "orthogonal forward estimates only" updates rules on the Madelon dataset to minimize a (left) quadratic and (right) a logistic loss. The range of acceptable $h$ is rather large. For instance, this range is $[10^{-9}, 10^{-1}]$ when minimizing the logistic loss.}
    \label{fig:dependency_h}
\end{figure}

\newpage

\subsection{Impact of the memory parameter N}

\begin{figure}[h!t]
    \centering
    \includegraphics[width=0.37\textwidth]{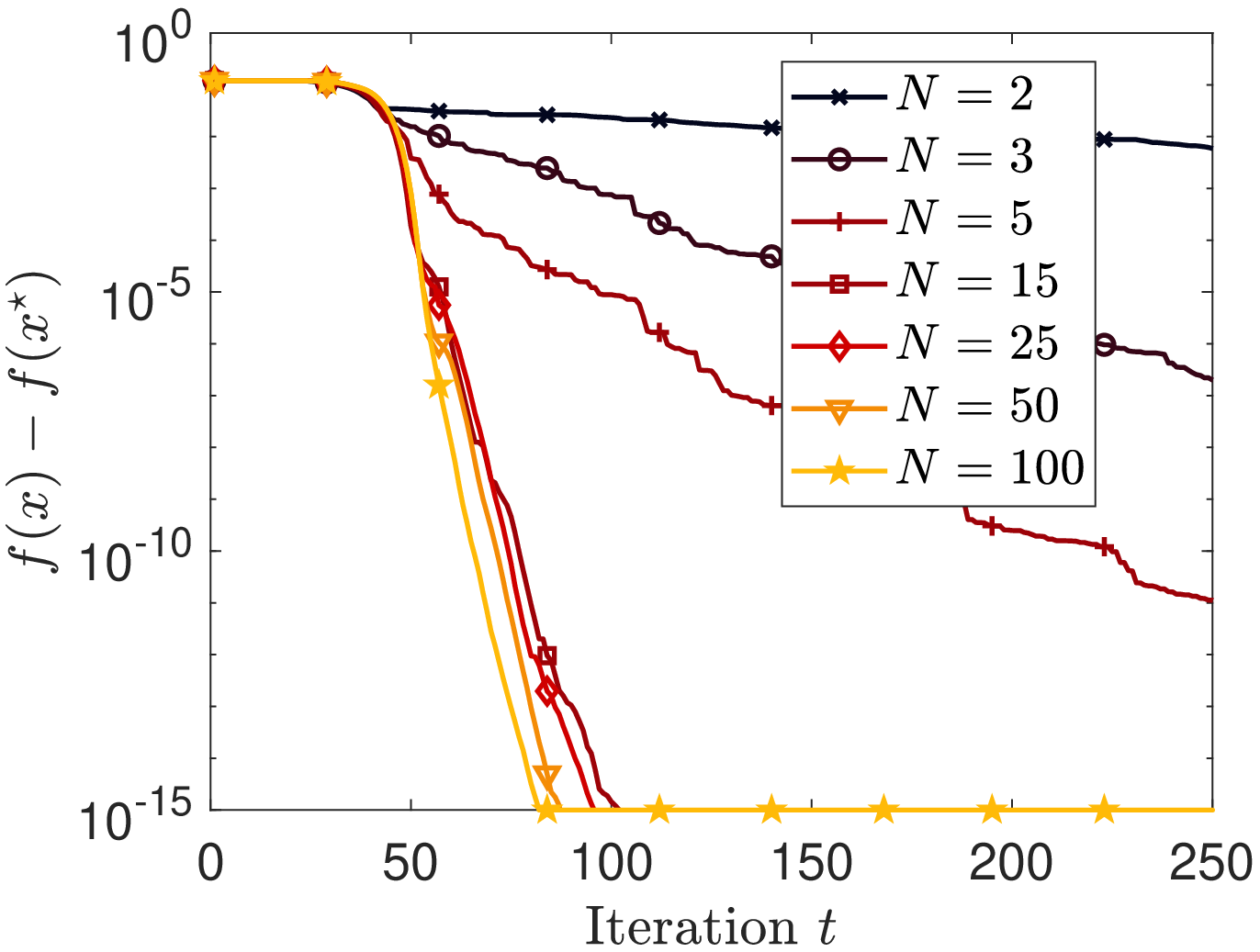}
    \includegraphics[width=0.37\textwidth]{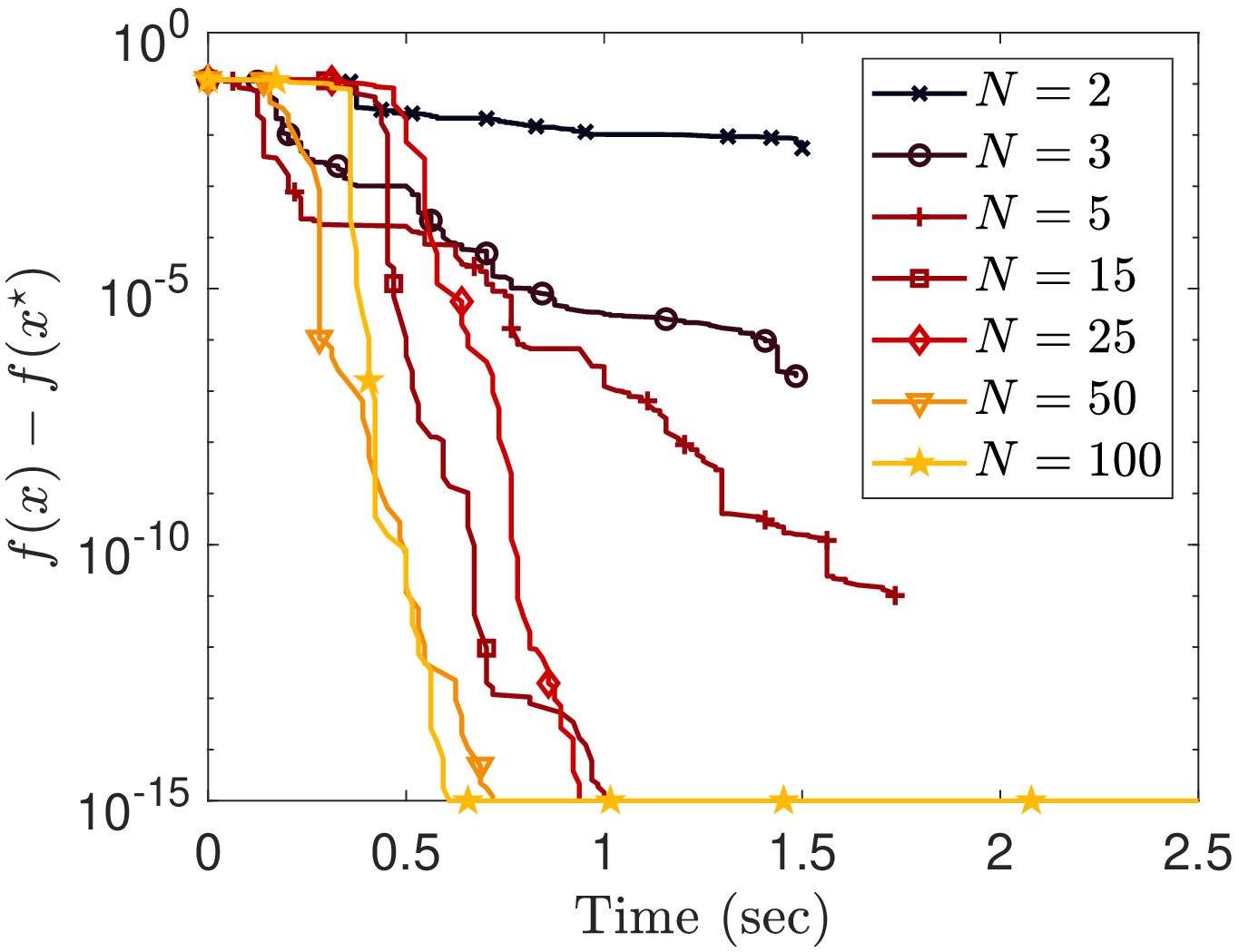}
    \caption{Impact of the memory size $N$ on the convergence rate of the type 1 method with the "Orthogonal forward estimate" update rule to minimize a logistic loss on the Madelon dataset. Left: number of iterations versus suboptimality, right: time versus suboptimality. Overall, it is always better to increase the memory parameter in terms of the number of iterations, but there is an effect of diminishing returns.}
    \label{fig:dependency_N}
\end{figure}

\subsection{Nonconvex optimization}

\begin{figure}[h!t]
    \centering
    \includegraphics[width=0.3\textwidth]{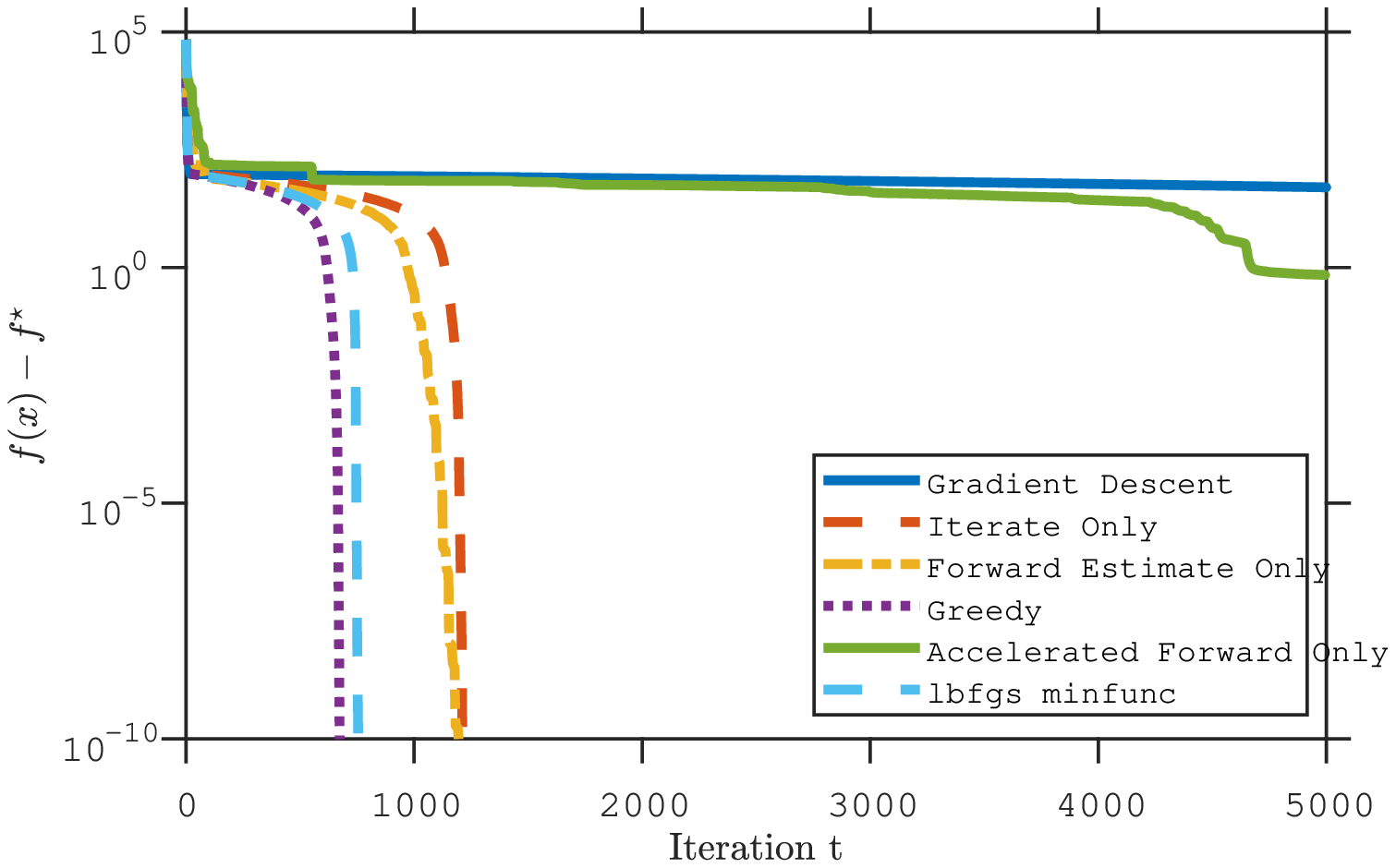}
\includegraphics[width=0.3\textwidth]{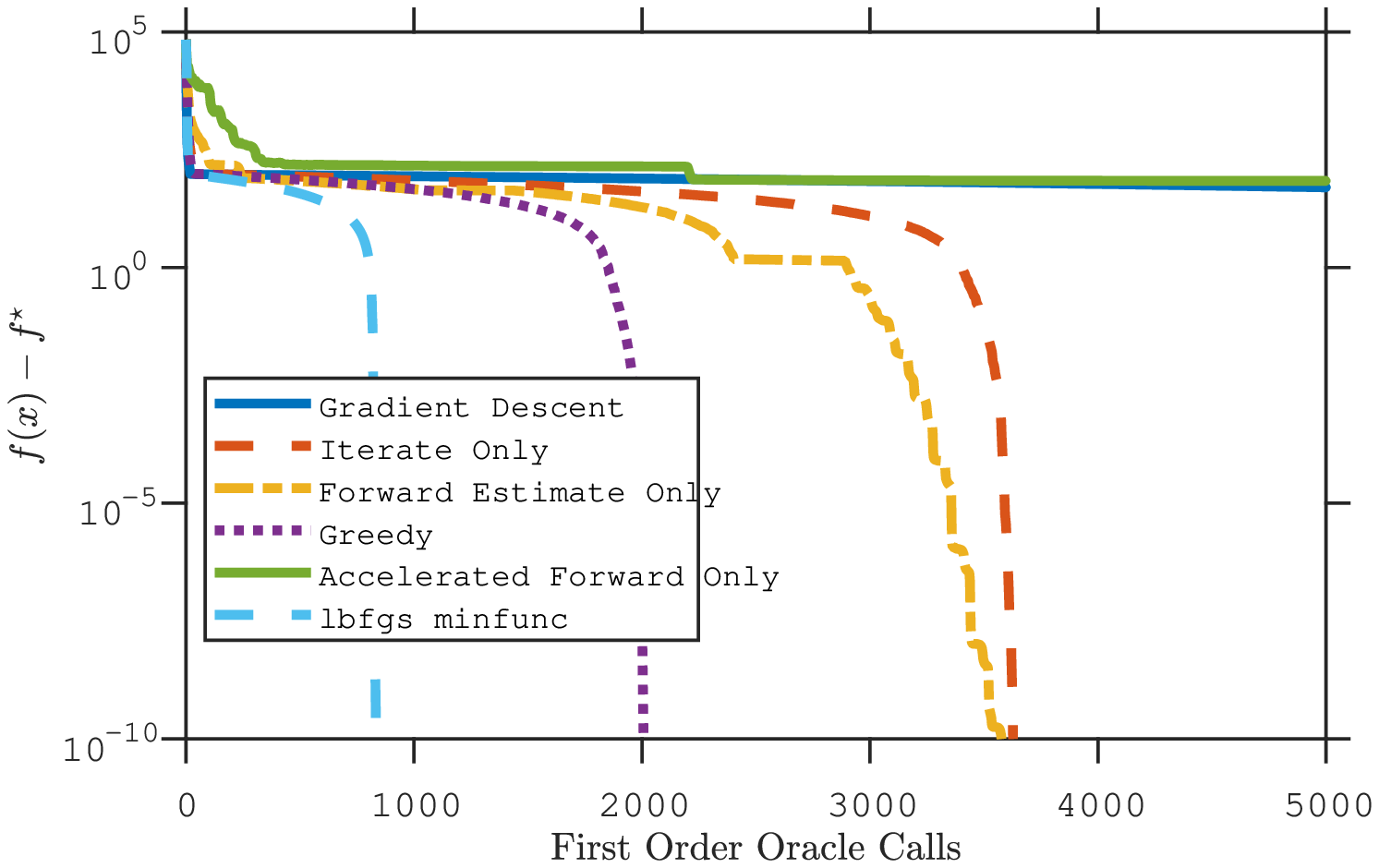}
\includegraphics[width=0.3\textwidth]{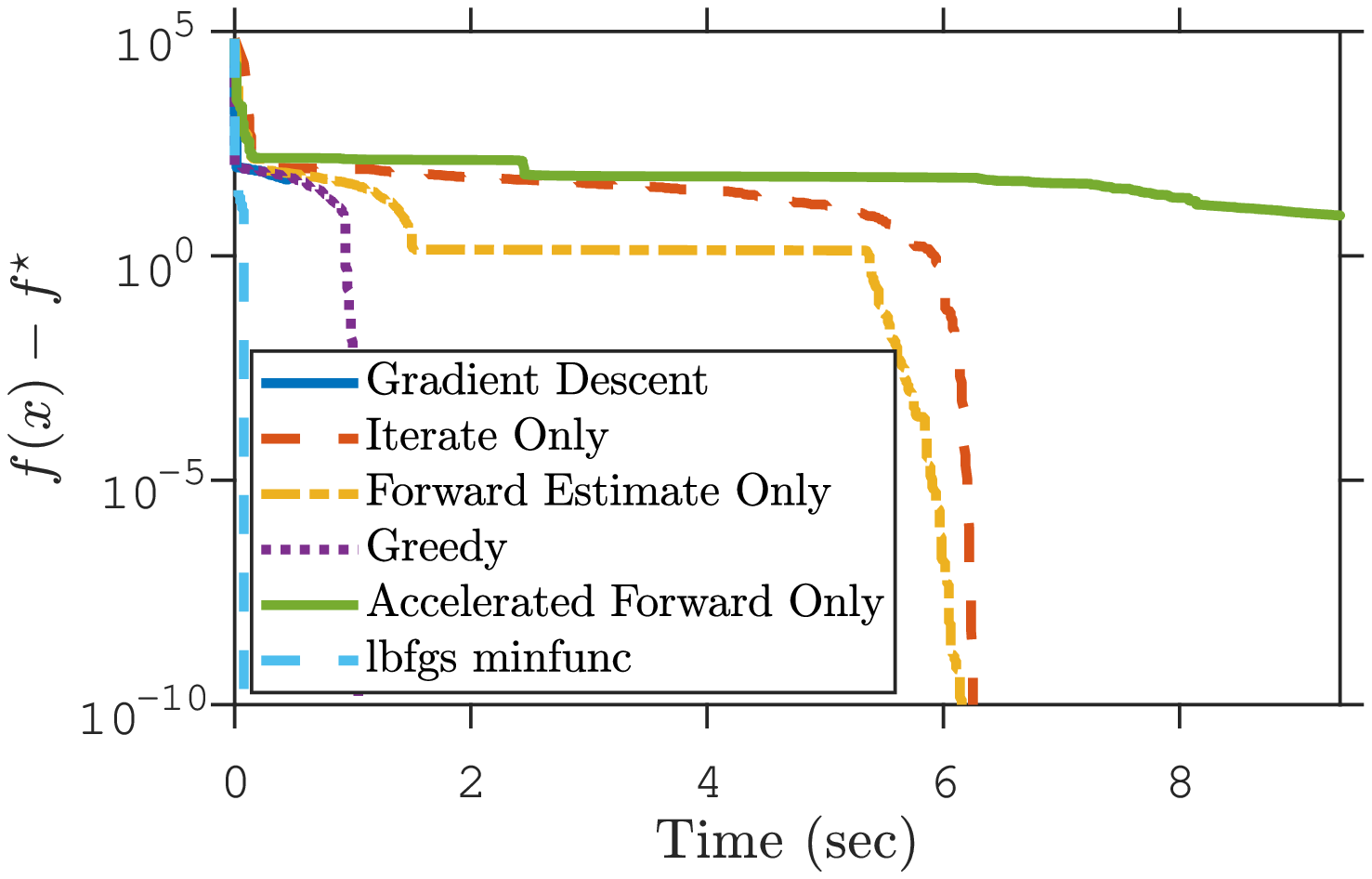}
    \caption{Comparison of type 1 methods on the Generalized Rosenbrock function in $\mathbb
    {R}^{100}$.}
    \label{fig:rosembrock}
\end{figure}

\begin{figure}[h!t]
    \centering
    \includegraphics[width=0.49\textwidth]{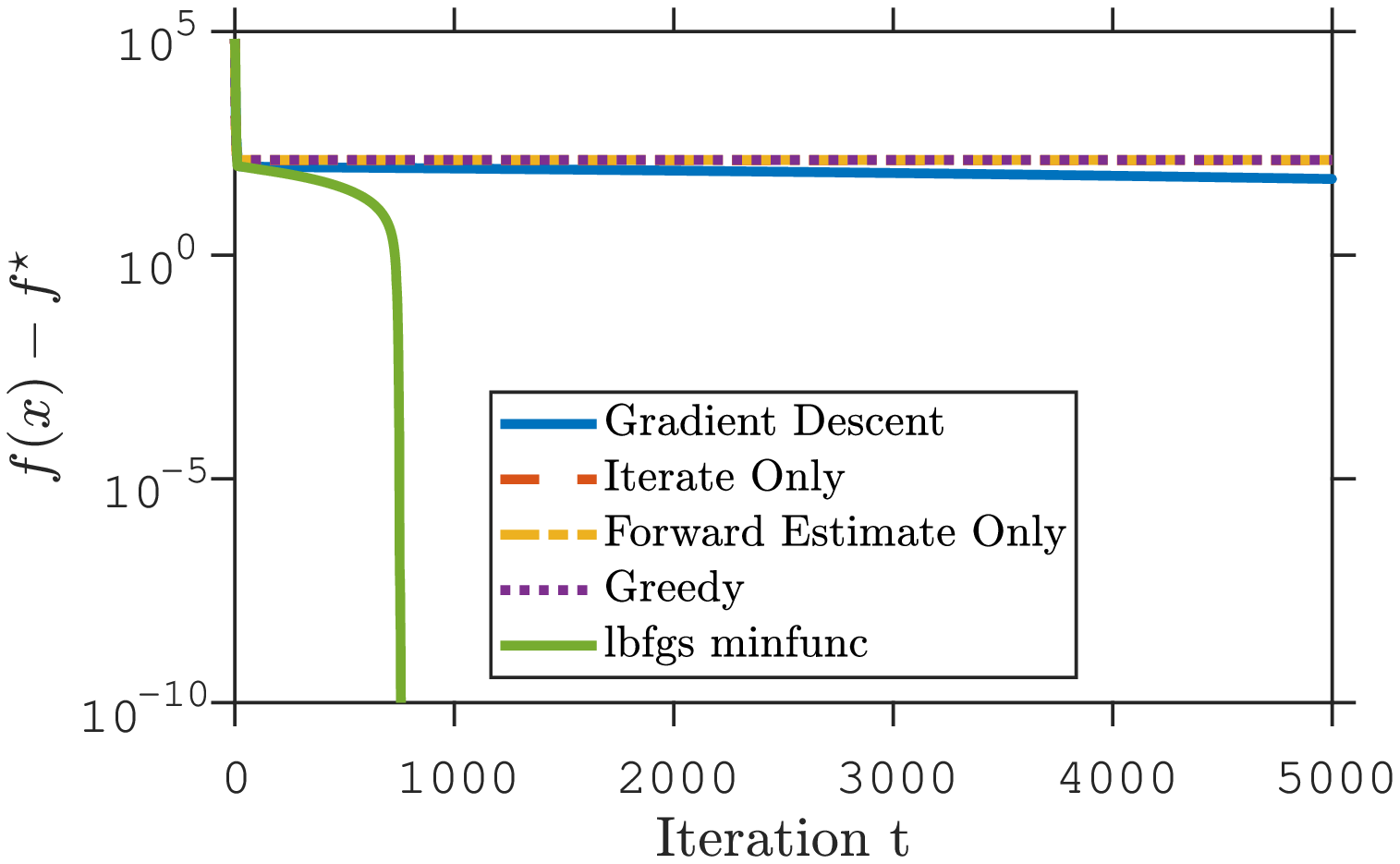}
\includegraphics[width=0.49\textwidth]{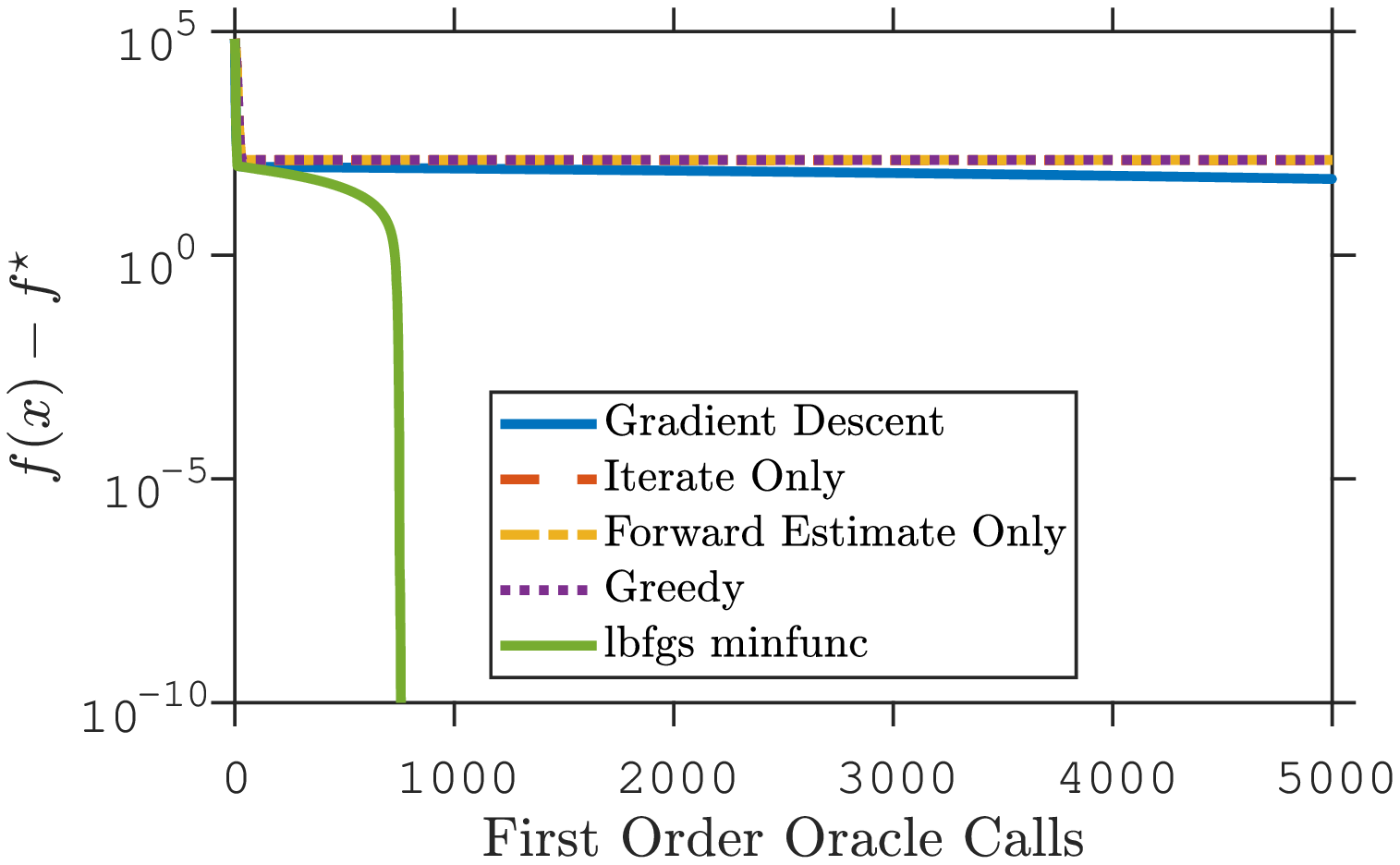}
    \caption{Comparison of type 2 methods on the Generalized Rosenbrock function in $\mathbb
    {R}^{100}$}
    \label{fig:rosembrock_type2}
\end{figure}

\clearpage
\subsection{Comparison of Type 1 Methods on Convex Problems}
\subsubsection{Square loss and cubic regularization}

\begin{figure}[h!t]
    \centering
    \includegraphics[width=0.3\textwidth]{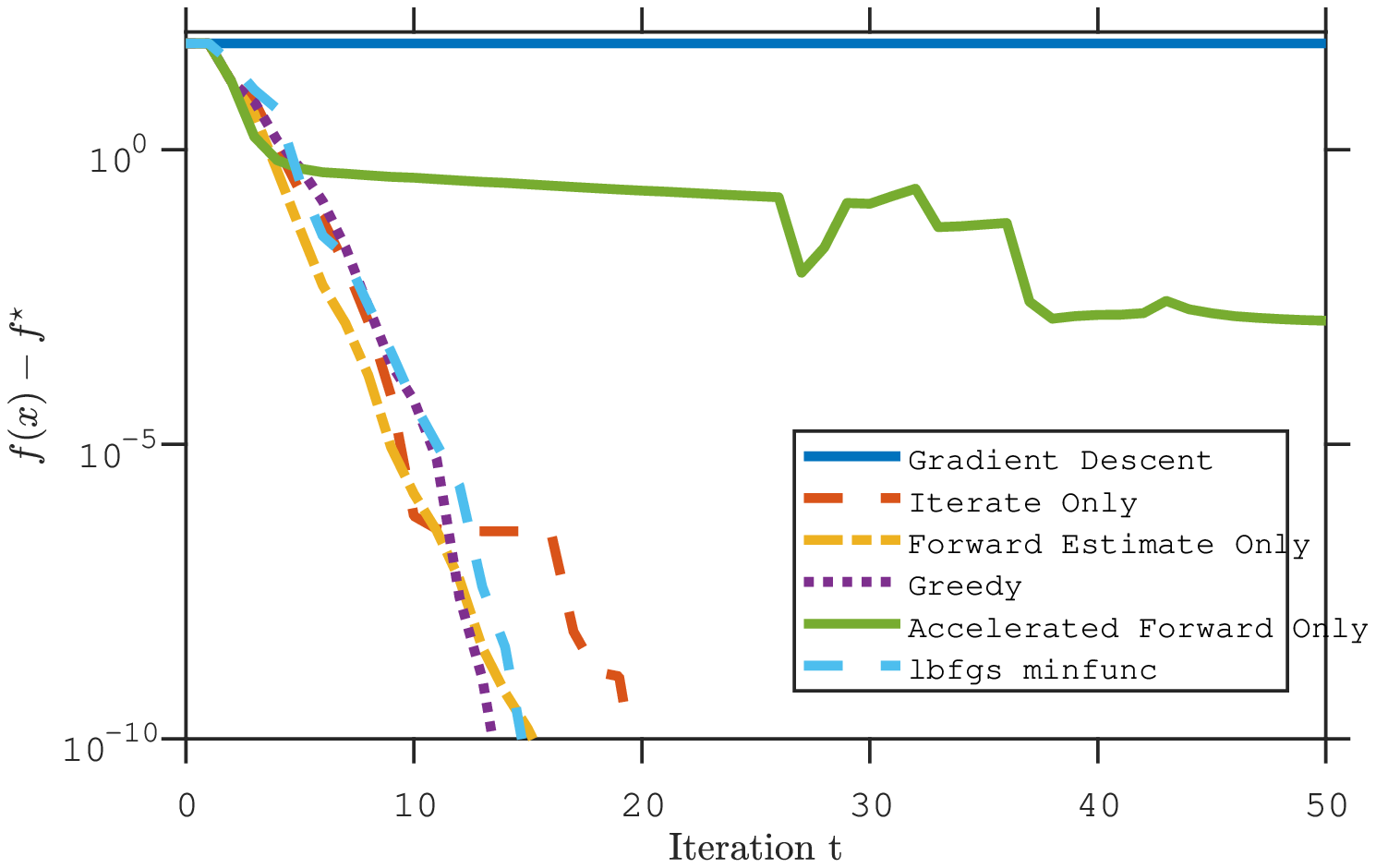}
\includegraphics[width=0.3\textwidth]{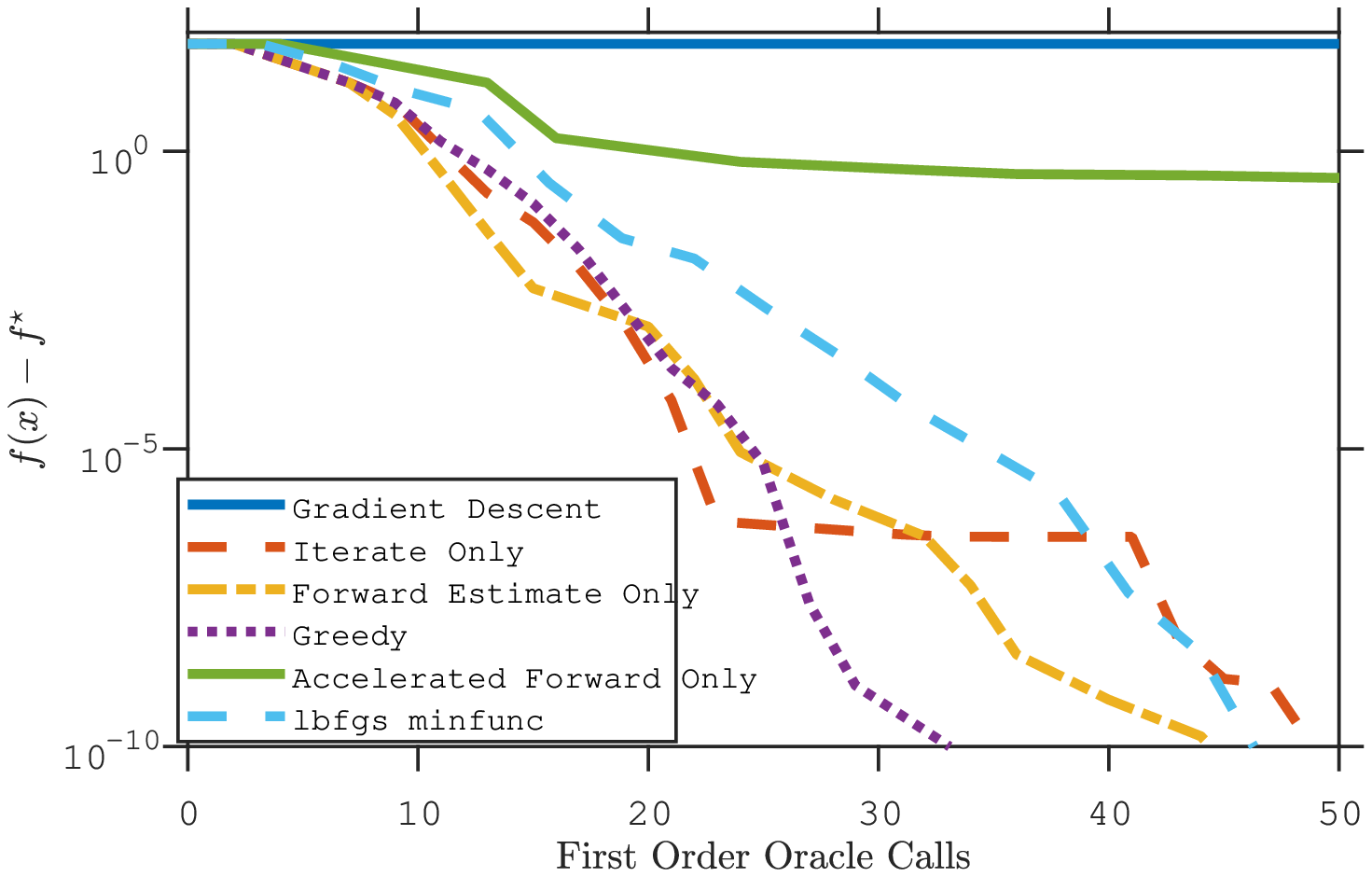}
\includegraphics[width=0.3\textwidth]{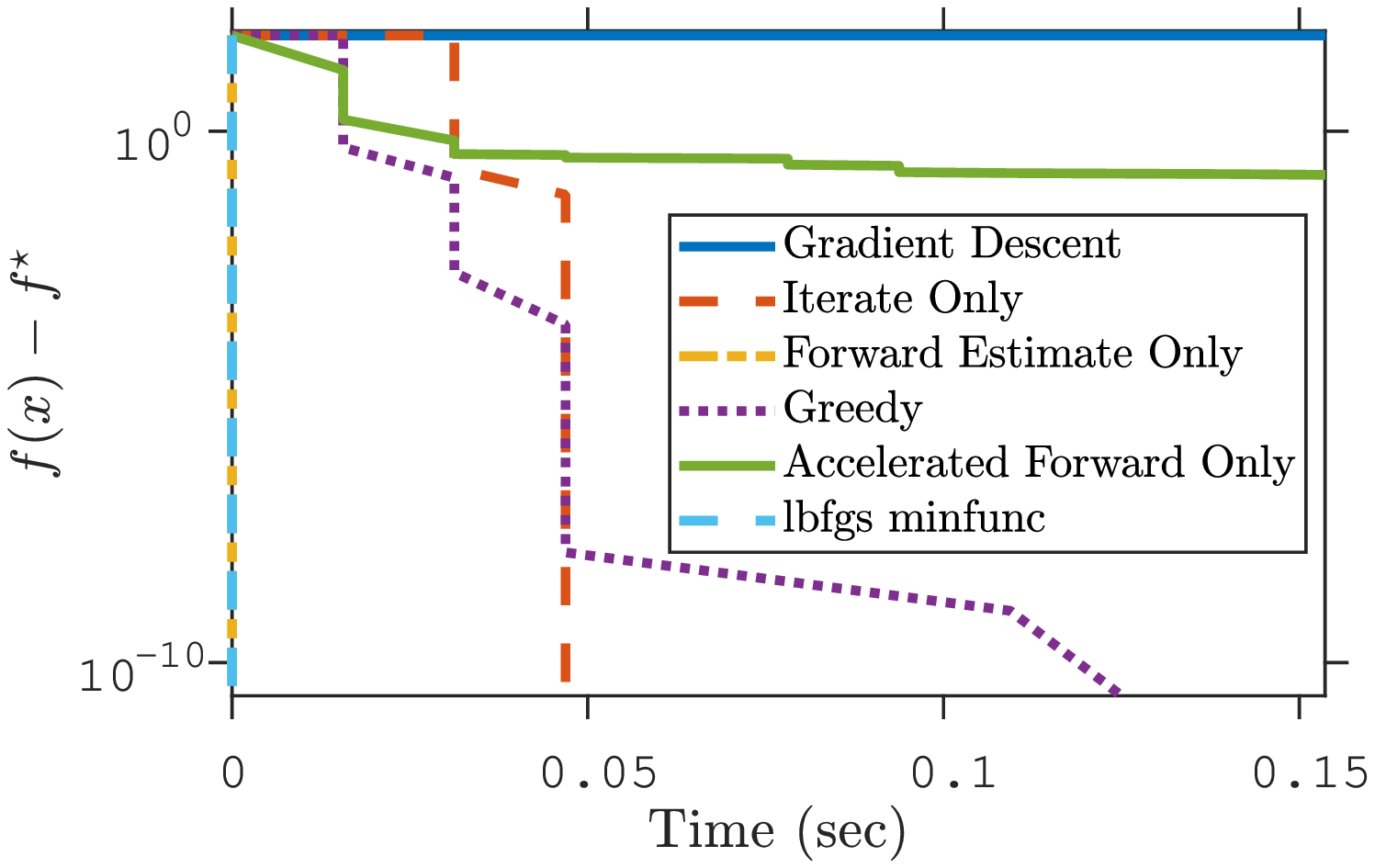}
    \caption{Comparison of type 1 methods: Square loss and cubic regularization on Madelon dataset}
    \label{fig:madelon_quad}
\end{figure}

\begin{figure}[h!t]
    \centering
    \includegraphics[width=0.3\textwidth]{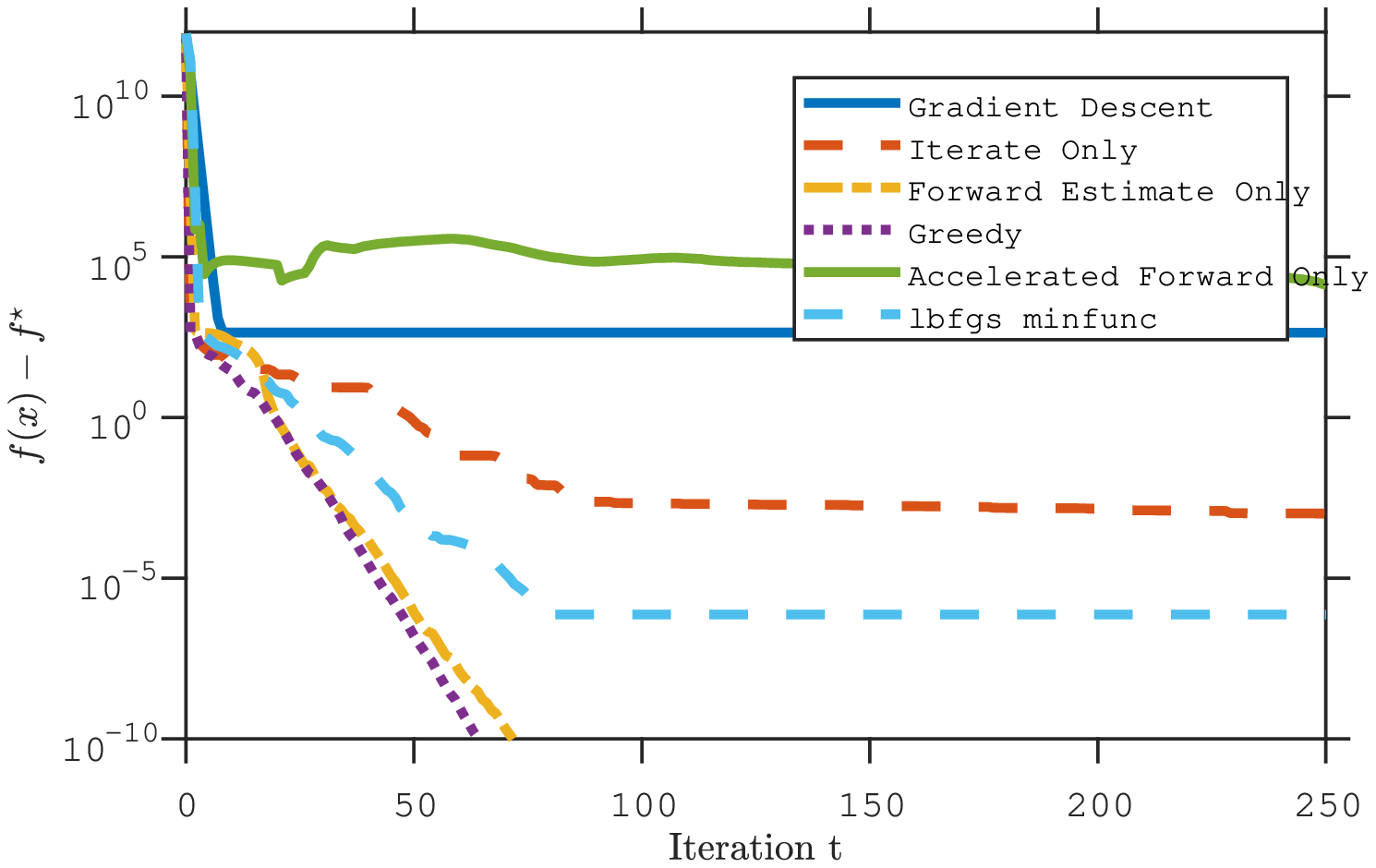}
\includegraphics[width=0.3\textwidth]{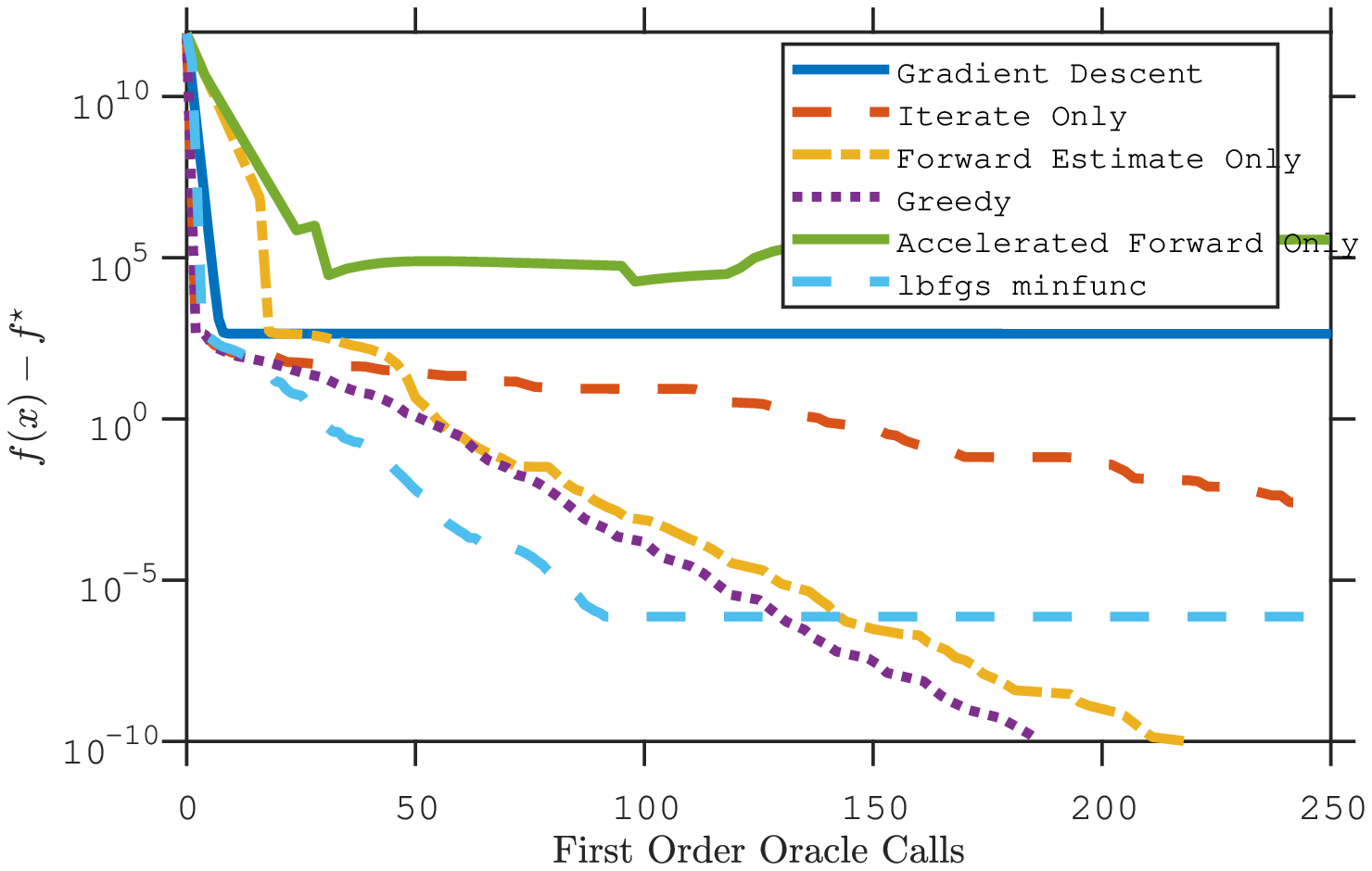}
\includegraphics[width=0.3\textwidth]{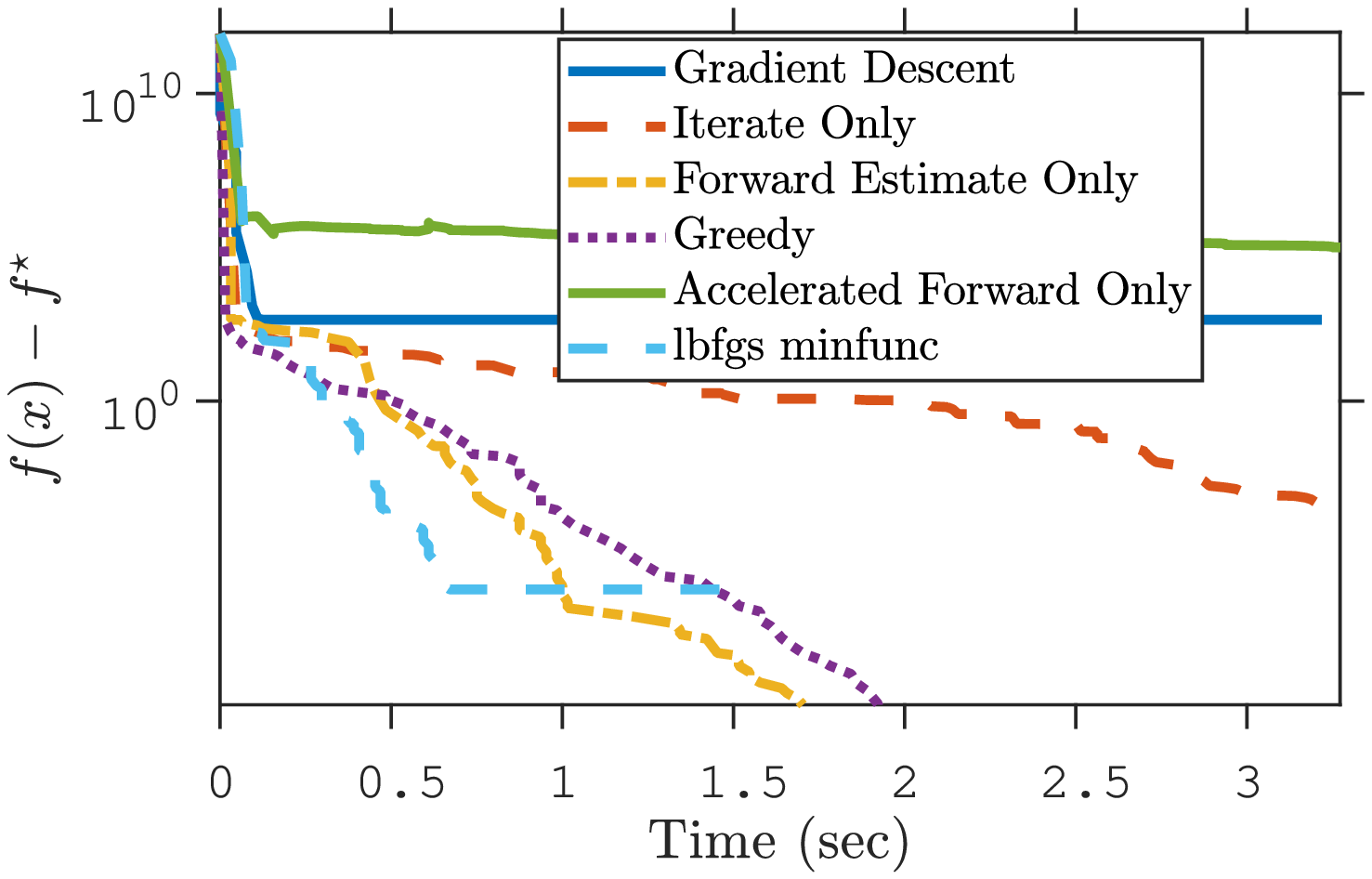}
    \caption{Comparison of type 1 methods: Square loss and cubic regularization on sido0 dataset}
    \label{fig:sido0_quad}
\end{figure}

\begin{figure}[h!t]
    \centering
    \includegraphics[width=0.3\textwidth]{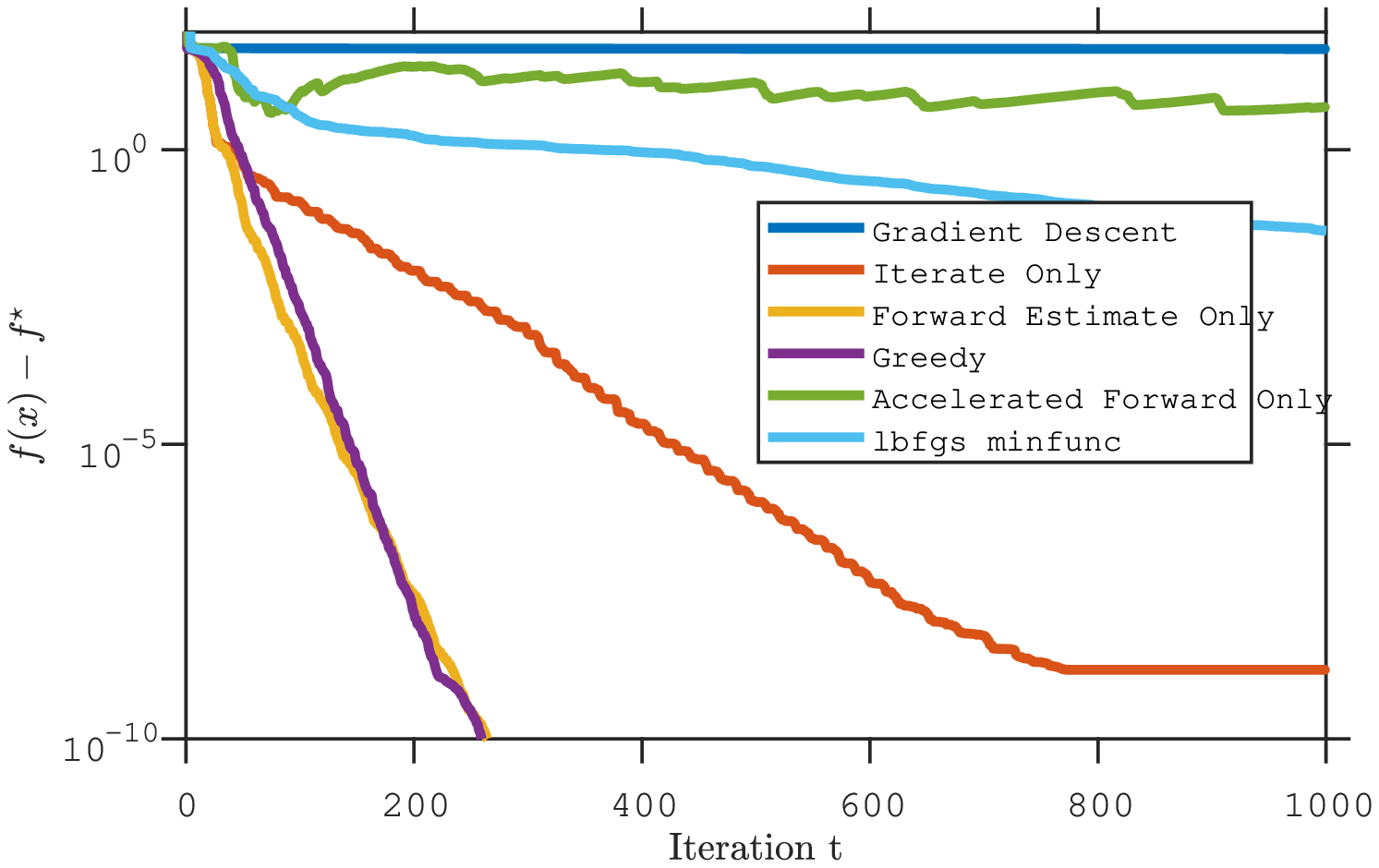}
\includegraphics[width=0.3\textwidth]{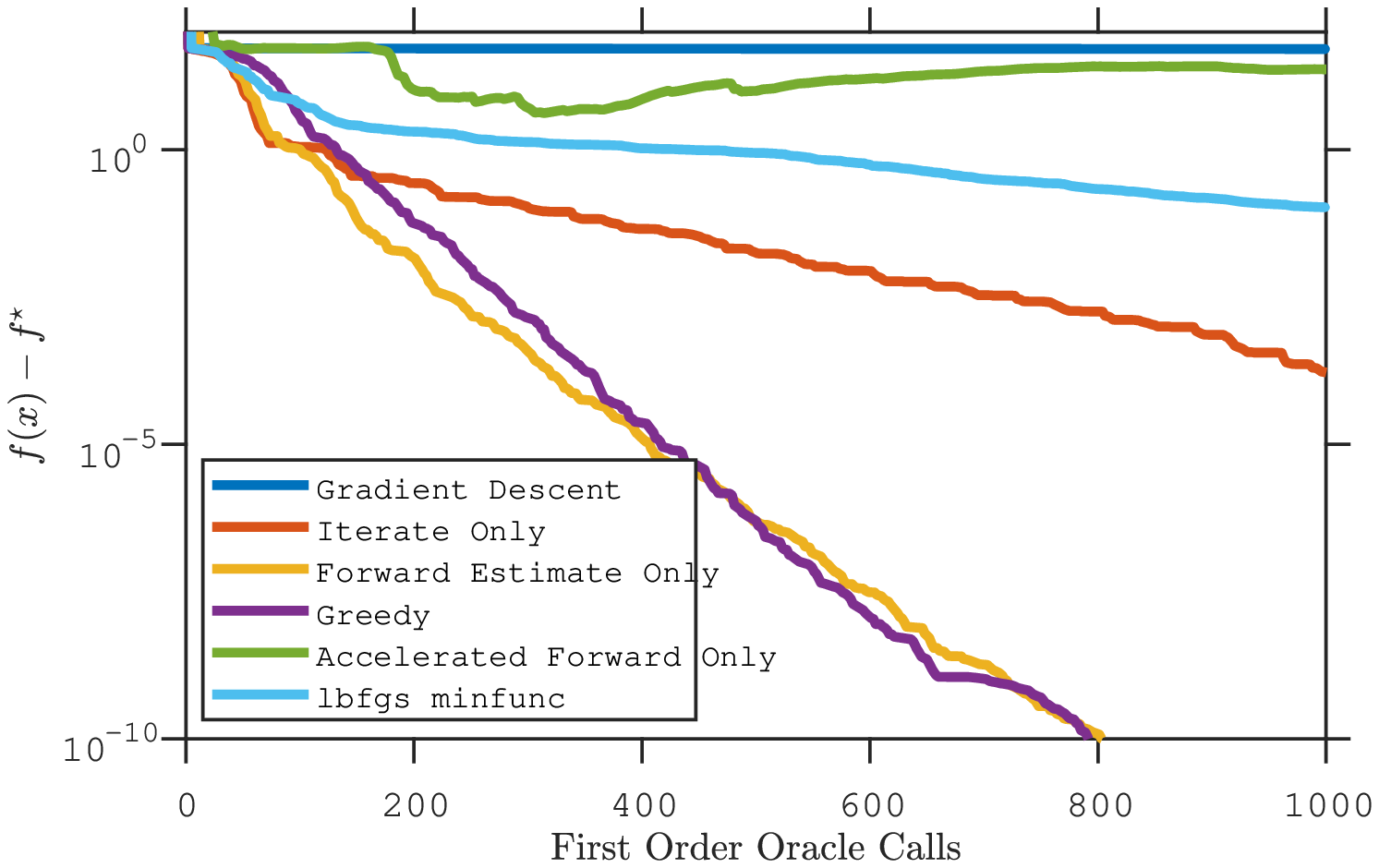}
\includegraphics[width=0.3\textwidth]{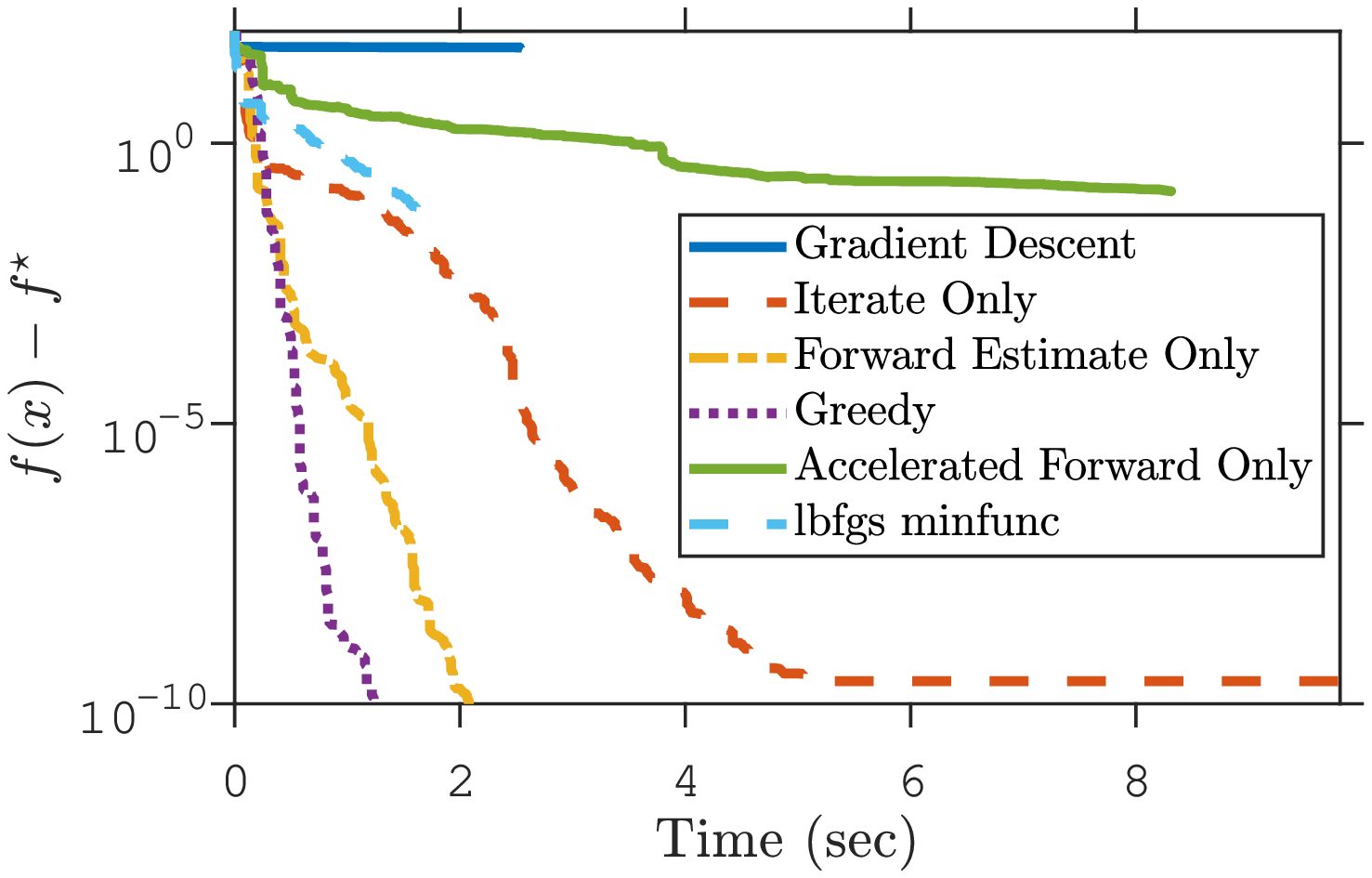}
    \caption{Comparison of type 1 methods: Square loss and cubic regularization on marti2 dataset}
    \label{fig:marti2_quad}
\end{figure}

\clearpage
\subsubsection{Logistic regression}

\begin{figure}[h!t]
    \centering
    \includegraphics[width=0.3\textwidth]{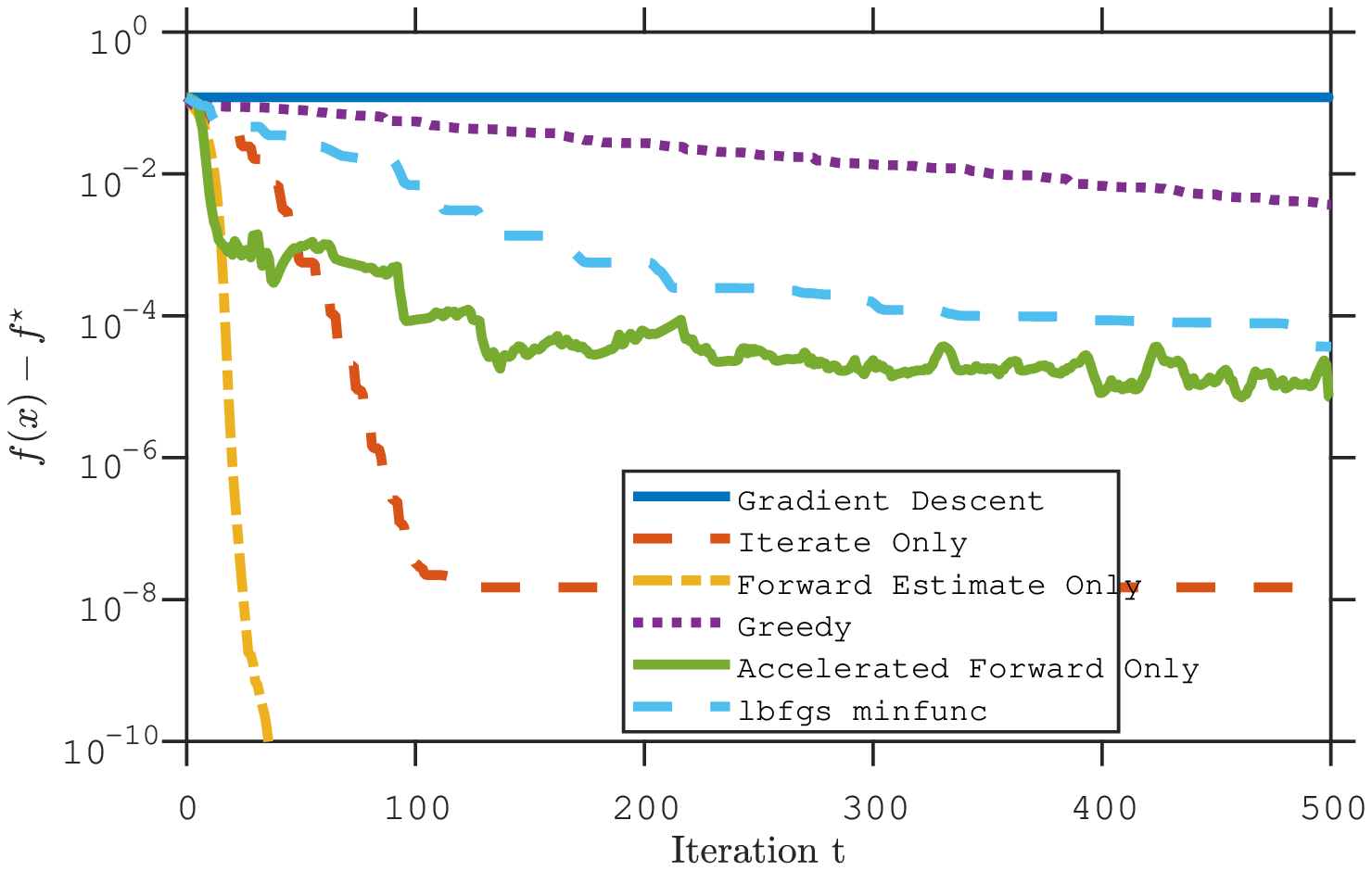}
\includegraphics[width=0.3\textwidth]{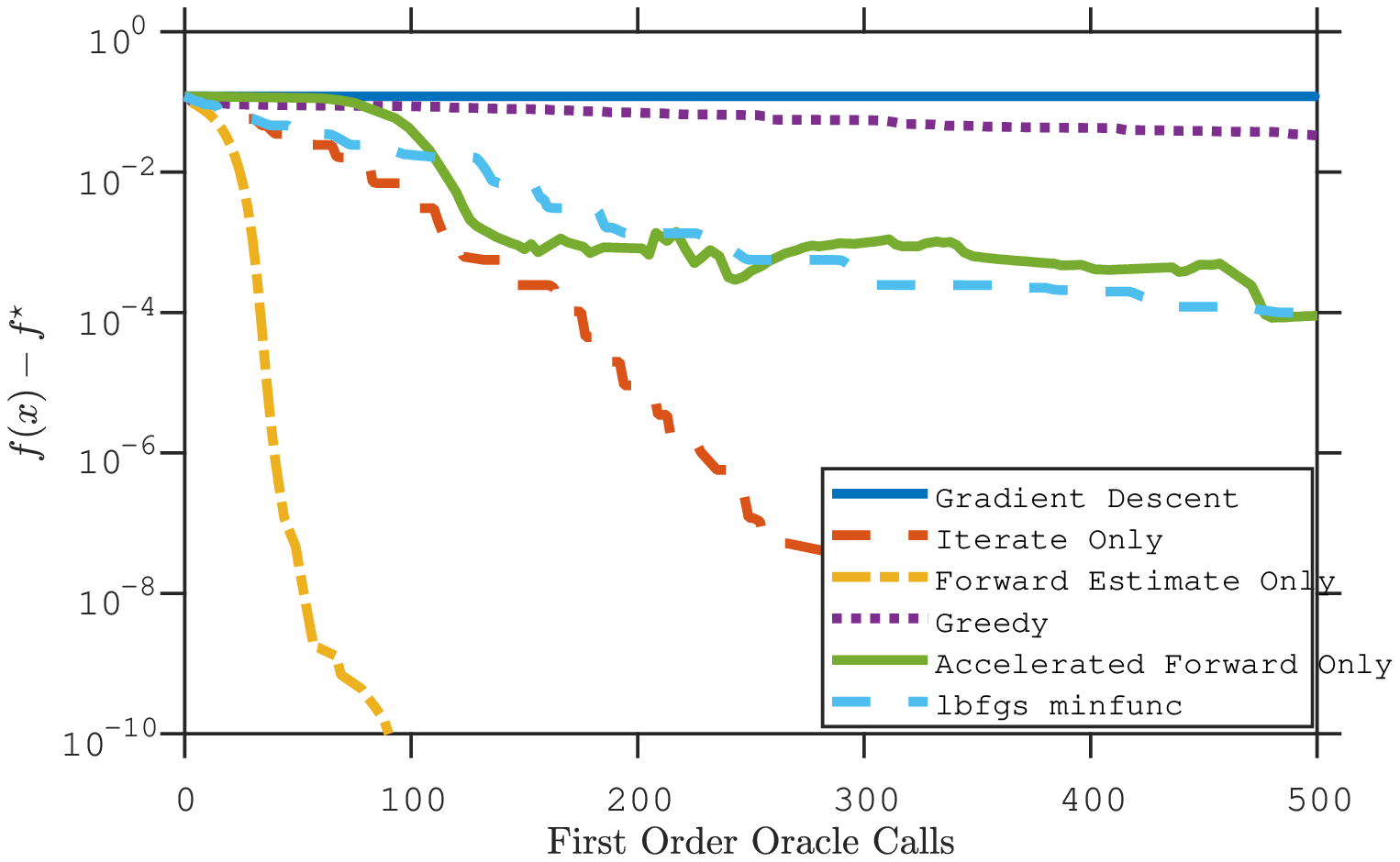}
\includegraphics[width=0.3\textwidth]{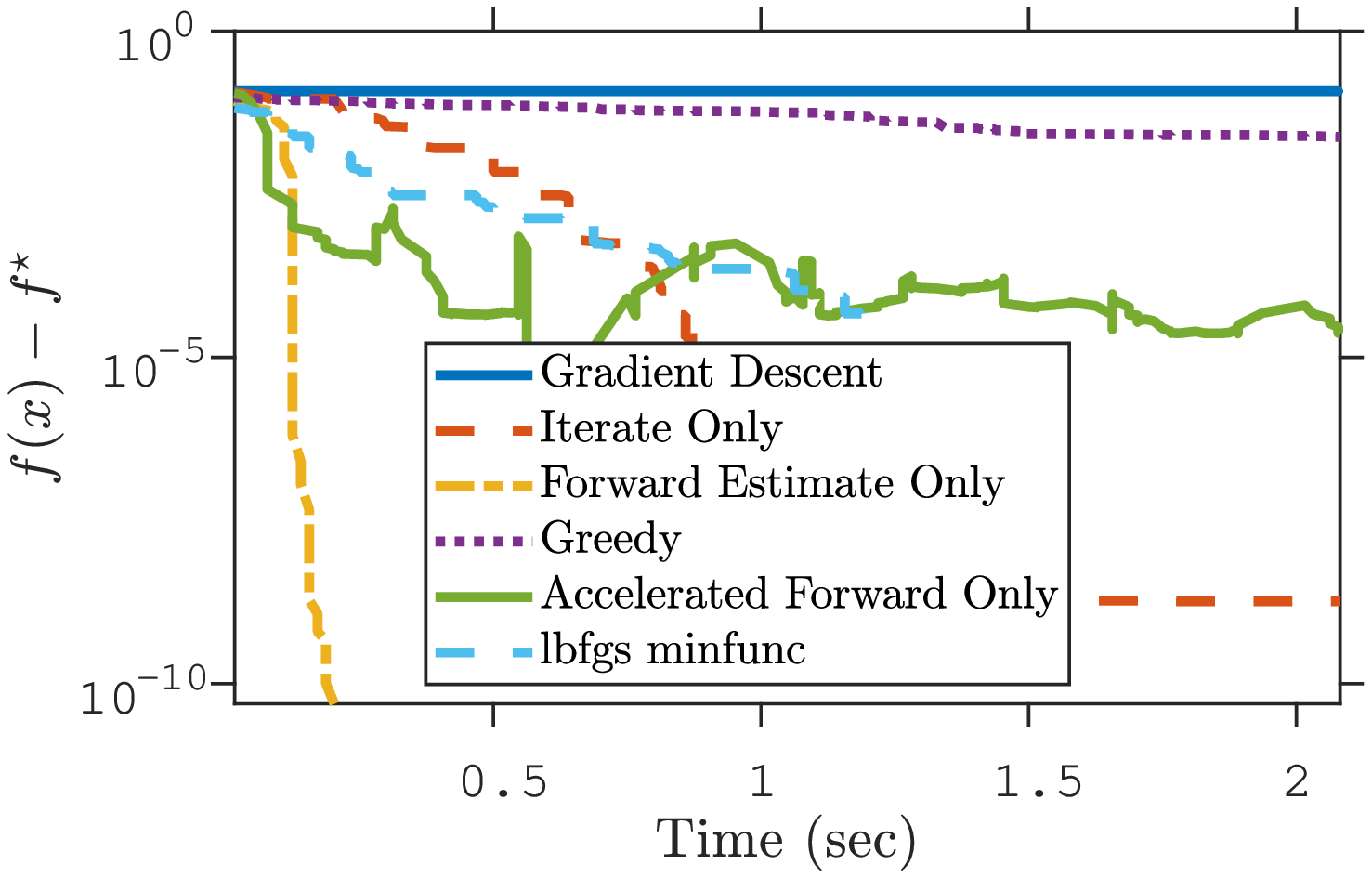}
    \caption{Comparison of type 1 methods: Logistic loss and cubic regularization on Madelon dataset}
    \label{fig:madelon_logistic}
\end{figure}

\begin{figure}[h!t]
    \centering
    \includegraphics[width=0.3\textwidth]{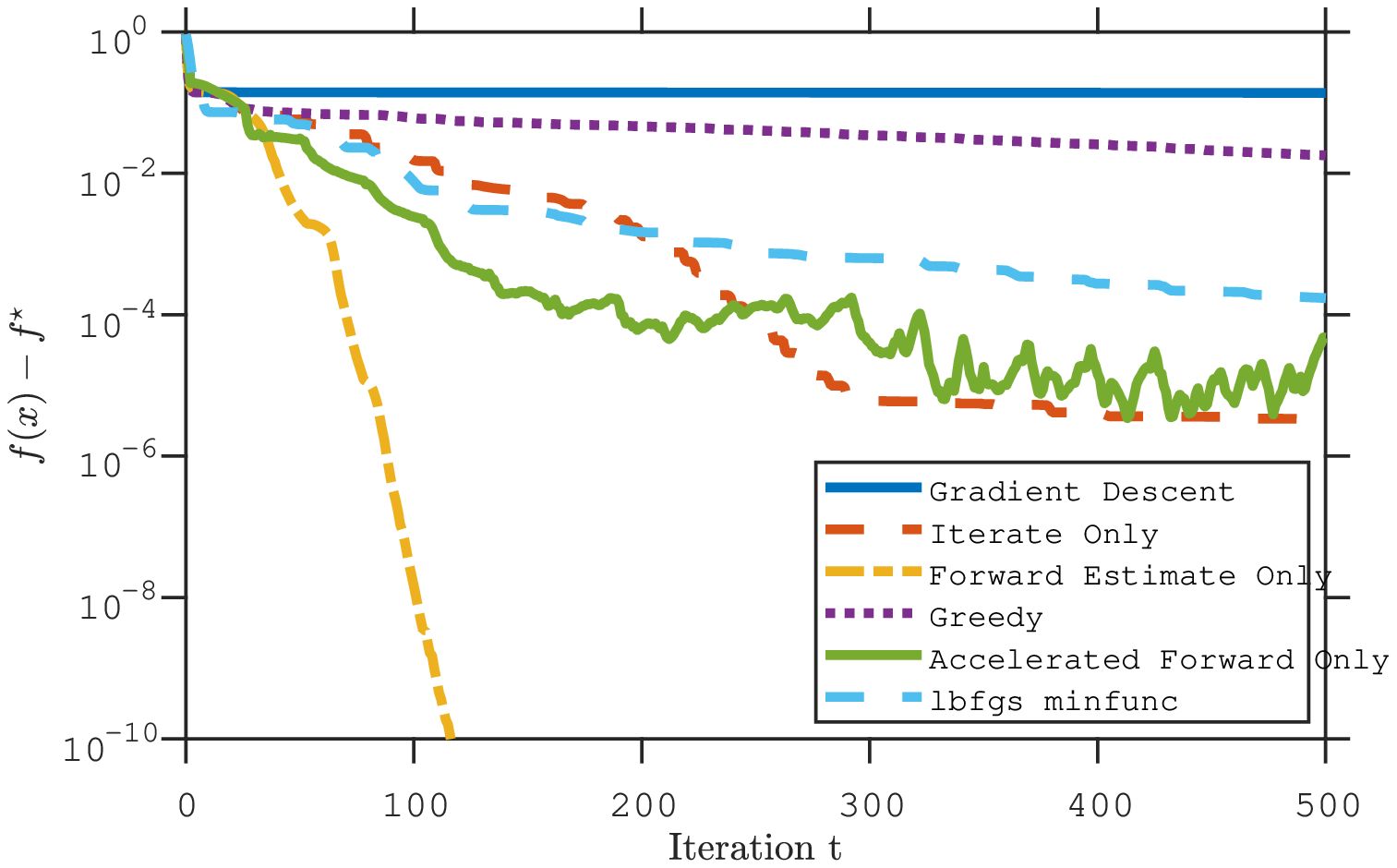}
\includegraphics[width=0.3\textwidth]{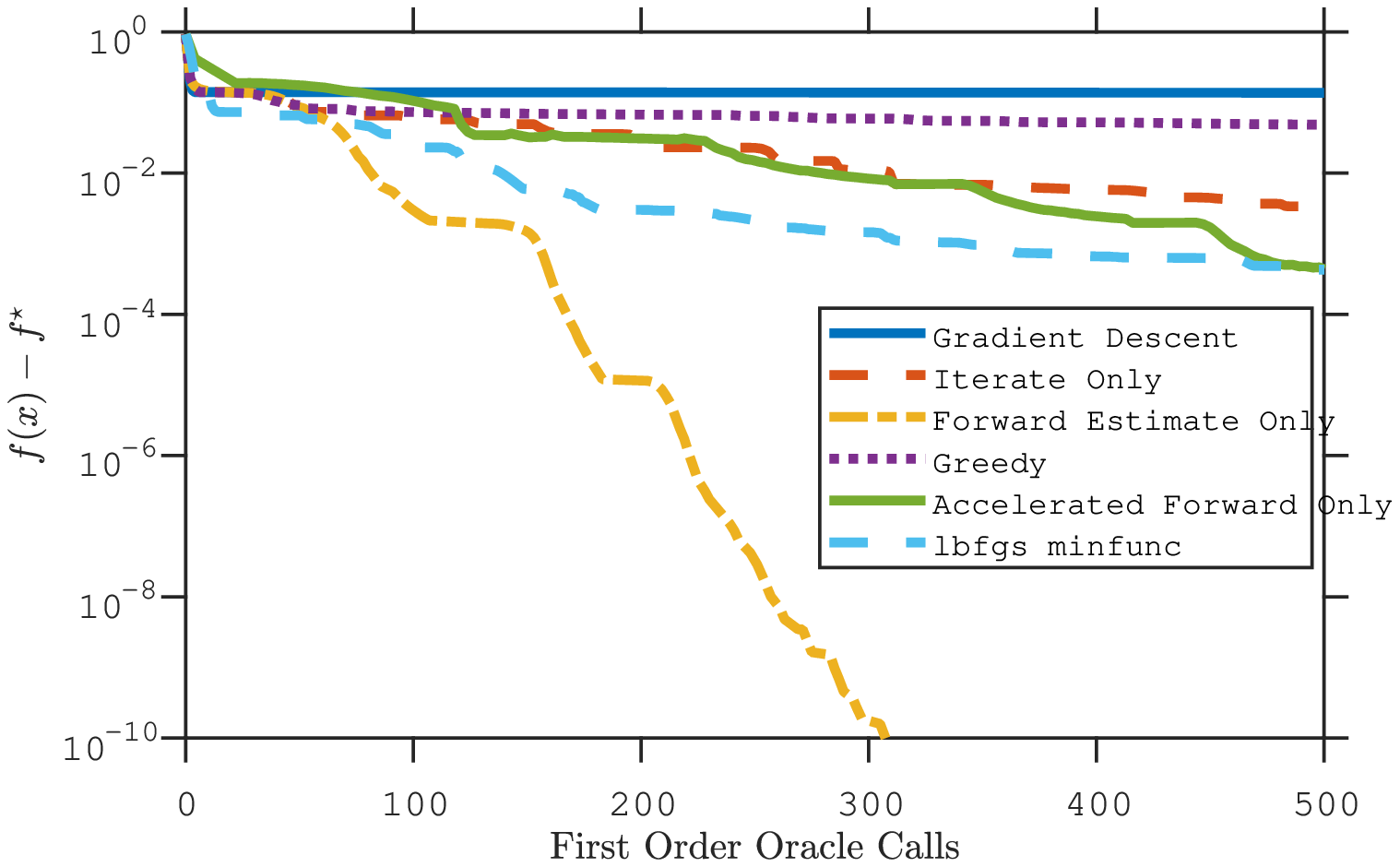}
\includegraphics[width=0.3\textwidth]{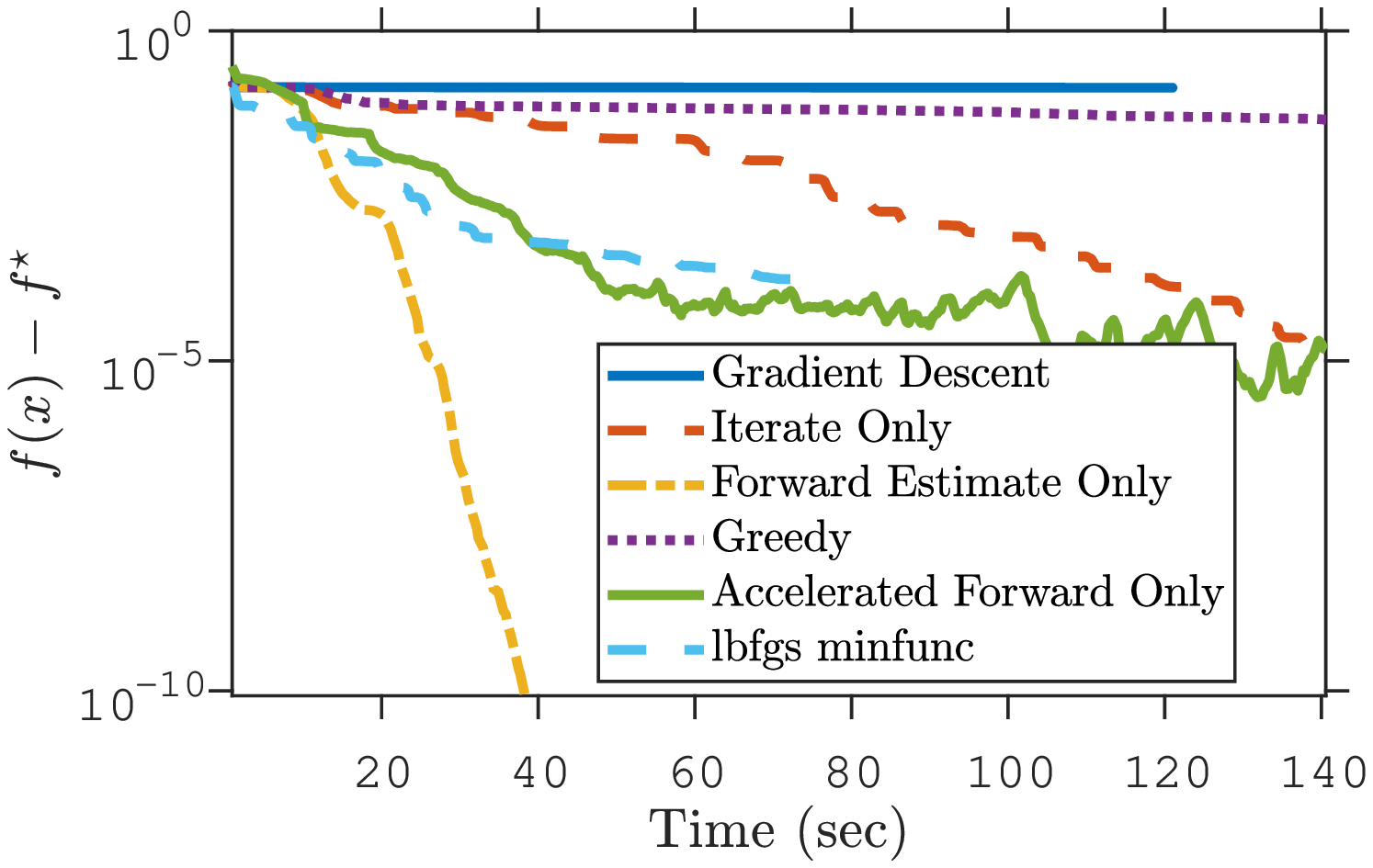}
    \caption{Comparison of type 1 methods: Logistic loss and cubic regularization on sido0 dataset}
    \label{fig:sido0_logistic}
\end{figure}

\begin{figure}[h!t]
    \centering
    \includegraphics[width=0.3\textwidth]{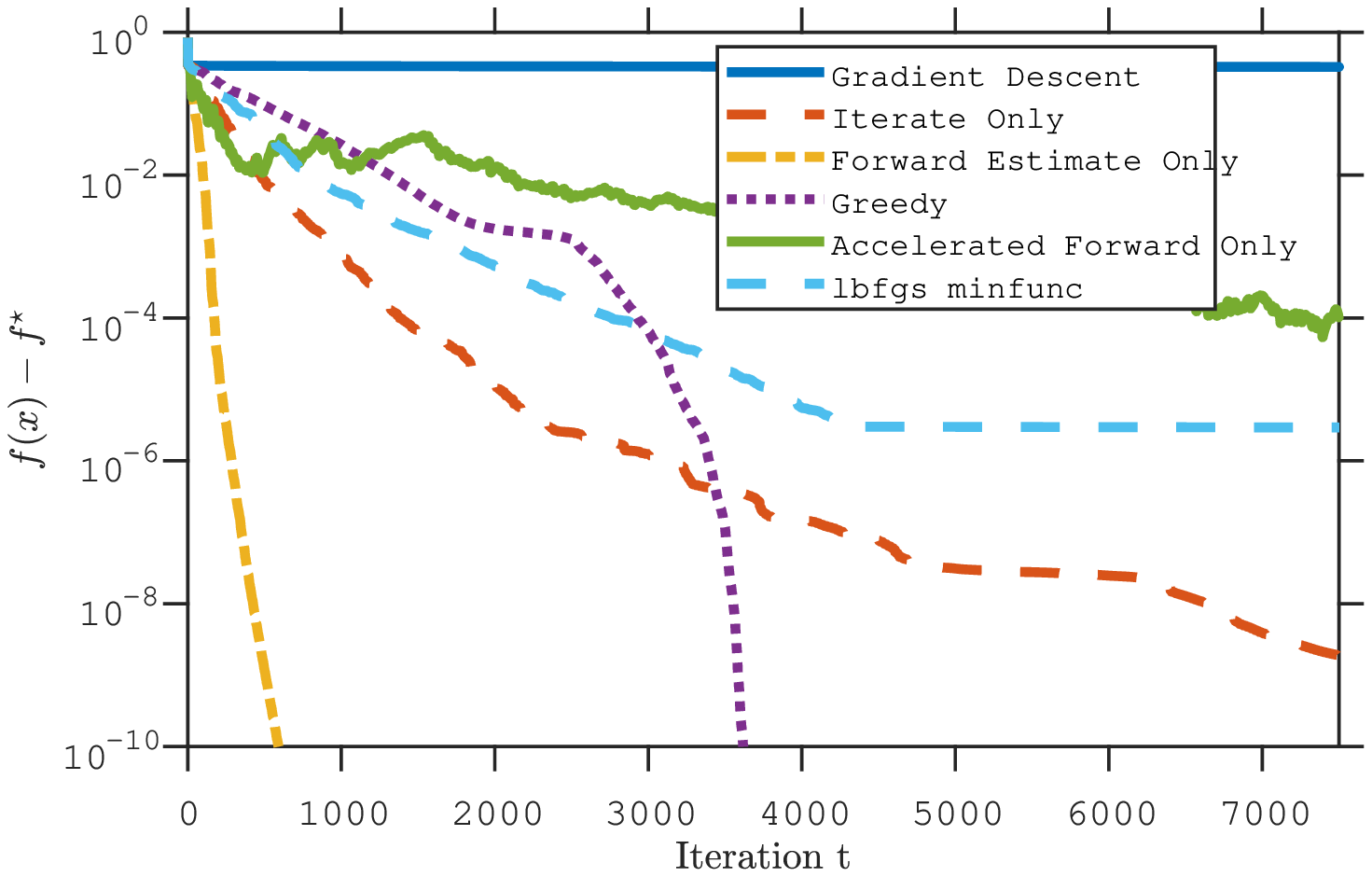}
\includegraphics[width=0.3\textwidth]{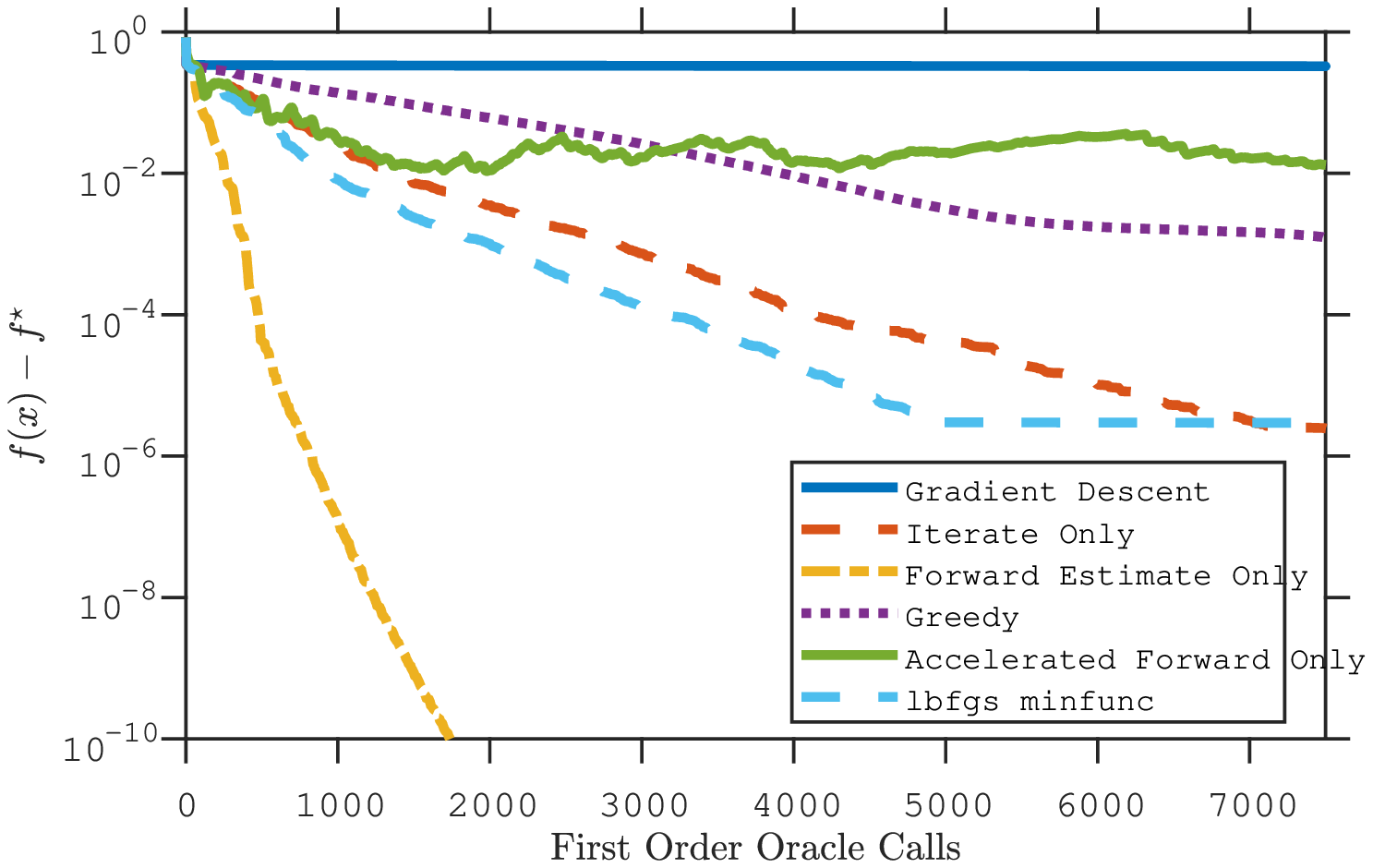}
\includegraphics[width=0.3\textwidth]{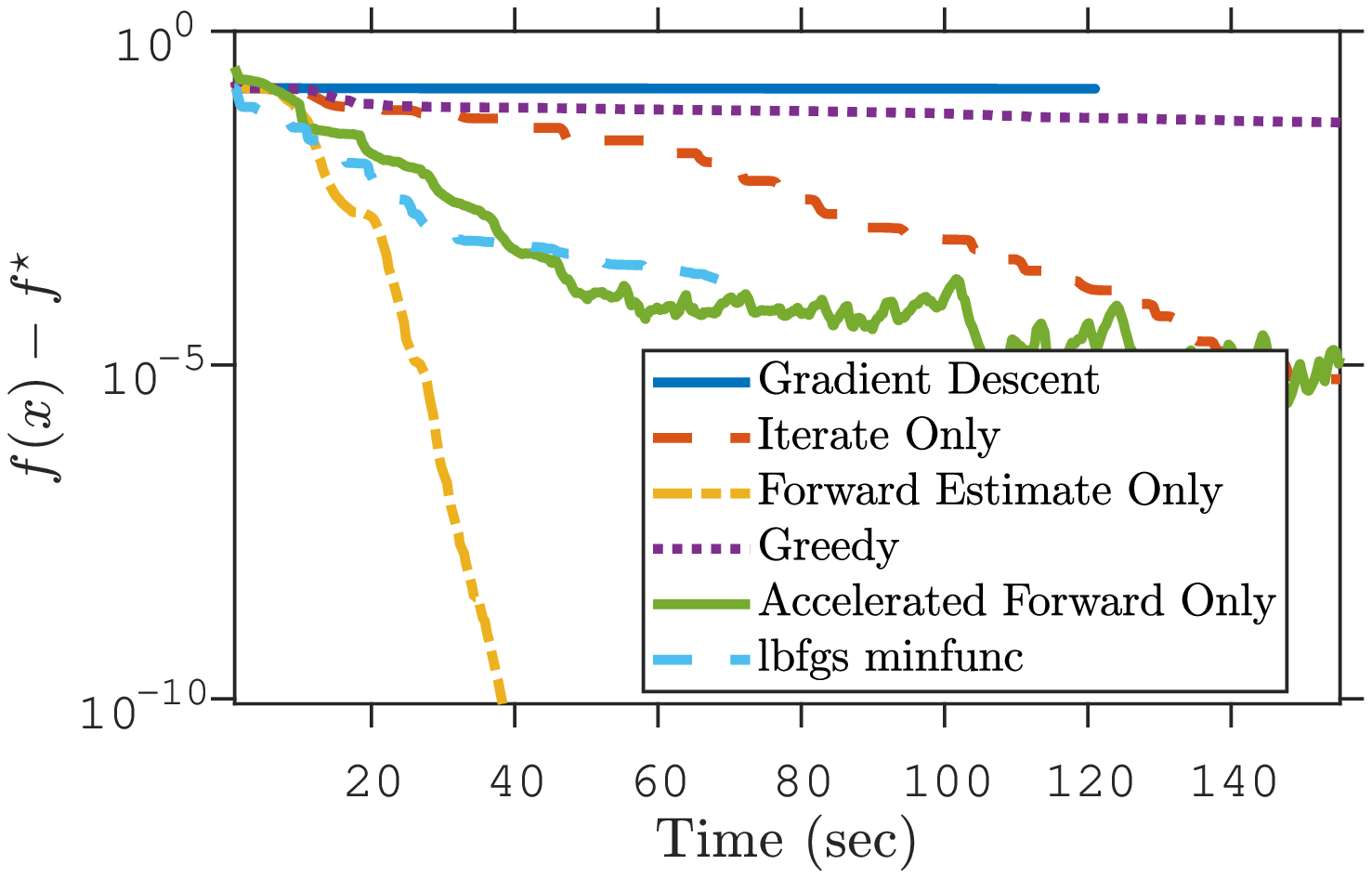}
    \caption{Comparison of type 1 methods: Logistic loss and cubic regularization on marti2 dataset}
    \label{fig:marti2_logistic}
\end{figure}

\clearpage
\subsection{Comparison of Type 2 Methods on Convex Problems}

The type-2 method was not the focus of this study. Its prototypical implementation is rather slow, hence, the time VS suboptimality graph are not showed.

\subsubsection{Square loss and cubic regularization}

\begin{figure}[h!t]
    \centering
    \includegraphics[width=0.49\textwidth]{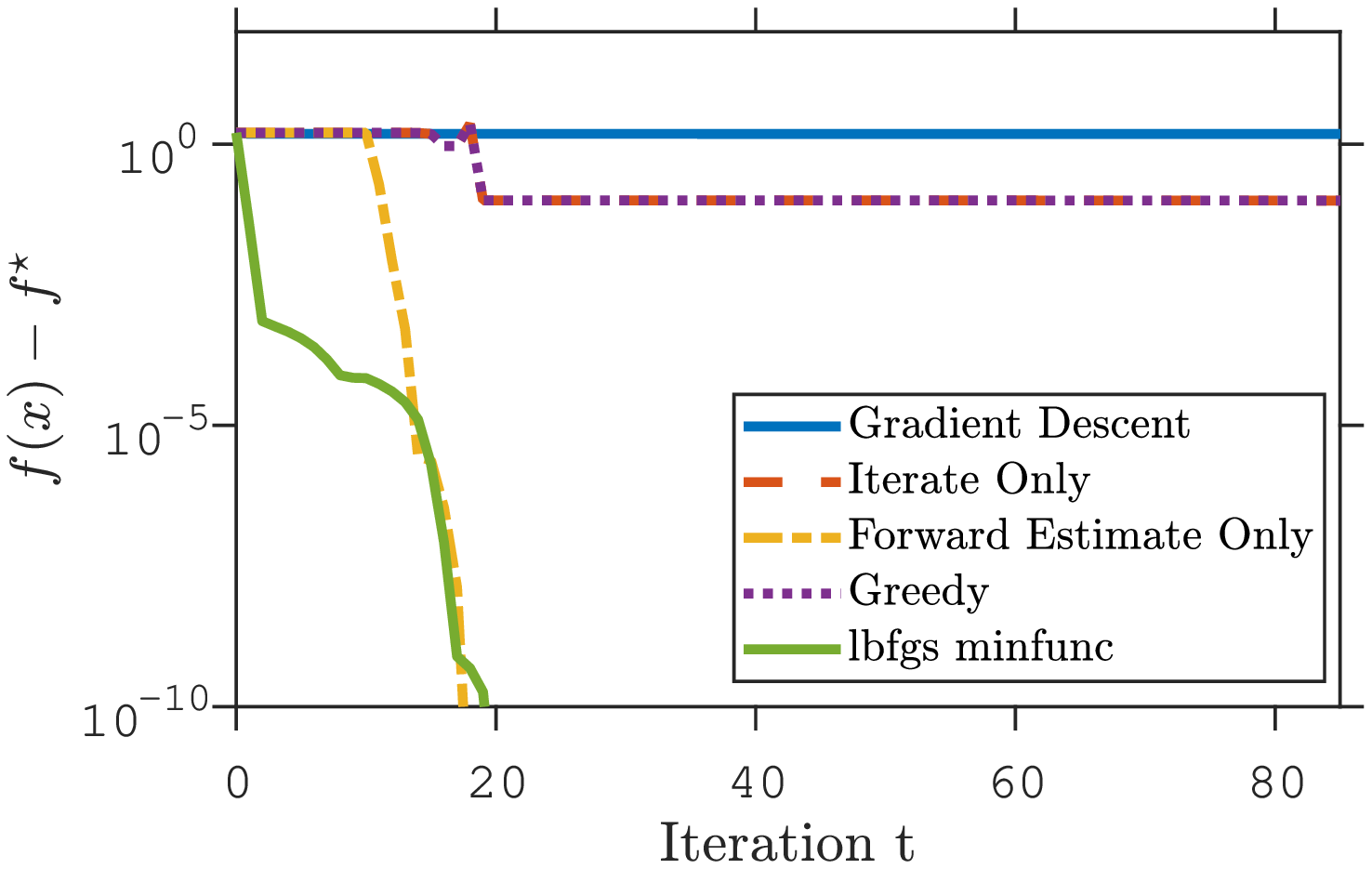}
\includegraphics[width=0.49\textwidth]{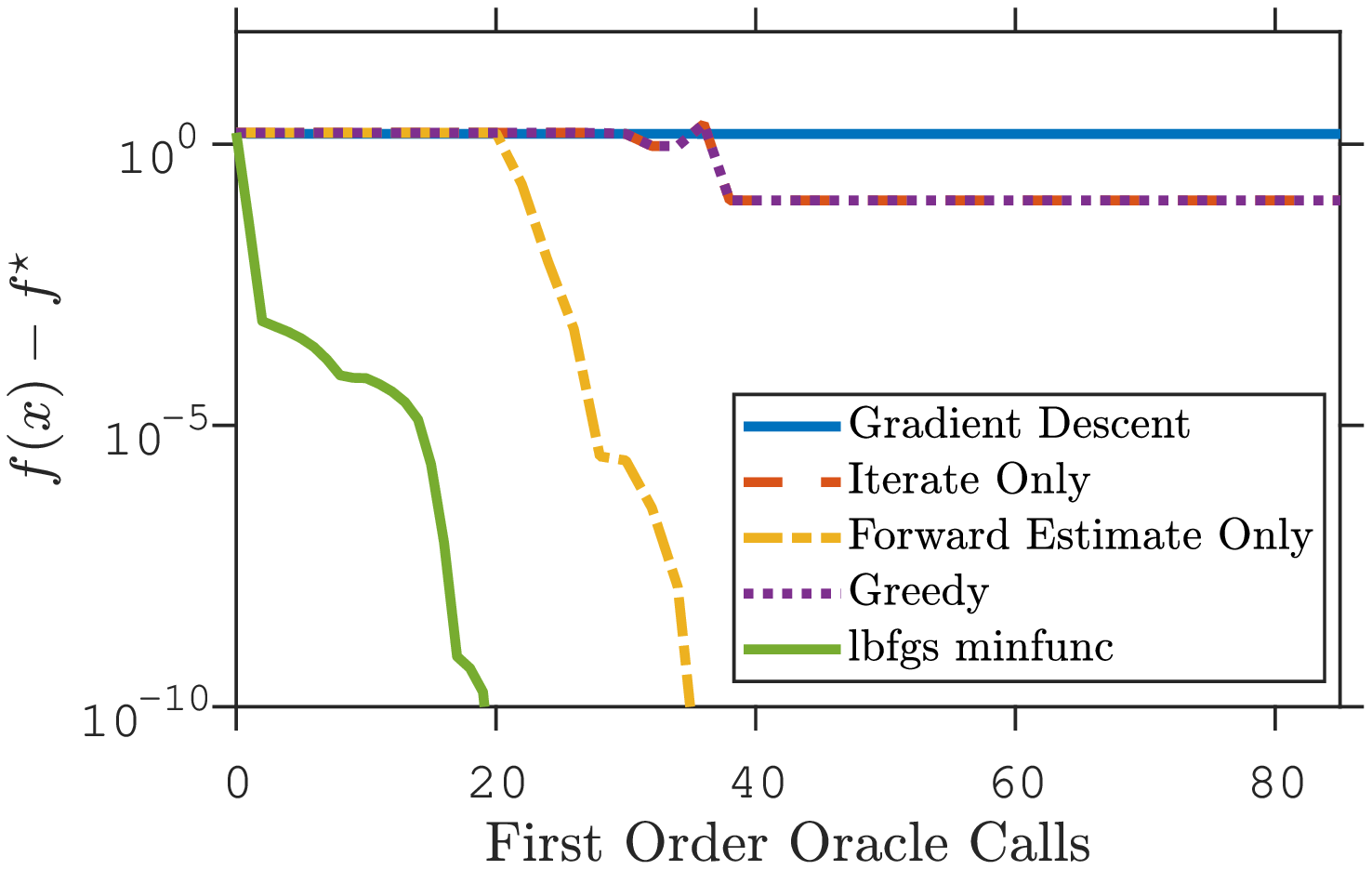}
    \caption{Comparison of type 2 methods: Square loss and cubic regularization on Madelon dataset}
    \label{fig:madelon_quad_type2}
\end{figure}

\begin{figure}[h!t]
    \centering
    \includegraphics[width=0.49\textwidth]{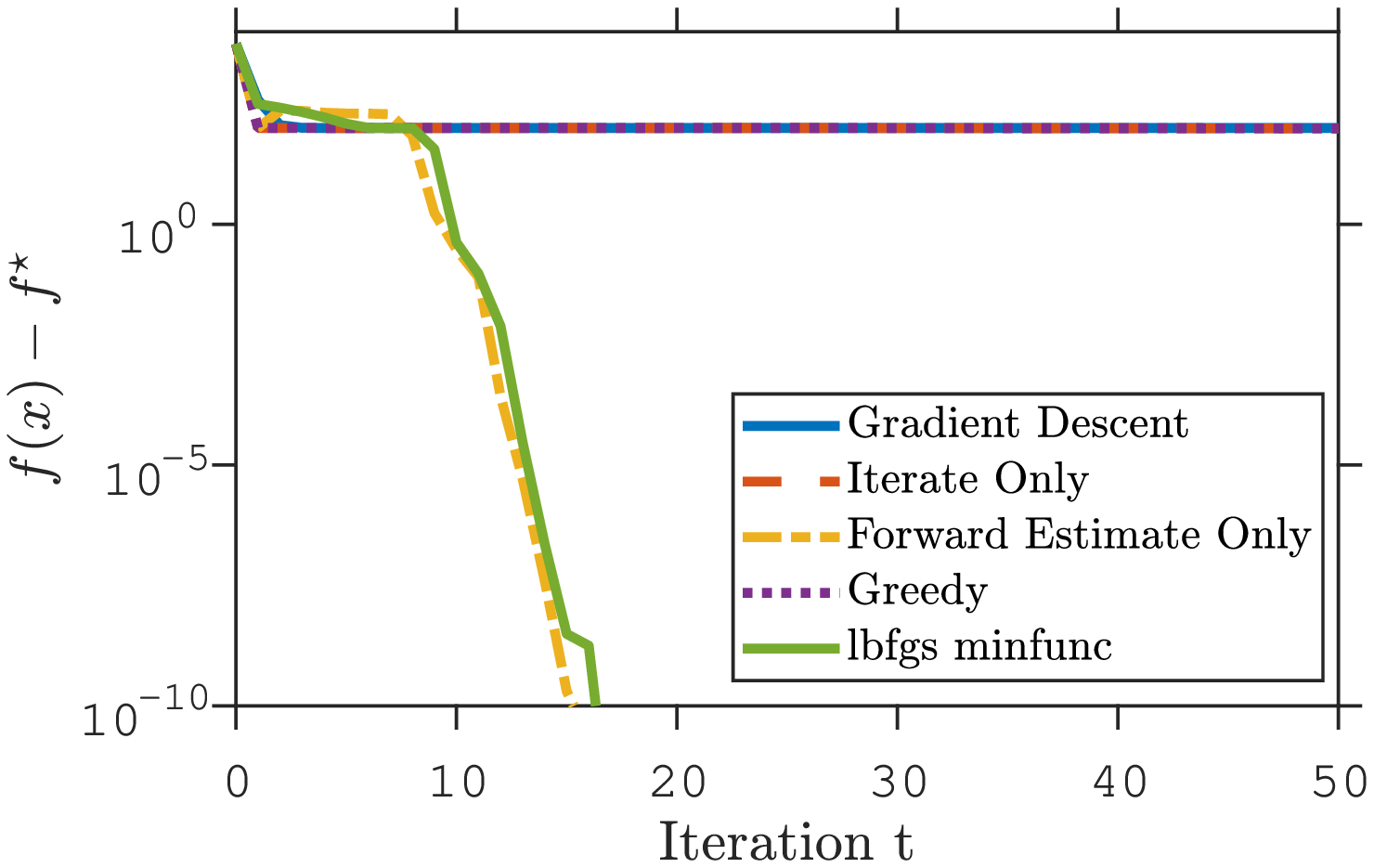}
\includegraphics[width=0.49\textwidth]{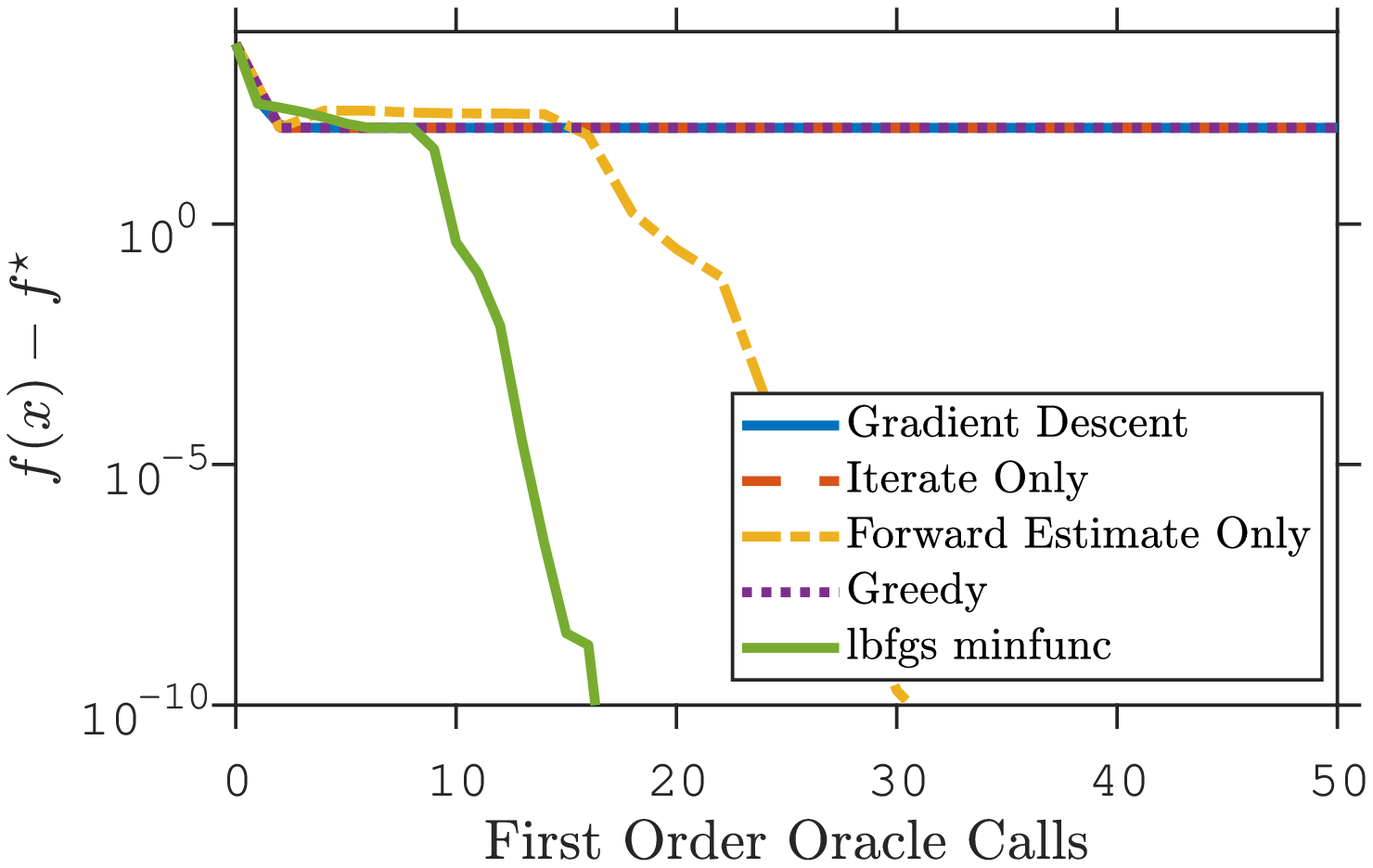}
    \caption{Comparison of type 2 methods: Square loss and cubic regularization on sido0 dataset}
    \label{fig:sido0_quad_type2}
\end{figure}

\begin{figure}[h!t]
    \centering
    \includegraphics[width=0.49\textwidth]{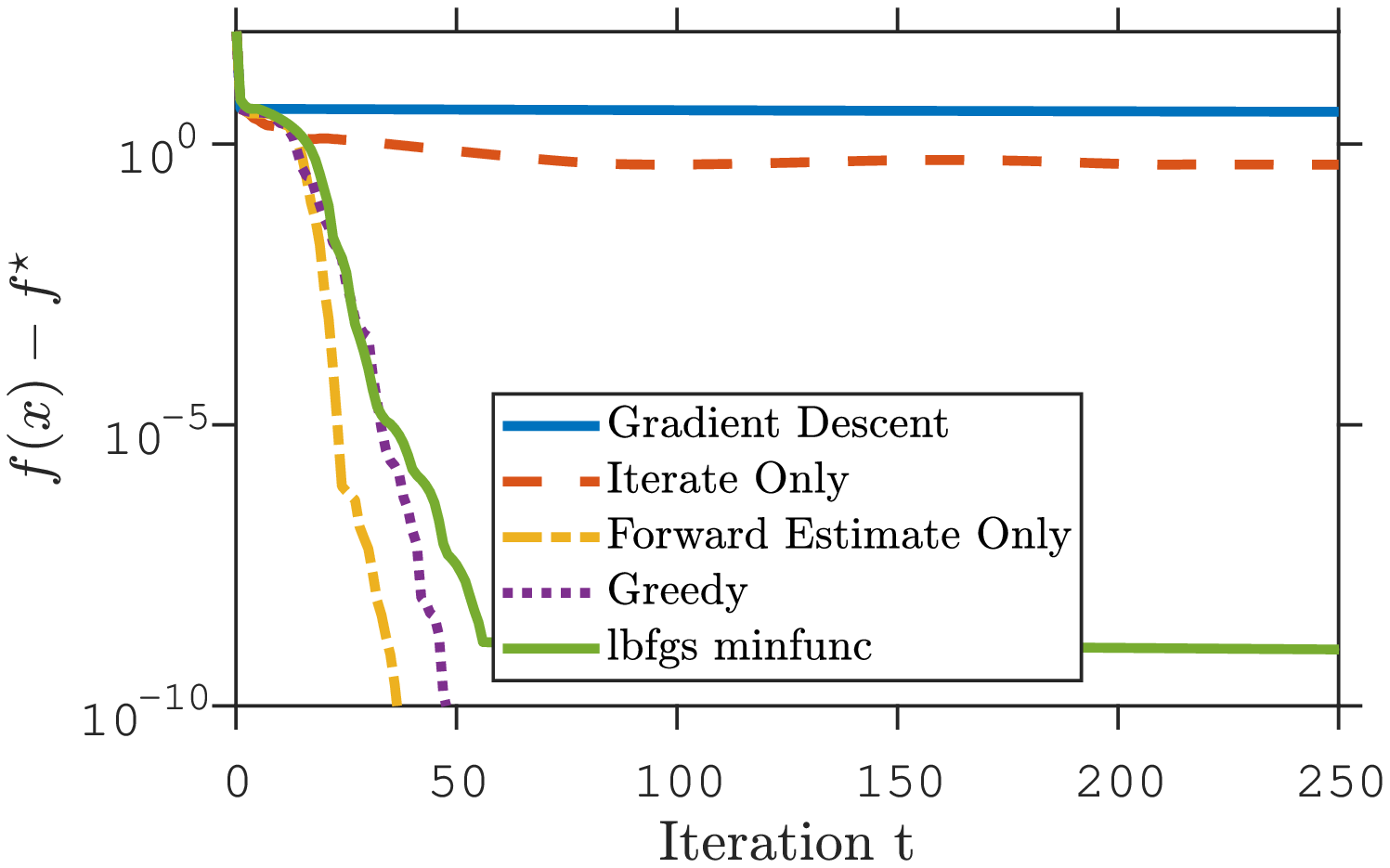}
\includegraphics[width=0.49\textwidth]{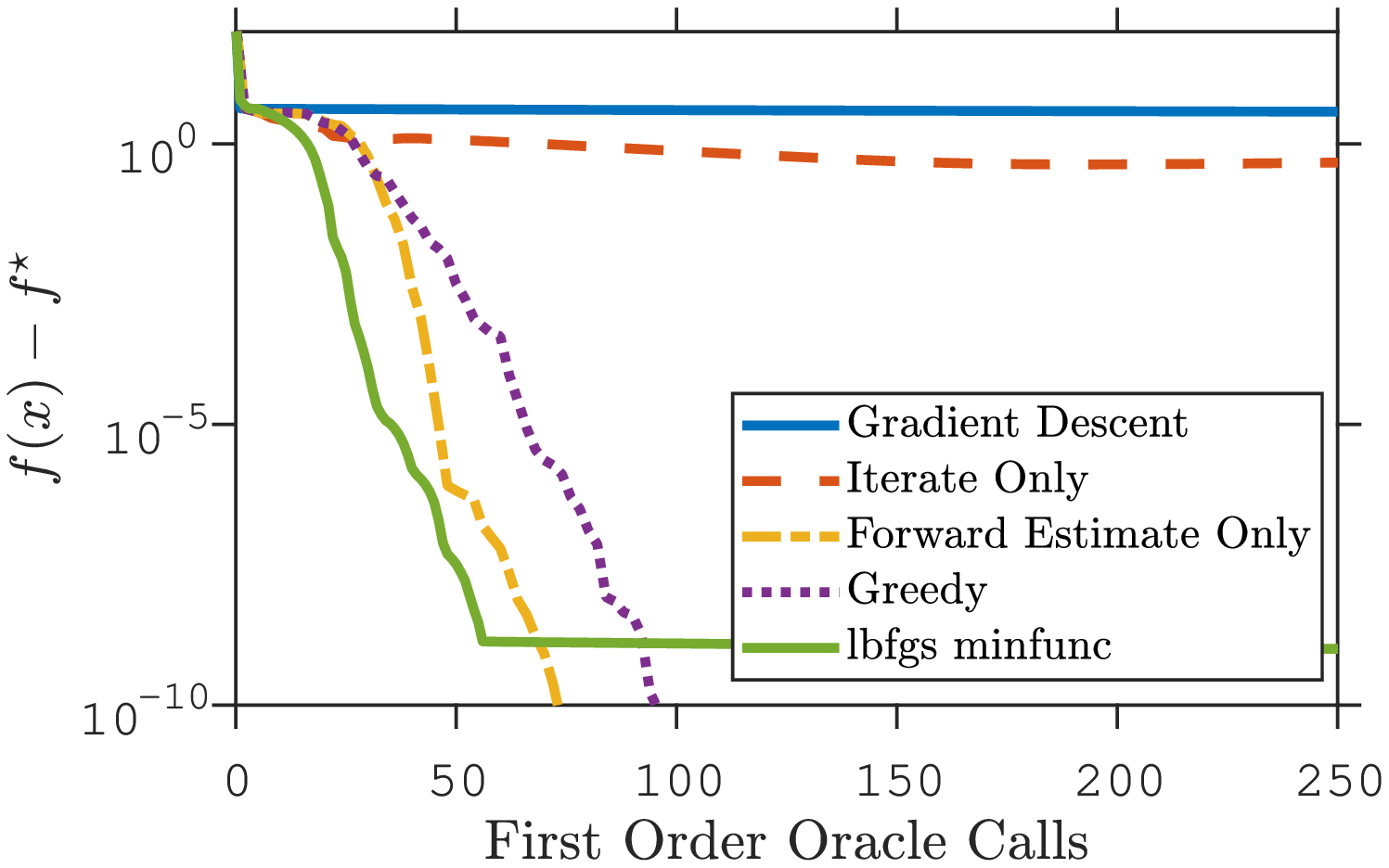}
    \caption{Comparison of type 2 methods: Square loss and cubic regularization on marti2 dataset}
    \label{fig:marti2_quad_type2}
\end{figure}

\clearpage
\subsubsection{Logistic regression}

\begin{figure}[h!t]
    \centering
    \includegraphics[width=0.49\textwidth]{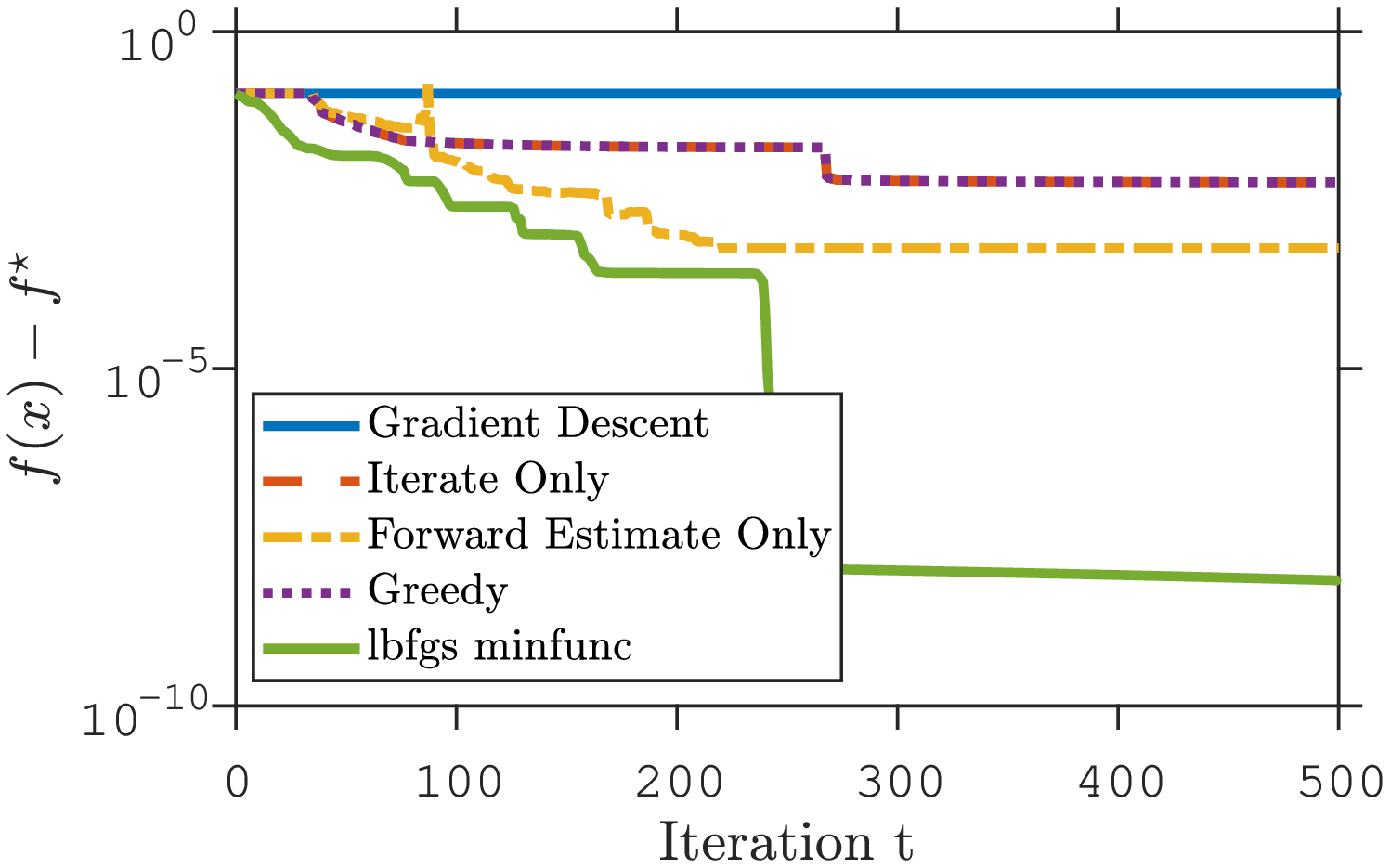}
\includegraphics[width=0.49\textwidth]{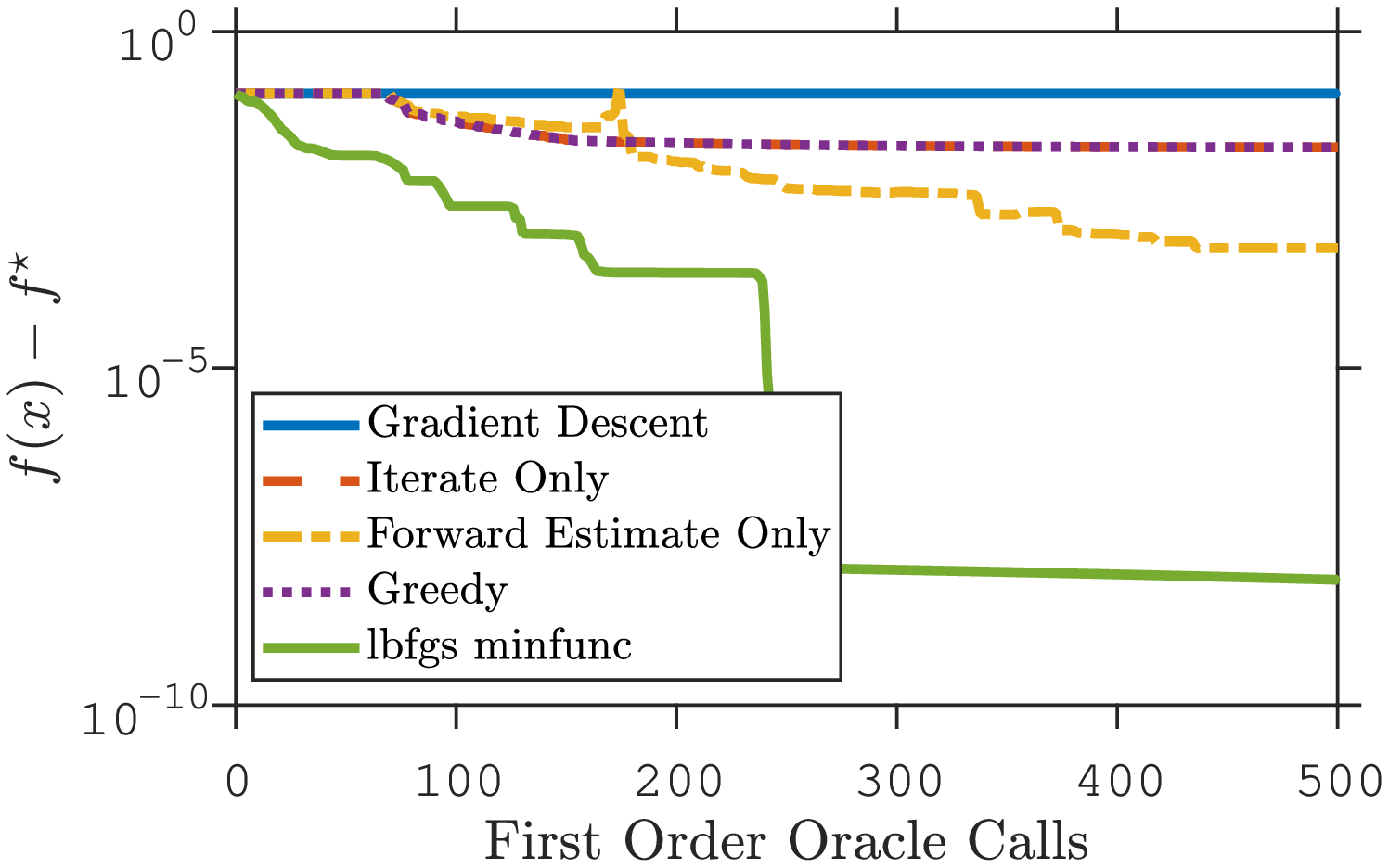}
    \caption{Comparison of type 2 methods: Logistic loss and cubic regularization on Madelon dataset}
    \label{fig:madelon_logistic_type2}
\end{figure}

\begin{figure}[h!t]
    \centering
    \includegraphics[width=0.49\textwidth]{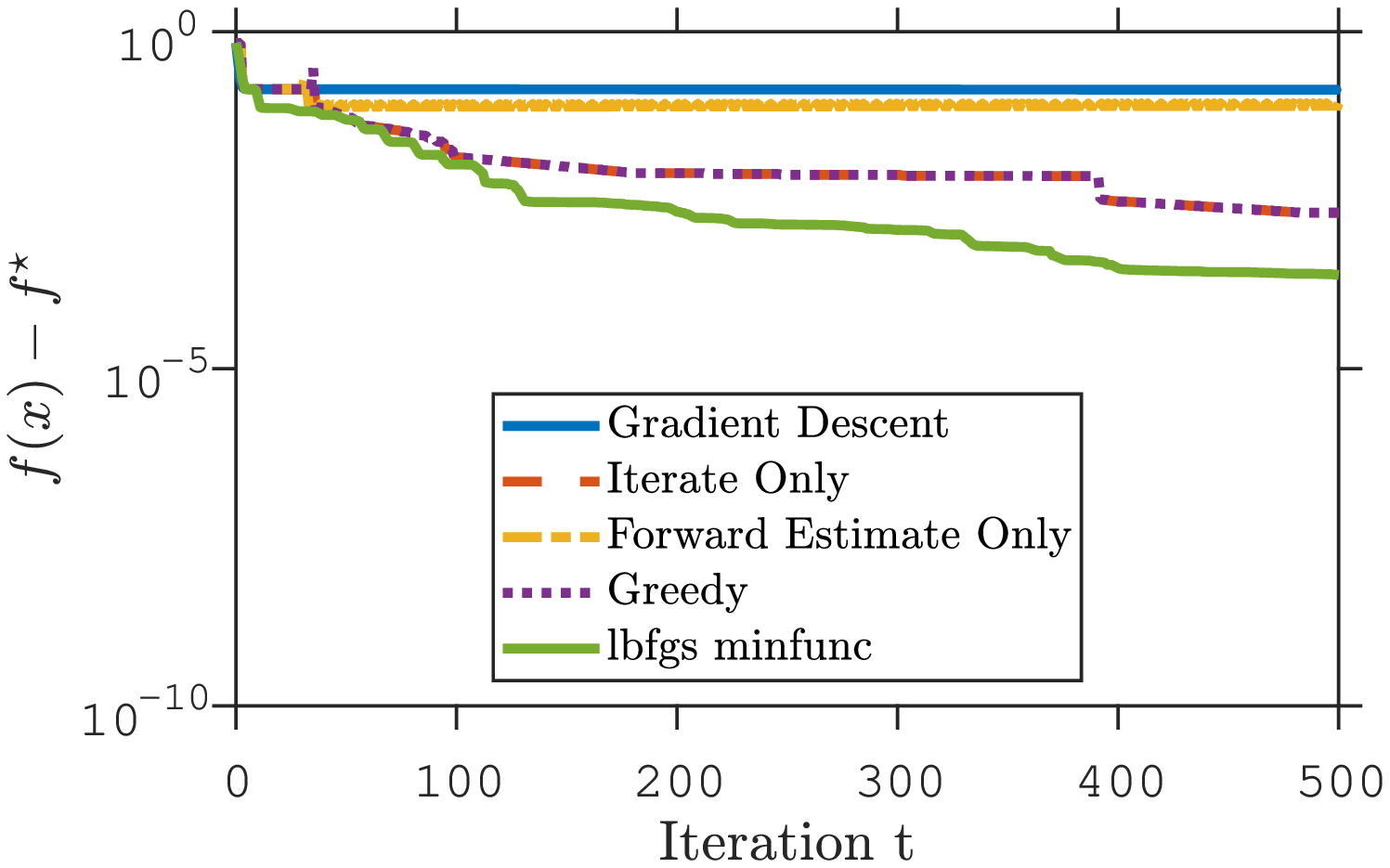}
\includegraphics[width=0.49\textwidth]{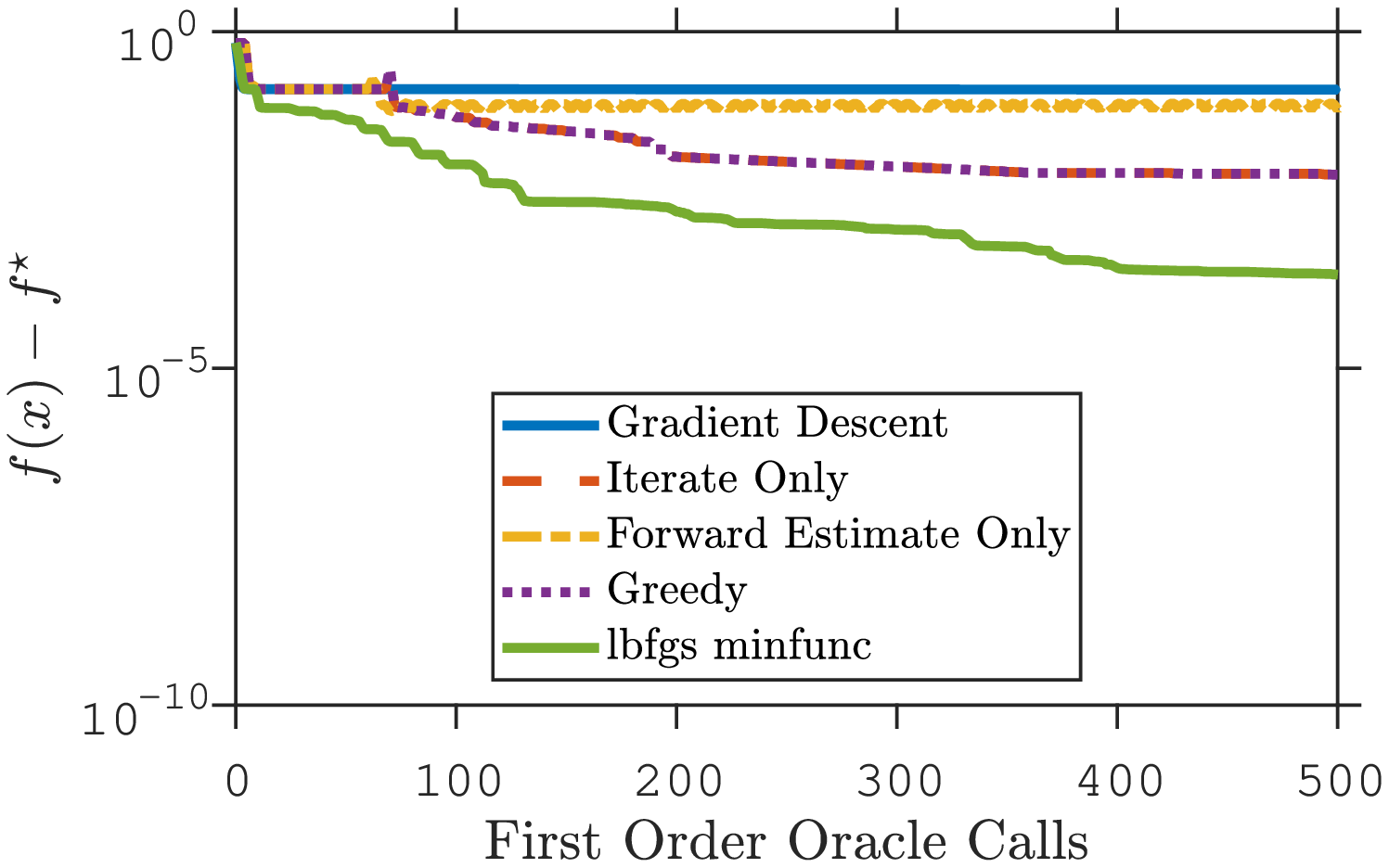}
    \caption{Comparison of type 2 methods: Logistic loss and cubic regularization on sido0 dataset}
    \label{fig:sido0_logistic_type2}
\end{figure}

\begin{figure}[h!t]
    \centering
    \includegraphics[width=0.49\textwidth]{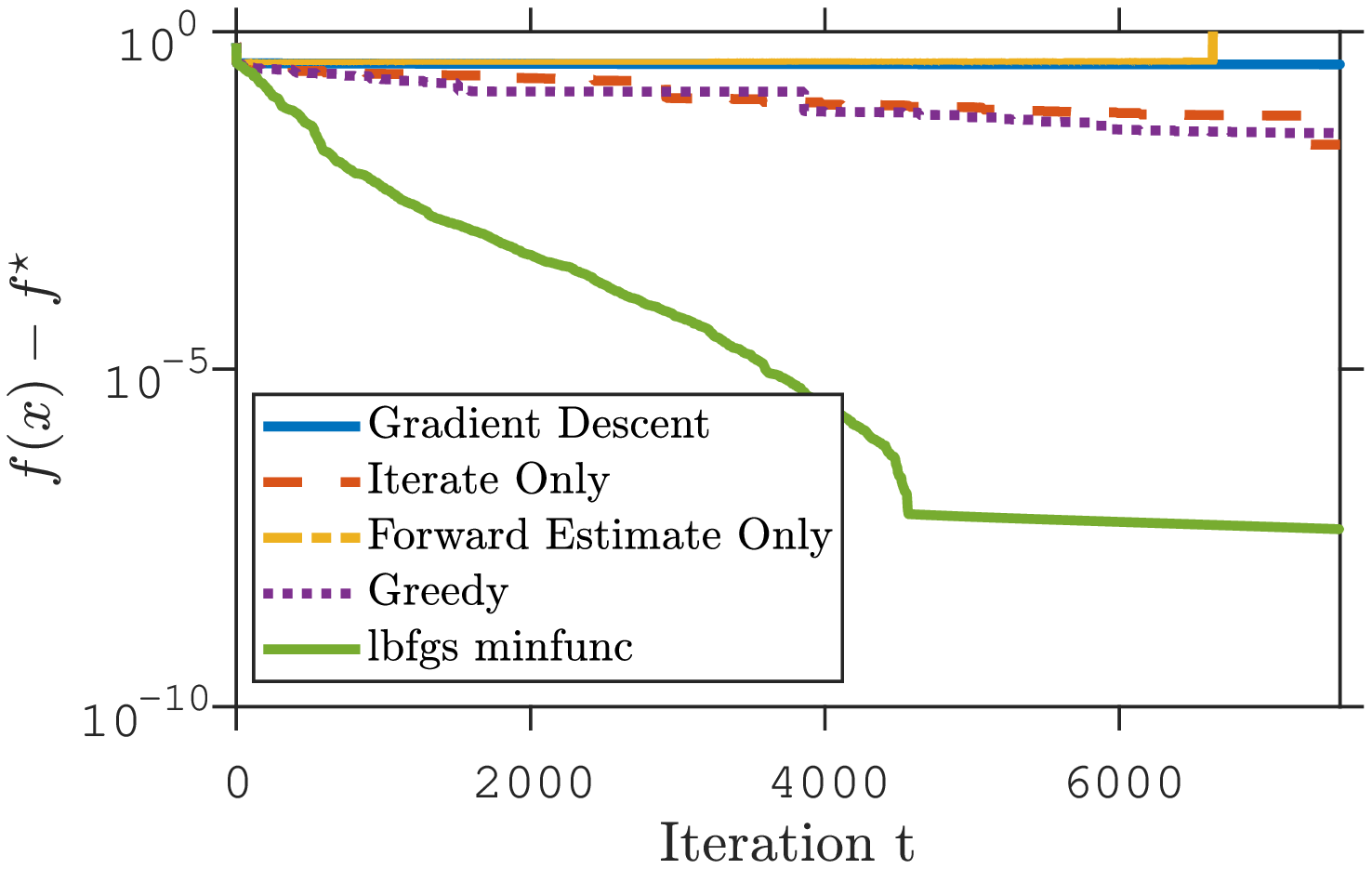}
\includegraphics[width=0.49\textwidth]{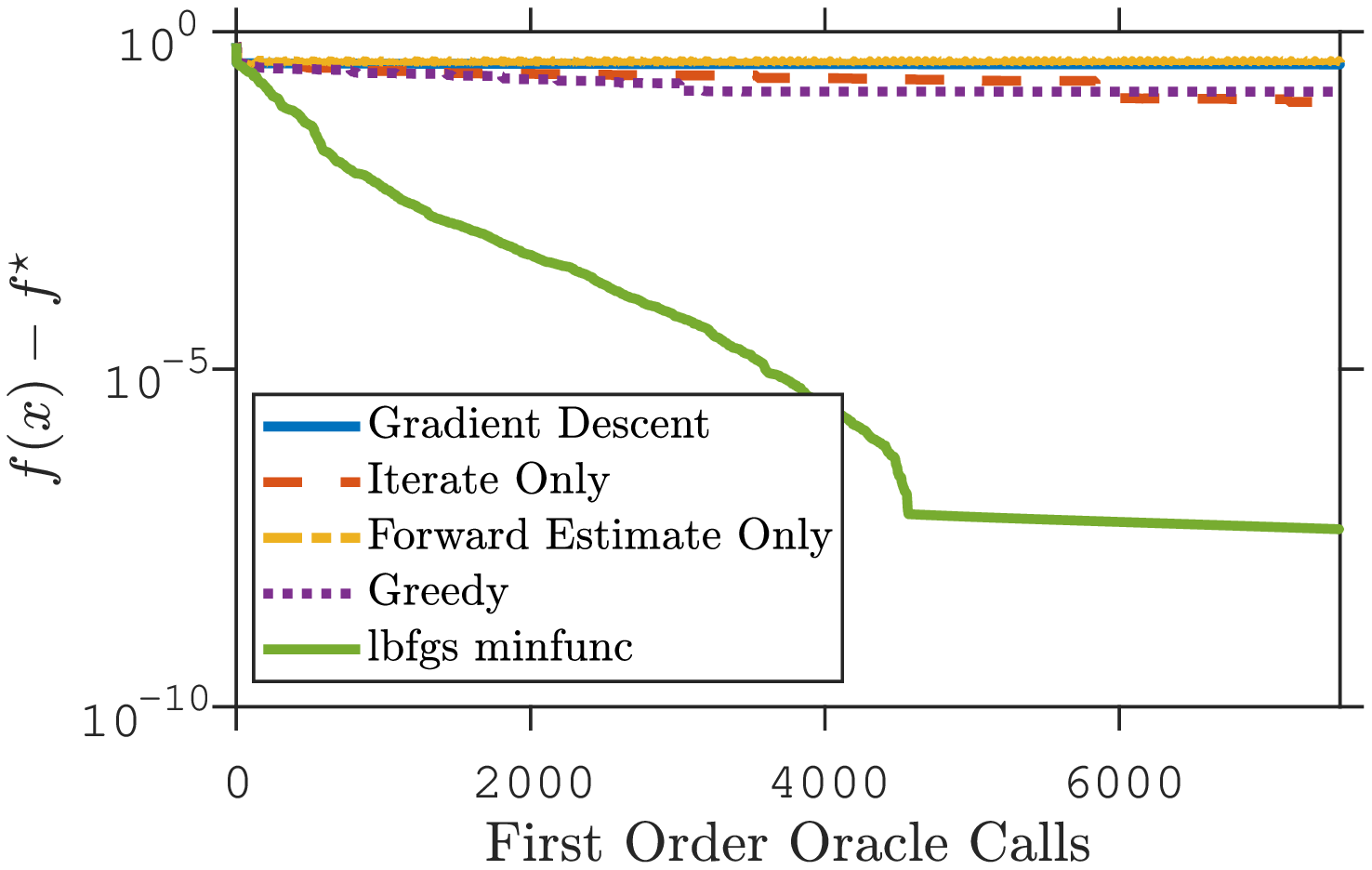}
    \caption{Comparison of type 2 methods: Logistic loss and cubic regularization on marti2 dataset}
    \label{fig:marti2_logistic_type2}
\end{figure}

\clearpage

\section{Missing proofs} \label{sec:missing_proofs}

In this section, when not needed, the subscript $t$ has been removed for clarity. The following definitions simplify the notations:
\begin{align}
    D_\dagger & = (D^TD)^{-1}D^T,& \label{eq:inv_D} \\
    D^T_\dagger & = D(D^TD)^{-1},& \label{eq:inv_D_traspose} \\
    \kappa_D &= \|D_\dagger\|\|D\| \label{eq:kappaD},
\end{align}
Note that the pseudo inverse $D_\dagger$ exists under \cref{assump:bounded_conditionning}. Note that
\[
    D_\dagger D = I, \qquad DD_\dagger = P_D = P.
\]

\subsection{Technical Result: Hessian Approximation}

This section presents technical results related to the approximation of the Hessian $\nabla^2 f(x)$. To simplify notations, let the matrices $H_0$ and $\tilde H_0$ be
\begin{align}\label{eq:def_h0}
    H_0 = \frac{D^TR+R^TD}{2},\qquad \tilde H_0 = D_\dagger^T H_0 D_\dagger = \frac{PGD_{\dagger}+D_{\dagger}^TG^TP}{2}.
\end{align}
Intuitively, $\tilde H_0$ is the Hessian approximation, while $H_0$ is the approximation of the quadratic form $D^T\nabla^2 f(x)D$.

\begin{proposition}[Subspace Hessian Approximation Error]\label{prop:accuracy_estimation}
    Assume $D$ satisfies \cref{assump:projector_grad}. Then, the following holds:
    \[
        \left\| \left(\tilde H_0 - P\nabla^2 f(x)P\right)D\alpha \right\| \leq \frac{L}{2}\|D_{\dagger}\|\|\varepsilon\|\|D\alpha\|
    \]
\end{proposition}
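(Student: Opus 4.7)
The plan is to introduce the error matrix $E \defas \nabla^2 f(x)D - G$ so that \cref{thm:bound_secant_alpha} directly controls both $\|w^T E\|$ and $\|E\alpha\|$. The key observation is that $\tilde H_0 - P\nabla^2 f(x)P$ can be rewritten symmetrically as $-\tfrac{1}{2}(PED_\dagger + D_\dagger^T E^T P)$. Indeed, using $P = DD_\dagger$ one has $P\nabla^2 f(x)P = \tfrac{1}{2}\bigl(P\nabla^2 f(x)DD_\dagger + D_\dagger^T D^T\nabla^2 f(x)P\bigr)$, and subtracting this from the definition \cref{eq:def_h0} of $\tilde H_0$ gathers $G - \nabla^2 f(x)D = -E$ on each side.

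Next, I would right-multiply this identity by $D\alpha$ and exploit the two clean algebraic identities $D_\dagger D = I$ and $PD = D$. These collapse $D_\dagger D\alpha$ to $\alpha$ in the first term and $P D\alpha$ to $D\alpha$ in the second, giving
\[
(\tilde H_0 - P\nabla^2 f(x)P)D\alpha = -\tfrac{1}{2}\bigl(PE\alpha + D_\dagger^T E^T D\alpha\bigr).
\]
The two pieces are then bounded by the two inequalities in \cref{thm:bound_secant_alpha}, applied to $E$: for the first term, $\|PE\alpha\| \leq \|E\alpha\| \leq \tfrac{L}{2}\|\varepsilon\|\|\alpha\|$ via \cref{eq:bound_hessian_scalar} (with $\|P\|\leq 1$ and the crude bound $|\alpha|^T\varepsilon \leq \|\varepsilon\|\|\alpha\|$); for the second, $\|D_\dagger^T E^T D\alpha\| = \|(D\alpha)^T E D_\dagger\| \leq \|(D\alpha)^T E\|\,\|D_\dagger\| \leq \tfrac{L}{2}\|\varepsilon\|\|D\alpha\|\|D_\dagger\|$ via \cref{eq:bound_hessian_dalpha}.

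The final step is homogenizing the two bounds so that both are expressed in terms of $\|D\alpha\|$ (to match the right-hand side of the claim). The first term has a spurious factor $\|\alpha\|$ instead of $\|D\alpha\|$, which I would absorb using $\|\alpha\| = \|D_\dagger D\alpha\| \leq \|D_\dagger\|\|D\alpha\|$. Combining the two $\tfrac{L}{4}\|D_\dagger\|\|\varepsilon\|\|D\alpha\|$ contributions yields exactly the advertised bound $\tfrac{L}{2}\|D_\dagger\|\|\varepsilon\|\|D\alpha\|$. The only mildly delicate step is the symmetric rewriting in the first paragraph; everything after that is essentially bookkeeping with the pseudo-inverse identities, and no use of \cref{assump:projector_grad} beyond the existence of $D_\dagger$ (guaranteed by \cref{assump:bounded_conditionning}) appears to be required.
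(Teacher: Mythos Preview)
Your proof is correct and follows essentially the same route as the paper's own argument: split $\tilde H_0 - P\nabla^2 f(x)P$ into two halves, simplify each half using $D_\dagger D = I$ and $PD = D$, bound one via \cref{eq:bound_hessian_scalar} and the other via \cref{eq:bound_hessian_dalpha}, and finish with $|\alpha|^T\varepsilon \leq \|\varepsilon\|\|\alpha\| \leq \|D_\dagger\|\|\varepsilon\|\|D\alpha\|$. Your introduction of the error matrix $E$ and the preliminary symmetric rewriting make the bookkeeping slightly tidier, but the underlying steps are identical; your closing remark that \cref{assump:projector_grad} is not actually used is also accurate.
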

\begin{proof}
    Since $D^T D_\dagger=D_\dagger^TD^T=P$, $D_\dagger D = \mathrm{I}$, $PD=D$, $\|P\| = 1$, and using \cref{eq:def_h0},
    \begin{align*}
        & \left\|\left[\frac{PGD_{\dagger}+D^T_{\dagger}G^TP}{2}-P\nabla^2 f(x)P\right]D\alpha\right\| \\
        \leq \quad &  \frac{1}{2}\big(\left\|(PGD_{\dagger} -P\nabla^2 f(x)P)D\alpha\right\| + \left\|(D^T_{\dagger}G^TP-P\nabla^2 f(x)P)D\alpha\right\|\big)\\
        \leq \quad &  \frac{1}{2}\Big(\left\|G\alpha -\nabla^2 f(x)D\alpha\right\| + \|D_{\dagger}\|\left\|(G^T-D^T\nabla^2 f(x))D\alpha\right\|\Big)\\
    \end{align*}
    Using inequality \cref{eq:bound_hessian_scalar} for the first term and \cref{eq:bound_hessian_dalpha} for second gives
    \begin{align*}
        \left\|\left[\frac{PGD_{\dagger}+D^T_{\dagger}G^TP}{2}-P\nabla^2 f(x)P\right]D\alpha\right\| \leq \frac{1}{2}\left(\frac{L}{2} |\alpha|^T \varepsilon + \|D_{\dagger}\| \frac{L\|D\alpha\|}{2} \|\varepsilon\|\right)
    \end{align*}
    Because $|\alpha|^T \varepsilon \leq \|\alpha\|\|\varepsilon\|\leq \|D_{\dagger}\| \|D\alpha\| \|\varepsilon\|$,
    \begin{align*}
        \left\|\left[\frac{PGD_{\dagger}+D^T_{\dagger}G^TP}{2}-P\nabla^2 f(x)P\right]D\alpha\right\| \leq \frac{L}{2}\|D_{\dagger}\|\|\varepsilon\|\|D\alpha\|.
    \end{align*}
\end{proof}

\begin{proposition} \label{prop:bound_projector_hessian}[Out-of-subspace Error Estimation]
    Let the function $f$ satisfy \cref{assump:lipchitiz_hessian}. Let the matrices $D,\,G$ be defined as in \cref{def:matrices} and vector $\varepsilon$ as in \cref{eq:error_vector}. 
    Then, for all $\alpha\in\mathbb{R}^N$,
    \[
        \frac{\|(I-P)\nabla^2 f(x)D\alpha\|}{\|D\alpha\|} \leq  \left(\|(I-P)G\| + L\|\varepsilon\|\right)\frac{\kappa_D}{\|D\|}.
    \]
\end{proposition}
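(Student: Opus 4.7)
The plan is to split $\nabla^2 f(x)D\alpha$ into $G\alpha$ plus a secant-approximation error, apply the projector $(I-P)$, and bound each piece separately using \cref{thm:bound_secant_alpha} together with the definition of $\kappa_D$. Concretely, I would start with the identity
\[
    (I-P)\nabla^2 f(x)D\alpha = (I-P)G\alpha + (I-P)\bigl(\nabla^2 f(x)D - G\bigr)\alpha,
\]
which follows by adding and subtracting $G\alpha$. Taking norms and using the triangle inequality, together with $\|I-P\|\leq 1$ (since $P$ is an orthogonal projector), yields
\[
    \|(I-P)\nabla^2 f(x)D\alpha\| \;\leq\; \|(I-P)G\|\,\|\alpha\| + \|(\nabla^2 f(x)D - G)\alpha\|.
\]

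Next, I would bound the second term by invoking \cref{thm:bound_secant_alpha}. The cleanest route is the duality identity $\|Mv\| = \sup_{\|w\|=1} w^T M v$, which combined with \cref{eq:bound_hessian_scalar} gives
\[
    \|(\nabla^2 f(x)D-G)\alpha\| \;\leq\; \tfrac{L}{2}\,|\alpha|^T\varepsilon \;\leq\; \tfrac{L}{2}\,\|\alpha\|\,\|\varepsilon\|,
\]
so that
\[
    \|(I-P)\nabla^2 f(x)D\alpha\| \;\leq\; \bigl(\|(I-P)G\| + \tfrac{L}{2}\|\varepsilon\|\bigr)\|\alpha\|.
\]
The final step is to replace $\|\alpha\|$ by a quantity involving $\|D\alpha\|$, using $\alpha = D_\dagger D\alpha$ (valid by \cref{assump:bounded_conditionning}). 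This gives $\|\alpha\| \leq \|D_\dagger\|\,\|D\alpha\| = \tfrac{\kappa_D}{\|D\|}\|D\alpha\|$ from the definitions \cref{eq:inv_D,eq:kappaD}. Dividing through by $\|D\alpha\|$ yields the stated inequality (in fact, with the slightly tighter constant $\tfrac{L}{2}\|\varepsilon\|$ in place of $L\|\varepsilon\|$, which is absorbed into the looser bound of the proposition).

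There is no real obstacle here: the bound is essentially a triangle inequality combined with the previously established \cref{thm:bound_secant_alpha} and the conditioning of $D$. The only subtlety is choosing to pass to the operator-norm bound via duality rather than directly from \cref{eq:bound_hessian_dalpha}, since the latter bounds $\|w^T(\nabla^2 f(x)D - G)\|$ over $w$ rather than $\|(\nabla^2 f(x)D - G)\alpha\|$ for a fixed $\alpha$; both routes work, but the scalar form in \cref{eq:bound_hessian_scalar} slots in most transparently.
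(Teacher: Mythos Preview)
Your proposal is correct and essentially identical to the paper's proof: both add and subtract $G\alpha$, bound $\|(I-P)G\alpha\|\le\|(I-P)G\|\,\|\alpha\|$ and $\|(\nabla^2 f(x)D-G)\alpha\|\le\tfrac{L}{2}\|\varepsilon\|\,\|\alpha\|$, then convert $\|\alpha\|$ to $\tfrac{\kappa_D}{\|D\|}\|D\alpha\|$. The only cosmetic difference is that the paper invokes \cref{eq:bound_hessian_dalpha} to bound the operator norm $\|\nabla^2 f(x)D-G\|$ directly, whereas you reach the same estimate via duality and \cref{eq:bound_hessian_scalar}; both yield the same $\tfrac{L}{2}\|\varepsilon\|$ constant, which the statement then relaxes to $L\|\varepsilon\|$.
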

\begin{proof}
    Indeed, using \cref{eq:bound_hessian_dalpha},
    \begin{align*}
        \|(I-P)\nabla^2 f(x)D\alpha\| & = \|(I-P)(G-G+\nabla^2 f(x)D)\alpha\|\\
        & \leq \|(I-P)(\nabla^2 f(x)D-G)\alpha\| + \|(I-P)G\alpha\|\\
        & \leq \|(\nabla^2 f(x)D-G)\alpha\| + \|(I-P)G\alpha\|\\
        & \leq \|\nabla^2 f(x)D-G\|\|\alpha\| + \|(I-P)G\alpha\|\\
        & \leq \left(\frac{L\|\varepsilon\|}{2}\|\alpha\| + \|(I-P)G\alpha\|\right)\\
    \end{align*}
    Hence,
    \[
        \frac{\|(I-P)\nabla^2 f(x)D\alpha\|}{\|D\alpha\|}  \leq \frac{ \left(\frac{L\|\varepsilon\|}{2}\|\alpha\| + \|(I-P)G\alpha\|\right)}{\|D\alpha\|}.
    \]
    Moreover,
    \begin{align*}
        \frac{\|(I-P)\nabla^2 f(x)D\alpha\|}{\|D\alpha\|} 
        & \leq \left(\frac{L\|\varepsilon\|}{2} + \|(I-P)G\|\right) \|\alpha\|.\\
        & \leq \left(\frac{L\|\varepsilon\|}{2} + \|(I-P)G\|\right) \frac{\|\alpha\|}{\|D\alpha\|}\\
        & \leq \max_\alpha \left(\frac{L\|\varepsilon\|}{2} + \|(I-P)G\|\right) \frac{\|\alpha\|}{\|D\alpha\|}\\
        & = \left(\frac{L\|\varepsilon\|}{2} + \|(I-P)G\|\right) \sigma^{-1}_{\min}(D).
    \end{align*}
    The desired result follows from the fact that $\kappa_D = \frac{\|D\|}{\sigma_{\min}(D)}$.
\end{proof}

\subsection{Technical Results: Cubic Subproblem}

This section presents results on the properties of the solution of the cubic subproblem
\begin{equation}\label{eq:app_cubic_subproblem}
    \alpha^\star \defas \argmin_\alpha \nabla f(x)^T(D\alpha) + \frac{1}{2}(D\alpha)^T\tilde H_\Gamma (D\alpha) + \frac{M}{6}\|D\alpha\|^3,\qquad  x_+ = x+D\alpha^\star
\end{equation}
where $\tilde H_\Gamma \in\mathbb{R}^{d\times d}$ is a rank $N$ matrix such that
\begin{align}\label{eq:def_tildeH}
    \tilde H = D_\dagger^T H_\gamma D_\dagger, \qquad \Leftrightarrow\;\; H = D^T\tilde H_\Gamma D, \qquad H_\gamma = \frac{R^TD+D^TR+\Gamma}{2},
\end{align}
and $\Gamma$ is a $N\times N$ matrix. For instance, setting $\Gamma = M\|\varepsilon\|\|D\|I$ gives the $H$ in \cref{alg:type1}.

\begin{proposition} The first-order and second-order conditions of the subproblem  \cref{eq:app_cubic_subproblem} read
\begin{align} 
    D^T\nabla f(x) + H_{\Gamma}\alpha + \frac{M}{2}D^TD\alpha \|D\alpha\| & = 0,\label{eq:first_order_condition}\\
    H_{\Gamma} + \frac{M}{2}D^TD\|D\alpha\| & \succeq 0.\label{eq:second_order_condition}
\end{align}
\end{proposition}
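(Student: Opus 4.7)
The plan is to verify directly that the minimizer $\alpha^\star$ of the cubic subproblem \cref{eq:app_cubic_subproblem} satisfies both stated conditions. The first-order condition will follow from standard calculus, while the second-order condition requires a short change-of-variables argument that reduces the problem to the classical Nesterov--Polyak cubic regularization model.

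For \cref{eq:first_order_condition}, I would first use $H_\Gamma = D^T \tilde H_\Gamma D$ (symmetric by construction, since $R^TD+D^TR$ is symmetric and $\Gamma$ is symmetric) to rewrite the objective as
\begin{equation*}
    g(\alpha) = \nabla f(x)^T D\alpha + \tfrac12 \alpha^T H_\Gamma \alpha + \tfrac{M}{6}\|D\alpha\|^3.
\end{equation*}
Differentiating termwise yields $D^T\nabla f(x)$ from the linear part, $H_\Gamma\alpha$ from the quadratic part, and, by the chain rule applied to $(\alpha^T D^T D\alpha)^{3/2}$, $\tfrac{M}{2}\|D\alpha\|\, D^T D\alpha$ from the cubic regularizer. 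Setting $\nabla g(\alpha^\star) = 0$ immediately produces \cref{eq:first_order_condition}.

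For \cref{eq:second_order_condition}, direct differentiation gives
\begin{equation*}
    \nabla^2 g(\alpha^\star) = H_\Gamma + \tfrac{M}{2}\|D\alpha^\star\|\,D^TD + \tfrac{M}{2\|D\alpha^\star\|}(D^TD\alpha^\star)(D^TD\alpha^\star)^T,
\end{equation*}
whose positive semidefiniteness is strictly weaker than the claim, because the final rank-one term is itself positive semidefinite. To obtain the stronger statement, I would use the change of variables $\beta = (D^TD)^{1/2}\alpha$, which is well-defined thanks to \cref{assump:bounded_conditionning}. Under this substitution $\|D\alpha\| = \|\beta\|$, and the subproblem becomes the canonical cubic-regularization problem $\min_\beta \hat g^T\beta + \tfrac12 \beta^T \hat H \beta + \tfrac{M}{6}\|\beta\|^3$, where $\hat H = (D^TD)^{-1/2}H_\Gamma(D^TD)^{-1/2}$. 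The Nesterov--Polyak characterization of its global minimizer $\beta^\star$ gives $\hat H + \tfrac{M}{2}\|\beta^\star\|I \succeq 0$; conjugating both sides by $(D^TD)^{1/2}$ and using $\|\beta^\star\| = \|D\alpha^\star\|$ then recovers \cref{eq:second_order_condition}.

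The main obstacle is the second-order condition: the additional rank-one positive semidefinite term in the Hessian makes the plain necessary condition for local minimality strictly weaker than what is claimed. Invoking the well-established cubic-regularization second-order characterization after the spectral change of variables is the cleanest route, and avoids a direct spectral argument about when that rank-one correction can be absorbed.
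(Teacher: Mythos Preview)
Your proposal is correct and follows essentially the same route as the paper: both rely on the Nesterov--Polyak characterization of the global minimizer of the cubic model (the paper simply cites \cite{nesterov2006cubic,nesterov2008accelerating} without further detail). Your change of variables $\beta=(D^TD)^{1/2}\alpha$ is exactly the reduction needed to bridge the subspace problem to the canonical one, and the paper itself uses this very transformation elsewhere (see \cref{prop:equivalence_type1_subproblem}); your observation that the naive second-order necessary condition is strictly weaker than \cref{eq:second_order_condition} is also the right diagnosis of why the global-minimizer characterization, rather than mere local optimality, must be invoked.
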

\begin{proof}
    See \citep{nesterov2008accelerating}, equation (3.3), and \citep{nesterov2006cubic}, equation (2.7).
\end{proof}

\begin{proposition} \label{prop:ineq_grad_qn_optimal}
     Let $f$ satisfies \cref{assump:lipchitiz_hessian} and $B\in\mathbb{R}^{d\times d}$ be any matrix. Assume the matrix $D$ satisfies \cref{assump:projector_grad}, and $\alpha$ satisfies the first-order condition~\cref{eq:first_order_condition}. Let $\tilde H_\Gamma$ be defined in \cref{eq:def_tildeH}. Then,
     \begin{align}
         \|\nabla f(x) + BD\alpha - \nabla f(x_+)\| & = \|(\tilde H_{\Gamma}-B + \frac{M\|D\alpha\|}{2})D\alpha + \nabla f(x_+)\| \\
         & \leq \frac{L}{2}\|D\alpha\|^2 + \|[B-\nabla^2 f(x)]D\alpha\|.
     \end{align}
     Then, the following equation follows from the optimality condition multiplied by $D(D^TD)^{-1}$, writing $P = DD_\dagger = D_\dagger^TD^T$, assuming $P\nabla f(x) = \nabla f(x)$,
    \[
        \nabla f(x) + (\tilde H_{\Gamma} + \frac{M\|D\alpha\|}{2})D\alpha = 0.
    \]
    Replacing $\nabla f(x)$ gives
    \[
    \|\nabla f(x) + BD\alpha - \nabla f(x_+)\| = \|-(\tilde H_{\Gamma} + \frac{M\|D\alpha\|}{2})D\alpha + BD\alpha - \nabla f(x_+)\|,
    \]
    which is the desired result.
\end{proposition}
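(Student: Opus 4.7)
The statement decomposes cleanly into two essentially independent claims: an equality that re-expresses $\nabla f(x) + BD\alpha - \nabla f(x_+)$ using the first-order optimality condition \cref{eq:first_order_condition}, and an inequality that controls it via the Lipschitz continuity of the Hessian. My plan is to handle them separately; the equality is linear algebra on the optimality condition, and the inequality is a triangle-inequality argument using $\nabla^2 f(x)D\alpha$ as a pivot.

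For the equality, the first-order condition lives in $\mathbb{R}^N$ while the target lives in $\mathbb{R}^d$, so the key step is to lift \cref{eq:first_order_condition} to the ambient space. The natural lift is pre-multiplication by $D_\dagger^T = D(D^TD)^{-1}$, using $D_\dagger^T D^T = P$, the fact that $PD\alpha = D\alpha$, and the identity $D_\dagger^T H_\Gamma \alpha = \tilde H_\Gamma D\alpha$, which follows from $H_\Gamma = D^T\tilde H_\Gamma D$ (rearranging \cref{eq:def_tildeH}) combined with $D_\dagger D = I$. This yields
\[
    P\nabla f(x) + \Bigl(\tilde H_\Gamma + \tfrac{M\|D\alpha\|}{2} I\Bigr) D\alpha = 0,
\]
and \cref{assump:projector_grad} collapses $P\nabla f(x)$ to $\nabla f(x)$. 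Substituting the resulting expression for $\nabla f(x)$ into $\nabla f(x) + BD\alpha - \nabla f(x_+)$ and taking norms produces the claimed equality.

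For the inequality, I would insert and subtract $\nabla^2 f(x)D\alpha$ and apply the triangle inequality,
\[
    \|\nabla f(x) + BD\alpha - \nabla f(x_+)\| \leq \|\nabla f(x) + \nabla^2 f(x)D\alpha - \nabla f(x_+)\| + \|(B - \nabla^2 f(x))D\alpha\|.
\]
The first summand is exactly the quantity bounded by \cref{eq:ineq_secant} with $y = x_+ = x + D\alpha$ and $z = x$, giving $\tfrac{L}{2}\|D\alpha\|^2$, and the second is the second summand on the right-hand side of the claim.

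I do not expect any real obstacle: the whole argument is a chain of applications of known identities and a single triangle inequality. The only part that requires care is the algebraic bookkeeping in passing between $H_\Gamma$ and $\tilde H_\Gamma$ and between $\alpha$ and $D\alpha$; specifically, verifying $D_\dagger^T H_\Gamma \alpha = \tilde H_\Gamma D\alpha$ cleanly, which hinges on using $D_\dagger D = I$ twice. Beyond that, nothing is invoked outside of \cref{assump:lipchitiz_hessian,assump:projector_grad} and the definition of $\tilde H_\Gamma$ in \cref{eq:def_tildeH}.
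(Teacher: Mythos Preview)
Your proposal is correct and follows essentially the same approach as the paper: the equality is obtained by left-multiplying the first-order condition \cref{eq:first_order_condition} by $D(D^TD)^{-1}$ and invoking \cref{assump:projector_grad}, and the inequality comes from inserting $\nabla^2 f(x)D\alpha$ and applying the triangle inequality together with \cref{eq:ineq_secant}. The only cosmetic difference is that your write-up makes the algebraic identity $D_\dagger^T H_\Gamma\alpha = \tilde H_\Gamma D\alpha$ explicit, whereas the paper leaves this step implicit.
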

\begin{proof}
    The inequality follows directly from \cref{eq:ineq_secant},
    \begin{align*}
        \|\nabla f(x) + BD\alpha - \nabla f(x_+)\| & \leq \|\nabla f(x) + \nabla^2f(x)D\alpha - \nabla f(x_+)\| + \|BD\alpha -\nabla^2f(x)D\alpha\| \\
        &\leq \frac{L}{2} \|D\alpha\|^2 + \|[B -\nabla^2f(x)]D\alpha\| .
    \end{align*}
        
\end{proof}

\begin{proposition}
\label{prop:ineq_grad_qn_optimal_no_b}
    Assume $D$ satisfies \cref{assump:projector_grad}. Let $\tilde H$ be defined in \cref{eq:def_tildeH}. Then, for all $\tilde \Gamma$, if 
    \[
        B = \tilde H_\Gamma - \frac{1}{2} D_\dagger\tilde \Gamma D_\dagger^T
    \]
        in \cref{prop:ineq_grad_qn_optimal}, the following holds:
    \begin{align}
        \left\|\left(\frac{1}{2}D_\dagger\tilde \Gamma D_\dagger^T + \frac{M\|D\alpha\|}{2}\right)D\alpha + \nabla f(x_+)\right\| \leq \frac{L}{2}\|D\alpha\|^2 + \|[B-\nabla^2 f(x)]D\alpha\|,
    \end{align}
    where
    \begin{align*}
        \|[B-\nabla^2 f(x)]D\alpha\| \leq \|D\alpha\|\left( \frac{L}{2}\|D_{\dagger}\|\|\varepsilon\|+ \frac{\|(I-P)\nabla^2 f(x)D\alpha\|}{\|D\alpha\|}  + \frac{1}{2}\left\|D_\dagger (\Gamma-\tilde \Gamma) D_\dagger\right\|\right)
    \end{align*}
\end{proposition}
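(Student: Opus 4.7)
The first displayed inequality is essentially a direct specialization of \cref{prop:ineq_grad_qn_optimal}. The plan is to plug in the particular choice $B=\tilde H_\Gamma-\tfrac12 D_\dagger^{T}\tilde\Gamma D_\dagger$ (reading the typo in the statement): then $\tilde H_\Gamma-B=\tfrac12 D_\dagger^{T}\tilde\Gamma D_\dagger$, so the left-hand side of \cref{prop:ineq_grad_qn_optimal} collapses exactly to $\|(\tfrac12 D_\dagger^{T}\tilde\Gamma D_\dagger+\tfrac{M\|D\alpha\|}{2}I)D\alpha+\nabla f(x_+)\|$, and the right-hand side inherits the bound $\tfrac{L}{2}\|D\alpha\|^2+\|[B-\nabla^2 f(x)]D\alpha\|$ without further work.

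The substantive task is to control $\|[B-\nabla^2 f(x)]D\alpha\|$. The approach is to insert $\tilde H_0$ and $P\nabla^2 f(x)P$ as pivots. First, using the definition \cref{eq:def_tildeH} together with the decomposition of $H_\Gamma$, I would write $\tilde H_\Gamma=\tilde H_0+\tfrac12 D_\dagger^{T}\Gamma D_\dagger$, so that
\begin{align*}
B-\nabla^2 f(x)
&=\bigl(\tilde H_0-P\nabla^2 f(x)P\bigr)+\bigl(P\nabla^2 f(x)P-\nabla^2 f(x)\bigr)+\tfrac12 D_\dagger^{T}(\Gamma-\tilde\Gamma)D_\dagger.
\end{align*}
Applying both sides to $D\alpha$ and using the key identity $PD\alpha=D\alpha$ (so the ``right-side'' $(P-I)D\alpha$ term in the middle piece vanishes), this reduces to
\begin{align*}
[B-\nabla^2 f(x)]D\alpha
=\bigl(\tilde H_0-P\nabla^2 f(x)P\bigr)D\alpha-(I-P)\nabla^2 f(x)D\alpha+\tfrac12 D_\dagger^{T}(\Gamma-\tilde\Gamma)D_\dagger D\alpha.
\end{align*}

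Finally, the triangle inequality leaves three terms to bound. The first is handled directly by \cref{prop:accuracy_estimation}, yielding the $\tfrac{L}{2}\|D_\dagger\|\|\varepsilon\|\|D\alpha\|$ contribution. The second is left untouched, appearing as $\|(I-P)\nabla^2 f(x)D\alpha\|$ (which is itself controlled by \cref{prop:bound_projector_hessian} if one needs to expose $\|(I-P)G\|$). The third is bounded by operator-norm submultiplicativity, i.e.\ $\|D_\dagger^{T}(\Gamma-\tilde\Gamma)D_\dagger D\alpha\|\le \|D_\dagger^{T}(\Gamma-\tilde\Gamma)D_\dagger\|\,\|D\alpha\|$. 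Summing produces the stated estimate. The only conceptual step is recognising the $PD\alpha=D\alpha$ cancellation and the decomposition of $\tilde H_\Gamma$ through the pivots $\tilde H_0$ and $P\nabla^2 f(x)P$; everything else is algebra and triangle inequalities.
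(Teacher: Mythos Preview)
Your proposal is correct and follows essentially the same route as the paper: specialize \cref{prop:ineq_grad_qn_optimal} with the given $B$ for the first inequality, then decompose $B-\nabla^2 f(x)$ through the pivots $\tilde H_0$ and $P\nabla^2 f(x)P$, use $PD\alpha=D\alpha$ to collapse the middle piece to $-(I-P)\nabla^2 f(x)D\alpha$, and invoke \cref{prop:accuracy_estimation} plus submultiplicativity on the remaining two terms. The paper phrases the split via an explicit $[P+(I-P)]$ insertion rather than your additive pivot identity, but the content is identical.
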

\begin{proof}
    From \cref{prop:ineq_grad_qn_optimal},
    \[
        \|(\tilde H_{\Gamma}-B + \frac{M\|D\alpha\|}{2})D\alpha + \nabla f(x_+)\| \leq \frac{L}{2}\|D\alpha\|^2 + \|[B-\nabla^2 f(x)]D\alpha\|.
    \]
    Replacing $B$ in the left-hand-side gives
    \[
         \|(\tilde H_{\Gamma}-B + \frac{M\|D\alpha\|}{2})D\alpha + \nabla f(x_+)\| =  \|( \frac{D_\dagger\Gamma D_\dagger^T}{2}+ \frac{M\|D\alpha\|}{2})D\alpha + \nabla f(x_+)\|
    \]
    Since
    \[
        \nabla^2 f(x)D\alpha = P\nabla^2 f(x)PD\alpha + (I-P)\nabla^2 f(x)PD\alpha,
    \]
    where $P = D(D^TD)^{-1}D^T$, and because $PD = D$,
    the inequality becomes
    \begin{align}
        \|[B-\nabla^2 f(x)]D\alpha\| = &\|\left[\tilde H_\Gamma - \frac{1}{2} D_\dagger \tilde \Gamma D_\dagger^T -\nabla^2 f(x)\right]D\alpha\|
        \\
        = &\|[P + (I-P)]\left[\tilde H_\Gamma - \frac{1}{2} D_\dagger \tilde \Gamma D_\dagger^T -\nabla^2 f(x)\right]P D\alpha\|\\
        \leq &  \left\| \left(\tilde H_0 - P\nabla^2 f(x)P\right)D\alpha \right\| 
        \label{eq:temp_term1} \\
        & + \left( \frac{1}{2}\left\| D_{\dagger}^T (\Gamma-\tilde \Gamma) D_{\dagger} \right\| + \frac{\left\| (I-P)\nabla^2 f(x)D\alpha) \right\|}{\|D\alpha\|} \right)\|D\alpha\|\label{eq:temp_term2}
    \end{align}
\end{proof}

\begin{corollary}[Bound depending on $\tilde \Gamma$] 
\label{cor:bound_depend_gamma}
In \cref{prop:ineq_grad_qn_optimal_no_b}, 
\begin{itemize}
    \item  if $\tilde \Gamma = 0$ and $\Gamma = M\|D\|\|\varepsilon\|I$,
    \begin{align} \label{eq:bound_gamma_0}
    \left\|\frac{M\|D\alpha\|}{2}D\alpha + \nabla f(x_+)\right\| \leq \frac{L}{2}\|D\alpha\|^2+ \|D\alpha\|\left(  \frac{\|\varepsilon\|}{\|D\|}\left(\frac{L+M\kappa_D}{2}\right)\kappa_D
     + \|(I-P)\nabla^2 f(x)P\| \right)
\end{align}
    \item if $\tilde \Gamma = \Gamma$,
\begin{align}\label{eq:bound_cancel_gamma}
    \left\|\left(\frac{1}{2}D_\dagger \Gamma D_\dagger^T + \frac{M\|D\alpha\|}{2}\right)D\alpha + \nabla f(x_+)\right\| \leq \frac{L}{2}\|D\alpha\|^2 + \|D\alpha\|\left( \frac{L}{2} \frac{\|\varepsilon\|}{\|D\|} \kappa_D + \frac{\|(I-P)\nabla^2 f(x)D\alpha\|}{\|D\alpha\|}\right)
\end{align}
\item If $\tilde \Gamma =  D( M \|D\alpha\|)D^T$ and $\Gamma = M\|D\|\|\varepsilon\|I$,
\begin{align}\label{eq:lower_bound_gradient_plus}
        \|\nabla f(x_+)\|
         & \leq \frac{L+M}{2}\|D\alpha\|^2 + \|D\alpha\|\left( \frac{\|\varepsilon\|}{\|D\|}\left(\frac{L+M\kappa_D}{2}\right)\kappa_D+ \|(I-P)\nabla^2 f(x)P\| \right)
    \end{align}
\end{itemize}
\end{corollary}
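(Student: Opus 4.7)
The plan is to specialize Proposition~\ref{prop:ineq_grad_qn_optimal_no_b} to three concrete choices of its free parameter $\tilde\Gamma$; the proposition already contains the main analytic input, via \cref{prop:accuracy_estimation} and the first-order optimality condition \eqref{eq:first_order_condition}. Its master bound reads
\[
\Bigl\|\bigl(\tfrac{1}{2}D_\dagger^T\tilde\Gamma D_\dagger + \tfrac{M\|D\alpha\|}{2}\bigr)D\alpha + \nabla f(x_+)\Bigr\| \leq \tfrac{L}{2}\|D\alpha\|^2 + \|D\alpha\|\Bigl(\tfrac{L}{2}\|D_\dagger\|\|\varepsilon\| + \tfrac{\|(I-P)\nabla^2 f(x)D\alpha\|}{\|D\alpha\|} + \tfrac{1}{2}\bigl\|D_\dagger^T(\Gamma-\tilde\Gamma)D_\dagger\bigr\|\Bigr),
\]
and each case follows by choosing $\tilde\Gamma$ so that the prefactor on the left matches the desired expression, substituting $\|D_\dagger\|=\kappa_D/\|D\|$, and bounding $\|D_\dagger^T(\Gamma-\tilde\Gamma)D_\dagger\|$ explicitly.

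For Case 1, I set $\tilde\Gamma=0$ and $\Gamma=M\|D\|\|\varepsilon\|I_N$; since $\Gamma$ is a scalar multiple of the identity, $\|D_\dagger^T\Gamma D_\dagger\|=M\|D\|\|\varepsilon\|\|D_\dagger\|^2=M\|\varepsilon\|\kappa_D^2/\|D\|$. Combining with the $\tfrac{L\kappa_D\|\varepsilon\|}{2\|D\|}$ summand collapses the two $\|\varepsilon\|$-terms into the single prefactor $\kappa_D(L+M\kappa_D)\|\varepsilon\|/(2\|D\|)$, while $\|(I-P)\nabla^2 f(x)D\alpha\|\leq\|(I-P)\nabla^2 f(x)P\|\|D\alpha\|$ (from $D\alpha=PD\alpha$ and submultiplicativity) produces the advertised out-of-subspace term. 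For Case 2, the choice $\tilde\Gamma=\Gamma$ zeroes the mismatch outright and the desired inequality follows immediately from the master bound after the single substitution $\|D_\dagger\|=\kappa_D/\|D\|$.

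Case 3 is the only one requiring genuine new work. The idea is to pick $\tilde\Gamma$ so that $\tfrac{1}{2}D_\dagger^T\tilde\Gamma D_\dagger D\alpha$ exactly cancels $\tfrac{M\|D\alpha\|}{2}D\alpha$, reducing the left-hand side of the master bound to $\|\nabla f(x_+)\|$. Using $D_\dagger D=I_N$, $D_\dagger^T D^T=P$, and $PD\alpha=D\alpha$, the natural $N\times N$ representative of the statement's shorthand $\tilde\Gamma=D(M\|D\alpha\|)D^T$ is $-M\|D\alpha\|D^TD$, which achieves precisely this cancellation. The mismatch then splits into an identity piece giving $M\|\varepsilon\|\kappa_D^2/\|D\|$ (handled as in Case 1) and a piece $M\|D\alpha\|D^TD$ whose $D_\dagger^T(\cdot)D_\dagger$-norm equals $M\|D\alpha\|\cdot\|P\|=M\|D\alpha\|$; absorbing this extra $\tfrac{M}{2}\|D\alpha\|^2$ into the leading quadratic yields the $\tfrac{L+M}{2}\|D\alpha\|^2$ of the claim, and the $\|\varepsilon\|$-coefficients recombine into the same $\kappa_D(L+M\kappa_D)/(2\|D\|)$ as in Case 1.

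The main obstacle is purely bookkeeping: one must carefully track whether each occurrence of $D_\dagger$ sits on the left or the right of its neighbour, keep $\tilde\Gamma$ in its natural $N\times N$ form when applying the master bound, and invoke the identities $D_\dagger D=I_N$, $DD_\dagger=P$, and $D_\dagger^T D^T=P$ at the correct moments. Once this bookkeeping is in place, no analytic estimate beyond submultiplicativity and the triangle inequality is required, and all three cases follow by direct substitution into Proposition~\ref{prop:ineq_grad_qn_optimal_no_b}.
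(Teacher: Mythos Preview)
Your proposal is correct and follows exactly the route the paper intends: the corollary is stated without proof because each case is a direct specialization of \cref{prop:ineq_grad_qn_optimal_no_b}, and you carry out precisely those specializations. Your bookkeeping is sound—in particular, you correctly interpret the Case~3 choice (the paper's $\tilde\Gamma=D(M\|D\alpha\|)D^T$ has a sign/transpose typo; the working $N\times N$ choice is indeed $\tilde\Gamma=-M\|D\alpha\|D^TD$, which makes $D_\dagger^T\tilde\Gamma D_\dagger D\alpha=-M\|D\alpha\|D\alpha$ and reduces the left-hand side to $\|\nabla f(x_+)\|$), and your computation $\|D_\dagger^T D^TD\,D_\dagger\|=\|P\|=1$ is exactly what absorbs the extra $\tfrac{M}{2}\|D\alpha\|^2$ into the leading term.
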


\subsection{Technical Results: Decrease Guarantees}

This section presents two technical results on the minimal decrease of the function $f$.

\begin{proposition} \label{prop:global_lower_bound_xplus}
    Let \cref{assump:lipchitiz_hessian,assump:projector_grad,assump:bounded_epsilon,assump:bounded_conditionning} hold. Then, $\forall y\in\mathbb{R}^d$, \cref{alg:type1} ensures
    \[
        f(x_+) \leq f(y) + \frac{M+L}{6}\|y-x\|^3  + \frac{\|y-x\|^2}{2} \left( \|\nabla^2 f(x)- P\nabla^2 f(x)P\| + \delta\frac{L \kappa +  M\kappa^2}{2}  \right)
    \]
\end{proposition}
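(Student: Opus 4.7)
The plan is to invoke the minimization property of \cref{alg:type1} to upper-bound $f(x_+)$ by the Type~I bound \cref{eq:type1_bound} (with the algorithm's current $M$ in place of $L$) evaluated at a freely chosen $\alpha$, and then to use \cref{assump:lipchitiz_hessian} to swap $f(x)$ for $f(y)$. Concretely, since $\alpha^\star$ minimizes the bound and the backtracking loop only exits when the realized decrease also satisfies it, one has
\[
  f(x_+) \,\leq\, f(x) + \nabla f(x)^T D\alpha + \tfrac{1}{2}\alpha^T H\alpha + \tfrac{M}{6}\|D\alpha\|^3 \quad \text{for every } \alpha\in\mathbb{R}^N.
\]
I would specialize this to $\alpha = D_\dagger(y-x)$, so that $D\alpha = P(y-x)$ and $\|D\alpha\|\leq \|y-x\|$.

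With this choice, \cref{assump:projector_grad} immediately collapses $\nabla f(x)^T D\alpha$ to $\nabla f(x)^T(y-x)$. For the quadratic term I would use the identity $\alpha^T H\alpha = (D\alpha)^T \tilde H (D\alpha)$ with $\tilde H \defas D_\dagger^T H D_\dagger$, together with the projector identities $P\tilde H = \tilde H P = \tilde H$ (both following from $D_\dagger D = I$), to rewrite it as $(y-x)^T \tilde H (y-x)$. Combining with the two-sided Taylor inequality implied by \cref{assump:lipchitiz_hessian},
\[
  f(x) \,\leq\, f(y) - \nabla f(x)^T(y-x) - \tfrac{1}{2}(y-x)^T\nabla^2 f(x)(y-x) + \tfrac{L}{6}\|y-x\|^3,
\]
cancels the linear term and merges the two cubic pieces into $\tfrac{L+M}{6}\|y-x\|^3$, leaving
\[
  f(x_+) \,\leq\, f(y) + \tfrac{L+M}{6}\|y-x\|^3 + \tfrac{1}{2}(y-x)^T\bigl(\tilde H - \nabla^2 f(x)\bigr)(y-x).
\]

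The remaining task is to bound the quadratic error, for which I would split
\[
  \tilde H - \nabla^2 f(x) \,=\, \bigl(\tilde H - P\nabla^2 f(x) P\bigr) + \bigl(P\nabla^2 f(x)P - \nabla^2 f(x)\bigr).
\]
The second piece contributes at most $\tfrac{1}{2}\|\nabla^2 f(x) - P\nabla^2 f(x) P\|\,\|y-x\|^2$ by the operator norm. For the first piece I would further decompose $\tilde H = \tilde H_0 + \tfrac{M\|D\|\|\varepsilon\|}{2}D_\dagger^T D_\dagger$: the $\tilde H_0$ contribution is controlled using \cref{prop:accuracy_estimation} applied to $D\alpha = P(y-x)$ followed by Cauchy--Schwarz, giving $\tfrac{L\|D_\dagger\|\|\varepsilon\|}{4}\|y-x\|^2$, while the regularization contribution is directly $\tfrac{M\|D\|\|\varepsilon\|\|D_\dagger\|^2}{4}\|y-x\|^2$. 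Finally, \cref{assump:bounded_epsilon,assump:bounded_conditionning} let me replace $\|\varepsilon\|/\|D\|$ by $\delta$ and $\|D_\dagger\|\|D\|$ by $\kappa$, collapsing the two terms into the claimed $\tfrac{\|y-x\|^2}{2}\cdot\tfrac{\delta(L\kappa + M\kappa^2)}{2}$.

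The main obstacle is purely bookkeeping with the projector algebra: verifying $P\tilde H = \tilde H P = \tilde H$ so that quadratic forms on $Pv$ are equivalent to quadratic forms on $v$, and ensuring that \cref{prop:accuracy_estimation} is only invoked on vectors of the form $D\alpha$. Once these observations are in place, the remaining steps reduce to triangle inequalities, Cauchy--Schwarz, and substitution of the constants supplied by the three requirements.
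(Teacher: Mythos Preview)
Your proposal is correct and follows essentially the same route as the paper: evaluate the Type~I model at $\alpha$ with $D\alpha = P(y-x)$, use \cref{assump:projector_grad} to remove the projector from the linear term, swap $f(x)$ for $f(y)$ via the two-sided Taylor inequality, and bound the quadratic discrepancy by splitting off $P\nabla^2 f(x)P$. The only cosmetic differences are that the paper bounds $\alpha^T(H-D^T\nabla^2 f(x)D)\alpha$ directly via \cref{thm:bound_secant_alpha} before substituting $\alpha$ (rather than lifting to $\tilde H$ and invoking \cref{prop:accuracy_estimation}), and it performs the Taylor swap last instead of first; the resulting constants are identical.
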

\begin{proof}
    The output of \cref{alg:type1} ensures that 
    \begin{align*}
        f(x_+) \leq \min_\alpha f(x) + \nabla f(x)^TD\alpha + \frac{1}{2} (D\alpha)^T\nabla^2 f(x)D\alpha  + \frac{1}{2} \alpha^T\left(H-D^T\nabla^2 f(x)D\right)\alpha + \frac{M}{6}\|D\alpha\|^3
    \end{align*}
    However, by the definition of $H$ \cref{eq:type1_bound},
    \begin{align*}
        & \frac{1}{2} \alpha^T\left(H-D^T\nabla^2 f(x)D\right)\alpha\\
        \leq & \frac{1}{2}\left(  \alpha^T\left(\frac{G^TD+D^TG}{2}-D^T\nabla^2 f(x)D\right)\alpha +  \|\alpha\|^2\frac{M\|D\|\|\varepsilon\|}{2} \right)\\
        \leq & \frac{1}{2}\left( \alpha^T\left(\frac{G^TD+D^TG}{2}-D^T\nabla^2 f(x)D\right)\alpha +  \|D^\dagger\|^2\|D\alpha\|\frac{M\|D\|\|\varepsilon\|}{2} \right)\\
        = & \frac{1}{2}\left(  (D\alpha)^T\left(G-\nabla^2 f(x)D\right)\alpha +  \|D^\dagger\|^2\|D\alpha\|\frac{M\|D\|\|\varepsilon\|}{2}\right).
    \end{align*}
    The last equality comes from the fact that
    \[
        \alpha^T\left( D^TG \right)\alpha = \alpha^T\left( \frac{D^TG + G^TD}{2} + \frac{D^TG - G^TD}{2} \right)\alpha = \alpha^T\left( \frac{D^TG + G^TD}{2}\right)\alpha.
    \]
    Now, using \cref{eq:bound_hessian_scalar} with $w = D\alpha$ gives
    \begin{align*}
        \frac{1}{2} \alpha^T\left(H-D^T\nabla^2 f(x)D\right)\alpha \leq  \frac{L\|D\alpha\|}{4} \sum_{i=1}^N |\alpha_i| \varepsilon_i +  \|D^\dagger\|^2\|D\alpha\|\frac{M\|D\|\|\varepsilon\|}{4}.
    \end{align*}
    Finally, since 
    \[
        \sum_{i=1}^N |\alpha_i| \varepsilon_i \leq \|\alpha\|\|\varepsilon\| \leq \|D^{\dagger}\|\|D\alpha\|\|\varepsilon\|,
    \]
    the inequality becomes
    \begin{align*}
        \frac{1}{2} \alpha^T\left(H-D^T\nabla^2 f(x)D\right)\alpha & \leq  \frac{\|D\alpha\|^2}{4}\left(L \|D^{\dagger}\|\|\varepsilon\| +  M\|D^\dagger\|^2\|D\|\|\varepsilon\|\right)\\
        & = \frac{\|D\alpha\|^2}{4}\frac{\|\varepsilon\|}{\|D\|}\left(L \kappa_D +  M\kappa_D^2\right).
    \end{align*}
    All together,
    \begin{align*}
        & f(x_+) \\
        \leq & \min_\alpha f(x) + \nabla f(x)^TD\alpha + \frac{1}{2} (D\alpha)^T\nabla^2 f(x)D\alpha + \frac{1}{2} \alpha^T\left(H-D^T\nabla^2 f(x)D\right)\alpha + \frac{M}{6}\|D\alpha\|^3\\
        \leq & \min_\alpha f(x) + \nabla f(x)^TD\alpha + \frac{1}{2} (D\alpha)^T\nabla^2 f(x)D\alpha + \frac{\|D\alpha\|^2}{4}\frac{\|\varepsilon\|}{\|D\|}\left(L \kappa_D +  M\kappa_D^2\right) + \frac{M}{6}\|D\alpha\|^3
    \end{align*}
    Now, by \cref{assump:bounded_conditionning}, for all $y$, one can find $\alpha$ such that
    \[
        D\alpha = P(y-x) = D D^\dagger(y-x).
    \]
    Indeed, multiplying both sides by $D^{\dagger}$ gives
    \[
        \alpha = D^{\dagger}(y-x).
    \]
    Therefore, the minimum can be written as a function of $y$ instead of $\alpha$,
    \begin{align}
        f(x_+) \leq \min_{y\in\mathbb{R}^d} \;\; & f(x) + \nabla f(x)^TP(y-x) + \frac{1}{2} (P(y-x))^T\nabla^2 f(x)P(y-x) \nonumber\\
        & + \frac{\|P(y-x)\|^2}{4}\frac{\|\varepsilon\|}{\|D\|}\left(L \kappa_D +  M\kappa_D^2\right) + \frac{M}{6}\|P(y-x)\|^3.\label{eq:temp_global_lower_bound}
    \end{align}
    Since $P\nabla f(x) = \nabla f(x)$ by \cref{assump:projector_grad}, and using the crude bound $\|P(y-x)\|\leq \|y-x\|$,
    \begin{align*}
        f(x_+) \leq \min_{y\in\mathbb{R}^d} \;\; & f(x) + \nabla f(x)^T(y-x) + \frac{1}{2} (y-x)^T\nabla^2 f(x)(y-x)  \\
        & + \frac{1}{2} (y-x)\left[\nabla^2 f(x)- P\nabla^2 f(x)P\right](y-x) \\
        & + \frac{\|y-x\|^2}{4}\frac{\|\varepsilon\|}{\|D\|}\left(L \kappa_D +  M\kappa_D^2\right) + \frac{M}{6}\|y-x\|^3.
    \end{align*}
    Using the lower bound \cref{eq:ineq_function},
    \[
        f(x) + \nabla f(x)^T(y-x) + \frac{1}{2} (y-x)^T\nabla^2 f(x)(y-x) - \frac{L}{6}\|y-x\|^3 \leq f(y),
    \]
    the crude bound $(y-x)\left[\nabla^2 f(x)- P\nabla^2 f(x)P\right](y-x) \leq \|\nabla^2 f(x)- P\nabla^2 f(x)P\|\|y-x\|^2$, and \cref{assump:bounded_conditionning,assump:bounded_epsilon} lead to the desired result,
    \begin{align*}
        f(x_+) \leq f(y) + \frac{M+L}{6}\|y-x\|^3  + \frac{\|y-x\|^2}{2} \left( \|\nabla^2 f(x)- P\nabla^2 f(x)P\| + \delta\frac{L \kappa +  M\kappa^2}{2}  \right)
    \end{align*}
\end{proof}

\begin{proposition} \label{prop:global_lower_bound_xplus_stoch}
    Let \cref{assump:lipchitiz_hessian,assump:random_projector,assump:bounded_epsilon,assump:bounded_conditionning} hold. Then, $\forall y\in\mathbb{R}^d$, \cref{alg:type1} ensures
    \begin{align*}
        \mathbb{E}f(x_+) \leq & \left(1-\frac{N}{d}\right)f(x) + \frac{N}{d} f(y)  + \frac{N}{d}\frac{(M+L)}{6}\|y-x\|^3 \\
        & + \frac{N}{d}\frac{\|y-x\|^2}{2} \left(\delta \frac{L \kappa +  M\kappa^2}{2} +\frac{(d-N)}{d}\|\nabla^2 f(x)\|\right)
    \end{align*}
\end{proposition}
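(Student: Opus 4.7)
The plan is to mirror the proof of \cref{prop:global_lower_bound_xplus} almost verbatim, replacing the deterministic step that invokes \cref{assump:projector_grad} by an expectation computation under \cref{assump:random_projector}. I would reuse the intermediate inequality \cref{eq:temp_global_lower_bound} obtained in the proof of \cref{prop:global_lower_bound_xplus}, since that derivation does not use \cref{assump:projector_grad}. Setting $u = y-x$ and choosing $\alpha = D_\dagger u$ so that $D\alpha = Pu$ (legitimate under \cref{assump:bounded_conditionning}), this intermediate step reads
\[
f(x_+) \leq f(x) + \nabla f(x)^T Pu + \tfrac{1}{2}(Pu)^T \nabla^2 f(x) (Pu) + \tfrac{\|Pu\|^2}{4}\tfrac{\|\varepsilon\|}{\|D\|}\bigl(L\kappa_D + M\kappa_D^2\bigr) + \tfrac{M}{6}\|Pu\|^3.
\]

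Next, I would take expectation over the random projector $P$. By \cref{assump:random_projector}, $\mathbb{E}[P] = \frac{N}{d}\mathrm{I}$, so $\mathbb{E}[Pu] = \frac{N}{d}u$ and $\mathbb{E}[\|Pu\|^2] = \frac{N}{d}\|u\|^2$. The pointwise bound $\|Pu\|^3 \leq \|u\|\,\|Pu\|^2$ yields $\mathbb{E}[\|Pu\|^3]\leq \frac{N}{d}\|u\|^3$, and \cref{assump:bounded_epsilon,assump:bounded_conditionning} applied pointwise handle the $\varepsilon$-factor and produce the claimed $\frac{N}{d}\frac{\|u\|^2}{4}\delta(L\kappa+M\kappa^2)$ contribution. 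The linear-in-$\nabla f(x)$ term $\frac{N}{d}\nabla f(x)^T u$ is converted into a statement about $f(y)$ via the Taylor lower bound (the companion of \cref{eq:ineq_function})
\[
\nabla f(x)^T u \leq f(y) - f(x) - \tfrac{1}{2}u^T \nabla^2 f(x) u + \tfrac{L}{6}\|u\|^3,
\]
which, after multiplication by $\frac{N}{d}$, produces the desired $(1-\frac{N}{d})f(x) + \frac{N}{d}f(y)$ combination, merges a $\frac{N}{d}\frac{L}{6}\|u\|^3$ piece with $\frac{M}{6}\mathbb{E}[\|Pu\|^3]$ into the $\frac{N}{d}\frac{L+M}{6}\|u\|^3$ term, and leaves a residual $-\frac{N}{2d}u^T \nabla^2 f(x) u$ to be absorbed.

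The hard part will be controlling the Hessian cross-term $\frac{1}{2}\bigl[\mathbb{E}[u^T P\nabla^2 f(x) P u] - \frac{N}{d}u^T\nabla^2 f(x) u\bigr]$, since only the first moment of $P$ is prescribed. My plan is a bias--variance identity: because $\mathbb{E}[Pu] = \frac{N}{d}u$ the cross term vanishes in expectation, so with $A = \nabla^2 f(x)$,
\[
\mathbb{E}\bigl[(Pu)^T A (Pu)\bigr] = \mathbb{E}\bigl[(Pu - \tfrac{N}{d}u)^T A (Pu - \tfrac{N}{d}u)\bigr] + \tfrac{N^2}{d^2} u^T A u.
\]
A direct computation gives $\mathbb{E}[\|Pu - \frac{N}{d}u\|^2] = \mathbb{E}[\|Pu\|^2] - \frac{N^2}{d^2}\|u\|^2 = \frac{N(d-N)}{d^2}\|u\|^2$, and combining with $A \preceq \|A\|\,\mathrm{I}$ this rearranges to
\[
\mathbb{E}[u^T P A P u] - \tfrac{N}{d} u^T A u \leq \tfrac{N(d-N)}{d^2}\|\nabla^2 f(x)\|\,\|u\|^2,
\]
which yields exactly the $\frac{N}{d}\frac{d-N}{d}\|\nabla^2 f(x)\|\frac{\|u\|^2}{2}$ contribution stated in the proposition. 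Assembling all pieces closes the proof.
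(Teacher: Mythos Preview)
Your plan is essentially the paper's own argument, with the order of operations slightly permuted: the paper first bounds $\mathbb{E}[(Pu)^T\nabla^2 f(x)(Pu)]$ by $\tfrac{N^2}{d^2}u^T\nabla^2 f(x)u+\tfrac{N(d-N)}{d^2}\|\nabla^2 f(x)\|\|u\|^2$ (citing \cite{hanzely2020stochastic}, which is exactly your bias--variance identity), then applies the Taylor lower bound with coefficient $\tfrac{N^2}{d^2}$ and uses convexity to absorb the leftover gradient term $\tfrac{N}{d}(1-\tfrac{N}{d})\nabla f(x)^T u\le \tfrac{N}{d}(1-\tfrac{N}{d})(f(y)-f(x))$. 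You instead apply Taylor with coefficient $\tfrac{N}{d}$ and are left with a Hessian residual rather than a gradient residual. Both routes land on the same final bound.

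There is, however, a genuine gap in your last step. From the bias--variance identity
\[
\mathbb{E}\bigl[(Pu)^T A (Pu)\bigr]-\tfrac{N}{d}u^T A u
= \mathbb{E}\bigl[(Pu-\tfrac{N}{d}u)^T A (Pu-\tfrac{N}{d}u)\bigr]-\tfrac{N(d-N)}{d^2}u^T A u,
\]
the hypothesis $A\preceq\|A\|\mathrm{I}$ alone only controls the first term; to drop the second term you need $u^T A u\ge 0$, i.e.\ $\nabla^2 f(x)\succeq 0$. Without that, you only get $\tfrac{2N(d-N)}{d^2}\|A\|\|u\|^2$, which is off by a factor of two. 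So convexity is required at exactly this point. The paper's proof uses convexity too (explicitly, for the gradient residual), even though \cref{assump:convex} is not listed among the hypotheses of the proposition; this is a minor omission in the statement, and the proposition is only ever invoked under \cref{assump:convex} (in \cref{thm:rate_randomconvex}). Once you add ``$\nabla^2 f(x)\succeq 0$'' to your justification of the final inequality, your argument is complete and equivalent to the paper's.
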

\begin{proof}
    The proof is the same as for \cref{prop:global_lower_bound_xplus}, until equation \cref{eq:temp_global_lower_bound},
    \begin{align*}
        f(x_+) \leq \min_{y\in\mathbb{R}^d} \;\; & f(x) + \nabla f(x)^TP(y-x) + \frac{1}{2} (P(y-x))^T\nabla^2 f(x)P(y-x) \nonumber\\
        & + \frac{\|P(y-x)\|^2}{4}\frac{\|\varepsilon\|}{\|D\|}\left(L \kappa_D +  M\kappa_D^2\right) + \frac{M}{6}\|P(y-x)\|^3.
    \end{align*}
    With \cref{assump:random_projector}, the following relations hold (see \citep[lemma 5.7]{hanzely2020stochastic})
    \begin{align}
        \mathbb{E}[\|P(y-x)\|^2] & = (y-x)^T\mathbb{E}[P](y-x) = \frac{N}{d}\|y-x\|^2,\\
        \mathbb{E}[\|P(y-x)\|^3] & \leq \mathbb{E}[\|P(y-x)\|^2]\|y-x\| = \frac{N}{d}\|y-x\|^2,\\
        \mathbb{E}[(y-x)^TP\nabla^ 2f(x)P(y-x)] & \leq \frac{N^2}{d^2} (y-x)\nabla^2 f(x)(y-x) + \frac{N(d-N)}{d^2}\|\nabla^2 f(x)\|\|y-x\|^2
    \end{align}
    Hence, removing the minimum and taking the expectation of \cref{eq:temp_global_lower_bound} gives
    \begin{align*}
        \mathbb{E}f(x_+) \leq & f(x) + \frac{N}{d}\nabla f(x)^T(y-x) \\
        & + \frac{1}{2} \left( \frac{N^2}{d^2} (y-x)\nabla^2 f(x)(y-x) + \frac{N(d-N)}{d^2}\|\nabla^2 f(x)\|\|y-x\|^2 \right)\\
        & + \frac{N}{d}\frac{\|y-x\|^2}{4}\frac{\|\varepsilon\|}{\|D\|}\left(L \kappa_D +  M\kappa_D^2\right) + \frac{N}{d}\frac{M}{6}\|y-x\|^3.
    \end{align*}
    Using the lower bound from \cref{eq:ineq_function}
    \[
        \frac{1}{2}(y-x)\nabla^2 f(x)(y-x) \leq f(y)+\frac{L}{6}\|y-x\|^3-f(x) -\nabla f(x)(y-x)
    \]
    in the inequality over the expectation gives
    \begin{align*}
        \mathbb{E}f(x_+) \leq & f(x) + \frac{N}{d}\nabla f(x)^T(y-x) \\
        & + \frac{N^2}{d^2}\left(f(y)+\frac{L}{6}\|y-x\|^3-f(x) -\nabla f(x)(y-x)\right)\\
        & +\frac{1}{2}\frac{N(d-N)}{d^2}\|\nabla^2 f(x)\|\|y-x\|^2 \\
        & + \frac{N}{d}\frac{\|y-x\|^2}{4}\frac{\|\varepsilon\|}{\|D\|}\left(L \kappa_D +  M\kappa_D^2\right) + \frac{N}{d}\frac{M}{6}\|y-x\|^3.
    \end{align*}
    After simplification,
    \begin{align*}
        \mathbb{E}f(x_+) \leq & \left(1-\frac{N^2}{d^2}\right)f(x) + \frac{N^2}{d^2} f(y)+ \frac{N}{d}\left(1-\frac{N}{d}\right)\nabla f(x)^T(y-x) \\
        & +\frac{1}{2}\frac{N(d-N)}{d^2}\|\nabla^2 f(x)\|\|y-x\|^2 \\
        & + \frac{N}{d}\frac{\|y-x\|^2}{4}\frac{\|\varepsilon\|}{\|D\|}\left(L \kappa_D +  M\kappa_D^2\right) + \left(\frac{N^2L}{6d^2}+\frac{NM}{6d}\right)\|y-x\|^3.
    \end{align*}
    To simplify the expression, since $N\leq d$,
    \[
        \left(\frac{N^2L}{6d^2}+\frac{NM}{6d}\right)\|y-x\|^3 \leq \frac{N(M+L)}{6d}\|y-x\|^3.
    \]
    Finally, since the function is convex,
    \[
        \frac{N}{d}\left(1-\frac{N}{d}\right)\nabla f(x)^T(y-x) \leq \frac{N}{d}\left(1-\frac{N}{d}\right)(f(y)-f(x)).
    \]
    From this last relation, \cref{assump:bounded_epsilon} and \cref{assump:bounded_conditionning} comes the desired result,
    \begin{align*}
        \mathbb{E}f(x_+) \leq & \left(1-\frac{N}{d}\right)f(x) + \frac{N}{d} f(y)  + \frac{N(M+L)}{6d}\|y-x\|^3 \\
        & + \frac{\|y-x\|^2}{2} \left(\frac{N}{d}\delta \frac{L \kappa +  M\kappa^2}{2} +\frac{N(d-N)}{d^2}\|\nabla^2 f(x)\|\right)
    \end{align*}
\end{proof}

\subsection{Technical Results: Accelerated Algorithm} 

\paragraph{Notations} The following functions define the estimate sequence,
\begin{align}
    \ell_t(x) &= \sum_{i=2}^t b_{i-1} \left(f(x_i) + \nabla f(x_i)(x-x_i)\right), \\
    \phi_t(x) &= f(x_1) + \ell_t(x) + \frac{\lambda_t^{(1)}}{2} \|x-x_0\|^2 + \frac{\lambda_t^{(2)}}{6} \|x-x_0\|^3\\
    \Phi_t(x) &= \frac{\phi_t(x)}{B_t},
\end{align}
where $\lambda_t^{(1,2)}$ are non-negative and increasing, and the sequences $b_t$, $B_t$ are
\begin{align}
    B_t & = \frac{t(t+1)(t+2)}{6} = \sum_{i=1}^t b_i,\\
    b_t & = \frac{(t+1)(t+2)}{2} = B_{t+1}-B_{t}.\\
\end{align}
Moreover, the following quantities will be important later,
\begin{align}
    v_t & = \argmin_x \phi_t(x) = \argmin_{x} \Phi_t(x), \label{eq:def_v}\\
    \beta_t &= \frac{b_t}{B_{t+1}},\label{eq:def_beta}\\
    y_t & = (1-\beta_t) x_t + \beta_t v_t. \label{eq:def_y}
\end{align}

\begin{lemma} \label{lem:breg_norm}
From \citep[Lemma 4]{nesterov2008accelerating}. The Bregman divergence of the function $\|x\|^i$ satisfies, for $i\geq 2$,
\[
    \|x\|^i-\|y\|^i - \nabla (\|y\|^i)(x-y) \geq \frac{1}{2^{i-2}}\|x-y\|^i.
\]    
\end{lemma}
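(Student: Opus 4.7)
The plan is to prove this standard uniform convexity inequality for the function $x\mapsto\|x\|^i$, which appears as Lemma 4 in \citep{nesterov2008accelerating}, by reducing it to a scalar two-variable inequality and then verifying that inequality directly. First I would dispose of the trivial case $y=0$: both $\|y\|^i$ and $\nabla(\|y\|^i)=i\|y\|^{i-2}y$ vanish, and the claim reduces to $\|x\|^i\geq 2^{-(i-2)}\|x\|^i$, which holds for $i\geq 2$. For $y\neq 0$, observe that both sides of the inequality are positively homogeneous of degree $i$ in $(x,y)$, so I can scale to assume $\|y\|=1$; rotational invariance of the Euclidean norm then lets me set $y=e_1$. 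Writing $s=\langle y,x\rangle$ and $r=\sqrt{\|x\|^2-s^2}\geq 0$, both sides depend only on the two real parameters $(s,r)$.

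With $u=s-1$ (so that $\|x-y\|^2=u^2+r^2$) the reduced claim is the scalar inequality
\[
\Psi(u,r) \;\defas\; ((1+u)^2+r^2)^{i/2} - 1 - iu - \tfrac{1}{2^{i-2}}(u^2+r^2)^{i/2} \;\geq\; 0.
\]
The cleanest route to $\Psi\geq 0$ is via a Clarkson-type midpoint inequality
\[
\|x\|^i + \|y\|^i \;\geq\; 2\Bigl\|\tfrac{x+y}{2}\Bigr\|^i + \tfrac{1}{2^{i-1}}\|x-y\|^i,
\]
which for $i\geq 2$ follows from the parallelogram identity $\|x\|^2+\|y\|^2=2\|\tfrac{x+y}{2}\|^2+\tfrac{1}{2}\|x-y\|^2$ combined with convexity of $t\mapsto t^{i/2}$ on $[0,\infty)$. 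One then boosts this midpoint estimate to the Bregman form by an iterated three-point argument (a telescoping sum over dyadic midpoints between $y$ and $x$), which recovers the sharper constant $1/2^{i-2}$. An alternative route is a purely analytic verification of $\Psi(u,r)\geq 0$: compute $\nabla\Psi$, check that $(u,r)=(0,0)$ is the unique zero of $\Psi$, and match the leading asymptotic $((1+u)^2+r^2)^{i/2}\sim(u^2+r^2)^{i/2}$ as $\|(u,r)\|\to\infty$ to confirm that the constant $1/2^{i-2}$ is admissible.

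The main obstacle will be the passage from the midpoint inequality to the sharper Bregman form, or equivalently the intermediate regime in the direct analytic approach, which neither a purely local Taylor expansion nor a purely asymptotic estimate handles on its own; the two regimes have to be stitched together. Since this is a classical uniform-convexity lemma whose proof is already given in detail in \citep[Lemma 4]{nesterov2008accelerating}, I would follow the structure of their argument rather than reproduce the full technical case analysis here.
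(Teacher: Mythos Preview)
The paper does not actually prove this lemma: it simply cites \citep[Lemma~4]{nesterov2008accelerating} and uses the result as a black box. Your proposal is therefore already more detailed than the paper's own treatment, and since you too ultimately defer to the same reference, the two ``proofs'' are in that sense equivalent.

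On the substance of your outline: the reduction by homogeneity and rotational invariance is fine, and the Clarkson-type midpoint inequality you state does follow from the parallelogram identity plus convexity of $t\mapsto t^{i/2}$ together with the elementary bound $(a+b)^{p}\geq a^{p}+b^{p}$ for $p\geq 1$, $a,b\geq 0$. The one place where your sketch is genuinely incomplete is the ``boost'' from the midpoint inequality (constant $1/2^{i-1}$) to the Bregman inequality (constant $1/2^{i-2}$) via dyadic telescoping: you assert that this recovers the sharper constant but do not carry it out, and getting exactly $1/2^{i-2}$ rather than a worse constant is the non-obvious part of that passage. Since the paper itself offers no proof and simply cites Nesterov, this is not a discrepancy with the paper; but if you wanted a self-contained argument you would need to actually execute that step (or follow Nesterov's original proof, which proceeds differently).
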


\begin{proposition}
    The function $\phi_t$ is lower-bounded by
    \begin{equation}\label{eq:lower_bound_phi_star}
        \phi_t \geq \underbrace{\phi_t(v_t)}_{=\phi_t^\star} + \frac{\lambda_t^{(1)}}{2} \|x-v_t\|^2 + \frac{\lambda_t^{(2)}}{12} \|x-v_t\|^3  
    \end{equation}
    where $v_t = \argmin_x \phi_t(x)$.
\end{proposition}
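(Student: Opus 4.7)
The plan is to exploit the convexity of $\phi_t$ together with the first-order optimality of $v_t$. Since $\ell_t(x)$ is affine in $x$, while the regularization terms $\tfrac{\lambda_t^{(1)}}{2}\|x-x_0\|^2$ and $\tfrac{\lambda_t^{(2)}}{6}\|x-x_0\|^3$ are convex (with $\lambda_t^{(1)},\lambda_t^{(2)}\geq 0$), the function $\phi_t$ is convex and differentiable. Writing the inequality in terms of the Bregman divergence $D_{\phi_t}(x,v_t)\defas \phi_t(x)-\phi_t(v_t)-\nabla\phi_t(v_t)^T(x-v_t)$ and using $\nabla\phi_t(v_t)=0$ (definition of $v_t$ in \cref{eq:def_v}), it suffices to show that
\[
    D_{\phi_t}(x,v_t) \;\geq\; \frac{\lambda_t^{(1)}}{2}\|x-v_t\|^2 + \frac{\lambda_t^{(2)}}{12}\|x-v_t\|^3.
\]

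The first step is to split $D_{\phi_t}$ additively over the three terms defining $\phi_t$. The affine part $f(x_1)+\ell_t(x)$ contributes zero, because the Bregman divergence of any affine function vanishes. The quadratic part $\tfrac{\lambda_t^{(1)}}{2}\|x-x_0\|^2$ contributes exactly $\tfrac{\lambda_t^{(1)}}{2}\|x-v_t\|^2$ by the standard identity for the squared Euclidean norm (direct expansion, or equivalently \cref{lem:breg_norm} with $i=2$, which is an equality in that case). For the cubic part, I would invoke \cref{lem:breg_norm} with $i=3$ applied to the shifted variable $u=x-x_0$ (and $u'=v_t-x_0$); translation invariance of differences yields
\[
    \|x-x_0\|^3 - \|v_t-x_0\|^3 - \nabla\!\left(\|\cdot-x_0\|^3\right)\!(v_t)\,(x-v_t) \;\geq\; \frac{1}{2}\|x-v_t\|^3,
\]
so the cubic term contributes at least $\tfrac{\lambda_t^{(2)}}{6}\cdot\tfrac{1}{2}\|x-v_t\|^3=\tfrac{\lambda_t^{(2)}}{12}\|x-v_t\|^3$.

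Summing the three contributions and using $\nabla\phi_t(v_t)=0$ produces the claimed inequality. The only minor subtlety is the degenerate case $\lambda_t^{(1)}=\lambda_t^{(2)}=0$, where $\phi_t$ is affine and the minimizer $v_t$ is not unique in general; however, in that case the right-hand side of \cref{eq:lower_bound_phi_star} vanishes and the inequality degenerates to $\phi_t(x)\geq \phi_t(v_t)$, which holds trivially by definition of $v_t$ (see \cref{prop:solution_minimizer} for the explicit form of $v_t$ in all three regimes). No step presents a real obstacle; the proof is essentially a bookkeeping exercise once \cref{lem:breg_norm} is applied to the translated cubic.
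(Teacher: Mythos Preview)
Your proposal is correct and follows essentially the same approach as the paper's proof: both use the first-order optimality condition $\nabla\phi_t(v_t)=0$, the vanishing contribution of the affine part $\ell_t$, and then apply \cref{lem:breg_norm} with $i=2$ and $i=3$ to the shifted quadratic and cubic regularizers. Your framing in terms of the Bregman divergence $D_{\phi_t}(x,v_t)$ is slightly more concise than the paper's explicit algebraic derivation of an intermediate identity for $\phi_t(x)$, but the mathematical content and the key lemma are identical.
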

\begin{proof}
    The first order condition on $\phi_t$ reads,
    \[
        \ell'_t + \nabla \left(\frac{\lambda_t^{(1)}}{2} \|v_t-x_0\|^2 + \frac{\lambda_t^{(2)}}{6}\|v_t-x_0\|^3\right) = 0.
    \]
    Multiplying both sides by $(x-v_t)$ gives
    \[
        \ell'_t(x-v_t) + \nabla \left(\frac{\lambda_t^{(1)}}{2} \|v_t-x_0\|^2 + \frac{\lambda_t^{(2)}}{6}\|v_t-x_0\|^3\right)(x-v_t) = 0.
    \]
    Note that, since $\ell_t$ is an affine function, $\ell'_t(x-v_t) = \ell_t(x)-\ell_t(v_t)$. Hence,
    \[
        \ell_t(x)-\ell_t(v_t) + \nabla \left(\frac{\lambda_t^{(1)}}{2} \|v_t-x_0\|^2 + \frac{\lambda_t^{(2)}}{6}\|v_t-x_0\|^3\right)(x-v_t) = 0.
    \]
    Finally, adding $\frac{\lambda_t^{(1)}}{2} \|x-x_0\|^2 + \frac{\lambda_t^{(2)}}{6}\|x-x_0\|^3$ on both sides  and after reorganizing the terms,
    \begin{equation}\label{eq:temp_lower_bound_phi}
        \textstyle \phi_t(x) = \ell_t(v_t) + \frac{\lambda_t^{(1)}}{2} \|x-x_0\|^2 + \frac{\lambda_t^{(2)}}{6}\|x-x_0\|^3 - \nabla \left(\frac{\lambda_t^{(1)}}{2} \|v_t-x_0\|^2 + \frac{\lambda_t^{(2)}}{6}\|v_t-x_0\|^3\right)(x-v_t).
    \end{equation}
    From \cref{lem:breg_norm} with $x=x-x_0$, $y=v_t-x_0$, and after reorganizing the terms,
    \[
        \|x-x_0\|^i - \nabla (\|v_t-x_0\|^i)(x-v_t) \geq \frac{1}{2^{i-2}}\|x-v_t\|^i + \|v_t-x_0\|^i.
    \]
    Therefore, using the previous inequality with $i=2$ and $i=3$, \cref{eq:temp_lower_bound_phi} becomes
    \[
        \phi_t(x) \geq \ell_t(v_t) + \frac{\lambda_t^{(1)}}{2} \|v_t-x_0\|^2 + \frac{\lambda_t^{(2)}}{6}\|v_t-x_0\|^3 + \frac{\lambda_t^{(2)}}{2} \|v_t-x\|^2 + \frac{\lambda_t^{(3)}}{12}\|v_t-x\|^3
    \]
    By definition of $\phi_t^\star = \phi_t(v_t)$,
    \[
        \phi_t(x) \geq \phi_t^\star + \frac{\lambda_t^{(1)}}{2} \|v_t-x\|^2 + \frac{\lambda_t^{(2)}}{12}\|v_t-x\|^3.
    \]
\end{proof}

\begin{proposition} \label{prop:condition_M}
    Let 
    \[
        \gamma = \frac{\kappa_D}{\|D\|}\left(\frac{3}{2}\|\varepsilon\| +2 \frac{\|(I-P)G\|}{M}\right).
    \]
    Then, under the assumptions of \cref{prop:bound_projector_hessian} the condition
    \[  
        \frac{\|f(x_+)\|^2}{M\left(\gamma+\|D\alpha\|\right)} \leq -\nabla f(x)^TD\alpha
    \]
    is guaranteed as long as $M \geq 2L$.
\end{proposition}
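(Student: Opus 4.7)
The strategy is to expand $\nabla f(x_+)$ using the projected first-order optimality condition of the cubic subproblem, split the residual into a Lipschitz-Hessian cubic term and a Hessian-approximation term, and then verify that both are small enough (under $M \geq 2L$ and the chosen $\gamma$) to make the desired inequality follow from a squared-norm expansion.

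First, I would project the optimality condition \cref{eq:first_order_condition} into the ambient space. With the specific choice $\Gamma = M\gamma D^T D$ used in \cref{alg:subroutine_acc}, multiplying \cref{eq:first_order_condition} by $D_\dagger^T$ on the left and invoking $P\nabla f(x) = \nabla f(x)$ from \cref{assump:projector_grad} yields
\begin{equation*}
\nabla f(x) + \tilde H_0\, D\alpha + \tfrac{M(\gamma + \|D\alpha\|)}{2}\, D\alpha = 0,
\end{equation*}
where $\tilde H_0 = D_\dagger^T \tfrac{G^T D + D^T G}{2} D_\dagger$. Subtracting this identity from $\nabla f(x_+)$ gives
\begin{equation*}
\nabla f(x_+) + \tfrac{M(\gamma + \|D\alpha\|)}{2}\, D\alpha = \underbrace{\bigl(\nabla f(x_+) - \nabla f(x) - \nabla^2 f(x)\, D\alpha\bigr)}_{E_1} + \underbrace{\bigl(\nabla^2 f(x) - \tilde H_0\bigr) D\alpha}_{E_2}.
\end{equation*}

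Second, I would bound the two error terms separately. From \cref{eq:ineq_secant} directly, $\|E_1\| \leq \tfrac{L}{2}\|D\alpha\|^2$. For $E_2$, decomposing $\nabla^2 f(x) D\alpha = P\nabla^2 f(x)P\, D\alpha + (I-P)\nabla^2 f(x)P\, D\alpha$ (using $PD = D$), bounding the first part via \cref{prop:accuracy_estimation} and the second via \cref{prop:bound_projector_hessian} gives
\begin{equation*}
\|E_2\| \leq \tfrac{\kappa_D}{\|D\|}\!\left(\tfrac{3L\|\varepsilon\|}{2} + \|(I-P)G\|\right)\|D\alpha\|.
\end{equation*}
Invoking $M \geq 2L$ converts these into $\|E_1\| \leq \tfrac{M}{4}\|D\alpha\|^2$ and $\|E_2\| \leq \tfrac{M\gamma}{2}\|D\alpha\|$; the specific constants $\tfrac{3}{2}$ and $2/M$ in the definition of $\gamma$ are tuned precisely so that this absorption is tight. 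Combining yields $\|E_1+E_2\| \leq \tfrac{M\|D\alpha\|}{4}\bigl(\|D\alpha\| + 2\gamma\bigr)$.

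Finally, expanding $\bigl\|\nabla f(x_+) + \tfrac{M(\gamma + \|D\alpha\|)}{2} D\alpha\bigr\|^2 = \|E_1 + E_2\|^2$ produces
\begin{equation*}
\|\nabla f(x_+)\|^2 + M(\gamma + \|D\alpha\|)\,\nabla f(x_+)^T D\alpha = \|E_1+E_2\|^2 - \tfrac{M^2(\gamma + \|D\alpha\|)^2 \|D\alpha\|^2}{4},
\end{equation*}
whose right-hand side is nonpositive thanks to the elementary inequality $\|D\alpha\| + 2\gamma \leq 2(\gamma + \|D\alpha\|)$. Dividing by $M(\gamma + \|D\alpha\|) > 0$ and rearranging yields the claimed bound with $-\nabla f(x_+)^T D\alpha$ on the right-hand side, which matches the SmallStep test in \cref{alg:subroutine_acc} and suggests that the statement is meant to involve $\nabla f(x_+)$ there.

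The main obstacle is the bookkeeping of constants in the bound on $\|E_2\|$: one must verify that, under $M \geq 2L$, the chosen $\gamma$ really absorbs the entire subspace-approximation error into $\tfrac{M\gamma}{2}\|D\alpha\|$. Once this is in hand, the rest reduces to a direct squared-norm expansion and the trivial algebraic inequality $\|D\alpha\|+2\gamma\leq 2(\gamma+\|D\alpha\|)$.
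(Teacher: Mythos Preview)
Your proposal is correct and follows essentially the same route as the paper: both arguments write $\nabla f(x_+) + \tfrac{M(\gamma+\|D\alpha\|)}{2}D\alpha$ as a residual, bound it by $\tfrac{L}{2}\|D\alpha\|^2$ plus the subspace-Hessian error (via \cref{prop:accuracy_estimation} and \cref{prop:bound_projector_hessian}), square, and verify that under $M\geq 2L$ the cross term dominates. The paper packages the residual bound as \cref{eq:bound_cancel_gamma} and then checks the scalar inequality $M\tfrac{\gamma+\|D\alpha\|}{2}\geq \tfrac{L}{2}\|D\alpha\|+\tfrac{3L}{2}\tfrac{\kappa_D\|\varepsilon\|}{\|D\|}+\tfrac{\kappa_D\|(I-P)G\|}{\|D\|}$ directly, whereas you rederive that bound from scratch and phrase the final comparison as $\|D\alpha\|+2\gamma\leq 2(\gamma+\|D\alpha\|)$; these are the same computation, and your observation that the conclusion should read $-\nabla f(x_+)^T D\alpha$ (matching the \texttt{SmallStep} test in \cref{alg:subroutine_acc}) is also consistent with how the paper actually uses the result.
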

\begin{proof}
    The starting point is \cref{eq:bound_cancel_gamma} combined with \cref{prop:bound_projector_hessian}:
    \begin{align*}
        \left\|\left(\frac{1}{2}D_\dagger \Gamma D_\dagger^T + \frac{M\|D\alpha\|}{2}\right)D\alpha + \nabla f(x_+)\right\| & \leq \frac{L}{2}\|D\alpha\|^2 + \|D\alpha\|\left( \frac{L}{2} \frac{\|\varepsilon\|}{\|D\|} \kappa_D + \frac{\|(I-P)\nabla^2 f(x)D\alpha\|}{\|D\alpha\|}\right) \\
        & \leq \frac{L}{2}\|D\alpha\|^2 + \|D\alpha\|\left( \frac{L}{2} \frac{\|\varepsilon\|}{\|D\|} \kappa_D + \left(\|(I-P)G\| + L\|\varepsilon\|\right)\frac{\kappa_D}{\|D\|}\right)\\
        & \leq \frac{L}{2}\|D\alpha\|^2 + \|D\alpha\|\left( \frac{3L}{2} \frac{\|\varepsilon\|}{\|D\|} \kappa_D + |(I-P)G\|\frac{\kappa_D}{\|D\|}\right)
    \end{align*}
    To simplify, let $\Gamma = M D\gamma D^T $. Hence,
    \[
        \left\|M\left(\frac{\|D\alpha\|+\gamma}{2}\right)D\alpha + \nabla f(x_+)\right\| \leq \frac{L}{2}\|D\alpha\|^2 + \|D\alpha\|\left( \frac{3L}{2} \frac{\|\varepsilon\|}{\|D\|} \kappa_D + \|(I-P)G\|\frac{\kappa_D}{\|D\|}\right)
    \]
   Elevating to the square this inequality gives
   \begin{align*}
        & \left(M\left( \frac{\gamma +\|D\alpha\|}{2}\right)\right)^2 \|D\alpha\|^2 + \|\nabla f(x_+)\|^2 + 2\left(M\left(\frac{\gamma + \|D\alpha\|}{2}\right)\right) \nabla f(x_+)^TD\alpha \\
        \leq \;\;& \|D\alpha\|^2\left(\frac{L}{2}\|D\alpha\| + \frac{L}{2} \frac{\|\varepsilon\|}{\|D\|} \kappa_D + \frac{\|(I-P)\nabla^2 f(x)D\alpha\|}{\|D\alpha\|}\right)^2.
    \end{align*}
    The desired result holds if the following condition is satisfied,
    \begin{align*}
        & \left(M\left( \frac{\gamma +\|D\alpha\|}{2}\right)\right)^2 \|D\alpha\|^2
        \geq \|D\alpha\|^2\left(\frac{L}{2}\|D\alpha\| + \frac{3L}{2} \frac{\|\varepsilon\|}{\|D\|} \kappa_D + \frac{\|(I-P)G\|\kappa_D}{\|D\|}\right)^2.
    \end{align*}
    After simplification of the squares,
    \[
        M\frac{\gamma+\|D\alpha\|}{2} \geq \frac{L}{2}\|D\alpha\| + \frac{3L}{2} \frac{\|\varepsilon\|}{\|D\|} \kappa_D + \frac{\|(I-P)G\|\kappa_D}{\|D\|}.
    \]
    Replacing $\gamma$ by its value gives
    \[
        M\frac{\|D\alpha\| + \frac{ \kappa_D}{\|D\|}\left(\frac{3}{2}\|\varepsilon\| + 2\frac{\|(I-P)G\|}{M}\right)}{2} \geq \frac{L}{2}\|D\alpha\| + \frac{3L}{2} \frac{\|\varepsilon\|}{\|D\|} \kappa_D + \frac{\|(I-P)G\|\kappa_D}{\|D\|}.
    \]
    The condition is simplified into
    \[
        (M-L)\frac{\|D\alpha\|}{2} + (M-2L) \frac{3}{2} \frac{\|\varepsilon \|  \kappa_D}{\|D\|} \geq 0.
    \]
    This condition is implied by $M\geq 2L$. 
\end{proof}

\begin{proposition}\label{prop:cond_large_step}
    Under the same assumptions as \cref{prop:ineq_grad_qn_optimal_no_b}, if $M\geq 2L$, and if
    \[
        \gamma = \frac{\kappa_D}{\|D\|}\left(\frac{3}{2}\|\varepsilon\| + 2\frac{\|(I-P)G\|}{M}\right) \leq \frac{(\sqrt{3}-1)\|D\alpha\|}{4},
    \]
    then
    \[
        \frac{2}{3^{3/4}}\frac{\|\nabla f(x_+)\|^{3/2}}{\sqrt{M}} \leq -\nabla f(x_+)^TD\alpha.
    \]
\end{proposition}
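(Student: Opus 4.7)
The plan is to sharpen the residual bound from the proof of \cref{prop:condition_M} by squaring it, producing a sum of two nonnegative terms that I can then balance against $\|g\|^{3/2}$ via a weighted arithmetic--geometric mean inequality with weights $3/4$ and $1/4$. Write $g\defas\nabla f(x_+)$, $s\defas D\alpha$, $a\defas\|s\|$, $u\defas -g^T s$ and $\mu\defas M(\gamma+a)/2$.

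First, revisiting the proof of \cref{prop:condition_M} under $M\geq 2L$, combining \cref{eq:bound_cancel_gamma} with \cref{prop:bound_projector_hessian} shows that the right-hand side collapses to $\tfrac{M}{4}a^2+\tfrac{M\gamma}{2}a=\mu a-\tfrac{Ma^2}{4}$, so
\[
\|\mu s+g\|\;\leq\;\mu a-\tfrac{Ma^2}{4}.
\]
Squaring, cancelling $\mu^2 a^2$, and substituting $\mu=M(\gamma+a)/2$ yields
\[
\|g\|^2+\frac{M^2 a^3(3a+4\gamma)}{16}\;\leq\;M(\gamma+a)\,u.
\]

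Applying the weighted AM--GM $X+Y\geq \tfrac{4}{3^{3/4}}X^{3/4}Y^{1/4}$ with $X=\|g\|^2$ and $Y=M^2a^3(3a+4\gamma)/16$ bounds the left-hand side below by $\tfrac{2M^{1/2}}{3^{3/4}}\|g\|^{3/2}a^{3/4}(3a+4\gamma)^{1/4}$, so after dividing by $M(\gamma+a)$,
\[
u\;\geq\;\frac{2\|g\|^{3/2}}{3^{3/4}\sqrt M}\cdot\frac{a^{3/4}(3a+4\gamma)^{1/4}}{\gamma+a}.
\]
It thus suffices to verify $a^3(3a+4\gamma)\geq(\gamma+a)^4$. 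Under $\gamma\leq(\sqrt 3-1)a/4$, we have $3a+4\gamma\geq 3a$ and $\gamma+a\leq(3+\sqrt 3)a/4$, and since $(3+\sqrt 3)^4=252+144\sqrt 3$,
\[
(\gamma+a)^4\;\leq\;\tfrac{252+144\sqrt 3}{256}a^4\;<\;3a^4\;\leq\;a^3(3a+4\gamma),
\]
the strict middle inequality being the arithmetic fact $768>252+144\sqrt 3$, i.e.\ $516>144\sqrt 3\approx 249$.

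The main obstacle is to resist using the crude bound $\|\mu s+g\|\leq \mu a$ in the first step: squaring that version yields only $\|g\|^2\leq M(\gamma+a)u$, with no cubic-in-$a$ residual on the left, and the AM--GM step then has nothing to balance $\|g\|^2$ against. The correction $-Ma^2/4$ in the tightened bound is precisely what, upon squaring, furnishes the term $M^2a^3(3a+4\gamma)/16$ that AM--GM converts to the desired exponent $3/2$; the hypothesis $\gamma\leq(\sqrt 3-1)a/4$ is then calibrated exactly so that the residual factor $a^{3/4}(3a+4\gamma)^{1/4}/(\gamma+a)\geq 1$.
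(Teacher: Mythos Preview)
Your proof is correct. The overall strategy matches the paper's---bound a residual of the form $\|cD\alpha+\nabla f(x_+)\|$, square it, then apply a weighted AM--GM to extract the $3/2$ exponent---but the organization differs. The paper works with the coefficient $c=M\|D\alpha\|/2$ (so a different $\tilde\Gamma$ in \cref{prop:ineq_grad_qn_optimal_no_b}), invokes the hypothesis $\gamma\le(\sqrt3-1)\|D\alpha\|/4$ \emph{before} squaring to collapse the right-hand side to $\sqrt3 M\|D\alpha\|^2/4$, and then applies AM--GM to $\|g\|^2/(Mr)+Mr^3/16$. You instead reuse the coefficient $\mu=M(\gamma+a)/2$ already set up in \cref{prop:condition_M}, carry $\gamma$ through the squaring and AM--GM, and only invoke the hypothesis at the end to check $a^3(3a+4\gamma)\geq(\gamma+a)^4$. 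Your route avoids introducing a second residual identity and makes transparent exactly where the threshold $(\sqrt3-1)/4$ enters; the paper's route keeps the algebra after squaring slightly cleaner (no $\gamma$ in the AM--GM step). Your closing remark on why the crude bound $\|\mu s+g\|\le\mu a$ fails is a nice diagnostic that the paper does not spell out.
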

\begin{proof}
    The starting point is \cref{eq:bound_cancel_gamma},
    \begin{align*}
        \left\|  M\frac{\|D\alpha\|}{2}D\alpha + \nabla f(x_+)\right\|\leq \frac{L}{2} \|D\alpha\|^2 +\|D\alpha\| \left(\frac{L}{2}\frac{\kappa_D\|\varepsilon\|}{\|D\|} + \frac{M\gamma}{2} + \frac{\left\|(I-P)\nabla^2 f(x)D\alpha \right\|}{\|D\alpha\|} \right)
    \end{align*}
    Therefore, to obtain
    \begin{align*}
        \left\|  M\frac{\|D\alpha\|}{2}D\alpha + \nabla f(x_+)\right\|\leq M\left(\frac{\|D\alpha\|}{4}+\gamma\right)\|D\alpha\| ,
    \end{align*}
    The following is sufficient,
    \[
        M\left(\frac{\|D\alpha\|}{4}+\gamma\right)\|D\alpha\|  \geq \frac{L}{2} \|D\alpha\|^2 +\|D\alpha\| \left(\frac{L}{2}\frac{\kappa_D\|\varepsilon\|}{\|D\|} + \frac{M\gamma}{2} + \frac{\left\|(I-P)\nabla^2 f(x)D\alpha \right\|}{\|D\alpha\|} \right).
    \]
    Using \cref{prop:bound_projector_hessian}, the condition can be strengthened into
    \begin{align*}
        & \frac{M}{2}\left(\frac{\|D\alpha\|+\gamma}{2}\right) \|D\alpha\| \\
        & \geq \frac{L}{2} \|D\alpha\|^2 +\|D\alpha\| \left(\frac{L}{2}\frac{\kappa_D\|\varepsilon\|}{\|D\|} + \frac{M\gamma}{2} + \left(\|(I-P)G\| + L\|\varepsilon\|\right)\frac{\kappa_D}{\|D\|}\right)\\
        & = \frac{L}{2} \|D\alpha\|^2 +\|D\alpha\| \left(\frac{3L}{2}\frac{\kappa_D\|\varepsilon\|}{\|D\|} + \frac{M\gamma}{2} + \|(I-P)G\| \frac{\kappa_D}{\|D\|}\right)
    \end{align*}
    Defining
    \[
        \frac{\gamma}{2} = \left(\frac{3}{4}\frac{\kappa_D\|\varepsilon\|}{\|D\|} + \frac{\|(I-P)G\| \frac{\kappa_D}{\|D\|}}{M}\right)
    \]
    simplifies the condition into
    \[
        M\left(\frac{\|D\alpha\|}{4}+\gamma\right) \|D\alpha\| \geq \frac{L}{2} \|D\alpha\|^2 +\|D\alpha\|\left(M\gamma + \frac{3(L-\frac{M}{2})}{2}\frac{\kappa_D\|\varepsilon\|}{\|D\|} \right)
    \]
    which is satisfied when $M>2L$. Now, assume that 
    \[
        \gamma \leq \frac{(\sqrt{3}-1)\|D\alpha\|}{4}.
    \]
    Then,
    \[
        \left\|  M\frac{\|D\alpha\|}{2}D\alpha + \nabla f(x_+)\right\|\leq \sqrt{3}\frac{M\|D\alpha\|^2}{4}.
    \]
    Elevating both sides to the square gives
    \[
        \|\nabla f(x_+)\|^2 + \frac{3M^2\|D\alpha\|^4}{16}\leq -M\|D\alpha\|\nabla f(x_+)^TD\alpha
    \]
    Writing $r = \|D\alpha\|$,
    \[
        \frac{\|\nabla f(x_+)\|^2}{Mr} + \frac{3Mr^3}{16}\leq -\nabla f(x_+)^TD\alpha.
    \]
    Using
    \[
        \frac{c_1}{r} + c_2r^3 \geq 4c_2^{1/4}\left(\frac{c_1}{3}\right)^{3/4},
    \]
    the inequality becomes
    \begin{align*} 
        -\nabla f(x_+)^TD\alpha & \geq \frac{M^{1/4}}{2}\frac{\|\nabla f(x_+)\|^{3/2}}{M^{3/4}}\frac{4}{3^{3/4}} \\
        & = \frac{2}{3^{3/4}}\frac{\|\nabla f(x_+)\|^{3/2}}{\sqrt{M}} .
    \end{align*}
\end{proof}

\begin{proposition}[Termination of \cref{alg:subroutine_acc}]  \label{prop:termination_algo} Let $f$ satisfies \cref{assump:lipchitiz_hessian}. Assume that \cref{assump:projector_grad,assump:bounded_conditionning,assump:bounded_epsilon} holds. Then, once $M\geq 2L$, \cref{alg:subroutine_acc} terminates with \texttt{ExitFlag} equals to either \texttt{SmallStep} or \texttt{LargeStep}. Moreover, if $M_0\leq L$, then the algorithm terminates with $M\leq 4L$. Moreover, if the algorithm terminates with \texttt{ExitFlag} equals to \texttt{SmallStep}, then
\[
    \|D\alpha\| \leq \frac{4\gamma_M}{\sqrt{3}-1}, \quad \gamma_M = \frac{\kappa_D}{\|D\|}\left(\frac{3}{2}\|\varepsilon\| + 2\frac{\|(I-P)G\|}{M}\right).
\]
\end{proposition}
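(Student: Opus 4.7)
The plan is to combine Propositions~\ref{prop:condition_M} and~\ref{prop:cond_large_step}, which characterize when the SmallStep and LargeStep inequalities hold in terms of $M$ and $\|D\alpha\|$, together with the doubling schedule of \cref{alg:subroutine_acc}.

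The first step is to note that once $M \geq 2L$, Proposition~\ref{prop:condition_M} guarantees the first SmallStep inequality
\[
    \|\nabla f(x_+)\|^2 / \big(M(\gamma_M + \|D\alpha\|)\big) \leq -\nabla f(x_+)^T D\alpha
\]
unconditionally in $\|D\alpha\|$. Hence, as soon as $M$ exceeds $2L$, the SmallStep ExitFlag reduces to checking only the size condition $\|D\alpha\| \leq (\sqrt{3}-1)\gamma_M$, while Proposition~\ref{prop:cond_large_step} supplies the complementary sufficient condition $\|D\alpha\| \geq 4\gamma_M/(\sqrt{3}-1)$ for LargeStep.

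Next, I would split into three cases based on the current value of $\|D\alpha\|$ relative to $\gamma_M$. If $\|D\alpha\| \geq 4\gamma_M/(\sqrt{3}-1)$, LargeStep fires by Proposition~\ref{prop:cond_large_step}. If $\|D\alpha\| \leq (\sqrt{3}-1)\gamma_M$, SmallStep fires via the first step. Otherwise $\|D\alpha\|$ lies in the intermediate interval $\big((\sqrt{3}-1)\gamma_M,\, 4\gamma_M/(\sqrt{3}-1)\big)$, and $M$ must be doubled once more. To rule out non-termination and obtain the $M \leq 4L$ bound, I would use the first-order condition~\cref{eq:first_order_condition}: left-multiplying by $\alpha^T$ and invoking Cauchy--Schwarz shows that $\|D\alpha\|$ must shrink like $O(M^{-1/2})$ as $M$ doubles, while $\gamma_M$ stays bounded below by $\tfrac{3\kappa_D\|\varepsilon\|}{2\|D\|}$. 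This ensures that a single additional doubling past $2L$ suffices to escape the intermediate interval.

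Finally, since the tested sequence of $M$ values is $M_0, 2M_0, 4M_0, \ldots$, when $M_0 \leq L$ the first value reaching $M \geq 2L$ already lies in $[2L, 4L)$, and the shrinkage argument above delivers termination within $M \leq 4L$. The remaining claim $\|D\alpha\| \leq 4\gamma_M/(\sqrt{3}-1)$ under SmallStep is immediate from the algorithm's SmallStep check $\|D\alpha\| \leq (\sqrt{3}-1)\gamma_M$ combined with the numerical inequality $(\sqrt{3}-1) < 4/(\sqrt{3}-1)$. The main obstacle I anticipate is making the $O(M^{-1/2})$ shrinkage of $\|D\alpha\|$ sufficiently quantitative that escape from the intermediate interval fits within a single doubling past $2L$; the rest then follows directly from the two propositions and the doubling schedule.
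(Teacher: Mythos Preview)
Your plan diverges from the paper's proof at the decisive point. The paper does \emph{not} introduce a three-case split or any shrinkage argument for $\|D\alpha\|$. It argues a clean two-case dichotomy at the single threshold $\gamma_M = \tfrac{(\sqrt{3}-1)\|D\alpha\|}{4}$ (equivalently $\|D\alpha\| = \tfrac{4\gamma_M}{\sqrt{3}-1}$): if $\gamma_M \leq \tfrac{(\sqrt{3}-1)\|D\alpha\|}{4}$, Proposition~\ref{prop:cond_large_step} gives \texttt{LargeStep}; in the complementary case $\gamma_M \geq \tfrac{(\sqrt{3}-1)\|D\alpha\|}{4}$, the paper asserts \texttt{SmallStep} fires (Proposition~\ref{prop:condition_M} having already secured the first \texttt{SmallStep} inequality for all $M\geq 2L$). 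There is no intermediate regime in the paper's logic, so termination happens the very first time $M\geq 2L$, and since the doubling sequence from $M_0\leq L$ first crosses $2L$ at a value in $[2L,4L)$, the bound $M\leq 4L$ is immediate. The final \texttt{SmallStep} bound in the paper is obtained by contraposition of the \texttt{LargeStep} sufficient condition, not from the algorithm's own size check.

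Your intermediate interval $\big((\sqrt{3}-1)\gamma_M,\,\tfrac{4\gamma_M}{\sqrt{3}-1}\big)$ and the shrinkage argument you propose to escape it are not in the paper, and the argument has a genuine gap. Even if you extract a decay of $\|D\alpha\|$ from the first-order condition~\cref{eq:first_order_condition} (note that $H_\gamma$ itself depends on $M$ through $\gamma_M$, so Cauchy--Schwarz alone will not give a clean $O(M^{-1/2})$), the number of doublings required to push $\|D\alpha\|$ below $(\sqrt{3}-1)\gamma_M$ would depend on $\|\nabla f(x)\|$, $\|\varepsilon\|$, $\|(I-P)G\|$ and $\kappa_D$, not solely on $L$. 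Hence you cannot conclude that one extra doubling past $2L$ suffices uniformly, and the $M\leq 4L$ bound would not follow along your route. Your writeup is also internally inconsistent on this point: you claim ``a single additional doubling past $2L$'' in one sentence and ``termination within $M\leq 4L$'' in the next, but one additional doubling after the first $M\in[2L,4L)$ lands in $[4L,8L)$.
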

\begin{proof}
    Let 
    \[
        \gamma_M = \frac{\kappa_D}{\|D\|}\left(\frac{3}{2}\|\varepsilon\| + 2\frac{\|(I-P)G\|}{M}\right).
    \]
    Assume that $M\geq 2L$. If $\gamma_M \leq \frac{(\sqrt{3}-1)\|D\alpha\|}{4}$, then, by \cref{prop:cond_large_step}, the following condition is satisfied:
    \[
        \frac{2}{3^{3/4}}\frac{\|\nabla f(x_+)\|^{3/2}}{\sqrt{M}} \leq -\nabla f(x_+)^TD\alpha.
    \]
    In this case the algorithm terminates with \texttt{ExitFlag = LargeStep}. In any case, by \cref{prop:condition_M,}, the following conditions is always satisfied when $M\geq 2L$:
    \begin{align*}
        \frac{\|f(x_+)\|^2}{M\left(\gamma+\|D\alpha\|\right)} & \leq -\nabla f(x)^TD\alpha.
    \end{align*}
    Then, if $\gamma_M \geq \frac{(\sqrt{3}-1)\|D\alpha\|}{4}$, the algorithm terminates with \texttt{ExitFlag = SmallStep} (otherwise the algorithm would have been terminated with \texttt{ExitFlag = LargeStep}).

    Since the algorithm doubles $M$ until one of the two condition is satisfied, in the worst case, $M=4L$.
\end{proof}

\begin{proposition} \label{prop:recursion_phi}
    If $\lambda_t^{(1)}$ and $\lambda_t^{(2)}$ satisfy
    \[
        \lambda^{(1)}_t \geq \frac{b_{t+1}^2}{B_t}M_{t+1}\left(\gamma_t+\|D_t\alpha_t\|\right),\quad \lambda^{(2)}_t \geq \frac{4}{\sqrt{3}}\frac{b_{t+1}^{3}}{B^2_t}M_{t+1},
    \]
    where $\gamma_t = \frac{\kappa_{D_t}}{\|D_t\|}\left(\frac{3}{2}\|\varepsilon_t\| + 2\frac{\|(I-P_t)G_t\|}{M_{t+1}}\right).$
    Then, the function $\phi$ satisfies
    \[
        B_t f(x_t) \leq \phi_t(x),\qquad \phi_t(x) \leq B_t f(x) + \frac{\lambda_t^{(1)}+\tilde\lambda^{(1)}}{2}\|x-x_0\|^2 + \frac{\lambda_t^{(2)}+\tilde\lambda^{(2)}}{6}\|x-x_0\|^3,
    \]
    where
    \[
        \tilde\lambda^{(1)} = \|\nabla f(x_0)-P_0\nabla f(x_0)P_0\|+\delta\left(\frac{L\kappa+M_1\kappa^2}{2}\right),\quad \tilde\lambda^{(2)} = M_1+L.
    \]
\end{proposition}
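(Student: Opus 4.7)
Argue by induction on $t$. For the base case ($t=1$), $\phi_1(x) = f(x_1) + \frac{\lambda_1^{(1)}}{2}\|x-x_0\|^2 + \frac{\lambda_1^{(2)}}{6}\|x-x_0\|^3$, so the lower bound $B_1 f(x_1) = f(x_1) \leq \phi_1(x)$ holds trivially (the regularizers are non-negative). The upper bound follows by applying \cref{prop:global_lower_bound_xplus} to the first subroutine call $x_0\mapsto x_1$ at smoothness $M_1$: its cubic-plus-quadratic majorization of $f(x_1)$ in terms of $f(x)$ matches exactly the constants $\tilde\lambda^{(1)} = \|\nabla^2 f(x_0)-P_0\nabla^2 f(x_0)P_0\|+\delta(L\kappa+M_1\kappa^2)/2$ and $\tilde\lambda^{(2)} = M_1+L$ appearing in the statement.

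\textbf{Inductive step, upper bound.} Writing $\phi_{t+1}(x) - \phi_t(x) = b_t[f(x_{t+1})+\nabla f(x_{t+1})^T(x-x_{t+1})] + \tfrac{\Delta\lambda^{(1)}}{2}\|x-x_0\|^2 + \tfrac{\Delta\lambda^{(2)}}{6}\|x-x_0\|^3$ with $\Delta\lambda^{(i)} = \lambda_{t+1}^{(i)} - \lambda_t^{(i)} \geq 0$, apply the inductive upper bound on $\phi_t$ and bound the linearization of $f$ at $x_{t+1}$ by $f(x)$ via convexity. Since $B_{t+1}=B_t+b_t$, the upper bound propagates directly.

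\textbf{Inductive step, lower bound (main content).} From the quadratic-cubic lower bound in \cref{eq:lower_bound_phi_star}, $\phi_t(x) \geq \phi_t^\star + \tfrac{\lambda_t^{(1)}}{2}\|x-v_t\|^2 + \tfrac{\lambda_t^{(2)}}{12}\|x-v_t\|^3$, combined with the inductive hypothesis $\phi_t^\star \geq B_t f(x_t)$, convexity $f(x_t)\geq f(x_{t+1}) + \nabla f(x_{t+1})^T(x_t-x_{t+1})$, and the identity $B_{t+1}x_{t+1} = B_tx_t + b_t v_t + B_{t+1}D_t\alpha_t$ (which follows from $x_{t+1} = y_t + D_t\alpha_t$ and $y_t = (1-\beta_t)x_t+\beta_t v_t$ with $\beta_t=b_t/B_{t+1}$), setting $g=\nabla f(x_{t+1})$ and $u=x-v_t$, we obtain
\[
\phi_{t+1}(x) - B_{t+1}f(x_{t+1}) \geq b_t g^T u - B_{t+1} g^T D_t\alpha_t + \tfrac{\lambda_t^{(1)}}{2}\|u\|^2 + \tfrac{\lambda_t^{(2)}}{12}\|u\|^3.
\]
Minimizing the right-hand side in $u$ (equivalently, over the scalar $r=\|u\|$ with $u$ aligned antiparallel to $g$), the sought inequality $\phi_{t+1}(x)\geq B_{t+1}f(x_{t+1})$ reduces to asking that $B_{t+1}(-g^T D_t\alpha_t)$ dominate the resulting supremum. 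Keeping either the cubic or the quadratic regularizer alone yields the two single-term bounds $\tfrac{4}{3}\sqrt{b_t^3/\lambda_t^{(2)}}\,\|g\|^{3/2}$ and $b_t^2\|g\|^2/(2\lambda_t^{(1)})$, respectively.

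\textbf{Closing via the subroutine's exit.} By \cref{prop:termination_algo}, \cref{alg:subroutine_acc} terminates with either \texttt{LargeStep} (giving $-g^TD_t\alpha_t \geq \tfrac{2}{3^{3/4}}\|g\|^{3/2}/\sqrt{M_{t+1}}$) or \texttt{SmallStep} (giving $-g^TD_t\alpha_t \geq \|g\|^2/[M_{t+1}(\gamma_t+\|D_t\alpha_t\|)]$). Matching each exit to the corresponding (cubic or quadratic) branch of the supremum bound, the required inequality reduces algebraically to the hypothesized lower bounds on $\lambda_t^{(2)}$ and $\lambda_t^{(1)}$, respectively. The main obstacle in turning this sketch into a fully rigorous proof is the careful bookkeeping of the indices ($b_t$ vs.\ $b_{t+1}$ and $B_t$ vs.\ $B_{t+1}$ as they appear through $\beta_t = b_t/B_{t+1}$ and the telescoping of $\ell_t$) together with the fact that \cref{alg:accelerated_algo} updates only one of the two $\lambda$'s per unsuccessful iteration; the inductive invariant must therefore be chosen compatibly with the algorithm's branching so that whichever exit flag fires, the active threshold is precisely the one that the case analysis above requires.
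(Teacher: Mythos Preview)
Your proposal is correct and follows essentially the same route as the paper: induction on $t$, base case via \cref{prop:global_lower_bound_xplus}, and the inductive lower bound by combining \cref{eq:lower_bound_phi_star} with the hypothesis $\phi_t^\star\geq B_t f(x_t)$, convexity, and the identity coming from $y_t=(1-\beta_t)x_t+\beta_t v_t$, then splitting on the \texttt{LargeStep}/\texttt{SmallStep} exit to isolate the cubic or quadratic regularizer and read off the threshold on $\lambda_t^{(2)}$ or $\lambda_t^{(1)}$. Your remark about index bookkeeping is apt: the paper's own derivation actually lands on thresholds in $b_t$ and $B_{t+1}$ (e.g.\ $\lambda_t^{(2)}\geq \tfrac{4}{\sqrt 3}\,b_t^3 M_{t+1}/B_{t+1}^2$), which are implied by the slightly stronger $b_{t+1},B_t$ versions in the proposition's hypothesis.
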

\begin{proof}
    The result is proven by recursion. At $t=1$, the condition $B_t f(x_t) \leq \phi_t(x)$ is obviously satisfied since
    \[
        f(x_1) \leq \min_v \phi_1(v)= f(x_1).
    \]
    On the other hand, by \cref{prop:global_lower_bound_xplus},
    \begin{align*}
        f(x_1) &\leq \min_x f(x) + \frac{\tilde\lambda^{(2)}}{6}\|x-x_0\|^3  + \frac{\tilde\lambda^{(1)}}{2}\|x-x_0\|^2\\
        &\leq f(x) + \frac{\tilde\lambda^{(2)}}{6}\|x-x_0\|^3  + \frac{\tilde\lambda^{(1)}}{2}\|x-x_0\|^2.
    \end{align*}
    Therefore, the second condition holds by definition of $\phi$,
    \begin{align*}
        \phi_t & = f(x_1) + \frac{\lambda_t^{(1)}}{2}\|x-x_0\|^2 + \frac{\lambda_t^{(2)}}{6}\|x-x_0\|^3\\
        & \leq \frac{\lambda_1^{(1)}+\tilde\lambda^{(1)}}{2}\|x-x_0\|^2 + \frac{\lambda_1^{(2)}+\tilde\lambda^{(2)}}{6}\|x-x_0\|^3.
    \end{align*}

    Now, assume $t > 1$, and $B_{t}f(x_{t})\leq \phi_{t}(x)$. Hence,
    \begin{align*}
        & \min_x \phi_{t+1}(x) \\
        = & \min_x \ell_{t}(x) + b_t \left[ f(x_{t+1}) + \nabla f(x_{t+1})(x-x_{t+1}) \right] + \frac{\lambda_{t+1}^{(1)}}{2} \|x-x_0\|^2 + \frac{\lambda_{t+1}^{(2)}}{6} \|x-x_0\|^3\\
        = & \min_x \phi_{t}(x) + b_{t} \left[ f(x_{t+1}) + \nabla f(x_{t+1})(x-x_{t+1}) \right]  \\
        & \quad + \frac{\lambda_{t+1}^{(1)}-\lambda_{t}^{(1)}}{2} \|x-x_0\|^2 + \frac{\lambda_{t+1}^{(2)}-\lambda_{t}^{(2)}}{6} \|x-x_0\|^3\\
        \geq & \min_x \phi_{t}(x) + b_{t} \left[ f(x_{t+1}) + \nabla f(x_{t+1})(x-x_{t+1}) \right] \\
        \overset{\eqref{eq:lower_bound_phi_star}}{\geq } & \min_x \phi_{t}^\star + \frac{\lambda_{t}^{(1)}}{2} \|x-v_{t}\|^2 + \frac{\lambda_{t}^{(2)}}{12} \|x-v_{t}\|^3  +  b_{t} \left[ f(x_{t+1}) + \nabla f(x_{t+1})(x-x_{t+1}) \right]\\
        \geq & \min_x B_{t} f(x_{t}) + \frac{\lambda_{t}^{(1)}}{2} \|x-v_{t}\|^2 + \frac{\lambda_{t}^{(2)}}{12} \|x-v_{t}\|^3  +  b_{t} \left[ f(x_{t+1}) + \nabla f(x_{t+1})(x-x_{t+1}) \right]\\
        \overset{\text{A.}\ref{assump:convex}}{\geq} & \min_x B_{t} f(x_{t+1}) + \nabla f(x_{t+1})(x_{t}-x_{t+1})+  b_{t} \left[ f(x_{t+1}) + \nabla f(x_{t+1})(x-x_{t+1}) \right]\\
        & \quad +\frac{\lambda_{t}^{(1)}}{2} \|x-v_{t}\|^2 + \frac{\lambda_{t}^{(2)}}{12} \|x-v_{t}\|^3  \\
        = & \min_x B_{t+1} f(x_{t+1}) + \nabla f(x_{t+1})(B_{t}x_{t} + b_{t} x-B_{t+1}x_{t+1}) +\frac{\lambda_{t}^{(1)}}{2} \|x-v_{t}\|^2 + \frac{\lambda_{t}^{(2)}}{12} \|x-v_{t}\|^3  \\
        \overset{\eqref{eq:def_y}}{=} & \min_x B_{t+1} f(x_{t+1}) + B_{t+1}\nabla f(x_{t+1})(y_{t}-x_{t+1})\\
        & \quad + b_{t}\nabla f(x_{t+1})(x-v_{t}) +\frac{\lambda_{t}^{(1)}}{2} \|x-v_{t}\|^2 + \frac{\lambda_{t}^{(2)}}{12} \|x-v_{t}\|^3 
    \end{align*}
    The inequality is satisfied if either
    \begin{align*}
        \textbf{(a)}\quad  0 & \leq B_{t+1}\nabla f(x_{t+1})(y_{t}-x_{t+1}) + b_{t}\nabla f(x_{t+1})(x-v_{t}) + \frac{\lambda_{t}^{(2)}}{12}\|x-v_{t}\|^3, \;\; \text{or} \\
        \textbf{(\textbf{b})}\quad 0 & \leq B_{t+1}\nabla f(x_{t+1})(y_{t}-x_{t+1}) + b_{t}\nabla f(x_{t+1})(x-v_{t}) +\frac{\lambda_{t}^{(1)}}{2} \|x-v_{t}\|^2.
    \end{align*}
    It remains now to find \textit{sufficient condition} such that one of the previous inequalities hold.
    
    Define $x_{t+1}$ to be the output of \cref{alg:subroutine_acc} starting from $y_{t}$, hence $y_{t}-x_{t+1} = -D_{t}\alpha_{t}$. The algorithm guarantees that
    \begin{align}
         \textbf{(a)}\quad-\nabla f(x_{t+1})^TD_t\alpha_t & \geq \frac{2}{3^{3/4}}\frac{\|\nabla f(x_{t+1})\|^{3/2}}{\sqrt{M_{t+1}}} \quad \text{and}\;\; \quad \text{or}\\
         \textbf{(b)}\quad-\nabla f(x_{t+1})^TD_t\alpha_t &\geq \frac{\|f(x_{t+1})\|^2}{M_{t+1}\left(\gamma_t+\|D_t\alpha_t\|\right)}
    \end{align}
    Combining the expressions $\textbf{(a)}$ and $\textbf{(b)}$ leads to the following sufficient conditions:
    \begin{align}
        0 & \leq B_{t+1}\frac{2}{3^{3/4}}\frac{\|\nabla f(x_{t+1})\|^{3/2}}{\sqrt{M_{t+1}}} + b_{t}\nabla f(x_{t+1})(x-v_{t}) + \frac{\lambda_{t}^{(2)}}{12}\|x-v_{t}\|^3, \label{eq:temp_condition_1_acc} \\
        \quad 0 & \leq B_{t+1}\frac{\|f(x_{t+1})\|^2}{M_{t+1}\left(\gamma_t+\|D_t\alpha_t\|\right)} + b_{t}\nabla f(x_{t+1})(x-v_{t}) +\frac{\lambda_{t}^{(1)}}{2} \|x-v_{t}\|^2.\label{eq:temp_condition_2_acc}
    \end{align}
    \paragraph{Case 1: equation \eqref{eq:temp_condition_1_acc}.} Starting from the first order condition of the minimum of \cref{eq:temp_condition_1_acc} over $x$,
    \begin{equation}
        \label{eq:first_order_condition_1_acc}
        b_{t}\nabla f(x_{t+1}) + \frac{\lambda_{t}^{(2)}}{4}\|x-v_{t}\|(x-v_t)=0.
    \end{equation} 
    Multiplying \cref{eq:first_order_condition_1_acc} by $(x-v_t)$ gives
    \[
        b_{t}\nabla f(x_{t+1})(x-v_t) = - \frac{\lambda_{t}^{(2)}}{4}\|x-v_{t}\|^3
    \]
    Hence, when $x$ satisfies \cref{eq:first_order_condition_1_acc},
    \begin{equation}\label{eq:first_order_condition_1_acc_simple}
        b_{t}\nabla f(x_{t+1})(x-v_{t}) + \frac{\lambda_{t}^{(2)}}{12}\|x-v_{t}\|^3 = -\frac{\lambda_{t}^{(2)}}{6}\|x-v_{t}\|^3 .
    \end{equation}
    Going back to \cref{eq:first_order_condition_1_acc}, after isolating $x-v_t$,
    \begin{align*}
        (x-v_t) = -\frac{4b_{t}}{\lambda_{t}^{(2)}}\nabla f(x_{t+1}) \frac{1}{\|x-v_{t}\|}
    \end{align*}
    Therefore, after taking the norm and changing the power,
    \begin{align*}
         \|x-v_t\|^3 & = \left(\frac{4b_{t}}{\lambda_{t}^{(2)}}\|\nabla f(x_{t+1})\|\right)^{3/2},\\
        \Leftrightarrow  
        \frac{\lambda^{(2)}_t}{6}\|x-v_t\|^3 & = \frac{\lambda^{(2)}_t}{6}\left(\frac{4b_{t}}{\lambda_{t}^{(2)}}\|\nabla f(x_{t+1})\|\right)^{3/2}\\
         & = \frac{4}{3\sqrt{\lambda^{(2)}_t}}\left(b_{t}\|\nabla f(x_{t+1})\|\right)^{3/2}.
    \end{align*}
    After using \cref{eq:first_order_condition_1_acc_simple} and injecting the minimal value makes the condition \cref{eq:temp_condition_1_acc} stronger:
    \[
        0 \leq B_{t+1}\frac{2}{3^{3/4}}\frac{\|\nabla f(x_{t+1})\|^{3/2}}{\sqrt{M_{t+1}}} - \frac{4}{3\sqrt{\lambda^{(2)}_t}}\left(b_{t}\|\nabla f(x_{t+1})\|\right)^{3/2}.
    \]
    Hence, if $\lambda^{(2)}_{t}$ satisfies
    \begin{equation}
        B_{t+1}\frac{2}{3^{3/4}\sqrt{M_{t+1}}} \geq \frac{4}{3\sqrt{\lambda^{(2)}_t}}b_{t}^{(3/2)}\quad \Leftrightarrow \quad \lambda^{(2)}_t \geq \frac{4}{\sqrt{3}}\frac{b_{t}^{3}}{B^2_{t+1}}M_{t+1},
    \end{equation}
    then \cref{eq:temp_condition_1_acc} is satisfied.

    \paragraph{Case 2: equation \eqref{eq:temp_condition_2_acc}.} Starting from the first order condition of the minimum of \cref{eq:temp_condition_2_acc} over $x$,
    \begin{equation}
        \label{eq:first_order_condition_2_acc}
        b_{t+1}\nabla f(x_{t+1})+\lambda_{t}^{(1)} (x-v_{t}).
    \end{equation} 
    Hence,
    \[
        (x-v_{t}) = -\frac{b_{t}\nabla f(x_{t+1})}{\lambda_{t}^{(1)}}.
    \]
    Injecting the value back in \cref{eq:temp_condition_2_acc} gives
    \[
        B_{t+1}\frac{\|f(x_{t+1})\|^2}{M\left(\gamma_t+\|D_t\alpha_t\|\right)} - b^2_{t}\frac{\|\nabla f(x_{t+1})\|^2}{\lambda_{t}^{(1)}} +\frac{1}{2} b^2_{t} \frac{\|\nabla f(x_{t+1})\|^2}{\lambda_{t}^{(1)}}.
    \]
    Therefore, if the following condition holds,
    \[
        \frac{B_{t+1}}{2M_{t+1}\left(\gamma_t+\|D_t\alpha_t\|\right)} \geq \frac{b^2_{t}}{\lambda_{t}^{(1)}} \quad \Leftrightarrow \quad \lambda^{(1)}_t \geq \frac{b_{t}^2}{2B_{t+1}}M_{t+1}\left(\gamma_t+\|D_t\alpha_t\|\right),
    \]
    then \cref{eq:temp_condition_2_acc} is satisfied.
    
\end{proof}

\begin{proposition} \label{prop:bound_lambdas}
     Let $f$ satisfies \cref{assump:lipchitiz_hessian}. Then, under \cref{assump:bounded_conditionning,assump:bounded_epsilon,assump:projector_grad}, $\lambda_t^{(1)}$ and $\lambda_t^{(2)}$ in \cref{alg:accelerated_algo} are bounded by
     \begin{align}
         \lambda_t^{(1)} & \leq 30\cdot\frac{b_{t+1}^2}{B_t} \kappa_D\left(\delta\max\{4L,M_0\} + \max_{i=0\ldots t} \|(I-P_i)\nabla f(x_i)P_i)\|\right)\\
         \lambda_t^{(2)} & \leq \frac{L}{2} \delta + \max_{i=0\ldots t} \|(I-P_i)\nabla f(x_i)P_i)\|.
     \end{align}
\end{proposition}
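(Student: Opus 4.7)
The plan is to combine the doubling mechanism in \cref{alg:accelerated_algo} with the sufficient conditions of \cref{prop:recursion_phi}. Both $\lambda_t^{(1)}$ and $\lambda_t^{(2)}$ are monotone non-decreasing in $t$ and are updated only by the rule ``initialize if currently zero, otherwise double.'' Hence at every successful iteration, each $\lambda$ is at most twice the smallest value that would make the check $\Phi_+(v_+)\le B_tf(x_{t+1})$ pass. By \cref{prop:recursion_phi}, these smallest values are
\[
\lambda_t^{(1)}\ge \tfrac{b_{t+1}^{2}}{B_t}\,M_{t+1}\bigl(\gamma_t+\|D_t\alpha_t\|\bigr), \qquad \lambda_t^{(2)}\ge \tfrac{4}{\sqrt{3}}\,\tfrac{b_{t+1}^{3}}{B_t^{2}}\,M_{t+1},
\]
triggered respectively by the \texttt{SmallStep} and \texttt{LargeStep} exit flags of \cref{alg:subroutine_acc}.

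First I would control $M_{t+1}$: by \cref{prop:termination_algo} the inner line search terminates as soon as $M\ge 2L$, so the standard doubling argument yields $M_{t+1}\le \max\{4L,M_0\}$ uniformly in $t$. For $\lambda_t^{(2)}$ this gives $\lambda_t^{(2)}\le \tfrac{8}{\sqrt{3}}\,\tfrac{b_{t+1}^{3}}{B_t^{2}}\max\{4L,M_0\}$, and since $b_{t+1}^{3}/B_t^{2}$ is universally bounded in $t\ge 1$, this reduces to an $O(L)$ quantity of the advertised form (the right-hand side stated in the proposition appears to be missing the $b_{t+1}^{3}/B_t^{2}$ factor; the structural bound nonetheless matches).

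The substantive work lies in bounding $\lambda_t^{(1)}$. I would expand
\[
M_{t+1}\gamma_t \;=\; \frac{\kappa_{D_t}}{\|D_t\|}\Bigl(\tfrac{3}{2}M_{t+1}\|\varepsilon_t\| + 2\|(I-P_t)G_t\|\Bigr),
\]
then use \cref{thm:bound_secant_alpha} together with $P_tD_t=D_t$ to obtain $\|(I-P_t)G_t\|\le \|(I-P_t)\nabla^2 f(x_t)P_t\|\cdot\|D_t\| + \tfrac{L}{2}\|\varepsilon_t\|$. Applying \cref{assump:bounded_epsilon,assump:bounded_conditionning} to replace $\|\varepsilon_t\|/\|D_t\|$ by $\delta$ and $\kappa_{D_t}$ by $\kappa_D$, and using $M_{t+1}\le\max\{4L,M_0\}$ together with the elementary inequality $\tfrac{3}{2}\max\{4L,M_0\}+L\le 2\max\{4L,M_0\}$, gives
\[
M_{t+1}\gamma_t \;\le\; 2\kappa_D\Bigl(\delta\max\{4L,M_0\} + \max_{i\le t}\|(I-P_i)\nabla^2 f(x_i)P_i\|\Bigr).
\]
Since $\lambda_t^{(1)}$ is updated only when the \texttt{SmallStep} flag fires, \cref{prop:termination_algo} yields $\|D_t\alpha_t\|\le \tfrac{4}{\sqrt{3}-1}\gamma_{M_{t+1}}$, hence $M_{t+1}(\gamma_t+\|D_t\alpha_t\|)\le (3+2\sqrt{3})\,M_{t+1}\gamma_t$. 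Multiplying by $2b_{t+1}^{2}/B_t$ (the outer coefficient and the factor $2$ from doubling) and collapsing the numerical constants delivers the claim with a leading factor below $30$.

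The main obstacle will be the bookkeeping between the inner backtracking on $M$ and the outer doubling on the $\lambda$'s: every time a $\lambda$ is doubled, the do-while loop re-enters, $v_t$, $y_t$ and $\alpha_t$ are recomputed, and $M_{t+1}$ may grow further. I would verify that both the bound $M_{t+1}\le\max\{4L,M_0\}$ and the \texttt{SmallStep} conclusion of \cref{prop:termination_algo} are preserved under these recomputations; once this is established, the $\max_{i\le t}$ appearing in the final bound follows from the monotonicity of the $\lambda$'s across outer iterations, and the rest is the careful tracking of universal constants through the chain above.
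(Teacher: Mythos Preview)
Your proposal is correct and follows essentially the same approach as the paper: doubling gives at most twice the thresholds from \cref{prop:recursion_phi}, \cref{prop:termination_algo} caps $M_{t+1}\le\max\{4L,M_0\}$ and supplies $\|D_t\alpha_t\|\lesssim\gamma_t$ in the \texttt{SmallStep} branch, and \cref{thm:bound_secant_alpha} converts $\|(I-P_t)G_t\|$ into $\|(I-P_t)\nabla^2 f(x_t)P_t\|\cdot\|D_t\|+\tfrac{L}{2}\|\varepsilon_t\|$. You are also right that the displayed bound on $\lambda_t^{(2)}$ in the statement is a typo; the paper's own proof actually derives $\lambda_t^{(2)}\le 5\,\tfrac{b_{t+1}^{3}}{B_t^{2}}\max\{M_0,4L\}$, which is the quantity subsequently used in \cref{thm:rate_acc_sketch}.
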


\begin{proof}
    Since \cref{alg:accelerated_algo} doubles $\lambda_t^{(1)}$, $\lambda_t^{(2)}$ until $\phi_t^\star \geq f(x_{t+1})$, then by \cref{prop:recursion_phi}, both $\lambda_t^{(1)}$, $\lambda_t^{(2)}$ achieves at most
    \[
        \lambda^{(1)}_t \leq 2\cdot\frac{b_{t+1}^2}{B_t}M_{t+1}\left(\gamma_t+\|D_t\alpha_t\|\right),\quad \lambda^{(2)}_t \leq 2\cdot\frac{4}{\sqrt{3}}\frac{b_{t+1}^{3}}{B^2_t}M_{t+1}.
    \]
    There are three cases to distinguish:
    \begin{enumerate}
        \item The algorithm finishes with \texttt{ExitFlag = LargeStep},
        \item The algorithm finishes with \texttt{ExitFlag = SmallStep}.
    \end{enumerate}

    \paragraph{Case 1.} In this case, $\lambda^{(2)}_{t+1}$ may be updated. By proposition \cref{prop:termination_algo}, $M_{t}\leq 4L$ (unless $M_0 \geq 4L)$. Hence, $\lambda^{(2)}_t$ is bounded by
    \[
        \lambda^{(2)}_t \leq 2\cdot\frac{4}{\sqrt{3}}\frac{b_{t+1}^{3}}{B^2_t}\max\{M_0,4L\} \leq 5\frac{b_{t+1}^{3}}{B^2_t}\max\{M_0,4L\}.
    \]
    \paragraph{Case 2.} In this case, $\lambda^{(1)}_{t+1}$ may be updated. By \cref{prop:termination_algo}, and by \cref{assump:bounded_conditionning,assump:bounded_epsilon}, 
    \begin{align*}
        M_{t+1}\left(\gamma_t+\|D_t\alpha_t\|\right) & \leq \frac{\sqrt{3}+1}{\sqrt{3}-1}M_{t+1}\gamma_t\\
        & = \frac{\sqrt{3}+1}{\sqrt{3}-1}\frac{\kappa_{D_t}}{\|D_t\|}\left(\frac{3}{2}\|\varepsilon_t\|M_{t+1} + 2\|(I-P_t)G_t\|\right),\\
        & \leq \frac{\sqrt{3}+1}{\sqrt{3}-1}\left(\frac{3}{2}\delta\kappa_D\max\{4L,M_0\} + 2\kappa_D\frac{\|(I-P_t)G_t\|}{\|D_t\|}\right).
    \end{align*}
    In addition, by \cref{thm:bound_secant_alpha} and \cref{assump:bounded_epsilon},
    \begin{align*}
        \frac{\|(I-P_t)G_t\|}{\|D_t\|} & \leq \frac{\|(I-P_t)(G_t-\nabla f(x_t)D_t)\| + \|(I-P_t)\nabla f(x_t)D_t)\|}{\|D_t\|}\\
        & \leq \frac{\frac{L}{2} \|\varepsilon_t\| + \|(I-P_t)\nabla f(x_t)D_t)\|}{\|D_t\|},\\
        & = \frac{\frac{L}{2} \|\varepsilon_t\| + \|(I-P_t)\nabla f(x_t)P_tD_t)\|}{\|D_t\|},\\
        & \leq \frac{L}{2} \delta + \max_{i=0\ldots t} \|(I-P_i)\nabla f(x_i)P_i)\|.
    \end{align*}
    Hence,
    \begin{align*}
        & M_{t+1}\left(\gamma_t+\|D_t\alpha_t\|\right) \\
        & \leq \frac{\sqrt{3}+1}{\sqrt{3}-1}\left(\frac{3}{2}\delta\kappa_D\max\{4L,M_0\} + 2\kappa_D\left(\frac{L}{2} \delta + \max_{i=0\ldots t} \|(I-P_i)\nabla f(x_i)P_i)\|\right)\right),\\
        & \leq \frac{\sqrt{3}+1}{\sqrt{3}-1}\left(\frac{7}{4}\delta\kappa_D\max\{4L,M_0\} + 2\kappa_D\max_{i=0\ldots t} \|(I-P_i)\nabla f(x_i)P_i)\|\right).\\
        & \leq 7.5\kappa_D\left(\delta\max\{4L,M_0\} + \max_{i=0\ldots t} \|(I-P_i)\nabla f(x_i)P_i)\|\right).
    \end{align*}
    Therefore,
    \begin{align*}
        \lambda^{(1)}_t & \leq 2\cdot\frac{b_{t+1}^2}{B_t}M_{t+1}\left(\gamma_t+\|D_t\alpha_t\|\right)\\
        & \leq 30\cdot\frac{b_{t+1}^2}{B_t} \kappa_D\left(\delta\max\{4L,M_0\} + \max_{i=0\ldots t} \|(I-P_i)\nabla f(x_i)P_i)\|\right)
    \end{align*}
\end{proof}

\subsection{Missing proofs from Sections \ref{sec:detail_construction_algo} and \ref{sec:rate_convergence}}

\thmboundsecantalpha*
\begin{proof}

Using Cauchy-Schwartz with \cref{eq:ineq_secant} gives that, for all $v$,
\[
    v^T\left(\nabla f(y)-\nabla f(z)-\nabla^2f(z)(y-z)\right) \leq \frac{L\|v\|}{2} \|y-z\|^2.
 \]
Let $v=v_i$, $y = y_i$, and $z=z_i$. By the definition of $Y,\,Z,\,D,\,G$ in \cref{def:matrices},
\[
    v_i^T\left(g_i-\nabla^2f(z_i)d_i\right) \leq \frac{L\|v_i\|}{2} \|d_i\|^2.
\]
Introducing $\nabla^2 f(x)$ gives
\begin{align*}
    v_i^T\left(g_i-\nabla^2f(z_i)d_i\right) & = v_i^T\left(g_i-\nabla^2f(x)d_i\right) + v_i^T(\nabla^2f(z_i)-\nabla^2f(x))d_i.
\end{align*}
Since the Hessian is $L$-Lipchitz-continuous \cref{assump:lipchitiz_hessian}, $(\nabla^2f(z_i)-\nabla^2f(x))d_i \leq L\|d_i\|\|z_i-x\|$. Therefore, by the definition of $\varepsilon_i$,
\begin{align}
    v_i^T\left(g_i-\nabla^2f(x)d_i\right) & \leq \frac{L\|v_i\|\varepsilon_i}{2}. \label{eq:temp_bound_thm1}
\end{align}
Let $v_i=\sign(\alpha_i)w$. Summing all inequalities multiplied by $|\alpha_i|$ gives the first desired result:
\begin{align*}
    w^T\left(G-\nabla^2f(x)D\right)\alpha & \leq \frac{L\|w\|\sum_{i=1}^N\varepsilon_i|\alpha_i|}{2}.
\end{align*}
The second result is rather straightforward, since \cref{eq:temp_bound_thm1} with $v_i=w$ gives
\[ 
    w^T\left(g_i-\nabla^2f(x)d_i\right) \leq \frac{L\|w\|\varepsilon_i}{2}.
\]
Therefore,
\[ 
    \sqrt{\sum_{i=1}^N \left(w^T\left(g_i-\nabla^2f(x)d_i\right)\right)^2} \leq \|w\| \sqrt{\sum_{i=1}^N \left\|g_i-\nabla^2f(x)d_i\right\|^2}\leq \|w\| \sqrt{\sum_{i=1}^N L\varepsilon_i^2}  \leq \frac{L\|w\|\|\varepsilon\|}{2}.
\]

\end{proof}

\thmupperboundalgo*
\begin{proof}
    The inequality \cref{eq:type2_bound} is a direct consequence of \cref{eq:ineq_secant} (with $y=x_+$, $z=x$) combined with \cref{eq:bound_hessian_dalpha},
    \begin{align*}
        & \|\nabla f(x_+)-\nabla f(x)-\nabla^2f(x)D\alpha\| \leq \frac{L}{2} \|D\alpha\|^2\\
        \Leftrightarrow \;\; & w^T\left(\nabla f(x_+)-\nabla f(x)-\nabla^2f(x)D\alpha\right) \leq \frac{L\|w\|}{2} \|D\alpha\|^2\\
        \Leftrightarrow \;\; & w^T\nabla f(x_+) \leq \frac{L\|w\|}{2} \|D\alpha\|^2 + w^T\left(\nabla f(x)+\nabla^2f(x)D\alpha\right) \\
        \Leftrightarrow \;\; & w^T\nabla f(x_+) \overset{\cref{eq:bound_hessian_scalar}}{\leq} \frac{L\|w\|}{2} \left(\|D\alpha\|^2 + \sum_{i=1}^N |\alpha_i| \varepsilon_i\right)+ w^T\left(\nabla f(x)+G\alpha\right)\\
        \Leftrightarrow \;\; & w^T\nabla f(x_+) \leq \|w\|\left(\frac{L}{2} \left(\|D\alpha\|^2 + \sum_{i=1}^N |\alpha_i| \varepsilon_i\right)+ \|\nabla f(x)+G\alpha\|\right)
    \end{align*}
    Setting $w = \nabla f(x_+)$ gives \cref{eq:type2_bound}. 

    The inequality \cref{eq:type1_bound} instead comes from \cref{eq:ineq_function} combined with \cref{eq:bound_hessian_dalpha}. Indeed,
    \begin{align*}
        f(x_{+}) & \leq f(x) + \nabla f(x)D\alpha + \frac{1}{2}(D\alpha)^T\nabla^2 f(x)(D\alpha) + \frac{L}{6}\|D\alpha\|^3 \\
        & \overset{\cref{eq:bound_hessian_dalpha}}{\leq} f(x) + \nabla f(x)D\alpha + \frac{1}{2}\left((D\alpha)^TG\alpha +  \frac{L\|D\alpha\|}{2} \sum_{i=1}^N |\alpha_i| \varepsilon_i\right) + \frac{L}{6}\|D\alpha\|^3
    \end{align*}
    It remains to use the followings bounds:
    \begin{align*}
        \sum_{i=1}^N |\alpha_i| \varepsilon_i = \alpha^T (\sign(\alpha)\odot \varepsilon) & \leq \|\alpha\| \|\varepsilon\|,\\
        \|D\alpha\| & \leq \|D\| \|\alpha\|.
    \end{align*}
    All together,
    \[
        f(x_{+}) \leq f(x) + \nabla f(x)D\alpha + \frac{1}{2}(D\alpha)^TG\alpha + \frac{L}{4}\|\alpha\|^2 \|D\|\|\varepsilon\| + \frac{L}{6}\|D\alpha\|^3
    \]
    Finally, since $(D\alpha)^TG\alpha$ is a quadratic form, only the symmetric counterpart of $D^TG$ counts. That means, $(D\alpha)^TG\alpha = \alpha^T\frac{D^TG + G^TD}{2}\alpha$. Hence, writing $H = \frac{D^TG + G^TD}{2} + \mathrm{I}\frac{L}{2}\|D\|\|\varepsilon\|$ gives the desired result,
    \[
        f(x_{+}) \leq f(x) + \nabla f(x)D\alpha + \frac{\alpha^TH\alpha}{2} + \frac{L}{6}\|D\alpha\|^3.
    \]
\end{proof}

\thmforwardestimateproperties*
\begin{proof}
    The proof is done by recursion. 

    Since $D_1$ is simply a vector of norm one, $D_1^TD_1 = 1$. Moreover, $D_1 = \frac{\nabla f(x_0)}{\|\nabla f(x_0)\|}$. Hence, is obviously span $\nabla f(x_0)$.
    
    Assume that $D_{t-1}$ is orthogonal. Then, potentially removing one column does not affect its orthogonality. Therefore, $\tilde D_{t-1}$ is orthogonal. Now, consider the vector
    \begin{align*}
        v & = \frac{(I - \tilde D_{t-1}(\tilde D_{t-1}^T\tilde D_{t-1})^{-1}\tilde D_{t-1}^T) \nabla f(x_t)}{\|(I - \tilde D_{t-1}(\tilde D_{t-1}^T\tilde D_{t-1})^{-1}\tilde D_{t-1}^T) \nabla f(x_t)\|}\\
        & = \frac{(I - \tilde D_{t-1}\tilde D_{t-1}^T) \nabla f(x_t)}{\|(I - \tilde D_{t-1}\tilde D_{t-1}^T) \nabla f(x_t)\|},
    \end{align*}
    where the second equality is obtained using the orthogonality of $\tilde D_{t-1}$. This corresponds to a normalized orthogonal projection of the vector $\nabla f(x_t)$ onto the orthogonal columns span of $\tilde D_{t-1}$. Since
    \[
        \tilde D_{t-1}(I - \tilde D_{t-1}\tilde D_{t-1}^T) \nabla f(x_t) = 0,
    \]
    the matrix $D_t = [\tilde D_{t-1} v]$ is orthogonal:
    \[
        D_{t}^TD_t = \begin{bmatrix}
            I & 0 \\ 0 & 1
        \end{bmatrix}.
    \]
    Finally, the matrix $D_t$ indeed spans $\nabla f(x_t)$, since $P_t\nabla f(x_t) = \nabla f(x_t)$:
    \begin{align*}
        P_t\nabla f(x) = D_tD_t^T \nabla f(x_t) & = \tilde D_{t-1}\tilde D_{t-1}^T \nabla f(x_t) + vv^T \nabla f(x_t)\\
        & = \tilde D_{t-1}\tilde D_{t-1}^T \nabla f(x_t) +  \frac{(I - \tilde D_{t-1}\tilde D_{t-1}^T)\nabla f(x_t) \nabla f(x_t)^T(I - \tilde D_{t-1}\tilde D_{t-1}^T)}{\|(I - \tilde D_{t-1}\tilde D_{t-1}^T) \nabla f(x_t)\|^2} \nabla f(x_t)\\
        & = \tilde D_{t-1}\tilde D_{t-1}^T \nabla f(x_t) + (I - \tilde D_{t-1}\tilde D_{t-1}^T)\nabla f(x_t)  \frac{\nabla f(x_t)^T(I - \tilde D_{t-1}\tilde D_{t-1}^T)^2\nabla f(x_t)}{\|(I - \tilde D_{t-1}\tilde D_{t-1}^T) \nabla f(x_t)\|^2} \\
        & = \tilde D_{t-1}\tilde D_{t-1}^T \nabla f(x_t) + (I - \tilde D_{t-1}\tilde D_{t-1}^T)\nabla f(x_t)\\
        & = \nabla f(x_t).
    \end{align*}

\end{proof}

\thmminimaldecrease*
\begin{proof}
    Using \cref{eq:first_order_condition}, at each iteration, after the while loop, the first-order condition of the subroutine \cref{alg:type1} reads
    \begin{align} \label{eq:temp_first_order}
        D_t^T\nabla f(x_t) + H_t\alpha_{t+1} + \frac{M_{t+1}}{2}D_t^TD_t\alpha_{t+1} \|D_t\alpha_{t+1}\| = 0.
    \end{align}
    The subscript $t$ is dropped for clarity. After multiplying by $\alpha$,
    \[
        \nabla f(x_t)^TD\alpha + \alpha^TH\alpha + \frac{M}{2}\|D\alpha\|^3 = 0.
    \]
    In addition, multiplying both times by $\alpha$ the second-order condition \cref{eq:second_order_condition} gives 
    \[
        \alpha^TH\alpha \geq -\frac{M}{2}\|D\alpha\|^3.
    \]
    which gives, after replacing it in \cref{eq:temp_first_order},
    \begin{align} \label{eq:temp_ineq_first_order}
        \nabla f(x_t)^TD\alpha  \leq -\frac{M}{2}\|D\alpha\|^3 +\frac{M}{2}\|D\alpha\|^3 = 0.
    \end{align}
    Injecting \cref{eq:temp_first_order,eq:temp_ineq_first_order} into the while condition of \cref{alg:type1} gives the desired result:
    \begin{align}
        f(x_+) & \leq f(x) + \nabla f(x)^TD\alpha + \frac{1}{2} \alpha^TH\alpha + \frac{M\|D\alpha\|^3}{6},\label{eq:temp_ub_fun}\\
        & = f(x) -\frac{1}{2} \nabla f(x)^TD\alpha  - \frac{M\|D\alpha\|^3}{12} \nonumber\\
        & \leq f(x) - \frac{M\|D\alpha\|^3}{12}.\nonumber
    \end{align}
    Where \cref{eq:temp_ub_fun} is guaranteed if $M>L$. Therefore, in the worst case, $M<2L$. Finally, after $t$ iterations, the number of total gradient calls is bounded by $2t + \log_2\left( \frac{M_0}{L} \right)$ as shown in \citep{nesterov2006cubic}.
\end{proof}

\thmratenonconvex*
\begin{proof}
    The starting inequality is \cref{eq:lower_bound_gradient_plus}:
    \[
        \|\nabla f(x_+)\| \leq \frac{L+M}{2}\|D\alpha\|^2 + \|D\alpha\|\left( \frac{\|\varepsilon\|}{\|D\|} \left(\frac{L + M\kappa_D}{2} \right) \kappa_D + \|(I-P)\nabla^2 f(x)P\|  \right).
    \]
    The result is obtained by decomposing the inequality using a maximum,
    \begin{align*}
        & \|\nabla f(x_+)\| \\
        & \leq \max\left\{ (L+M)\|D\alpha\|^2 \;;\; 2\|D\alpha\|\left( \frac{\|\varepsilon\|}{\|D\|} \left(\frac{L + M\kappa_D}{2} \right) \kappa_D + \|(I-P)\nabla^2 f(x)P\|  \right)\right\}.
    \end{align*}
    In the first case,
    \begin{equation} \label{eq:temp_lower_bound_dalpha_1}
        \|D\alpha\|\geq \sqrt{\frac{\|\nabla f(x_+)\|}{L+M}},
    \end{equation}
    while in the second case,
    \[
        \|D\alpha\|\geq \frac{\|\nabla f(x_+)\|}{ \frac{\|\varepsilon\|}{\|D\|} \left(\frac{L + M\kappa_D}{2} \right) \kappa_D + \|(I-P)\nabla^2 f(x)P\| }.
    \]
    Let $C_t$ be defined as
    \[
        C_t =  \frac{\|\varepsilon_t\|}{\|D_t\|} \left(\frac{L + M_{t+1}\kappa_{D_t}}{2} \right) \kappa_{D_t} + \|(I-P_t)\nabla^2 f(x_t)P_t\|.
    \]
    Then, using \cref{assump:bounded_epsilon,assump:bounded_conditionning}, and since $M<2L$ by \cref{thm:minimal_decrease},
    \[
        C_t \leq C = \delta L\left(\frac{1 + 2\kappa}{2} \right) \kappa + \max_t \|(I-P_t)\nabla^2 f(x_t)P_t\| 
    \]
    Therefore,
    \begin{equation} \label{eq:temp_lower_bound_dalpha_2}
        \|D\alpha\|\geq \frac{\|\nabla f(x_+)\|}{C}.
    \end{equation}
    At each iteration $t$, combining \cref{eq:temp_lower_bound_dalpha_1,eq:temp_lower_bound_dalpha_2} into \cref{thm:minimal_decrease} gives
    \[
        f(x_{t})-f(x_{t+1}) \geq \frac{M_{t+1}}{12}\|\underbrace{x_{t+1}-x_t}_{=D_t\alpha_t}\|^3 \geq  \frac{M_{t+1}}{12} \min\left\{ \left(\frac{\|\nabla f(x_+)\|}{L+M_{t+1}}\right)^{3/2} \;;\; \left(\frac{\|\nabla f(x_+)\|}{C}\right)^{3} \right\}
    \]
    Therefore,
    \begin{align*}
        f(x_0)-f^\star & \geq f(x_0)-f(x_{t}) \\
        & = \sum_{i=0}^{t-1} f(x_{i})-f(x_{i+1})  \\
        & \geq \sum_{i=0}^{t-1} \left(\frac{M_{i+1}}{12}\|x_{i+1}-x_i\|^3\right) \\
        & \geq  \sum_{i=0}^{t-1} \min_t \frac{M_{i+1}}{12} \left\{ \left(\frac{\|\nabla f(x_{i+1})\|}{L+M_{i+1}}\right)^{3/2} \;;\; \left(\frac{\|\nabla f(x_{i+1})\|}{C}\right)^{3} \right\}\\
        & \geq  t \min_{i\in[0,t-1]}\frac{M_{i+1}}{12} \min\left\{ \left(\frac{\|\nabla f(x_{i+1})\|}{L+M_{i+1}}\right)^{3/2} \;;\; \left(\frac{\|\nabla f(x_{i+1})\|}{C}\right)^{3} \right\}\\
        & \geq  t  \frac{M_{\min}}{12} \min\left\{ \min_{i\in[1,t]}\left(\frac{\|\nabla f(x_{i})\|}{3L}\right)^{3/2} \;;\; \min_{i\in[1,t]}\left(\frac{\|\nabla f(x_{i})\|}{C}\right)^{3} \right\}
    \end{align*}
    After analyzing separately each case of the minimum, either
    \[
        \left(\frac{\min\limits_{i\in[1,t]}\|\nabla f(x_{i})\|}{3L}\right)^{3/2} \leq 12\frac{f(x_0)-f^\star}{tM_{\min}} \quad \text{or} \quad \left(\frac{\min\limits_{i\in[1,t]}\|\nabla f(x_{t+1})\|}{C}\right)^{3} \leq 12\frac{f(x_0)-f^\star}{tM_{\min}}.
    \]
    It remains to simplify to obtain the desired result,
    \[
        \min_{i=1\ldots t}\|\nabla f(x_{i})\| \leq \max\left\{\frac{3L}{t^{2/3}} \left(12\frac{f(x_0)-f^\star}{M_{\min}}\right)^{2/3} \; ; \; \left(\frac{C}{t^{1/3}}\right)\left(12\frac{f(x_0)-f^\star}{M_{\min}}\right)^{1/3} \right\}.
    \]
\end{proof}

\thmratestarconvex*
\begin{proof}
    Starting from the inequality in \cref{prop:global_lower_bound_xplus},
    \begin{align*}
        f(x_{t+1}) \leq f(y) + \frac{M_{t+1}+L}{6}\|y-x_t\|^3  + \frac{\|y-x_t\|^2}{2} C_2^{(t)},
    \end{align*}
    where 
    \[
        C_2^{(t)} = \|\nabla^2 f(x_t)- P_t\nabla^2 f(x_t)P_t\| + \delta\frac{L \kappa +  M_{t+1}\kappa^2}{2},
    \]
    and setting $y = (1-\beta_t) x_t + \beta_t x^\star $ and $f(x^\star)=f^\star$ gives
    \begin{align*}
        f(x_{t+1})-f^\star \leq f((1-\beta_t) x_t + \beta_t x^\star)-f^\star + \frac{M_{t+1}+L}{6}\beta_t^3\|x_t-x^\star\|^3  + \frac{\beta_t^2\|x_t-x^\star\|^2}{2} C_2^{(t)}.
    \end{align*}
    Because the function is star-convex,
    \begin{align*}
        f(x_{t+1})-f^\star \leq (1-\beta_t) (f(x_t)-f^\star) + \frac{M_{t+1}+L}{6}\beta_t^3\|x_t-x^\star\|^3  + \frac{\beta_t^2\|x_t-x^\star\|^2}{2} C_2^{(t)}.
    \end{align*}
    Since \cref{alg:type1} ensure a decrease in the function value, the iterate $x_t$ satisfies 
    \[
        x_t \in\{x:f(x\leq f(x_0))\},
    \]
    and therefore, $\|x_t-x^\star\|\leq R$ by \cref{assump:bounded_radius}. In addition, $M<2L$ by \cref{thm:minimal_decrease}. The inequality now becomes
    \begin{align}
        (f(x_{t+1})-f^\star) \leq (1-\beta_t) (f(x_t)-f^\star) + \beta_t^3\frac{LR^3}{2}  + \beta_t^2\frac{R^2C_2^{(t)}}{2} . \label{eq:temp_recursion_fx}
    \end{align}
    Finally, since $M<2L$, the scalar $C_2^t$ is bounded over time by $C_2$:
    \[
        C_2^{(t)} \leq C_2 \defas \delta L\frac{ \kappa +  2\kappa^2}{2} +\max_t \|\nabla^2 f(x_t)- P_t\nabla^2 f(x_t)P_t\| .
    \]
    Now, let 
    \begin{itemize}
        \item $B_t = \frac{t(t+1)(t+2)}{6}$,
        \item $b_t:B_{t}=B_{t-1}+b_t$, hence $b_t = \frac{t(t+1)}{2}$, and
        \item $\beta_t = \frac{b_{t+1}}{B_{t+1}}$.
    \end{itemize}
    Therefore, for $t\geq 1$,
    \[
        1=\frac{B_t}{B_{t}} = \frac{B_{t-1}}{B_{t}}+\frac{b_t}{B_{t}} =\frac{B_{t-1}}{B_{t}}+\beta_{t-1} \quad \Rightarrow\quad 1-\beta_{t-1} = \frac{B_{t-1}}{B_{t}}.
    \]
    Injecting those relations in \cref{eq:temp_recursion_fx} gives
    \begin{align*}
        (f(x_{t+1})-f^\star) \leq \frac{B_{t}}{B_{t+1}} (f(x_t)-f^\star) + \left( \frac{b_{t+1}}{B_{t+1}}\right)^3\frac{LR^3}{2}  + \left( \frac{b_{t+1}}{B_{t+1}}\right)^2\frac{R^2C_2}{2},
    \end{align*}
    hence the recursion
    \begin{align*}
        B_{t+1}(f(x_{t+1})-f^\star) & \leq B_t (f(x_t)-f^\star) + \frac{b^{3}_{t+1}}{B^2_{t+1}}\frac{LR^3}{2}  + \frac{b^2_{t+1}}{B_{t+1}}\frac{R^2C_2}{2}\\
        & \leq B_0 (f(x_t)-f^\star) + \sum_{i=0}^{t}\frac{b^{3}_{i+1}}{B^2_{i+1}}\frac{LR^3}{2}  + \sum_{i=0}^{t}\frac{b^2_{i+1}}{B_{i+1}}\frac{R^2C_2}{2}.
    \end{align*}
    \begin{align*}
        (f(x_{t+1})-f^\star) \leq \frac{B_0}{B_{t+1}} (f(x_t)-f^\star) + \frac{\sum_{i=0}^{t}\frac{b^{3}_{i+1}}{B^2_{i+1}}}{B_{t+1}}\frac{LR^3}{2}  + \frac{\sum_{i=0}^{t}\frac{b^2_{i+1}}{B_{i+1}}}{B_{t+1}}\frac{R^2C_2}{2}.
    \end{align*}
    Therefore, the rate reads
    By the definition of $b_t$ and $B_t$, 
    \begin{align*}
        \frac{b^{3}_{i+1}}{B^2_{i+1}} & = \frac{36}{8} \frac{(i+1)^3(i+2)^3}{(i+1)^2(i+2)^2(i+3)^2} = \frac{9}{2} \frac{(i+1)(i+2)}{(i+3)^2} \leq \frac{9}{2},\\
        \frac{b^{2}_{i+1}}{B_{i+1}} & = \frac{6}{4} \frac{(i+1)^2(i+2)^2}{(i+1)(i+2)(i+3)} = \frac{3}{2} \frac{(i+2)}{(i+3)}(i+1) \leq \frac{3}{2}(i+1).
    \end{align*}
    Hence,
    \begin{align*}
         \frac{\sum_{i=0}^{t}\frac{b^{3}_{i+1}}{B^2_{i+1}}}{B_{t+1}} & \leq \frac{\frac{9}{2}(t+1)}{\frac{(t+1)(t+2)(t+3)}{6}}\leq \frac{27}{(t+2)(t+3)},\\
         \frac{\sum_{i=0}^{t}\frac{b^2_{i+1}}{B_{i+1}}}{B_{t+1}} & \leq \frac{\sum_{i=0}^t\frac{3}{2}(i+1)}{\frac{(t+1)(t+2)(t+3)}{6}} =  \frac{\frac{3}{4}(t+2)(t+1)}{\frac{(t+1)(t+2)(t+3)}{6}}= \frac{9}{2(t+3)}.
    \end{align*}
    Shifting from $t+1$ tp $t$ gives the desired result,
    \[
        (f(x_{t})-f^\star) \leq 6\frac{f(x_t)-f^\star}{t(t+1)(t+2)} + \frac{1}{(t+1)(t+2)}\frac{L(3R)^3}{2}  + \frac{1}{t+2}\frac{C_2(3R)^2}{4}.
    \]
\end{proof}

\thmraterandomconvex*
\begin{proof}
    The proof technique is similar to \cite{hanzely2020stochastic}. Starting from \cref{prop:global_lower_bound_xplus_stoch} with $x=x_t$,
    \begin{align*}
        \mathbb{E}f(x_{t+1}) \leq & \left(1-\frac{N}{d}\right)f(x_t) + \frac{N}{d} f(y)  + \frac{N}{d}\frac{(M_{t+1}+L)}{6}\|y-x_t\|^3 \\
        & + \frac{N}{d}\frac{\|y-x_t\|^2}{2} \left(\delta \frac{L \kappa +  M_{t+1}\kappa^2}{2} +\frac{(d-N)}{d}\|\nabla^2 f(x_t)\|\right),
    \end{align*}
    where the expectation is taken with $D_{0},\ldots, D_{t-1}$ fixed. Using the inequality $M_{t+1}\leq 2L$ gives
    \begin{align*}
        \mathbb{E}f(x_{t+1}) \leq & \left(1-\frac{N}{d}\right)f(x_t) + \frac{N}{d}\left( f(y) + \frac{\|y-x_t\|^2}{2} C_3 + \frac{L}{2}\|y-x_t\|^3\right) 
    \end{align*}
    where
    \[
        C_3\defas \left(\delta L \frac{\kappa +  2\kappa^2}{2} +\frac{(d-N)}{d}\max_{i\in[0,t]}\|\nabla^2 f(x_i)\|\right).
    \]
    Let $y = \beta_t x^\star + (1-\beta_t)x_t$, $\beta_t \in[0,1]$. After using \cref{assump:convex} and \cref{assump:bounded_radius},
    \begin{align*}
        \mathbb{E}f(x_{t+1}) & \leq  \left(1-\frac{N}{d}\right)f(x_t) + \frac{N}{d}\left( f\Big(\beta_t x^\star + (1-\beta_t)x_t\big) + \beta_t^2\frac{C_3R^2}{2}  + \beta_t^3\frac{LR^3}{2}\right) \\ 
        & \leq  \left(1-\frac{N}{d}\right)f(x_t) + \frac{N}{d}\left( \beta_t f(x^\star) + (1-\beta_t) f(x_t) + \beta_t^2\frac{C_3R^2}{2}  + \beta_t^3\frac{LR^3}{2}\right) \\ 
        & =  \left(1-\frac{N}{d}\right)f(x_t) + \frac{N}{d}\left( \beta_t f(x^\star) + (1-\beta_t) f(x_t) + \beta_t^2\frac{C_3R^2}{2}  + \beta_t^3\frac{LR^3}{2}\right) , \\ 
        & =  \left(1-\beta_t\frac{N}{d}\right)f(x_t) + \frac{N}{d}\left( \beta_t f(x^\star) + \beta_t^2\frac{C_3R^2}{2}  + \beta_t^3\frac{LR^3}{2}\right) .
    \end{align*}
    Hence, the recursion
    \[
        \left(\mathbb{E}f(x_{t+1}) - f^\star\right) \leq  \left(1-\beta_t\frac{N}{d}\right)(f(x_t)-f^\star) + \frac{N}{d}\left( \beta_t^2\frac{C_3R^2}{2}  + \beta_t^3\frac{LR^3}{2}\right).
    \]
    Now, define
    \begin{align*}
        b_t & = t^2,\\
        B_t & = B_0 + \sum_{i=0}^t b_i, \;\; B_0 = \frac{4}{3}\left(\frac{d}{N}\right)^3\\
        \beta_t & = \frac{d}{N} \frac{b_{t+1}}{B_{t+1}} \;\; \Rightarrow \; 1-\frac{N}{d}\beta_t = \frac{B_t}{B_{t+1}}.
    \end{align*}
    Replacing those relations in the recursion gives
    \begin{align*}
        & B_{t+1}\left(\mathbb{E}f(x_{t+1}) - f^\star\right) \\
        \leq &  B_t(f(x_t)-f^\star) + \frac{N}{dB_{t+1}}\left(  \left(\frac{d}{N} \frac{b_{t+1}}{B_{t+1}}\right)^2\frac{C_3R^2}{2}  + \left(\frac{d}{N} \frac{b_{t+1}}{B_{t+1}}\right)^3\frac{LR^3}{2}\right)\\
        = & B_t(f(x_t)-f^\star) +   \frac{d}{N} \frac{b^2_{t+1}}{B_{t+1}}\frac{C_3R^2}{2}  + \frac{d^2}{N^2} \frac{b^3_{t+1}}{B^2_{t+1}}\frac{LR^3}{2}
    \end{align*}
    Expanding the inequality gives
    \begin{align*}
        & B_{t+1}\left(\mathbb{E}f(x_{t+1}) - f^\star\right) \leq B_0(f(x_0)-f^\star) +  \frac{d}{N} \sum_{t=0}^{t+1}\frac{b^2_{i+1}}{B_{i+1}}\frac{C_3R^2}{2}  + \frac{d^2}{N^2} \sum_{t=0}^{t+1}\frac{b^3_{i+1}}{B^2_{i+1}}\frac{LR^3}{2}
    \end{align*}
    Since
    \begin{align*}
        B_t & = B_0 + \sum_{i=1}^t \geq B_0 + \int_0^t x^2 \mathrm{d} x = B_0 + \frac{t^3}{3}\\
        \sum_{i=0}^t \frac{b_t^2}{B_t} & \leq \sum_{i=0}^t \frac{i^4}{B_0+i^3/3} \leq 3t^2,\\
        \sum_{i=0}^t \frac{b_t^3}{B^2_t} & \leq \sum_{i=0}^t \frac{i^6}{(B_0+i^3/3)^2} \leq 9t,\\
    \end{align*}
    the bound becomes
    \begin{align*}
        B_{t+1}\left(\mathbb{E}f(x_{t+1}) - f^\star\right) \leq B_0(f(x_0)-f^\star) +  \frac{d}{N}3t^2\frac{C_3R^2}{2}  + \frac{d^2}{N^2} 9t\frac{LR^3}{2}
    \end{align*}
    Dividing both sides by $B_{t+1}$ gives
    \begin{align*}
        \mathbb{E}f(x_{t+1}) - f^\star \leq \frac{B_0}{B_0+\frac{(t+1)^3}{3}}(f(x_0)-f^\star) +  \frac{d}{N}\frac{3(t+1)^2}{B_0+\frac{(t+1)^3}{3}}\frac{C_3R^2}{2}  + \frac{d^2}{N^2} \frac{9(t+1)}{B_0+\frac{(t+1)^3}{3}}\frac{LR^3}{2}.
    \end{align*}
    After the following simplifications,
    \begin{align*}
         \frac{B_0}{B_0+(t+1)^3/3} & =  \frac{1}{1+\frac{(t+1)^3}{3B_0}}= \frac{1}{1+\frac{1}{4}\left(\frac{N}{d}(t+1)\right)^3},\\
         \frac{3(t+1)^2}{B_0+(t+1)^3/3} & = \frac{3}{B_0}\frac{(t+1)^3}{1+\frac{(t+1)^3}{3B_0}} \frac{1}{t+1} \leq \frac{3}{B_0} 3B_0\frac{1}{t+1} = \frac{9}{t+1}, \\
         \frac{9(t+1)}{B_0+\frac{(t+1)^3}{3}} & = \frac{9}{B_0}\frac{(t+1)^3}{\frac{(t+1)^3}{3B_0}} \frac{1}{(t+1)^2} \leq \frac{9}{B_0}3B_0 \frac{1}{(t+1)^2} = \frac{27}{(t+1)^2},
    \end{align*}
    the inequality finally becomes (after shifting from $t+1$ to $t$),
    \begin{align*}
        \mathbb{E}f(x_{t}) - f^\star \leq \frac{1}{1+\frac{1}{4}\left[\frac{N}{d}t\right]^3}(f(x_0)-f^\star) + \frac{1}{\left[\frac{N}{d}t\right]^2} \frac{L(3R)^3}{2} +  \frac{1}{\left[\frac{N}{d}t\right]}\frac{C_3(3R)^2}{2}.
    \end{align*}
\end{proof}

\thmrateaccsketch*
\begin{proof}
    By construction of $\phi_t(x)$, from \cref{prop:recursion_phi} and \cref{assump:bounded_radius},
    \begin{align}
        B_t f(x_t) &\leq \min_x \phi_t(x)\\
        &\leq \phi_t(x^\star)\\
        &\leq B_t f(x^\star) + \frac{\lambda_t^{(1)}+\tilde\lambda^{(1)}}{2}\|x^\star-x_0\|^2 + \frac{\lambda_t^{(2)}+\tilde\lambda^{(2)}}{6}\|x^\star-x_0\|^3\\
        &\leq B_t f(x^\star) + \frac{\lambda_t^{(1)}+\tilde\lambda^{(1)}}{2}R^2 + \frac{\lambda_t^{(2)}+\tilde\lambda^{(2)}}{6}R^3\\
        \Rightarrow f(x_t)-f^\star & \leq  \frac{\lambda_t^{(1)}+\tilde\lambda^{(1)}}{2B_t}R^2 + \frac{\lambda_t^{(2)}+\tilde\lambda^{(2)}}{6B_t}R^3.
    \end{align}
    By \cref{prop:bound_lambdas}, the following bounds holds:
    \begin{align*}
         \lambda_t^{(1)} & \leq 30\cdot\frac{b_{t+1}^2}{B_t} \kappa_D\left(\delta\max\{4L,M_0\} + \max_{i=0\ldots t} \|(I-P_i)\nabla f(x_i)P_i)\|\right),\\
         \lambda_t^{(2)} & \leq 5\frac{b_{t+1}^{3}}{B^2_t}\max\{M_0,4L\}.
     \end{align*}
     Since $\frac{b_{t+1}}{B_t} = \frac{3}{(t+3)}$,
     \begin{align}
         \frac{b_{t+1}^{3}}{B^3_t} = \frac{3^3}{(t+3)^3}, \qquad
         \frac{b_{t+1}^{2}}{B^2_t} = \frac{3^2}{(t+3)^2}.
     \end{align}
     Therefore,
     \begin{align*}
        f(x_t)-f^\star \leq & 30\cdot\kappa_D\left(\delta\max\{4L,M_0\} + \max_{i=0\ldots t} \|(I-P_i)\nabla f(x_i)P_i)\|\right) \frac{(3R)^2}{(t+3)^2} \\
        & + 5\max\{M_0,4L\}\left(\frac{3R}{t+3}\right)^3 \\
        & + \frac{\frac{\tilde\lambda^{(1)}R^2}{2} + \frac{\tilde \lambda^{(2)}R^{3}}{6}}{(t+1)^3}. 
     \end{align*}
\end{proof}

\end{document}